\definecolor{darkred}{RGB}{100,0,0}
\definecolor{darkgreen}{RGB}{0,100,0}
\definecolor{darkblue}{RGB}{0,0,150}
\newtheorem{thm}{Theorem}[section]
\newtheorem{prp}[thm]{Proposition}
\newtheorem{lem}[thm]{Lemma}
\newtheorem{cor}[thm]{Corollary}
\def\beq{\begin{equation}}
\def\eeq{\end{equation}}
\def\beqn{\begin{eqnarray*}}
\def\eeqn{\end{eqnarray*}}
\def\bitem{\begin{itemize}}
\def\eitem{\end{itemize}}
\def\benum{\begin{enumerate}}
\def\eenum{\end{enumerate}}
\def\bmult{\begin{multline*}}
\def\emult{\end{multline*}}
\def\bcenter{\begin{center}}
\def\ecenter{\end{center}}
\newcommand{\ol}[1]{\overline{#1}}
\DeclareMathOperator{\rank}{rank}
\DeclareMathOperator{\tr}{tr}
\def\cA{\mathcal{A}}
\def\cB{\mathcal{B}}
\def\bC{\mathcal{C}}
\def\bE{\mathcal{E}}
\def\cF{\mathcal{F}}
\def\cG{\mathcal{G}}
\def\cN{\mathcal{N}}
\def\cP{\mathcal{P}}
\def\cQ{\mathcal{Q}}
\def\cS{\mathcal{S}}
\def\cU{\mathcal{U}}
\def\bA{\mathbf{A}}
\def\bB{\mathbf{B}}
\def\bC{\mathbf{C}}
\def\bD{\mathbf{D}}
\def\bE{\mathbf{E}}
\def\bG{\mathbf{G}}
\def\bI{\mathbf{I}}
\def\bN{\mathbf{N}}
\def\bS{\mathbf{S}}
\def\bU{\mathbf{U}}
\def\bV{\mathbf{V}}
\def\bW{\mathbf{W}}
\def\bY{\mathbf{Y}}
\def\1{{\mathbf 1}}
\newcommand\bSigma{{\boldsymbol\Sigma}}
\newcommand\bPi{{\boldsymbol\Pi}}
\newcommand{\E}{\operatorname{\mathbb{E}}}
\newcommand{\R}{\operatorname{\mathbb{R}}}
\renewcommand{\P}{\operatorname{\mathbb{P}}}
\newcommand{\Var}{\operatorname{Var}}
\newcommand{\var}[1]{\operatorname{Var}\left(#1\right)}
\newcommand{\pa}[1]{\left(#1\right)}
\newcommand{\cro}[1]{\left[#1\right]}
\newcommand{\ac}[1]{\left\{#1\right\}}
\newcommand{\nv}[1]{\textcolor{magenta}{[Nicolas: #1]}}
\begin{document}

\begin{frontmatter}

\title{Optimal Estimation of Schatten Norms of a rectangular Matrix}

\runtitle{Estimation of Schatten norms}
\begin{aug}
  \author{Sol\`ene Th\'epaut and Nicolas Verzelen}
  \runauthor{Th\'epaut, S. and Verzelen, N.}
\affiliation{Universit\'e Paris-Saclay, NukkAI, and INRAE}
\end{aug}

\begin{abstract}
  We consider the twin problems of estimating the effective rank and the Schatten norms $\|\bA\|_{s}$ of a rectangular $p\times q$ matrix $\bA$ from noisy observations. When $s$ is an even integer, we introduce a polynomial-time  estimator of $\|\bA\|_s$ that achieves the minimax rate $(pq)^{1/4}$. Interestingly, this optimal rate does not depend on the underlying rank of the matrix $\bA$. When $s$ is not an even integer, the optimal rate is much slower. A simple thresholding estimator of the singular values achieves the rate $(q\wedge p)(pq)^{1/4}$, which turns out to be optimal up to a logarithmic multiplicative term. The tight minimax rate is achieved by a more involved polynomial approximation method. This allows us to build estimators for a class of effective rank indices. As a byproduct, we also characterize the minimax rate for estimating the sequence of singular values of a matrix. 
  \end{abstract}

  \begin{keyword}[class=MSC]
    \kwd[Primary ]{62H12}
    \kwd[; secondary ]{62C20}
    \end{keyword}
    
    \begin{keyword}
    \kwd{schatten norm}\kwd{effective rank} \kwd{matrix  estimation} \kwd{random matrix}
       \kwd{polynomial approximation} \end{keyword}

    \end{frontmatter}

\maketitle

\section{Introduction}
In many modern problems, scientists are faced with a large data matrix $\bY$, which is often assumed to be sum of a signal matrix $\bA$ and a background noise. Under some structural assumptions on the signal such as small rank, it is possible to precisely recover  the signal matrix $\bA$, for instance with  singular value thresholding methods (see e.g.~\cite{chatterjee2015matrix,gavish2014optimal}). Low-rank assumptions are at the heart of unsupervised learning methods, including clustering or biclustering. In this work, we focus our attention on estimating  specific functionals of the matrix $\bA$ that are connection to its rank.  On the one hand, the rank or the  effective rank of the signal help to  assess the relevance of low-rank based procedures. On the other hand, evaluating the rank (or the effective rank) of $\bA$ may also be an objective per se as a characterization of the complexity of the signal matrix $\bA$. We argue below that inferring the effective rank of a matrix $\bA$ mostly boils down to  estimating Schatten norms of $\bA$.  This manuscript is dedicated to the latter estimation problem.

To be more specific, we consider the signal plus noise  model
\beq \label{eq:noise}
\bY = \bA + \bE\ ,
\eeq
where $\bA$ is the unobserved signal and $\bE$ is a $p \times q$ noise matrix with independent entries following a standard normal distribution.  Extension to non-Gaussian noise are dealt with at the end of the manuscript. 
 Without loss of generality the noise variance is set to one. Also,  we assume throughout the manuscript that $p\geq q$.

Given a $p\times q$ matrix $\bA$, we write $\sigma_1(\bA) \geq \sigma_2(\bA)\ldots \geq \sigma_q(\bA)\geq 0$ for its ordered sequence of singular values. 
The rank of a matrix $\bA$ corresponds to its number of positive singular values. 
For any $s\geq 1$, the $s$-Schatten norm of $\bA$ is defined as the $l_s$ norm of its sequence of singular values, that is 
\beq\label{eq:definition_schatten_norm}
\|\bA\|^{s}_s = \sum_{i=1}^{q} \sigma^{s}_i(\bA)\ . 
\eeq
For $s=\infty$, we define $\|\bA\|_{\infty}=\sigma_1(\bA)$ as the operator norm of $\bA$. 
More generally, given a function $f:\mathbb{R}^+\mapsto \mathbb{R}^+$, we define the functional $f_{\sigma}(A)$ by $f_{\sigma}(\bA)= \sum_{i=1}^{q}f(\sigma_i(\bA))$. For instance, $\|\bA\|_s^s= f_{\sigma}(\bA)$ when one takes $f:x\mapsto x^s$. This manuscript is dedicated to the problem of estimating the Schatten norms of $\bA$ from a single noisy  observation $\bY$. Before describing our contribution, we explain how Schatten norms are related to effective rank measures.

\subsection{Effective Rank of a matrix}\label{sec:intro_effective_rank}

Since the rank of a matrix  is very sensitive to small perturbations, it is  difficult to estimate it from $\bY$. Furthermore,  a large rank matrix $\bA$ may have only few large singular values together with many small singular values. For such a matrix, the rank is poorly informative on the structure of $\bA$. 
As an alternative, various notions of effective ranks have been introduced. In particular, some of these notions of effective rank are at the heart of  high-dimensional or infinite-dimensional probabilistic results. For instance,  in~\cite{koltchinskii2014concentration}, Koltchinskii and Lounici consider, for a non-negative symmetric matrix $\bSigma$, the indicator $\tr[\bSigma]/\|\bSigma\|_{\infty}= \|\bSigma\|_1/\|\bSigma\|_{\infty}$. For a  rectangular matrix $\bA$, one can think of two extensions of this index, depending on whether we work directly with the singular values of $\bA$ or with the singular values of the square matrix $\bA^T \bA$. 
\beq\label{eq:definition_effective_rank}
\mathrm{ER}_{1,\infty}(\bA)= \frac{\|\bA\|_1 }{\|\bA\|_{\infty}} \, \quad  ;\quad \quad \quad     \mathrm{ER}_{2,\infty}(\bA)= \mathrm{ER}_{1,\infty}(\bA^T \bA)= \frac{\|\bA\|_2^2 }{\|\bA\|^2_{\infty}}\ . 
\eeq
Other notions of effective ranks are based on entropy measures. For instance Roy and Veterlli~\cite{roy2007effective} introduce the Shannon effective rank:
\beq\label{eq:definition_shannon_rank}
\mathrm{ER}_{1,1}(\bA)= \exp\left[-\sum_{i=1}^q \frac{\sigma_i(\bA)}{\|\bA\|_1 }\log\left(\frac{\sigma_i(\bA)}{\|\bA\|_{1}}\right)\right]\ , 
\eeq
which corresponds to the Shannon effective number of the distribution induced by the probability vector ($\sigma_i(\bA)/\|\bA\|_1$) $i=1,\ldots, q$. More generally, one  extends any effective number index based either on probability vector associated to  the singular values of $\bA$, that is the vector ($\sigma_i(\bA)/\|\bA\|_1$), $i=1,\ldots, q$ or to  the  vector ($\sigma_i^2(\bA)/\|\bA\|^2_2$), $i=1,\ldots, q$. For the purpose of biodiversity analysis, a large class of diversity measures are considered in~\cite{jost2006entropy}. In particular, for any positive $s>0$ and different from 1, the Hill's effective number~\cite{jost2006entropy} which derived from Renyi entropy straightforwardly straightforwardly extends to the following effective rank indices
\beq\label{eq:definition_effective_hill}
\mathrm{ER}_{1,s}(\bA)= \left(\frac{\|\bA\|_s }{\|\bA\|_1}\right)^{s/(1-s)}\, ;  \quad \quad \mathrm{ER}_{2,s}(\bA)= \mathrm{ER}_{1,s}(\bA^T\bA)=  \left(\frac{\|\bA \|_{2s} }{\|\bA \|_2}\right)^{2s/(1-s)}\ . 
\eeq
When all the non-zero singular values of $\bA$ are equal,  all these effective rank indices are equal to the true rank of $\bA$. However, these measures differ in the way they treat heterogeneous values for the singular values. In short, smaller values of $s$  in  $\mathrm{ER}_{1,s}(\bA)$ and $\mathrm{ER}_{2,s}(\bA)$ are more prone to take into account smaller singular values in the effective rank. See~\cite{jost2006entropy} and references therein for further discussions.

\medskip 

Most work around effective ranks consider noiseless setting \cite{roy2007effective}, but the matrix $\bA$ is sometimes allowed to be only partially observed (e.g~\cite{balcan2019testing}). In this work, we tackle the general problem of estimating the effective rank of $\bA$ relying on a single noisy observation $\bY$ of $\bA$. To the exception of the Shannon effective rank~\eqref{eq:definition_shannon_rank}, all the other effective rank measures are ratio of Schatten norms of $\bA$, so that evaluating the former amounts to estimating well the latter. Besides, assessing the intrinsic difficulty of estimating Schatten norms $\|\bA\|_s$ can lead the statistician to  favor some specific effective rank measures, that are based on Schatten norm $\|\bA\|_s$ that are easier to estimate.

\subsection{Our Contribution}

In most of the manuscript, we focus our attention to the Gaussian signal plus noise matrix model~\eqref{eq:noise}. As a warm-up, we consider the Frobenius norm $\|\bA\|_2$ and we prove that the simple quadratic estimator $(\|\bY\|_2^2 - pq)^{1/2}_+$, where $x_+= \max(x,0)$ achieves the optimal risk $(pq)^{1/4}$. Interestingly, this risk $(pq)^{1/4}$ cannot be improved by any estimator even if the matrix $\bA$ is additionally known to be of rank at most one. Then, we establish that a non-linear transformation of $\sigma_1(\bY)$ estimates $\|\bA\|_{\infty}=\sigma_1(\bA)$ with the same optimal error rates $(pq)^{1/4}$.

Regarding general even norms $\|\bA\|_{2k}$ where $k$ is any integer, we first remark that $\|\bA\|_{2k}^{2k}=tr[(\bA^T\bA)^{2k}]$ is a polynomial with respect to the entries of $\bA$. This allows us to  build an unbiased estimator $U_k$ of $\|\bA\|_{2k}^{2k}$  based on Hermite polynomials. Relying on the invariance of $\|\bA\|_{2k}^{2k}$ by left and right orthogonal transformations, we establish that this estimator has a simple expression as an algebraic combination of monomials of the form $tr[(\bY\bY)^{l}]$ so that the estimator $U_k$ can be efficiently computed. One of our main result is general variance upper bound for $U_k$, which allows us to prove that the estimator $(U_k)_+^{1/(2k)}$ achieves the optimal risk $(pq)^{1/4}$ uniformly over all matrices $\bA$.

Regarding general norms $\|\bA\|_{s}$ where $s\geq 1$ is not an even integer, we first exhibit a simple plug-in estimator based on a linear transformation of the empirical singular values ($\sigma_i(\bY)$) that achieves a much higher error of the order $q^{1/s}(pq)^{1/4}$ (compare with $(pq)^{1/4}$ for even Schatten norms). Our second main result is a minimax lower bound of order $\frac{q^{1/s}}{\log^{s}(q)}(pq)^{1/4}$ stating that it is impossible to estimate non-even Schatten norms at a much faster rate. Using polynomial approximation techniques that approximate $\|\bA\|_s$ by a linear combination of even Schatten norms $\|\bA\|_{2k}^{2k}$, we are able to partially close this logarithmic gap between our upper and lower minimax bounds. 

As a first application of our estimator $U_{2k}$, we apply a moment matching method introduced by~\cite{kong2017spectrum} in a covariance estimation model to estimate the whole sequence of singular values $\bA$ at the minimax rate $\tfrac{q}{\log(q)}(pq)^{1/4}$. As a second application of our findings, we build an estimator of the effective rank $\mathrm{ER}_{2,\infty}$ and control its error in probability. Besides, we argue why Effective ranks measures $\mathrm{ER}_{2,s}$ where $s\geq 2$ is an integer are easier to estimate than other definitions~(\ref{eq:definition_effective_rank},\ref{eq:definition_shannon_rank},\ref{eq:definition_effective_hill}).

Finally, we extend our analysis to a signal plus noise model with a general subGaussian noise distribution. Our third main result is an error control of the estimator $(U_k)_+^{1/(2k)}$ for even Schatten norm $\|\bA\|_{2k}$. 
Quite surprisingly, we establish that, while the definition of $U_k$ heavily depends on the sequence of moments of the normal distribution, $(U_k)_+^{1/(2k)}$ achieves the same convergence rate $(qp)^{1/4}$ as for Gaussian noise. However, the analysis turns out to be much more involved in comparison to Gaussian case.

\subsection{Related Literature}

\paragraph{Low rank Matrix Estimation and Detection.} There is a long line of work regarding the estimation of the matrix $\bA$ when $\bA$ is assumed to be low-rank or approximately low-rank. Procedures based on singular value thresholding procedures~\cite{gavish2014optimal,chatterjee2015matrix}, that is estimating $\bA$ by setting small singular values of $\bY$ to 0,  have been proved to achieve near optimal performances. Donoho and Gavish~\cite{donoho2014minimax} have assessed the asymptotic risk of singular value soft-thresholding procedures in an asymptotic framework where the rank is proportional to $q$ and $p$ is proportional to $q$.  \cite{shabalin2013reconstruction,nadakuditi2014optshrink} have introduced other non-linear singular values shrinkage estimators that turn out to achieve smaller risks (the constant in front of the rates is slightly better), at least in an  asymptotic framework where $p/q \rightarrow c\in (0,\infty)$ and $\bA$ has finite rank low-rank. As explained in Section~\ref{sec:operator},  some of our Schatten norm estimators are reminiscent of those non linear shrinkage functions.

\paragraph{Asymptotic estimation of the singular values in the asymptotic $p/q\rightarrow c$ .} In asymptotic matrix analysis, most work have been devoted to the setting where $p/q\rightarrow c\in (0,\infty)$. In that literature, the model~\eqref{eq:noise} is coined as the information plus noise model~\cite{couillet2011random}.  Given the symmetric matrix $\frac{1}{p}\bY^T\bY$ with ordered eigenvalue $\lambda_1,\ldots, \lambda_q$, define the spectral measure as $\mu_{\bY^T\bY}:= \frac{1}{q}\sum_{i=1}^{q}\delta_{\lambda_i}$. In the specific case where $\bA=0$, it has been established~\cite{anderson2010introduction} that $\mu_{\bY^T\bY}$ converges weakly towards the Marchenko-Pastur distribution. More generally, when the spectral measure of $\frac{1}{p}\bA^T \bA$ converges to a probability measure, \cite{vallet2012improved} have characterized the limit of  $\mu_{\bY^T\bY/p}$ through its Stieljes transform. Given that characterization, one may try to invert the mapping from the limit of $\mu_{\bA^T \bA/p}$ to the limit of $\mu_{\bY^T\bY/p}$ to estimate some functional of the spectrum of $\bA^T\bA$ from $\bY$. In some specific case, where $\bA$ has asymptotically a finite number $k$ of distinct singular values, consistency of these singular values has been derived. See \cite[][Ch.8]{couillet2011random} and references therein for more details. Given a consistent estimation of the spectral measure, on may plug it to estimate the Schatten norms. However, results in this line of work are not comparable to ours as they are restricted to very specific settings.

\paragraph{Matrix Sketching.} In computer science, there is an active line of research around the problem of matrix sketching which can be defined as follows: the practitioner has access to some entries of the matrix $\bA$ or to some linear combination of the entries of the matrix $\bA$. In this noiseless problem, one aims at recovering functionals of $\bA$ such as a Schatten norm or the rank using the smallest possible budget. See~\cite{li2014sketching,khetan2019spectrum,balcan2019testing}. As in our noisy framework,~\cite{li2014sketching} emphasizes that estimating even Schatten norms using sketches seems easier than estimating non-even Schatten norms. Apart from this, the techniques for both the lower and upper bounds highly differ from our settings.

\paragraph{Estimation of non linear functionals.} Schatten norms are non-linear functionals of the entries of $\bA$. Vectorizing the matrices $\bY$, $\bA$, and $\bE$, we can rewrite our problem in the Gaussian sequence model. In particular, estimating the 2-Schatten norm is equivalent to estimating the $l_2$ norm of a vector in the Gaussian sequence model. See~\cite{donoho1990minimax,collier2015minimax} and references therein for an account of work of quadratic functional estimation. More generally, the estimation of the $l_r$ norm of a vector has been addressed in~\cite{lepski1999estimation,cailow2011,mukherjee2017}. However, apart from the specific case $s=2$, $s$-Schatten norms do no correspond to $l_r$ norm and the methodology developed in the Gaussian sequence model does not extend to our setting. 
In discrete distribution estimation, optimal rates of convergence for Shannon entropy, Renyi entropy have been respectively established in \cite{wu2016minimax} and \cite{jiao2015minimax} (see also~\cite{han2018local}) and the polynomial approximation techniques in Section~\ref{sec:general} are inspired from this literature.

Closer to our settings, Kong and Valiant~\cite{kong2017spectrum} have considered the problem of estimating even Schatten norms of a covariance matrix $\bSigma$ given a $n$-sample of a mean zero random vector with covariance matrix $\bSigma$. 
Interestingly, their estimator exhibit a low computational complexity and is unbiased for general noise distributions. 
Also, they rely on these Schatten norms to estimate the whole sequence of eigenvalues of $\bSigma$. Our application to estimating the sequence of singular values is directly inspired from their work.  
However, their setting much differs from ours as they have access to $n$ independent observations so that they can easily build an unbiased estimator of any polynomials in $\bSigma$. In contrast, we only have a single observation of the matrix $\bY$ so that higher-degree polynomials in $\bA$ are more difficult to estimate for non-Gaussian noise distribution. 
Also, Kong and Valiant~\cite{kong2017spectrum} do not assess at all the optimality of their rates. In contrast, we provide matching minimax lower bound in our information plus noise model. As an aside, our proof approach could perhaps extend to their covariance framework.

\subsection{Notation}
In this work, $c$, $c_1$, $c'$ denote numerical positive constants that may vary from line to line. We write that two quantities $u$ and $v$ satisfy $u\lesssim v$ if there exists a numerical constant $c$ such that $u\leq c v$. 
We recall that norm of the form $\|\bA\|_s$ for $(s\in [1, \infty]$) correspond to the $s$-Schatten norms. 
In contrast, for $s\in [1,\infty]$, we write $|\bA|_s$ and $|u|_s$ for entry-wise $l_s$ norms of the matrix $\bA$  and the vector $u$.

Section \ref{sec:warm_up} is devoted to the simple case of Frobenius norm estimation and provides both minimax upper and lower bounds. In Section~\ref{sec:operator}, we consider the problem of estimating the operator norm $\|\bA\|_{\infty}$. General even norms are addressed in Section~\ref{sec:even}, whereas   non-even norms are studied in Section~\ref{sec:general}. In Section~\ref{sec:non_gaussian_noise}, we extend our results to general noise distributions.
Finally, we come back to the problem of effective estimation together with some open questions in the discussion Section. Proofs are postponed to the appendix.

\section{Frobenius and Operator Norm}\label{sec:warm_up}

\subsection{Frobenius Norms}
We first estimate $\|\bA\|_2^2= \sum_{i,j}\bA_{ij}^2$ and then we deduce an estimator of $\|\bA\|_2$. Remark that $\|\bA\|_2^2$ can be interpreted as a square $l_2$ norm in a Gaussian sequence model. The latter problem is classical -- see e.g.~\cite{collier2015minimax} and our simple estimator $U_1$ for $\|\bA\|_2^2$ has been already analyzed in the Gaussian sequence model. Still, we provide here a self-contained analysis as a warming-up for general even Schatten norms. 
Since $\|\bY\|_2^2$ follows a non-central $\chi^2$ distribution, we derive from  standard computations for the normal random variables that $\E[\|\bY\|_2^2]= \|\bA\|_2^2 +pq$. This leads us to the following unbiased  estimator 
\beq\label{eq:definition_U_1}
U_1 = \|\mathbf{\bY}\|_{2}^2 - pq\ .
\eeq
\begin{prp}\label{prp:risk_frobenius}
For any $p\times q$ matrix $\bA$ it holds that
\beqn 
\mathbb{E} \cro{|U_1 - \|\bA\|_2^2|} &\leq & \sqrt{2pq} + 2\|\bA\|_2\\
\mathbb{E} \cro{|(U_1)_+^{1/2} - \|\bA\|_2|} &\leq& 3(pq)^{\frac{1}{4}}\enspace .
\eeqn 
\end{prp}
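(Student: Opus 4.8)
The plan is to establish the two inequalities in order, deriving the second from the first via elementary properties of $x\mapsto\sqrt x$ and $x\mapsto x_+$ together with a short dichotomy on the size of $\|\bA\|_2$ relative to $\sqrt{pq}$.

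For the first bound, I would expand $\|\bY\|_2^2=\|\bA+\bE\|_2^2=\|\bA\|_2^2+2\langle\bA,\bE\rangle+\|\bE\|_2^2$ with $\langle\bA,\bE\rangle:=\sum_{i,j}\bA_{ij}\bE_{ij}$, so that $R:=U_1-\|\bA\|_2^2=2\langle\bA,\bE\rangle+(\|\bE\|_2^2-pq)$. By the triangle inequality in $L^1$ it is enough to bound each term: since $\langle\bA,\bE\rangle\sim\mathcal{N}(0,\|\bA\|_2^2)$ one has $\E|\langle\bA,\bE\rangle|\le\|\bA\|_2$, and since $\|\bE\|_2^2\sim\chi^2_{pq}$ has variance $2pq$, Jensen's inequality gives $\E|\|\bE\|_2^2-pq|\le\sqrt{2pq}$; adding them yields $\E|R|\le\sqrt{2pq}+2\|\bA\|_2$. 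Along the way I would also record the exact variance $\E[R^2]=4\|\bA\|_2^2+2pq$; the only point to verify is that the cross term $\E[\langle\bA,\bE\rangle(\|\bE\|_2^2-pq)]$ vanishes, which is immediate from $\E[\bE_{ij}^3]=0$ and the independence of the entries.

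For the second bound I would combine two facts valid for all $a,b\ge0$, namely $(\sqrt a-\sqrt b)^2\le|a-b|$ and $\sqrt a-\sqrt b=(a-b)/(\sqrt a+\sqrt b)$, applied with $a=(U_1)_+$ and $b=\|\bA\|_2^2$. Since $x\mapsto x_+$ is $1$-Lipschitz and fixes $\|\bA\|_2^2\ge0$, we have $|(U_1)_+-\|\bA\|_2^2|\le|R|$, hence $|(U_1)_+^{1/2}-\|\bA\|_2|\le|R|^{1/2}$ always, and $|(U_1)_+^{1/2}-\|\bA\|_2|\le|R|/\|\bA\|_2$ whenever $\|\bA\|_2>0$. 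If $\|\bA\|_2\le\sqrt{pq}$, take expectations in the first inequality, apply Jensen ($\E\sqrt{|R|}\le\sqrt{\E|R|}$) and the first bound to get $\E[|(U_1)_+^{1/2}-\|\bA\|_2|]\le\sqrt{\sqrt{2pq}+2\|\bA\|_2}\le\sqrt{\sqrt2+2}\,(pq)^{1/4}<3(pq)^{1/4}$. If $\|\bA\|_2>\sqrt{pq}$, take expectations in the second inequality and use Cauchy--Schwarz with the exact variance: $\E[|(U_1)_+^{1/2}-\|\bA\|_2|]\le\sqrt{\E[R^2]}/\|\bA\|_2=\sqrt{4+2pq/\|\bA\|_2^2}\le\sqrt6<3\le3(pq)^{1/4}$, the last step using $pq\ge1$. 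Combining the two regimes gives the claim.

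All the computations are routine; the only genuine subtlety is that the dichotomy appears unavoidable: the $\sqrt{\,\cdot\,}$-contraction estimate alone carries a $\|\bA\|_2$-dependence that is harmless only when $\|\bA\|_2\lesssim\sqrt{pq}$, whereas the ratio estimate only improves on $3(pq)^{1/4}$ once the signal dominates the noise scale $\sqrt{pq}$. So I expect the main (and essentially only) obstacle to be arranging these two elementary bounds so that the numerical constants match the stated $3$.
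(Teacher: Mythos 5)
Your proof is correct and follows essentially the same route as the paper: the same decomposition $U_1-\|\bA\|_2^2=(\|\bE\|_2^2-pq)+2\langle\bA,\bE\rangle$ with the $\chi^2$-variance and Gaussian-mean bounds, and the same dichotomy on $\|\bA\|_2$ versus $\sqrt{pq}$ using $|\sqrt a-\sqrt b|\le\sqrt{|a-b|}$ in one regime and the ratio identity in the other. The only (immaterial) difference is that in the large-signal regime you invoke Cauchy--Schwarz with the exact variance $4\|\bA\|_2^2+2pq$ where the paper reuses its first-moment bound; both give a constant below $3$.
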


\noindent 
 {\bf Remark}: Since estimating $\|\bA\|_2^2$  amounts to  estimating the square $l_2$ norm (resp. $l_2$ norm) of a noisy vector of size $pq$, this problem has been thoroughly  studied in the literature. See~\cite{collier2015minimax} and references therein. In particular, the counterparts of $U_1$ and $(U_1)_+^{1/2}$ in the vector setting were  already analyzed in~\cite{collier2015minimax}. Still, we provide proofs of the above proposition for the sake of completeness. Next, we assess the optimality of Proposition~\ref{prp:risk_frobenius} by providing a minimax lower bound. 

\begin{prp}\label{prp:lower_frob}
There exists a numerical constant $c>0$ such that 
\begin{equation*}
\inf_{\widehat{T}}\sup_{\bA:\  \rank(\bA)\leq 1}\mathbb{E}\left[|\widehat{T} - \|\bA\|_2|\right]\geq c(pq)^{1/4}\ . 
\end{equation*}
 \end{prp}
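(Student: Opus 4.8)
The plan is to use Le Cam's two-point method, where one of the two ``hypotheses'' is a suitably randomized family of rank-one matrices rather than a single matrix. Under $\mathbb{P}_0$ take $\bA=0$, so $\|\bA\|_2=0$. Under $\mathbb{P}_1$, draw $\bA=\rho\,\bu\bv^T$ with $\bu\in\{-1,1\}^p$ and $\bv\in\{-1,1\}^q$ independent uniform sign vectors and $\rho=\sqrt{\kappa}\,(pq)^{-1/4}$ for a small numerical constant $\kappa$; then $\rank(\bA)\le 1$ and, crucially, $\|\bA\|_2=|\bu|_2|\bv|_2\,\rho=\rho\sqrt{pq}=\sqrt{\kappa}\,(pq)^{1/4}$ is \emph{deterministic} under $\mathbb{P}_1$. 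Writing $\mathbb{P}_1$ also for the induced mixture law of $\bY$, a standard two-point inequality gives, for every estimator $\widehat T$,
\[
\sup_{\bA:\,\rank(\bA)\le1}\E_\bA\cro{|\widehat T-\|\bA\|_2|}\ \ge\ \tfrac12\,\rho\sqrt{pq}\,\pa{1-\mathrm{TV}(\mathbb{P}_0,\mathbb{P}_1)},
\]
so it suffices to keep $\mathrm{TV}(\mathbb{P}_0,\mathbb{P}_1)$ bounded away from $1$ for $\kappa$ small.

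I would control the total variation through the $\chi^2$-divergence, $\mathrm{TV}(\mathbb{P}_0,\mathbb{P}_1)\le\tfrac12\sqrt{\chi^2(\mathbb{P}_1\|\mathbb{P}_0)}$. Since $\mathbb{P}_0=N(0,\bI_{pq})$ and $\mathbb{P}_1$ is a Gaussian location mixture, the likelihood ratio is $L(\bY)=\E_{\bu,\bv}\big[\exp(\rho\langle\bY,\bu\bv^T\rangle-\tfrac{\rho^2 pq}{2})\big]$, and computing $\E_{\mathbb{P}_0}[L^2]$ (the term $\|\rho\bu\bv^T\|_2^2=\rho^2pq$ cancels exactly against the cross term produced by $\E_{\mathbb{P}_0}[\exp(\rho\langle\bY,\bu\bv^T+\bu'\bv'^T\rangle)]$) yields
\[
1+\chi^2(\mathbb{P}_1\|\mathbb{P}_0)=\E\cro{\exp\!\big(\rho^2\,\langle\bu,\bu'\rangle\,\langle\bv,\bv'\rangle\big)},
\]
with $(\bu,\bv),(\bu',\bv')$ independent copies of the prior. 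Here $S:=\langle\bu,\bu'\rangle$ and $T:=\langle\bv,\bv'\rangle$ are independent sums of $p$, resp. $q$, i.i.d.\ Rademacher variables. Conditioning on $S$ and applying $\cosh(x)\le e^{x^2/2}$ to each of the $q$ factors gives $\E[e^{\rho^2ST}\mid S]\le\exp(\tfrac{q\rho^4}{2}S^2)$; then, since $S$ is sub-Gaussian with variance proxy $p$, the chaos bound $\E[e^{\lambda X^2}]\le(1-2\lambda\sigma^2)^{-1/2}$ for $2\lambda\sigma^2<1$ (proved by writing $e^{\lambda X^2}=\E_{g\sim N(0,1)}[e^{\sqrt{2\lambda}Xg}]$ and swapping expectations) applied with $\lambda=q\rho^4/2$, $\sigma^2=p$ gives $1+\chi^2(\mathbb{P}_1\|\mathbb{P}_0)\le(1-pq\rho^4)^{-1/2}=(1-\kappa^2)^{-1/2}$. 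Taking $\kappa$ a small enough numerical constant makes this at most $2$, hence $\chi^2\le1$ and $\mathrm{TV}\le1/2$; combined with the two-point inequality this proves the claim with $c=\sqrt{\kappa}/4$.

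The crux is the single estimate $\E[\exp(\rho^2\langle\bu,\bu'\rangle\langle\bv,\bv'\rangle)]\le(1-\kappa^2)^{-1/2}$: the bilinear form $\langle\bu,\bu'\rangle\langle\bv,\bv'\rangle$ is a degenerate chaos whose extreme values $\pm pq$ are actually attained (when $\bu=\bu'$, $\bv=\bv'$), so controlling one factor alone by a sub-Gaussian bound is hopeless and one must exploit the two-sided cancellation via the iterated conditioning above. Everything else — the Gaussian-mixture $\chi^2$ identity, the conversion $\mathrm{TV}\le\tfrac12\sqrt{\chi^2}$, and the two-point reduction — is routine. I would also note, to motivate the randomization, that a single deterministic rank-one alternative cannot yield the bound: distinguishing $N(\bA,\bI)$ from $N(0,\bI)$ would force $\rho\lesssim(pq)^{-1/2}$, hence $\|\bA\|_2\lesssim1$; it is exactly the averaging over the unknown sign pattern of $\bu\bv^T$ that allows a signal of Frobenius norm $(pq)^{1/4}$ to hide inside the noise.
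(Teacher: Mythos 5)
Your proof is correct and follows essentially the same route as the paper: the same Rademacher rank-one prior, the same $\chi^2$ identity $1+\chi^2=\E[\exp(\rho^2\langle \bu,\bu'\rangle\langle \bv,\bv'\rangle)]$, and the same first conditioning step via $\cosh(x)\le e^{x^2/2}$. The only (harmless) difference is the last step, where you bound $\E[e^{\lambda S^2}]$ by the Gaussian-linearization chaos inequality $(1-2\lambda\sigma^2)^{-1/2}$ instead of the paper's Hoeffding-plus-tail-integration argument.
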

One cannot estimate  $\|\bA\|_2$ uniformly over all matrices $\bA$ at a rate faster than $(pq)^{1/4}$. More importantly, even when one restricts itself to the much simpler class of matrices of rank at most one, it is impossible to achieve a faster rate. 
This theorem is proved using Le Cam's approach by constructing a discrete prior distribution on the space of rank-one matrix -- see the proof for more details.

Since, for a rank-one matrix $\bA$, all Schatten norms are equal to $\sigma_1(\bA)$, we straightforwardly deduce a minimax lower bound for all Schatten norms. 
\begin{cor}\label{cor:general_lower}
For any real number $s\geq 1$, we have 
\[
 \inf_{\widehat{T}}\sup_{\bA\in \mathbb{R}^{p\times q}}\mathbb{E}\left[|\widehat{T} - \|\bA\|_s|\right]\geq \inf_{\widehat{T}}\sup_{\bA:\  \rank(\bA)\leq 1}\mathbb{E}\left[|\widehat{T} - \|\bA\|_s|\right]\geq c(pq)^{1/4}\ ,
\]
where $c$ is as in Proposition~\ref{prp:lower_frob}.
\end{cor}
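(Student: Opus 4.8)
The plan is to deduce both inequalities immediately from Proposition~\ref{prp:lower_frob} together with a single pointwise observation, so the proof is essentially a transfer argument. For the first inequality, I would simply note that $\{\bA\in\mathbb{R}^{p\times q}:\rank(\bA)\leq 1\}$ is a subset of $\mathbb{R}^{p\times q}$, so that for every fixed estimator $\widehat{T}$ the supremum of the risk over the larger set dominates the supremum over the smaller one; taking the infimum over $\widehat{T}$ on both sides preserves the inequality.

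For the second inequality, the key point is that on the class of matrices of rank at most one all Schatten norms coincide with the Frobenius norm. Indeed, if $\rank(\bA)\leq 1$ then at most one singular value $\sigma_1(\bA)$ is nonzero, so $\|\bA\|_s^s=\sigma_1^s(\bA)$ and hence $\|\bA\|_s=\sigma_1(\bA)=\|\bA\|_2$ for every real $s\geq 1$ (and the zero matrix trivially satisfies $\|\bA\|_s=0=\|\bA\|_2$). Consequently, for any estimator $\widehat{T}$, the two suprema
\[
\sup_{\bA:\ \rank(\bA)\leq 1}\mathbb{E}\left[|\widehat{T}-\|\bA\|_s|\right] = \sup_{\bA:\ \rank(\bA)\leq 1}\mathbb{E}\left[|\widehat{T}-\|\bA\|_2|\right]
\]
are equal, since they range over exactly the same family of distributions for $\bY$ and the same target values. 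Taking the infimum over $\widehat{T}$ and invoking Proposition~\ref{prp:lower_frob} gives
\[
\inf_{\widehat{T}}\sup_{\bA:\ \rank(\bA)\leq 1}\mathbb{E}\left[|\widehat{T}-\|\bA\|_s|\right] = \inf_{\widehat{T}}\sup_{\bA:\ \rank(\bA)\leq 1}\mathbb{E}\left[|\widehat{T}-\|\bA\|_2|\right]\geq c(pq)^{1/4},
\]
which chains with the first inequality to conclude.

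There is essentially no obstacle here: the argument is a verbatim transfer of the minimax lower bound of Proposition~\ref{prp:lower_frob} via an exact identification of the loss functions on the rank-one model, and the constant $c$ is inherited unchanged. The only point worth stating carefully is that the identity $\|\bA\|_s=\|\bA\|_2$ holds uniformly over the entire class $\{\rank(\bA)\leq 1\}$ — which it does — so that passing from equality of individual risks to equality of the suprema (and then of the minimax risks) is legitimate.
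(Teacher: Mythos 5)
Your proposal is correct and coincides with the paper's own (one-line) argument: the paper deduces the corollary precisely from the observation that all Schatten norms of a rank-at-most-one matrix equal $\sigma_1(\bA)$, so the minimax risk over that class is identical to the Frobenius case handled by Proposition~\ref{prp:lower_frob}. Your write-up simply makes the transfer of suprema and infima explicit, which is fine.
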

As proved later in the manuscript, this minimax lower bound turns out to be tight when $s$ is an even integer.

\subsection{Operator Norm $\|\bA\|_{\infty}$}\label{sec:operator}

In this subsection, we address the case $s=\infty$ which amounts to  estimating $\|\bA\|_{\infty}=\sigma_1(\bA)$.  In view of Corollary~\ref{cor:general_lower}, the optimal estimation risk is at least of the order of $(pq)^{1/4}$ for general matrices $\bA$. 
From the decomposition $\bY= \bA+\bE$, we may be tempted to use  plug-in estimators of the form  $ \|\bY\|_{\infty}$.  
\begin{lem}\label{prp:nogestim}
For any matrix $\bA$, we have 
\[
\E[|\|\bY\|_{\infty}-\|\bA\|_{\infty}|]\leq \sqrt{p}+\sqrt{q} \ . 
\]
Conversely, there exist two positive numerical constants $c$ and $c'$ such that, for $p$ large enough, 
\[
 \sup_{\bA,\, \rank(\bA)\leq 1}\ \E[|\|\bY\|_{\infty}-\|\bA\|_{\infty}|]\geq c'\sqrt{p}\  . 
\]
\end{lem}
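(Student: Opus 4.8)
The plan is to treat the two inequalities separately; both reduce to classical estimates on the largest singular value $\sigma_1(\bE)=\|\bE\|_{\infty}$ of a $p\times q$ matrix with i.i.d.\ standard Gaussian entries.

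\emph{Upper bound.} Since $\|\cdot\|_{\infty}$ is a norm on $\R^{p\times q}$, the reverse triangle inequality gives, for every realization of $\bE$,
\[
\bigl|\,\|\bY\|_{\infty}-\|\bA\|_{\infty}\,\bigr|=\bigl|\,\|\bA+\bE\|_{\infty}-\|\bA\|_{\infty}\,\bigr|\le\|\bE\|_{\infty}=\sigma_1(\bE)\ .
\]
Taking expectations, the first claim reduces to the bound $\E[\sigma_1(\bE)]\le\sqrt{p}+\sqrt{q}$, which is the standard Gordon/Sudakov--Fernique estimate for the operator norm of a Gaussian matrix. Concretely, writing $\sigma_1(\bE)=\sup_{|u|_2=|v|_2=1}u^{T}\bE v$ and comparing the centered Gaussian process $(u,v)\mapsto u^{T}\bE v$ with $(u,v)\mapsto g^{T}u+h^{T}v$, where $g\in\R^{p}$ and $h\in\R^{q}$ are independent standard Gaussian vectors, one checks the Sudakov--Fernique hypothesis via the elementary identity that the increment variance of the first process equals $2-2(u^{T}u')(v^{T}v')\le |u-u'|_2^{2}+|v-v'|_2^{2}$ (using $(1-u^{T}u')(1-v^{T}v')\ge0$); hence $\E[\sigma_1(\bE)]\le\E|g|_2+\E|h|_2\le\sqrt{p}+\sqrt{q}$ by Jensen's inequality. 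I would invoke this rather than reprove it.

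\emph{Lower bound.} It suffices to exhibit one matrix in the class, and $\bA=0$ already does the job: it has rank $0\le1$ and $\|\bA\|_{\infty}=0$, so that $\E[\,|\|\bY\|_{\infty}-\|\bA\|_{\infty}|\,]=\E[\sigma_1(\bE)]$. Restricting the variational formula to the first coordinate vector yields $\sigma_1(\bE)\ge|\bE e_1|_2$, where $\bE e_1$ is the first column of $\bE$, a vector with i.i.d.\ $N(0,1)$ entries; hence $\E[\sigma_1(\bE)]\ge\E|\bE e_1|_2$, the expected Euclidean norm of a standard Gaussian vector in $\R^{p}$, which equals $\sqrt{p}\,(1+o(1))$ and therefore exceeds $c'\sqrt{p}$ for any fixed $c'<1$ once $p$ is large enough.

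There is essentially no obstacle here: each direction is a one-line reduction to a well-known fact about $\E[\sigma_1(\bE)]$. The only mild bookkeeping is making the lower-bound constant explicit and uniform in $p$ should one want it, which follows from the closed form $\E|\bE e_1|_2=\sqrt{2}\,\Gamma((p+1)/2)/\Gamma(p/2)\ge\sqrt{p-1}$ (Gautschi's inequality), or alternatively from a crude concentration bound on $\sigma_1(\bE)$ around its mean.
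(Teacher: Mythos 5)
Your proof is correct and follows essentially the same route as the paper: the upper bound is the reverse triangle inequality combined with the standard bound $\E[\sigma_1(\bE)]\le \sqrt{p}+\sqrt{q}$ (the paper's Lemma~\ref{lem:spectrum}), and the lower bound takes $\bA=0$ and reduces to the expected norm of the first column of $\bE$, exactly as in the paper.
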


When $p$ is of the same order as $q$, that is $\bA$ is almost a square matrix, $\sqrt{p}$ is of the same order as $(pq)^{1/4}$ and these simple plug-in estimators turn out to be optimal. However, for highly rectangular matrices, these estimators achieve a much slower rate than the minimax lower bound.  To improve the rate from $\sqrt{p}$ to $(pq)^{1/4}$, we start by estimating the operator norm $\|\bA^T\bA\|_{\infty}= \sigma_1^2(\bA)$. Define the $q\times q$ matrix $\bW=\bY^T\bY - p \bI_q$ where $\bI_q$ is the identity matrix. Simple calculations lead to $\E[\bW]= \bA^T \bA$. This motivates us to to estimate $\sigma_1^2(\bA)$ by  $\sigma_1(\bW)= \sigma_1^2(\bY)-p$.

\begin{prp}\label{prp:control_operator}
There exists a numerical constant $c>0$ such that, for any matrix $\bA$, one has
\begin{eqnarray}\nonumber
\mathbb{E}\cro{|\sigma_1^2(\bY)-p - \sigma_1^2(\bA)|} &\leq& 3\sqrt{pq} + 4\|\bA\|_{\infty}\sqrt{q} + 8e^{-p/4} + \sqrt{64\pi p} \ ;\\
\label{eq:conclusion_operator_norm}
\mathbb{E}\cro{|\left(\sigma_1^2(\bY)-p\right)_+^{1/2} - \|\bA\|_{\infty}|} &\lesssim &   \pa{pq}^{\frac{1}{4}}\ .
\end{eqnarray}

\end{prp}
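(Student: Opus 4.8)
The plan is to reduce the estimation of $\sigma_1^2(\bA)$ to a perturbation bound on the operator norm of the matrix $\bW = \bY^T\bY - p\bI_q$, whose expectation is $\bA^T\bA$. Writing $\bY = \bA + \bE$, we expand
\beq
\bW = \bA^T\bA + \bA^T\bE + \bE^T\bA + (\bE^T\bE - p\bI_q) =: \bA^T\bA + \bDelta,
\eeq
so that by Weyl's inequality $|\sigma_1(\bW) - \sigma_1^2(\bA)| = |\sigma_1(\bA^T\bA + \bDelta) - \sigma_1(\bA^T\bA)| \leq \|\bDelta\|_{\infty}$. It then suffices to control $\E[\|\bDelta\|_{\infty}]$, and we bound the three contributions separately via the triangle inequality: $\|\bDelta\|_{\infty} \leq 2\|\bA^T\bE\|_{\infty} + \|\bE^T\bE - p\bI_q\|_{\infty}$.

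For the cross term, $\|\bA^T\bE\|_{\infty} \leq \|\bA\|_{\infty}\,\|\bE\|_{\infty}$ is too lossy when $q \ll p$; instead I would use that $\bA^T\bE$ has rank at most $\rank(\bA) \wedge q$ and, more importantly, bound $\|\bA^T\bE\|_{\infty} = \sup_{|u|_2 = 1} |u^T\bA^T\bE v|$ over the unit spheres. Conditionally on $\bE$, for fixed $u$ the vector $\bA u$ has Euclidean norm at most $\|\bA\|_{\infty}$, and $\|\bA^T\bE\|_{\infty} \le \|\bA\|_\infty \|\bE\|_\infty$ combined with $\E[\|\bE\|_\infty] \lesssim \sqrt p$ gives a term $\lesssim \|\bA\|_\infty \sqrt p$; but we want $\|\bA\|_\infty\sqrt q$. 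The right estimate is $\|\bA^T\bE\|_{\infty} = \|\bE^T\bA\|_{\infty}$, a $q\times q$ matrix acting on the column space of $\bA$: projecting $\bE$ onto the (at most $q$-dimensional) row space of $\bA^T$, we get $\|\bA^T\bE\|_\infty \le \|\bA\|_\infty \cdot \sigma_1(\bE')$ where $\bE'$ is a $q\times q$ Gaussian matrix, so $\E[\|\bA^T\bE\|_\infty] \lesssim \|\bA\|_\infty\sqrt q$. For the centered Wishart-type term, standard non-asymptotic random matrix theory (e.g.\ the Bai--Yin / Davidson--Szarek bounds) gives $\E[\|\bE^T\bE - p\bI_q\|_{\infty}] \lesssim \sqrt{pq} + q \lesssim \sqrt{pq}$ since $q \le p$; a concentration argument produces the $e^{-p/4}$ and $\sqrt{64\pi p}$ correction terms when converting the high-probability bound $\sigma_1(\bE) \le \sqrt p + \sqrt q + t$ into an expectation bound, which accounts for the precise constants in the first displayed inequality.

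For the second inequality, I would apply the elementary inequality $|\sqrt{x_+} - \sqrt{a}| \leq \sqrt{|x - a|}$ valid for $a \geq 0$, with $x = \sigma_1^2(\bY) - p$ and $a = \sigma_1^2(\bA) = \|\bA\|_{\infty}^2$, so that by Jensen's inequality
\beq
\E\cro{|(\sigma_1^2(\bY)-p)_+^{1/2} - \|\bA\|_{\infty}|} \leq \E\cro{|\sigma_1^2(\bY) - p - \sigma_1^2(\bA)|^{1/2}} \leq \pa{\E\cro{|\sigma_1^2(\bY) - p - \sigma_1^2(\bA)|}}^{1/2}.
\eeq
Plugging in the first bound gives $\lesssim (\sqrt{pq} + \|\bA\|_{\infty}\sqrt{q} + \sqrt{p})^{1/2}$, which is $\lesssim (pq)^{1/4}$ as soon as $\|\bA\|_{\infty} \lesssim \sqrt{p}$; in the complementary regime $\|\bA\|_{\infty} \gtrsim \sqrt p$, the relative error is small and one argues directly that $|(\sigma_1^2(\bY)-p)_+^{1/2} - \|\bA\|_\infty| \lesssim \|\bDelta\|_\infty / \|\bA\|_\infty \lesssim \sqrt{pq}/\sqrt p = \sqrt q \le (pq)^{1/4}$, using $\sqrt{|x-a|} \le \sqrt{x+a}$ is replaced by the sharper $|x - a|/(\sqrt{x_+}+\sqrt a)$. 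The main obstacle is getting the cross term to scale as $\|\bA\|_{\infty}\sqrt{q}$ rather than the naive $\|\bA\|_{\infty}\sqrt{p}$: this is exactly where the rectangular structure is exploited, and it is what makes the final rate $(pq)^{1/4}$ rather than $\sqrt p$ as for the plug-in estimator of Lemma~\ref{prp:nogestim}.
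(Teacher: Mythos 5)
Your proposal is correct and follows essentially the same route as the paper: the decomposition $\bW=\bA^T\bA+\bDelta$ with $\|\bDelta\|_{\infty}\leq 2\|\bA^T\bE\|_{\infty}+\|\bE^T\bE-p\bI_q\|_{\infty}$, the rotation-invariance/projection argument reducing the cross term to a $q\times q$ Gaussian matrix (the paper's Lemma~\ref{croise}), the Davidson--Szarek bound for the centered Wishart term (Lemma~\ref{lem:noiseop}), and the two-regime argument ($|\sqrt{x_+}-\sqrt{a}|\leq\sqrt{|x-a|}$ for small $\|\bA\|_{\infty}$, and $|x-a|/\sqrt{a}$ for large $\|\bA\|_{\infty}$) for the second inequality. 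The only cosmetic differences are that you invoke Weyl's inequality where the paper writes out the two-sided variational bound explicitly, and you split the cases at $\|\bA\|_{\infty}\asymp\sqrt{p}$ rather than at $(pq)^{1/4}$; both choices work.
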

This new estimator $\left(\sigma_1^2(\bY)-p\right)_+^{1/2}$ achieves the optimal rate $(pq)^{1/4}$ uniformly over all possible matrices $\bA$. As for the Frobenius norm, a low-rank assumption on $\bA$ does not ease the operator norm estimation problem. 

From asymptotic considerations where $p/n\rightarrow c'\in (0,\infty)$ and $\bA$ has finite rank $r$, Shabalin and Nobel~\cite{shabalin2013reconstruction} have introduced an estimator $\widehat{\bA}$ of $\bA$ satisfying 
\[
  \sigma^2_i(\widehat{\bA})= 
  \frac{1}{2}\left[\sigma_i^2(\bY) - p- q + \sqrt{[\sigma^2_i(\bY)-p-q]^2- 4pq }\right] \  ,
\]
when $\sigma_i(\bY)$ is large enough. When $q$ is small compared to $p$, then the quantity $\sigma_1(\widehat{\bA})$ does not much differ from our estimator $\sqrt{(\sigma_1(\bY)-p)_+}$. In fact, it is not hard to prove  that  $\sigma_1(\widehat{\bA})$ also estimates $\|\bA\|_{\infty}$ at the optimal rate $(pq)^{1/4}$ in analogy to~\eqref{eq:conclusion_operator_norm}.

\section{Estimation of General even norms}\label{sec:even}

We now turn to the case of general even Schatten norms. In this section, $k$ stands for a positive integer and we are interested in estimating $\|\bA\|_{2k}= [\sum_{i=1}^q \sigma^{2k}_i(\bA)]^{2k}$. We will show in this section that, by carefully studying an unbiased estimator of $\|\bA\|_{2k}^{2k}$, it is possible to estimate $\|\bA\|_{2k}$ at the desired rate $(pq)^{1/4}$.  Recall that, when $k$ is an integer, we have the following identity
\[
 \|\bA\|_{2k}^{2k}= \tr[(\bA^T\bA)^{k}]= \sum_{i_1,\ldots, i_k=1}^{p}\sum_{j_1,\ldots, j_k=1}^{q} \prod_{t=1}^{k} \bA_{i_tj_t}\bA_{i_{t}j_{t+1}}\ ,
\]
with the convention $j_{k+1}=j_1$. Obviously, $\|\bA\|_{2k}^{2k}$ is a polynomial of order $2k$ with respect to the entries of $\bA$. Since $\bY_{ij}\sim \cN(\bA_{ij},1)$, one can unbiasedly estimates each monomial using polynomials in the entries of $\bY$.

Write $\phi(y)= e^{-y^2/2}(2\pi)^{-1/2}$ for the density of the standard normal random variables.  For a positive integer $r$, we define the Hermite polynomial of degree $r$ by the equation
\beq\label{eq:definition_hermite}
 \frac{d^r}{dy^{r}}\phi(y) = (-1)^{r}H_r(y) \phi(y)
\eeq
It is well known (see e.g.~\cite{cailow2011}) that, for  $Z\sim \cN(x,1)$,  $\E[H_r(Z)]= x^r$. Given two sequences  $i=(i_1,\ldots, i_k)$ and $j=(j_1,\ldots,j_k)$,  we denote $N_{rs}(ij)$ the number of occurrences of the $2$-tuple $(r,s)$ in the sequences $(i_t,j_t)$ and $(i_{t}, j_{t+1})$ with $t=1,\ldots, k$  (recall that, by convention $i_{k+1}=i_1$). Then, the estimator
\beq\label{eq:definition_u_q}
U_{k}= \sum_{i_1,\ldots, i_k=1}^{p}\sum_{j_1,\ldots, j_k=1}^{q} \prod_{r=1}^p \prod_{s=1}^q H_{N_{rs}(i,j)}(\bY_{rs})
\eeq
obviously satisfies $\mathbb{E}[U_k]= \tr[(\bA^T\bA)^k]= \|\bA\|_{2k}^{2k}$.

Although it is easy to deduce the expectation $\E[U_k]$ from~\eqref{eq:definition_u_q}, this definition is not convenient from a practical perspective as  it may suggest that $O(p^{k}q^{k})$ operations are needed to evaluate $U_k$. In fact, it turns out that $U_k$ simplifies as an algebraic combination of Schatten norms  of $\bY^T \bY$ -- see the next proposition.

Given a positive integer $l$, we write  $\cS[l]$  for the collection of  integer-valued sequences $s$ whose entries are  nondecreasing and that sum to $l$. In other words, any vector $s\in \cS[l]$ satisfies $1\leq s_1\leq s_2\leq \ldots \leq s_{|s|}$ and $\sum_{i}s_i=l$. 

\begin{prp}\label{prp:representation_somme_newton}
 There exist coefficients $\alpha_s$ (depending only on $p$ and $q$) such that
 \beq\label{eq:definition:representation_somme_newton}
 U_k = \tr[(\bY^T\bY)^k]+ \alpha_0 + \sum_{l=1}^{k-1} \sum_{s\in \cS[l]} \alpha_s \prod_{i=1}^{|s|} \tr[(\bY^T \bY)^{s_i}]\ .
 \eeq
\end{prp}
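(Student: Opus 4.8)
The plan is to recognize $U_k$ as the image of the orthogonally invariant polynomial $P(\bA):=\tr[(\bA^T\bA)^k]$ under the linear ``unbiasing'' operator, and then to exploit that this operator preserves both orthogonal invariance and the top homogeneous component.

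\emph{Step 1 ($U_k$ as an unbiasing).} Let $S$ be the linear map on polynomials in the $pq$ variables $(\bY_{rs})$ defined by $(SQ)(\bA)=\E_{\bE}[Q(\bA+\bE)]$, with $\bE$ having i.i.d.\ $\cN(0,1)$ entries. Because $\E[H_n(x+Z)]=x^n$ for $Z\sim\cN(0,1)$ and $H_n(y)=y^n+(\text{lower order})$, $S$ sends a polynomial of degree $d$ to one of degree $\le d$ with the \emph{same} leading homogeneous part; hence $S$ is bijective on the (finite-dimensional) space of polynomials of any fixed bounded degree, in particular injective. Writing $P(\bA)=\sum_{i,j}\prod_{r,s}\bA_{rs}^{N_{rs}(i,j)}$ and using the entrywise identity $\E[H_m(Z)]=x^m$ for $Z\sim\cN(x,1)$ together with independence of the entries, the polynomial $U_k$ in \eqref{eq:definition_u_q} is exactly the one with $SU_k=P$, i.e.\ the displayed unbiasedness $\E[U_k]=\tr[(\bA^T\bA)^k]$. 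Reading off leading terms, the degree-$2k$ homogeneous component of $U_k$ is $\sum_{i,j}\prod_{t=1}^k\bY_{i_tj_t}\bY_{i_tj_{t+1}}=\tr[(\bY^T\bY)^k]$, and $U_k$ has no component of degree $>2k$.

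\emph{Step 2 (orthogonal invariance of $U_k$).} For $\bU\in O(p)$, $\bV\in O(q)$ write $(\rho Q)(\bY)=Q(\bU\bY\bV)$. Since $\bU\bE\bV$ has the same law as $\bE$, one checks $S\rho=\rho S$ on polynomials. As $P$ is $\rho$-invariant ($\tr[(\bV^T\bY^T\bY\bV)^k]=\tr[(\bY^T\bY)^k]$), we get $S(\rho U_k)=\rho(SU_k)=\rho P=P=SU_k$, so injectivity of $S$ forces $\rho U_k=U_k$. Thus $U_k$ is $O(p)\times O(q)$-invariant; taking $\bU=-\bI_p$ shows $U_k(-\bY)=U_k(\bY)$, so the odd homogeneous parts of $U_k$ vanish and $U_k=\sum_{m=0}^{k}U_k^{(2m)}$ with each $U_k^{(2m)}$ an $O(p)\times O(q)$-invariant homogeneous polynomial of degree $2m$.

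\emph{Step 3 (invariant-theoretic reduction).} Via the SVD $\bY=\bU D\bV^T$, an $O(p)\times O(q)$-invariant polynomial equals its restriction to diagonal matrices evaluated at the singular values of $\bY$; that restriction is a polynomial that is symmetric and even in each singular value, hence a symmetric polynomial in the $\sigma_i^2$. Because $p\ge q$, $\bY^T\bY$ ranges over all $q\times q$ PSD matrices, so such a polynomial is a polynomial in the power sums $\sum_i\sigma_i^{2l}=\tr[(\bY^T\bY)^l]$, $l=1,\dots,q$, which are algebraically independent. Applying this to each $U_k^{(2m)}$ and grading the variable $\tr[(\bY^T\bY)^l]$ by $\bY$-degree $2l$, homogeneity of degree $2m$ forces isobaric weight $m$, so $U_k^{(0)}=\alpha_0$ and, for $1\le m\le k$, $U_k^{(2m)}=\sum_{s\in\cS[m]}\alpha_s\prod_{i=1}^{|s|}\tr[(\bY^T\bY)^{s_i}]$ for suitable coefficients depending only on $p,q$ (those $s\in\cS[m]$ with a part exceeding $q$ simply receive coefficient $0$). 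By Step 1 the $m=k$ contribution may be taken to be $\tr[(\bY^T\bY)^k]$ itself; summing the components yields \eqref{eq:definition:representation_somme_newton}.

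The crux is Step 2: one must show that $U_k$ \emph{itself}, not merely its expectation, is orthogonally invariant, and this is precisely where the injectivity of the Gaussian-convolution operator $S$ and its commutation with the $O(p)\times O(q)$ action are needed. Everything afterwards is bookkeeping with the degree grading and the classical structure theorem for symmetric polynomials.
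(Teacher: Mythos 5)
Your proof is correct, and it takes a genuinely different route through the key structural step. Both arguments ultimately rest on the same injective operator: your Gaussian-smoothing map $S$ is exactly the paper's evaluation morphism $\psi$, with the identical justification (it preserves the top homogeneous component, hence is injective on each finite-dimensional filtration level). The difference is what injectivity is used for. You use it to transfer the $O(p)\times O(q)$-invariance of $\tr[(\bA^T\bA)^k]$ back to the polynomial $U_k$ itself, and then finish by the classical description of such invariants: restriction to diagonal matrices via the SVD gives a symmetric, sign-invariant polynomial in the singular values, hence a polynomial in the power sums $\tr[(\bY^T\bY)^l]$, and the degree grading forces each homogeneous component $U_k^{(2m)}$ to be isobaric of weight $m$, with the top component equal to $\tr[(\bY^T\bY)^k]$ by the leading-term computation. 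The paper instead works on the $\bA$-side: it shows that $\E\big[\prod_t\tr[(\bY^T\bY)^{\alpha_t}]\big]$ differs from $\prod_t\tr[(\bA^T\bA)^{\alpha_t}]$ by a symmetric polynomial of strictly smaller total degree in the $\sigma_i^2(\bA)$, constructs a candidate $V_k$ by recursively debiasing $\tr[(\bY^T\bY)^k]$, and only then invokes injectivity of $\psi$ to identify $U_k=V_k$. Notably, the paper derives the orthogonal invariance of $U_k$ as a \emph{consequence} of the proposition, whereas you establish it first and deduce the proposition from it. Your version is cleaner and more conceptual; the paper's is constructive, and its recursion is what actually yields the explicit coefficients exploited later for $U_2$ and $U_3$. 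The only place I would ask you to add a sentence is the passage in Step 3 from the restriction to diagonal matrices back to all of $\R^{p\times q}$: one should say explicitly that the invariant polynomial $U_k^{(2m)}$ and the candidate polynomial in the $\tr[(\bY^T\bY)^l]$ are both invariant and agree on diagonal matrices, and that every matrix is orbit-equivalent to a diagonal one, so they agree everywhere. This is routine but is the step that actually upgrades the statement from singular values to the full matrix.
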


\noindent 
The proof of this proposition relies on the orthogonal invariance of the Gaussian distribution and on the representation of symmetric polynomials by newton sums.  As a consequence of this result, the estimator $U_k$ can  be computed in  $O((p+k)q^2+e^{c\sqrt{k}})$ operations. Indeed, we first evaluate and store all the $2l$ Schatten norms of $Y$ for $l=1,\ldots, k$ and then we compute $\sum_{l=1}^{k-1}|\cS[l]|$ combinations of these norms. Since $|\cS[l]|$ is the number of partition of the integer $l$, we know from Hardy and Ramanujan's asymptotic formula that $\log(|S[l]|)\sim \pi\sqrt{2l/3}$.

\medskip 

\noindent 
{\bf Remark}: In the specific case $k=1$, we have already established in the previous section that $U_1= \tr[\bY^T\bY]-pq$. In general, the proof of Proposition~\eqref{prp:representation_somme_newton} only ensures the existence of the coefficients in~\eqref{eq:definition:representation_somme_newton}.  Still, by computing the expectation of 
$ \tr[(\bY^T \bY)^{s}]$, one can recursively debias $\tr[(\bY^T \bY)^{k}]$ to work out the expression of $U_k$. As an example, the following proposition provides an explicit expression for $U_2$ and $U_3$.

\begin{prp}\label{prp:U2}
For any $q\leq p$, we have 
\beqn 
U_2& =& \tr[(\bY^T\bY)^2]- 2(p+q+1) \tr[\bY^T\bY]+ pq(1+p+q)\ ; \\
U_3& = &\tr[(\bY^T\bY)^3] -3(p+q+1)\tr[(\bY^T\bY)^2]+ 3[p^2+q^2 +pq+p+q-2]\tr[\bY^T \bY]\\ && + pq\left[-p^2-q^2+5\right] \ .
\eeqn 
\end{prp}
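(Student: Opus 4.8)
I would prove Proposition~\ref{prp:U2} by the recursive debiasing recipe announced in the Remark just above. Write $t_j:=\tr[(\bA^T\bA)^j]=\|\bA\|_{2j}^{2j}$ and $M_j:=\tr[(\bY^T\bY)^j]$ for $j=1,2,3$. By Proposition~\ref{prp:representation_somme_newton}, $U_k$ is a fixed polynomial in $M_1,\dots,M_k$ whose coefficients depend only on $p$ and $q$; so for $k=2,3$ it suffices to pin down these coefficients, and the only input we need is the list of expectations $\E[M_1]$, $\E[M_2]$, $\E[M_3]$ written as polynomials in $t_1,t_2,t_3$ and $p,q$.

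The first and main step is to compute these low-order moments. Substituting $\bY=\bA+\bE$ into
\[
M_j=\sum_{i_1,\dots,i_j=1}^{p}\ \sum_{j_1,\dots,j_j=1}^{q}\ \prod_{t=1}^{j}(\bA+\bE)_{i_tj_t}\,(\bA+\bE)_{i_tj_{t+1}}
\]
(indices cyclic) and expanding, every term of $\E[M_j]$ factors into a purely deterministic part built from entries of $\bA$ times a product of moments of the i.i.d.\ standard Gaussian entries of $\bE$; only the terms whose $\bE$-factors can be matched in pairs survive, since $\E[\bE_{rs}]=\E[\bE_{rs}^3]=0$, $\E[\bE_{rs}^2]=1$ and $\E[\bE_{rs}^4]=3$. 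By the orthogonal invariance already exploited in the proof of Proposition~\ref{prp:representation_somme_newton}, $\E[M_j]$ depends on $\bA$ only through its singular values and is symmetric in them, hence is a polynomial in $t_1,\dots,t_j$; and since each matched noise pair lowers the $\bA$-degree by two while freeing one row- or column-index (producing a factor $p$ or $q$), the only monomials that can occur are $t_1$ and a constant when $j=1$; $t_2$, $t_1$ and a constant when $j=2$; and $t_3$ (with coefficient $1$), $t_2$ and $t_1^2$ (coefficients linear in $p,q$), $t_1$ (coefficient quadratic in $p,q$) and a constant (cubic in $p,q$, the third moment of a central Wishart matrix) when $j=3$. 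Tracking the index identifications forced by each admissible matching pins down every coefficient; in particular one gets $\E[M_1]=t_1+pq$ and $\E[M_2]=t_2+2(p+q+1)\,t_1+pq(p+q+1)$.

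Given these expansions, the rest is linear algebra. For $k=2$ one simply sets $U_2:=M_2-2(p+q+1)\,M_1+pq(p+q+1)$; since $\E[M_1]=t_1+pq$, the identity $\E[U_2]=t_2$ is immediate, and this is exactly the displayed expression. For $k=3$ one debiases $M_3$ against statistics already known to be unbiased for the lower-order targets: $U_2$ for $t_2$, the statistic $U_1=M_1-pq$ for $t_1$, and, to absorb the $t_1^2$ term appearing in $\E[M_3]$, the statistic $U_1^2-4U_1-2pq$, which satisfies $\E[U_1^2-4U_1-2pq]=t_1^2$ (this last point uses $\Var(M_1)=2pq+4t_1$). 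Writing $U_3$ as $M_3$ minus the appropriate $(p,q)$-multiples of these three statistics and of a constant makes $\E[U_3]=t_3$ hold by construction; expanding $U_1$, $U_2$ and $U_1^2$ and then collecting the coefficients of $M_3$, $M_2$, $M_1$ and the constant yields the closed form for $U_3$.

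The only genuinely laborious part is the Gaussian (Wick) expansion of $\E[M_3]$: the product runs around a length-three cyclic trace, so one has to sort the admissible pairings of the six factors into the three geometric types — adjacent, distance-two, and diametrically opposite in the factor $6$-cycle — and track the $(p,q)$-multiplicity each contributes; the opposite pairings are exactly the ones that split the remaining four $\bA$-factors into two independent loops, and hence are the source of the $t_1^2$ contribution. Everything after this computation is routine substitution and collection of terms.
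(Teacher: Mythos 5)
Your strategy coincides with the paper's: compute $\E[\tr[(\bY^T\bY)^j]]$ for $j=1,2,3$ as polynomials in $t_j:=\|\bA\|_{2j}^{2j}$ via a Wick expansion, then debias recursively. For $U_2$ the plan is complete modulo routine computation, your expression for $\E[\tr[(\bY^T\bY)^2]]$ agrees with \eqref{eq:moment2}, and the uniqueness of an unbiased combination of $\tr[(\bY^T\bY)^2]$, $\tr[\bY^T\bY]$ and $1$ (the leading monomials $t_2,t_1,1$ of their expectations are linearly independent) ensures you recover exactly the displayed $U_2$.

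For $U_3$, however, your (correct) observation that the three diametrically opposite pairings of the factor $6$-cycle contribute $3\|\bA\|_2^4=3t_1^2$ to $\E[\tr[(\bY^T\bY)^3]]$ is incompatible with the statement you are asked to prove. Carrying your computation through gives
\[
\E\big[\tr[(\bY^T\bY)^3]\big]=t_3+3(p+q+2)\,t_2+3t_1^2+3(p^2+q^2+3pq+3p+3q+4)\,t_1+pq(p^2+q^2+3pq+3p+3q+4)\,,
\]
so the debiased statistic must contain a $-3(\tr[\bY^T\bY])^2$ term (only $(\tr[\bY^T\bY])^2$ has expectation with leading monomial $t_1^2$) and the coefficient $-3(p+q+2)$ in front of $\tr[(\bY^T\bY)^2]$; since the representation of $U_3$ as a combination of $M_3,M_2,M_1^2,M_1,1$ with prescribed expectation is unique, the displayed formula cannot equal $U_3$ and is in fact biased. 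A decisive sanity check is $p=q=1$: there $U_3=H_6(\bY_{11})=\bY_{11}^6-15\bY_{11}^4+45\bY_{11}^2-15$, whereas the displayed expression equals $\bY_{11}^6-9\bY_{11}^4+9\bY_{11}^2+3$, with expectation $\bA_{11}^6+6\bA_{11}^4$. The origin of the discrepancy in the paper's own proof is \eqref{eq:decomposition_2-0}: one actually has $\E[(\bA^T\bE+\bE^T\bA)^2]=(q+2)\bA^T\bA+\|\bA\|_2^2\,\bI_q$, which has the same trace as $2(q+1)\bA^T\bA$ (so \eqref{eq:moment2} and $U_2$ are unaffected) but differs from it as a matrix; the difference enters $\tr[\bS\,\E(\bN^2)]$ and produces both the missing $3t_1^2$ and the shift from $3(p+q+1)$ to $3(p+q+2)$ in \eqref{eq:moment3}. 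In short, your proof is the right computation: it establishes the $U_2$ identity as stated, but for $U_3$ it yields a corrected formula (with a $(\tr[\bY^T\bY])^2$ term) rather than the one in the proposition, which should be amended.
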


As another consequence of Proposition~\ref{prp:representation_somme_newton}, the distribution of $U_k$ is invariant by left and right  orthogonal transformations of $\bY$. Hence, we can assume henceforth that $\bA$ is null outside its diagonal and that $\bA_{ii}=\sigma_i(\bA)$ for $i=1,\ldots, q$. This observation allows us to work out the variance of $U_k$ in the next theorem.

\begin{thm}\label{prp:risk_U_k}
For any integer $k\geq 2$, and any $p\geq q$, we have 
 \[
 \Var{(U_k)}\leq   c^{k} \left[   (pq)^{k} +  p \|\bA\|_{4k-4}^{4k-4}+ \|\bA\|_{4k-2}^{4k-2} + p^kk^{3k}+  k^{6k}\right]\ ,
 \]
 where $c$ is a numerical constant. 
\end{thm}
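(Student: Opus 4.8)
The plan is to exploit the orthogonal invariance established in Proposition~\ref{prp:representation_somme_newton} to reduce to the case where $\bA$ is diagonal with $\bA_{ii}=\sigma_i:=\sigma_i(\bA)$, and then to compute $\Var(U_k)$ directly from the Hermite-polynomial representation~\eqref{eq:definition_u_q}. Writing $U_k=\sum_{i,j}\prod_{r,s}H_{N_{rs}(i,j)}(\bY_{rs})$, the second moment $\E[U_k^2]$ is a double sum over pairs of index-sequences $((i,j),(i',j'))$, and since the $\bY_{rs}$ are independent, $\E\big[\prod_{r,s}H_{N_{rs}(i,j)}(\bY_{rs})H_{N_{rs}(i',j')}(\bY_{rs})\big]$ factorizes over $(r,s)$. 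The key algebraic fact is the Hermite product formula: for $Z\sim\cN(a,1)$, $\E[H_m(Z)H_n(Z)]=\sum_{\ell}\binom{m}{\ell}\binom{n}{\ell}\ell!\, a^{m+n-2\ell}$, so a factor is nonzero only when the two local multiplicities $N_{rs}(i,j)$ and $N_{rs}(i',j')$ interact, and each ``diagonal'' pairing ($\ell$ of the $m$ edges at $(r,s)$ matched with $\ell$ of the $n$) contributes a combinatorial weight times a power of $a=\bA_{rs}$, which vanishes unless $r=s$. Subtracting $\E[U_k]^2=(\sum_i\prod_t\bA_{i_ti_t}\cdots)^2$ kills the term where the two sequences pair up ``trivially'' with no shared edge, so $\Var(U_k)$ is a sum over configurations where the two closed walks of length $k$ (in the bipartite multigraph on $[p]\times[q]$ with edge set determined by $(i,j)$) share at least one common matrix-entry slot.

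The combinatorial heart is then to \emph{enumerate these overlapping pairs of walks and bound their contribution}. Each walk $(i,j)$ is a closed walk of length $k$ alternating between a row-vertex and a column-vertex; its monomial $\prod_t \bA_{i_tj_t}\bA_{i_tj_{t+1}}$ is nonzero only when every edge is a ``loop'' $i_t=j_t$ (diagonal), i.e. all $2k$ half-edges collapse onto diagonal positions $\le q$ — but the \emph{Hermite cross terms} also allow a bounded number of off-diagonal excursions provided they are re-used by the partner walk (since an isolated off-diagonal factor $H_m(\bY_{rs})$ with $\bA_{rs}=0$ has mean $0$ unless $m=0$, and for the variance we need $\E[H_m(\bY_{rs})H_n(\bY_{rs})]$, nonzero iff $m=n$ when $\bA_{rs}=0$, giving weight $m!$). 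I would organize the count by: (i) the set $S$ of matrix slots $(r,s)$ visited by \emph{both} walks with nonzero local Hermite pairing, (ii) how the diagonal/off-diagonal structure distributes along each walk, and (iii) a ``merging'' argument showing that forcing $|S|\ge 1$ loses a factor comparable to $\sqrt{pq}$ relative to the full $(pq)^k$ that $\E[U_k]^2$-type terms would give. Terms where both walks stay on the diagonal and overlap produce the $p\|\bA\|_{4k-4}^{4k-4}+\|\bA\|_{4k-2}^{4k-2}$ contributions (overlap in $1$ or $\ge 2$ shared diagonal entries, the free row index of a shared vertex being responsible for the extra factor $p$); terms that use off-diagonal excursions, which must be paired between the two walks and thus doubly constrained, are bounded purely combinatorially and produce the ``noise-only'' terms $(pq)^k + p^k k^{3k} + k^{6k}$, where the $k^{ck}$ factors come from the number of walk-shapes (bounded by $k^{O(k)}$) and the Hermite weights $\prod m!$, each $m\le 2k$, contributing at most $(2k)!^{O(k)}=k^{O(k)}$.

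Concretely the steps are: (1) reduce to diagonal $\bA$; (2) expand $\E[U_k^2]-\E[U_k]^2$ using independence across entries and the Hermite covariance formula; (3) index the surviving terms by pairs of closed length-$k$ walks together with an ``edge-matching'' on their shared slots, and record for each the power of each $\sigma_i$ and the powers of $p$ and $q$ coming from free indices; (4) group by the number and type of shared slots, and in each group bound the number of configurations by $k^{O(k)}$ and the free-index count by the appropriate power of $p,q$; (5) use $\sum_i \sigma_i^{2m}=\|\bA\|_{2m}^{2m}$ and crude inequalities like $\|\bA\|_{a}^{a}\le \|\bA\|_{b}^{b}$-type monotonicity only where needed (here the statement keeps $\|\bA\|_{4k-4}$ and $\|\bA\|_{4k-2}$ separately, so no such reduction is required) to collect everything into the five stated terms, absorbing all $k$-dependent combinatorial factors into $c^k$ together with the explicit $p^k k^{3k}$ and $k^{6k}$ pieces. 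The main obstacle I anticipate is step (3)–(4): correctly setting up the bijection between ``surviving second-moment terms'' and decorated pairs of walks, and in particular doing the bookkeeping of free indices carefully enough to see that \emph{every} nontrivial overlap costs at least one free index (hence a factor $\sqrt{pq}$ off the square of the leading term), while simultaneously keeping the combinatorial multiplicity under control by $k^{O(k)}$ — this is exactly the kind of moment-method graph-counting argument (as in random-matrix trace-method proofs) where the difficulty is entirely in the careful case analysis rather than in any single estimate.
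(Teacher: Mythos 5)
Your plan follows essentially the same route as the paper's proof: reduce to diagonal $\bA$ by orthogonal invariance, expand the variance over pairs of index sequences using Hermite orthogonality (off-diagonal factors with $\bA_{rs}=0$ force exact matching of the multiplicities $N_{rs}$ between the two walks), and then bound each overlap class by counting walk shapes ($k^{O(k)}$ of them) together with the free row/column indices. The combinatorial bookkeeping you defer in steps (3)--(4) --- showing that every shared diagonal slot costs free indices, which the paper carries out via partitions of $[2k]$ induced by repeated values, the maps $F_1,F_2$ identifying the second walk from the first, and a boundary argument on the shared set $S$ --- is exactly where the paper concentrates its effort, and your attribution of the terms $p\|\bA\|_{4k-4}^{4k-4}$, $\|\bA\|_{4k-2}^{4k-2}$, $(pq)^k$, $p^kk^{3k}$, $k^{6k}$ is consistent with how they arise there.
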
 

The proof is based on some combinatorial arguments: starting from the Hermite definition~\eqref{eq:definition_u_q} of $U_k$, we first establish necessary conditions on sequences $(i_1,j_1)$,\ldots, $(i_k,j_k)$  and  $(i'_1,j'_1)$,\ldots, $(i'_k,j'_k)$ so that the corresponding covariance is non-zero. Then, we count the remaining sequences building on earlier ideas on random matrix analysis~\cite{anderson2010introduction}. 
From this theorem, we deduce a risk bound for the estimator $(U_k)_+^{1/2k}$ of $\|\bA\|_{2k}$.

\begin{cor}\label{cor:schatten}
For any positive integer $k$ and any $p\times q$ matrix $\bA$ one has 
\beq
\E\left[|(U_k)_+^{1/(2k)}- \|\bA\|_{2k}|\right]\leq c^{k}\left[(pq)^{1/4}+ p^{1/4}k^{3/4}+ k^{3/2}\right]\ ,
\eeq
 where $c$ is a positive numerical constant. 
\end{cor}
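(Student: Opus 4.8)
\textbf{Proof plan for Corollary~\ref{cor:schatten}.}

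The plan is to deduce the corollary from the variance bound of Theorem~\ref{prp:risk_U_k} by a standard ``bias--variance plus square-root smoothing'' argument, treating the small-$k$ cases ($k=1$, and possibly $k=2,3$) separately via Proposition~\ref{prp:risk_frobenius} and Proposition~\ref{prp:U2} if needed, since Theorem~\ref{prp:risk_U_k} is stated for $k\geq 2$ and $U_1$ was already handled. For $k\geq 2$, since $U_k$ is unbiased for $\|\bA\|_{2k}^{2k}$, Theorem~\ref{prp:risk_U_k} gives
\[
\E\bigl[|U_k - \|\bA\|_{2k}^{2k}|\bigr]\leq \sqrt{\Var(U_k)}\leq c^{k}\Bigl[(pq)^{k/2}+ p^{1/2}\|\bA\|_{4k-4}^{2k-2}+ \|\bA\|_{4k-2}^{2k-1}+ p^{k/2}k^{3k/2}+ k^{3k}\Bigr],
\]
using $\sqrt{a+b}\leq\sqrt a+\sqrt b$ repeatedly and adjusting the numerical constant absorbed into $c^{k}$.

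The key step is then to pass from an estimate of $\|\bA\|_{2k}^{2k}$ to one of $\|\bA\|_{2k}$ via the map $x\mapsto x_+^{1/(2k)}$. I would use the elementary inequality $|x_+^{1/(2k)}-y^{1/(2k)}|\leq |x-y|^{1/(2k)}$ valid for $x\in\mathbb R$, $y\geq 0$ (concavity / subadditivity of $t\mapsto t^{1/(2k)}$ on $\mathbb R^+$ together with monotonicity), followed by Jensen's inequality for the concave function $t\mapsto t^{1/(2k)}$:
\[
\E\bigl[|(U_k)_+^{1/(2k)}-\|\bA\|_{2k}|\bigr]\leq \E\bigl[|U_k-\|\bA\|_{2k}^{2k}|^{1/(2k)}\bigr]\leq \Bigl(\E\bigl[|U_k-\|\bA\|_{2k}^{2k}|\bigr]\Bigr)^{1/(2k)}.
\]
Substituting the variance bound and using $(a_1+\cdots+a_5)^{1/(2k)}\leq a_1^{1/(2k)}+\cdots+a_5^{1/(2k)}$, each term contributes: $(pq)^{k/2}$ becomes $(pq)^{1/4}$, $p^{k/2}k^{3k/2}$ becomes $p^{1/4}k^{3/4}$, $k^{3k}$ becomes $k^{3/2}$, and the constant $c^{k}$ becomes $c^{k/(2k)}=c^{1/2}$, a fixed constant. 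The two signal-dependent terms $p^{1/2}\|\bA\|_{4k-4}^{2k-2}$ and $\|\bA\|_{4k-2}^{2k-1}$ must be controlled by $(pq)^{1/4}$ after taking the $1/(2k)$-th root; this is where I expect the main work.

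The main obstacle is precisely handling the two $\|\bA\|$-dependent terms, which \emph{a priori} grow unboundedly with $\|\bA\|$. The resolution is the standard dichotomy used throughout this kind of argument: compare the estimation error with the target $\|\bA\|_{2k}$ itself. One uses the norm comparison $\|\bA\|_{4k-2}\leq\|\bA\|_{2k}$ and $\|\bA\|_{4k-4}\leq\|\bA\|_{2k}$ (monotonicity of Schatten $\ell_s$ norms in $s$), so $p^{1/2}\|\bA\|_{4k-4}^{2k-2}\leq p^{1/2}\|\bA\|_{2k}^{2k-2}$ and $\|\bA\|_{4k-2}^{2k-1}\leq\|\bA\|_{2k}^{2k-1}$. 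Now split into two regimes. If $\|\bA\|_{2k}\leq (pq)^{1/4}$, then directly $\E[|(U_k)_+^{1/(2k)}-\|\bA\|_{2k}|]$ is controlled by the other (signal-free) terms plus $\|\bA\|_{2k}\lesssim (pq)^{1/4}$ after one notes $(U_k)_+^{1/(2k)}$ can also be bounded crudely, OR more cleanly one keeps the two signal terms and checks $\bigl(p^{1/2}\|\bA\|_{2k}^{2k-2}\bigr)^{1/(2k)}= p^{1/(4k)}\|\bA\|_{2k}^{1-1/k}$ and $\bigl(\|\bA\|_{2k}^{2k-1}\bigr)^{1/(2k)}=\|\bA\|_{2k}^{1-1/(2k)}$, both of which are $\leq (pq)^{1/4}$ up to constants when $\|\bA\|_{2k}\leq (pq)^{1/4}$ (since $p^{1/(4k)}\leq (pq)^{1/4}$ trivially for $p\geq q\geq 1$, and the powers of $\|\bA\|_{2k}$ are less than one). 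If instead $\|\bA\|_{2k}> (pq)^{1/4}$, then $\|\bA\|_{2k}^{1-1/k}=\|\bA\|_{2k}\cdot\|\bA\|_{2k}^{-1/k}$ and one needs the product $p^{1/(4k)}\|\bA\|_{2k}^{-1/k}$ to be $\lesssim 1$, i.e. $\|\bA\|_{2k}^{1/k}\gtrsim p^{1/(4k)}$, i.e. $\|\bA\|_{2k}\gtrsim p^{1/4}$; this may require a \emph{third} sub-case $\|\bA\|_{2k}\in((pq)^{1/4}, p^{1/4})$ handled as in the first regime. Since $p\geq q$, we have $(pq)^{1/4}\leq p^{1/4}\leq (pq)^{1/4}\cdot (p/q)^{1/4}$, so the intermediate band is nonempty only when $p\gg q$, and there $\|\bA\|_{2k}<p^{1/4}$ forces $p^{1/(4k)}\|\bA\|_{2k}^{1-1/k}<p^{1/(4k)}p^{(1/4)(1-1/k)}=p^{1/4}$, which is not obviously $\lesssim (pq)^{1/4}$ — so this term would need to be absorbed by bounding $|(U_k)_+^{1/(2k)}-\|\bA\|_{2k}|\leq (U_k)_+^{1/(2k)}+\|\bA\|_{2k}$ and re-examining, OR (more likely the intended route) by noting the variance term $p\|\bA\|_{4k-4}^{4k-4}$ is actually dominated by $(pq)^k$ whenever $\|\bA\|_{2k}$ is small, and by $\|\bA\|_{4k-2}^{4k-2}$-type terms otherwise. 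I would therefore carry out the detailed regime analysis carefully, expecting that with the correct bookkeeping every contribution reduces to one of $(pq)^{1/4}$, $p^{1/4}k^{3/4}$, $k^{3/2}$ after the $1/(2k)$-th root, which is exactly the claimed bound. The remaining routine points are: verifying the $k=1$ case from Proposition~\ref{prp:risk_frobenius} (there the bound reads $3(pq)^{1/4}$, consistent), and checking that all constants collapse to a single $c^{k}$.
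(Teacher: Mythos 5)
Your overall architecture (variance bound from Theorem~\ref{prp:risk_U_k}, the inequality $|x_+^{1/(2k)}-y^{1/(2k)}|\leq |x-y|^{1/(2k)}$ plus Jensen, monotonicity of Schatten norms to replace $\|\bA\|_{4k-4}$ and $\|\bA\|_{4k-2}$ by $\|\bA\|_{2k}$, and a dichotomy on the size of $\|\bA\|_{2k}$) matches the paper, and your small-signal regime $\|\bA\|_{2k}\leq (pq)^{1/4}$ is essentially correct (the paper uses the slightly safer threshold $[(p\vee k^3)(q\vee k^3)]^{1/4}$ so that the $k$-dependent variance terms are also absorbed, but that is bookkeeping).

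The genuine gap is the large-signal regime. There you keep applying the concavity bound and try to control the $1/(2k)$-th roots $p^{1/(4k)}\|\bA\|_{2k}^{1-1/k}$ and $\|\bA\|_{2k}^{1-1/(2k)}$; these grow without bound in $\|\bA\|_{2k}$, so no amount of sub-casing can bring them below the signal-free target $c^k[(pq)^{1/4}+p^{1/4}k^{3/4}+k^{3/2}]$ (for instance $\|\bA\|_{2k}^{1-1/(2k)}$ is already too large once $\|\bA\|_{2k}\gg pq$). The missing idea is to switch, when $\|\bA\|_{2k}$ is large, from taking roots to dividing: use $|x-y|\leq |x^{2k}-y^{2k}|/y^{2k-1}$ for $x,y\geq 0$, i.e.\ bound the error by $\E[|U_k-\|\bA\|_{2k}^{2k}|]/\|\bA\|_{2k}^{2k-1}$. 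Then $\|\bA\|_{2k}^{2k-1}/\|\bA\|_{2k}^{2k-1}=1$, $p^{1/2}\|\bA\|_{2k}^{2k-2}/\|\bA\|_{2k}^{2k-1}=p^{1/2}/\|\bA\|_{2k}\lesssim p^{1/4}$, and $(pq)^{k/2}/\|\bA\|_{2k}^{2k-1}\leq (pq)^{1/4}$, which is exactly what is needed. The paper implements this through $|(1+u)^{1/(2k)}-1|\leq 2|u|$ for $u\geq -1/2$, together with a separate Chebyshev bound on the bad event $\P[U_k\leq \|\bA\|_{2k}^{2k}/2]$ (needed because $U_k$ can be negative, so the relative-error expansion is only valid when $U_k$ is comparable to its mean); your sketch omits both of these ingredients, and your "intermediate band" worry disappears once they are in place.
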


Interestingly, the estimator $(U_k)_+^{1/(2k)}$ achieves the optimal rate $(pq)^{1/4}$ for any even norm. This implies that, as for Frobenius norm estimation, estimating the $2k$-Schatten norm of a small rank matrix $\bA$ is as difficult as estimating a full rank matrix.

\section{Estimation of general non-even norms}\label{sec:general}

\subsection{Simple estimators}

In contrast to even Schatten norms, one first observes that it is impossible to devise unbiased estimators of non-even norms because such functionals are not smooth with respect to $\bA$.

\begin{lem}\label{lem:no_unbiased_estimator}
For all square integrable  estimators $\varphi(\bY)$ and all $r>0$ and, there exists a matrix $\bA$ such that $\|\bA\|_1\leq r$ and $\E[\varphi(\bY)]\neq \|\bA\|_1$. 
\end{lem}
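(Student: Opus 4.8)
The plan is to argue by contradiction using the analyticity of the map $\bA \mapsto \E[\varphi(\bY)]$ as a function of the entries of $\bA$, exploiting the fact that $\bA \mapsto \|\bA\|_1$ fails to be real-analytic.

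First I would set up the analyticity claim. Fix a square-integrable estimator $\varphi(\bY)$ and write $\bY = \bA + \bE$ with $\bE$ having i.i.d.\ standard normal entries. Then
\[
 g(\bA) := \E[\varphi(\bA+\bE)] = \int_{\R^{p\times q}} \varphi(\by)\, \prod_{i,j} \phi(y_{ij}-\bA_{ij})\, d\by\ ,
\]
where $\phi$ is the standard normal density. Because each $\bA_{ij}\mapsto \phi(y_{ij}-\bA_{ij})$ extends to an entire function of $\bA_{ij}\in\bbC$ and one can dominate the relevant integrals (using Cauchy--Schwarz, the square-integrability of $\varphi$, and Gaussian tail bounds, uniformly for $\bA$ in any compact subset of $\bbC^{p\times q}$), the function $g$ extends to a holomorphic — in particular real-analytic — function on all of $\R^{p\times q}$. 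This is the standard argument that the expectation of any estimator is an analytic function of the mean parameter in a Gaussian location model.

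Next I would exhibit the non-analyticity of the target. Suppose, for contradiction, that $\E[\varphi(\bY)] = \|\bA\|_1$ for every $\bA$ with $\|\bA\|_1 \le r$. Restrict attention to the one-parameter family $\bA(t) = t\, \bA_0$ for a fixed rank-one matrix $\bA_0$ with $\|\bA_0\|_1 = 1$, and $t$ ranging over $(-r,r)$. Along this line, $\|\bA(t)\|_1 = |t|$, which is not differentiable at $t=0$. But $t\mapsto g(t\bA_0)$ is a restriction of a real-analytic function to a line, hence real-analytic in $t$, hence smooth at $t=0$ — contradiction. (Alternatively, pick a single entry: with $\bA$ supported on one entry equal to $a$, $\|\bA\|_1 = |a|$, again non-smooth at $a=0$, while $a\mapsto g(\bA)$ is analytic.) Therefore no such $\varphi$ exists, and since $r>0$ was arbitrary, for every $r>0$ there is some $\bA$ with $\|\bA\|_1\le r$ and $\E[\varphi(\bY)]\neq\|\bA\|_1$.

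The main obstacle is making the analyticity argument fully rigorous: one must justify differentiating under the integral sign and, more cleanly, the holomorphic extension of $g$ to a complex neighborhood. The key technical point is a uniform domination bound — for $\bA$ ranging over a compact set in $\bbC^{p\times q}$, the integrand $|\varphi(\by)|\prod_{i,j}|\phi(y_{ij}-\bA_{ij})|$ is bounded by $|\varphi(\by)|$ times a fixed Gaussian-type weight (coming from $|\exp(-(y_{ij}-\bA_{ij})^2/2)|$ with $\bA_{ij}$ complex, which contributes $\exp(-(y_{ij}-\mathrm{Re}\,\bA_{ij})^2/2 + (\mathrm{Im}\,\bA_{ij})^2/2)$), and this weight is in $L^2(d\by)$, so Cauchy--Schwarz with $\varphi \in L^2$ gives an integrable dominating function. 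Morera's theorem (or Fubini plus Cauchy's theorem applied coordinatewise) then upgrades continuity in $\bA$ to holomorphy. Once holomorphy is in hand, the restriction-to-a-line step and the non-smoothness of $|t|$ finish the proof immediately.
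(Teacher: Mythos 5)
Your proposal is correct and follows essentially the same route as the paper: restrict to a one-parameter perturbation of a single entry (or a rank-one line), note that $t\mapsto\|\bA_t\|_1=|t|$ is not differentiable at $0$, and contrast this with the smoothness of $t\mapsto\E[\varphi(\bA_t+\bE)]$ in the Gaussian location model. The paper only invokes differentiability via dominated convergence rather than full holomorphy, but that is a cosmetic difference; your stronger analyticity claim is true and the contradiction is identical.
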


\begin{proof}[Proof of Lemma~\ref{lem:no_unbiased_estimator}]
The proof arguments are classical. For any $t\in \mathbb{R}$, consider the matrix $\bA_t$ such that $(\bA_t)_{1,1}=t$ and $(\bA_t)_{i,j}=0$ otherwise. Obviously $\|\bA_t\|_1= |t|$. The function $t\mapsto \|\bA_t\|_1$ is not differentiable at $0$. In contrast, the function $t\mapsto \mathbb{E}[\varphi(\bY)]= (2\pi)^{-pq/2}\int \varphi(\bY) e^{-\|\bY\|_2^2/2-t^2/2+ t\bY_{11}} d\bY$ is differentiable by Lebesgue's dominated convergence theorem. As a consequence, both  functions cannot match in any neighborhood of $0$. 
\end{proof}

Consider any real number $s\geq 1$. As a starting point and to assess the quality of more refined procedure, we consider the plug-in estimator $\|\bY\|_s$ that simply evaluates the $s$-Schatten norm  of the observed matrix $\bY$. 
\begin{lem}\label{lem:plug_in_nuclear_norm}
 For any matrix $\bA\in \mathbb{R}^{p\times q}$ and any $s\geq 1$, we have   $\mathbb{E}\left[|\|\bY\|_s- \|\bA\|_s |\right]\leq  2 q^{1/s}\sqrt{p}$. 
\end{lem}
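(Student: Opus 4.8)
The plan is to use the triangle inequality for Schatten norms together with a standard comparison between $s$-Schatten norms and the Frobenius norm. First I would write, by the triangle inequality for $\|\cdot\|_s$,
\[
\bigl| \|\bY\|_s - \|\bA\|_s \bigr| \leq \|\bY - \bA\|_s = \|\bE\|_s.
\]
Taking expectations, it remains to bound $\E[\|\bE\|_s]$. Since $\bE$ has $q$ singular values, we have the elementary inequality $\|\bE\|_s^s = \sum_{i=1}^q \sigma_i^s(\bE) \leq q^{1 - s/2}\bigl(\sum_{i=1}^q \sigma_i^2(\bE)\bigr)^{s/2} = q^{1-s/2}\|\bE\|_2^s$ for $s\geq 2$ (by power-mean / Jensen), hence $\|\bE\|_s \leq q^{1/s - 1/2}\|\bE\|_2$; for $1\leq s\leq 2$ one instead has $\|\bE\|_s \leq q^{1/s-1/2}\|\bE\|_2$ as well, by the same power-mean inequality applied in the opposite direction using that there are at most $q$ terms, so in fact $\|\bE\|_s \le q^{(1/s - 1/2)_+}\|\bE\|_2 \le q^{1/s}\|\bE\|_2 / \sqrt{q}\cdot\sqrt{q}$; to keep things uniform and simple I would just record $\|\bE\|_s \leq q^{1/s}\,\sigma_1(\bE)^{1-2/s}\,\|\bE\|_2^{2/s}$ or, more cleanly, use $\|\bE\|_s \le q^{1/s}\|\bE\|_\infty$ when $s\ge 1$ is awkward — so the cleanest route is: for every $s \ge 1$, $\|\bE\|_s \le q^{1/s}\,\|\bE\|_\infty$ is false in general, so instead bound $\|\bE\|_s^s \le q\,\|\bE\|_\infty^s$ giving $\|\bE\|_s \le q^{1/s}\|\bE\|_\infty$, which IS correct since each of the $q$ singular values is at most $\|\bE\|_\infty$.

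So the clean argument is: $\|\bE\|_s = \bigl(\sum_{i=1}^q \sigma_i^s(\bE)\bigr)^{1/s} \leq \bigl(q\,\sigma_1^s(\bE)\bigr)^{1/s} = q^{1/s}\,\sigma_1(\bE) = q^{1/s}\,\|\bE\|_\infty$. Then I would invoke the standard bound $\E[\sigma_1(\bE)] \leq \sqrt{p} + \sqrt{q} \leq 2\sqrt{p}$ (since $p \ge q$), which follows from Gaussian concentration / the Slepian–Gordon comparison for the largest singular value of a $p\times q$ Gaussian matrix; this is also exactly the operator-norm part of Lemma~\ref{prp:nogestim} already stated in the excerpt. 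Combining, $\E[\|\bY\|_s - \|\bA\|_s|] \le \E[\|\bE\|_s] \le q^{1/s}\,\E[\sigma_1(\bE)] \le 2\,q^{1/s}\sqrt{p}$, which is the claimed bound.

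The only mild subtlety — and the step I would double-check — is the triangle inequality for Schatten norms: $\|\cdot\|_s$ is a genuine norm on matrices for all $s\ge 1$ (it is the $\ell_s$ norm of the singular value vector, and subadditivity of singular values of a sum, i.e. $\sigma_i(\bX+\bY) \le \sum$-type majorization inequalities, makes it a norm — this is classical, e.g. via Ky Fan / von Neumann). Everything else is routine. There is no real obstacle here; the lemma is a warm-up bound whose whole point is to be beaten later by the polynomial-approximation estimator.
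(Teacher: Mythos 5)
Your final argument is correct and is essentially identical to the paper's proof: triangle inequality gives $\E[|\|\bY\|_s-\|\bA\|_s|]\leq \E[\|\bE\|_s]$, then $\|\bE\|_s\leq q^{1/s}\|\bE\|_{\infty}$, and finally $\E[\|\bE\|_{\infty}]\leq \sqrt{p}+\sqrt{q}\leq 2\sqrt{p}$ (Lemma~\ref{lem:spectrum}). The detour through power-mean inequalities (and the momentary claim that $\|\bE\|_s\le q^{1/s}\|\bE\|_\infty$ is ``false in general'' --- it is in fact always true, as you then verify) should simply be deleted from a final write-up.
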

\begin{proof}[Proof of Lemma \ref{lem:plug_in_nuclear_norm}]
From the triangular inequality, we deduce that $\mathbb{E}\left[|\|\bY\|_s- \|\bA\|_s |\right]\leq \mathbb{E}\left[\|\bE\|_s\right]$. Since $\|\bE\|_s\leq q^{1/s}\|\bE\|_{\infty}$, we simply need to bound the expected operator norm of $\bE$. By Lemma~\ref{lem:spectrum} in the appendix, we have $\E[\|\bE\|_{\infty}]\leq \sqrt{p}+\sqrt{q}$, which concludes the proof. 
\end{proof}

\noindent
{\bf Remark}: The risk bound achieved by $\|\bY\|_s$ is much larger than what has been obtained by unbiased estimators of the even norm. Even for square matrices $(p=q)$, Lemma~\ref{lem:plug_in_nuclear_norm} enforces a rate of the order $p^{1/s}\sqrt{p}$ whereas even Schatten norms can be estimated at the much faster rate $\sqrt{p}$.

\medskip 

\noindent 
{\bf Remark}: As in Section~\ref{sec:operator}, it turns out that the risk  upper bound $q^{1/s}\sqrt{p}$  cannot be significantly improved for this specific estimator $\|\bY\|_s$.  Indeed, consider the specific case  $\bA=0$. For any $p\geq 4q$, we have $\E[|\|\bY\|_s-\|\bA\|_s|]\geq c q^{1/s} \sqrt{p}$. Indeed, $\|\bY\|_s\geq q^{1/s} \sigma_q(\bY)$, and we deduce from Lemma~\ref{lem:DavSza}, that with probability higher than $1-e^{-q/8}$, $\sigma_q(\bY)\geq \sqrt{p} - 3/2\sqrt{q}\geq \sqrt{p}/4$. The result follows.

\medskip 

Next,  we  improve over the rate $p^{1/s}\sqrt{p}$ by relying on a correction of the empirical singular values in the same spirit as what we did for estimating the operator norm $\|\bA\|_{\infty}$. Consider the centered Gram matrix $\bW= \bY^T \bY- p \bI_q$ and observe that  
\[
\E[\bW]= \bA^T \bA + \E[\bE^T \bA + \bA^T \bE] + \E[\bE^T\bE-p\bI_q]= \bA^T \bA\ . 
\]
As a consequence, $\bW$ is an unbiased estimator $\bA^T\bA$, which could suggest that the eigenvalues of $\bW$ are close to that of $\bA^T\bA$, which in turn are the square of the singular values of $\bA$. Since the $i$-th eigenvalues of $\bW$ equals $\sigma_i^2(\bY)-p$, this leads us to considering the following estimator of $\|\bA\|_s$
\beq\label{eq:definition_T_S}
T_s= \left[\sum_{i=1}^q [(\sigma_i^2(\bY)-p)_+]^{s/2}\right]^{1/s}\ .
\eeq	
\begin{prp}\label{prp:upper_risk_nuclear_covariance}
For  any matrix $\bA\in \mathbb{R}^{p\times q}$ and any $s\geq 1$, we have
\begin{eqnarray}
 \E[|T^s_s - \|\bA\|^s_s|]&\leq& c^{s} \left[ q[(p\vee s)(q\vee s)]^{s/4}+ \sqrt{q}\|\bA\|_{s-1}^{s-1}+\1_{s\geq 2}\sqrt{pq}  \|\bA\|_{s-2}^{s-2}\right]\ ;  \nonumber  \\
\E[|T_s - \|\bA\|_s|]&\leq& c^{s} q^{1/s}[(p\vee s)(q\vee s)]^{1/4}\ ,
\label{eq:upper_risk_nuclear_covariance}
\end{eqnarray}
for some positive numerical constant $c>0$. 
\end{prp}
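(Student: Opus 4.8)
\emph{Proof plan.} The plan is to reduce everything to a sharp per-eigenvalue perturbation estimate for $\bW:=\bY^\top\bY-p\bI_q$. Since the law of $\bY^\top\bY$ — hence of $T_s$ — is invariant under left and right multiplication of $\bY$ by orthogonal matrices, I would assume $\bA=\operatorname{diag}(\sigma_1,\dots,\sigma_q)$ (completed by zero rows), and write $\sigma_i=\sigma_i(\bA)$, $\bSigma=\operatorname{diag}(\sigma_1,\dots,\sigma_q)$, $\nu_i=(\lambda_i(\bW))_+=(\sigma_i^2(\bY)-p)_+$, recalling $\E[\bW]=\bA^\top\bA$. A direct expansion gives $\bW=\bSigma^2+\bC+\bN$ with $\bN:=\bE^\top\bE-p\bI_q$ and $\bC:=\bSigma\widetilde\bE+\widetilde\bE^\top\bSigma$, where $\widetilde\bE$ is the top $q\times q$ block of $\bE$ (its first $q$ rows). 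Isolating $\widetilde\bE$ is essential: $\bC$ only feels a $q\times q$ Gaussian block, whose operator norm is of order $\sqrt q$, not the order $\sqrt p$ of $\|\bE\|_\infty$. The heart of the proof is the deterministic estimate
\[
|\nu_i-\sigma_i^2|\ \le\ 2\sigma_i\|\widetilde\bE\|_\infty+\|\widetilde\bE\|_\infty^2+\|\bN\|_\infty,\qquad 1\le i\le q,
\]
which I would derive from Courant--Fischer on the coordinate subspaces $V_-=\mathrm{span}(e_1,\dots,e_i)$ and $V_+=\mathrm{span}(e_i,\dots,e_q)$: for any unit $x$ one has the completing-the-square identity $x^\top(\bSigma^2+\bC)x=\|(\bSigma+\widetilde\bE)x\|^2-\|\widetilde\bE x\|^2$, and $\|\bSigma x\|\ge\sigma_i$ on $V_-$ while $\|\bSigma x\|\le\sigma_i$ on $V_+$ (the relevant diagonal entries of $\bSigma$ being $\ge\sigma_i$, resp.\ $\le\sigma_i$), so $x^\top(\bSigma^2+\bC)x\in[\sigma_i^2-2\sigma_i\|\widetilde\bE\|_\infty-\|\widetilde\bE\|_\infty^2,\ \sigma_i^2+2\sigma_i\|\widetilde\bE\|_\infty+\|\widetilde\bE\|_\infty^2]$ on the corresponding subspace; adding $\mp\|\bN\|_\infty$, optimizing over $V_\pm$, and taking positive parts gives the display. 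I would then work on the event $\Omega$ where $\|\widetilde\bE\|_\infty\le c(\sqrt q+\sqrt s+\sqrt{\log p})$, $\|\bN\|_\infty\le c(\sqrt{pq}+q+(\sqrt p+\sqrt q)(\sqrt s+\sqrt{\log p})+s+\log p)$ and $\|\bE\|_\infty\le c(\sqrt p+\sqrt q+\sqrt s+\sqrt{\log p})$; by Lemma~\ref{lem:spectrum}, Lemma~\ref{lem:DavSza} and the fact that $\widetilde\bE$ is a $q\times q$ standard Gaussian matrix, $\P(\Omega^c)\le c\,e^{-c'(s+\log p)}$, and on $\Omega$ the display reads $|\nu_i-\sigma_i^2|\le c(\sigma_i\sqrt{q\vee s}+\Delta)$ with $\Delta:=c\sqrt{(p\vee s)(q\vee s)}$ (the logarithms absorbed since $\sqrt{\log p}\le[(p\vee s)(q\vee s)]^{1/4}$).

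For the first inequality, on $\Omega$ I would decompose $T_s^s-\|\bA\|_s^s=\sum_i(\nu_i^{s/2}-\sigma_i^s)$ and split on whether $\sigma_i^2\le 2\Delta$ or $\sigma_i^2>2\Delta$. In the first case $\nu_i\le 3\Delta$, so those indices contribute at most $q(3\Delta)^{s/2}\le c^s q[(p\vee s)(q\vee s)]^{s/4}$. In the second case, after enlarging $\Delta$ by a fixed constant so that $2\sigma_i\sqrt{q\vee s}\le\sigma_i^2/4$ there, one has $\nu_i\ge\sigma_i^2/2$, whence by the mean value theorem $|\nu_i^{s/2}-\sigma_i^s|\le\tfrac s2(\tfrac32\sigma_i^2)^{s/2-1}|\nu_i-\sigma_i^2|\le c^s(\sqrt{q\vee s}\,\sigma_i^{s-1}+\Delta\,\sigma_i^{s-2})$, which sums to $c^s(\sqrt{q\vee s}\,\|\bA\|_{s-1}^{s-1}+\Delta\,\|\bA\|_{s-2}^{s-2})$ (with $\|\bA\|_0^0:=\rank(\bA)$ when $s=2$, no such term when $s<2$). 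Using $\sqrt{q\vee s}\le c^s\sqrt q$ and $\Delta\le c^s\sqrt{pq}$ (both reducing to $\sqrt s\le c^s$), this is $c^s[q((p\vee s)(q\vee s))^{s/4}+\sqrt q\,\|\bA\|_{s-1}^{s-1}+\1_{s\ge2}\sqrt{pq}\,\|\bA\|_{s-2}^{s-2}]$, i.e.\ the first inequality. The contribution of $\Omega^c$ I would bound by the deterministic Weyl estimate $|\sigma_i(\bY)-\sigma_i|\le\|\bE\|_\infty$, which gives $|T_s^s-\|\bA\|_s^s|\lesssim c^s\sum_{m=1}^{s}\binom sm\sigma_1^m(\bE)\|\bA\|_{s-m}^{s-m}+c^s q\,\sigma_1^s(\bE)$ pointwise; integrating against $\1_{\Omega^c}$ by Cauchy--Schwarz and the Gaussian moment bound $\E[\sigma_1^{2m}(\bE)]\le(c(\sqrt p+\sqrt q+\sqrt m))^{2m}$, and using that $\P(\Omega^c)$ decays faster than any power of $p$, this is negligible — the crucial point being that this crude bound has degree at most $s-1$ in the entries of $\bA$, hence is dominated by the $\|\bA\|_{s-1}^{s-1}$ term.

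For the second inequality I would use the core estimate differently. Put $\rho:=[(p\vee s)(q\vee s)]^{1/4}$. On $\Omega$ the core estimate gives $|\sqrt{\nu_i}-\sigma_i|\le c\rho$ for every $i$: if $\sigma_i>c\rho$ then $\nu_i\ge\sigma_i^2/4>0$ and $|\sqrt{\nu_i}-\sigma_i|=|\nu_i-\sigma_i^2|/(\sqrt{\nu_i}+\sigma_i)\le|\nu_i-\sigma_i^2|/\sigma_i\lesssim\sqrt{q\vee s}+\Delta/\sigma_i\lesssim\rho$; if $\sigma_i\le c\rho$ then $\sqrt{\nu_i}\le\sigma_i+\sqrt{|\nu_i-\sigma_i^2|}\lesssim\rho$, so $|\sqrt{\nu_i}-\sigma_i|\le\sqrt{\nu_i}+\sigma_i\lesssim\rho$ (using $\sqrt{q\vee s}\le\rho$, $\|\widetilde\bE\|_\infty^2\lesssim\rho^2$, $\|\bN\|_\infty\lesssim\rho^2$). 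The reverse triangle inequality for the $\ell_s$ norm applied to the $q$-vectors $(\sqrt{\nu_i})_i$ and $(\sigma_i)_i$ then yields, on $\Omega$, $|T_s-\|\bA\|_s|\le|(\sqrt{\nu_i}-\sigma_i)_i|_s\le q^{1/s}\max_i|\sqrt{\nu_i}-\sigma_i|\le c\,q^{1/s}[(p\vee s)(q\vee s)]^{1/4}$; off $\Omega$ one argues exactly as in the previous paragraph, and the $\log p$ entering the definition of $\Omega$ is harmlessly absorbed into the $p\vee s$ factor.

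The step I expect to be the real obstacle is the core estimate $|\nu_i-\sigma_i^2|\lesssim\sigma_i\sqrt q+\sqrt{pq}$. The naive approach — write $\bW=\bA^\top\bA+(\bW-\bA^\top\bA)$ and apply Weyl's inequality — yields only $|\nu_i-\sigma_i^2|\le\|\bW-\bA^\top\bA\|_\infty\lesssim\|\bE\|_\infty\|\bA\|_\infty+\sqrt{pq}$, which carries the wrong factor $\sqrt p$ and, being governed by $\sigma_1$ rather than $\sigma_i$, produces neither the $\|\bA\|_{s-1}^{s-1}$ term nor an $\bA$-free bound on $T_s$. Recovering the correct $\sigma_i\sqrt q$ is precisely what forces the decomposition $\bW=\bSigma^2+\bC+\bN$ isolating the $q\times q$ block $\widetilde\bE$ together with the completing-the-square identity; it is the deterministic analogue, in the information-plus-noise model, of the fact that an individual large singular value of $\bY$ tracks the corresponding one of $\bA$ up to only $O(\sqrt q)$ fluctuations. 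Everything else — the random-matrix tail bounds, the construction of $\Omega$, the off-$\Omega$ bookkeeping, and elementary inequalities of the type $\sqrt s\le c^s$ — is routine.
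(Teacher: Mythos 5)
Your core deterministic estimate is exactly the paper's Lemma~\ref{lem:local_control_lambda_i}: the decomposition $\bW=\bSigma^2+(\bSigma\widetilde\bE+\widetilde\bE^{T}\bSigma)+(\bE^{T}\bE-p\bI_q)$ with $\widetilde\bE$ the top $q\times q$ block (the paper phrases this as $\Pi_{I(\bA)}\bE$, resp.\ $\bE_{[1:q]}$ after diagonalizing $\bA$), and the bound $|\lambda_i(\bW)-\sigma_i^2|\le 2\sigma_i\|\widetilde\bE\|_\infty+\|\widetilde\bE\|_\infty^2+\|\bE^T\bE-p\bI_q\|_\infty$. Your derivation via min--max on the coordinate subspaces $V_\pm$ plus completing the square is equivalent to the paper's (Cauchy interlacing for the upper bound, Courant--Fischer plus minimizing $x\mapsto x^2-2xz$ for the lower bound), and you correctly identify this as the crux. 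Where you genuinely diverge is in the packaging. First, the paper never introduces a good event: it integrates the tail bounds into moment bounds $\E\|\widetilde\bE\|_\infty^{s}\le c^{s}(q\vee s)^{s/2}$, $\E\|\bE^T\bE-p\bI_q\|_\infty^{s}\le c^{s}[(p\vee s)(q\vee s)]^{s/2}$ and takes expectations directly in the pointwise inequality $|(x+y)_+^{s/2}-x^{s/2}|\le s|y|(2x)^{s/2-1}+(2|y|)^{s/2}$; this avoids all of your $\Omega^c$ bookkeeping (which is correct but is the thinnest part of your write-up — you need $\P(\Omega^c)$ to beat a fixed power of $p$, and you need Young's inequality to fold the stray $\|\bA\|_{s-m}^{s-m}$, $m\ge 3$, back into the allowed terms). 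Second, for the $T_s$ bound the paper deduces it from the $T_s^s$ bound via $|(x+y)_+^{1/s}-x^{1/s}|\le|y|^{1/s}\wedge(2|y||x|^{1/s-1})$ and a case split on whether $\|\bA\|_s\gtrless q^{1/s}[(p\vee s)(q\vee s)]^{1/4}$, whereas you prove the uniform per-index bound $|\sqrt{\nu_i}-\sigma_i|\lesssim[(p\vee s)(q\vee s)]^{1/4}$ and apply the $\ell_s$ triangle inequality; your route is cleaner and is in fact the one the paper itself uses for Proposition~\ref{prp:upper_l1_spectrum} ($\ell_1$ reconstruction of the whole spectrum). One small loose end in your first-inequality argument: for $1\le s<2$ you must still absorb the $\Delta\,\sigma_i^{s-2}$ term; this works because on the regime $\sigma_i^2>2\Delta$ one has $\Delta\,\sigma_i^{s-2}\le\Delta^{s/2}$, which folds into the $q[(p\vee s)(q\vee s)]^{s/4}$ term — worth a sentence, but not a gap.
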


The proof of this result relies on interlacing inequalities for eigenvalues (Corollary III. 1.5 in~\cite{bhatia2013matrix}).
In comparison to the naive plug-in estimator $\|\bY\|_s$, the $q^{1/s}\sqrt{p}$ factor has been replaced by $q^{1/s}(pq)^{1/4}$. For highly rectangular matrices, there is therefore  a significant benefit to correcting the singular values.  Still, 
this risk bound is still $q^{1/s}$ times higher than the optimal risk bound for even Schatten norms. 

In the remainder of this section, we establish that, at least when $\|\bA\|_{\infty}$ is not too high this rate can be improved by a logarithmic factor by approximating $\|\bA\|^{s}_s$ by a suitable linear combination of even Schatten norms $\|\bA\|_{2k}^{2k}$. Then, we show that this new rate turns out to be minimax optimal. 

\subsection{Improved risk bounds by polynomial approximation}

Introducing the function $\psi_s: x \mapsto  x^{s/2}$ for $x>0$, we observe that $\|\bA\|^{s}_{s}= \sum_{i=1}^q \sigma^{s}_i(\bA)= \sum_{i=1}^q \psi_s(\sigma_i(\bA^T\bA))$. Following ideas pioneered  in Lepski et al.~\cite{lepski1999estimation,cailow2011,mukherjee2017} for estimating the $l_q$ norm of a vector in the Gaussian sequence model, we suggest to approximate the function $\psi_s$ by some polynomial function $P=\sum_{k=0}^{K}a_k x^k$ and then to build an unbiased estimator of $\sum_{i=1}^q P(\sigma^2_i(\bA))$ as a proxy for $\|\bA\|_s^{s}$. 
Indeed, $\sum_{i=1}^q P(\sigma^2_i(\bA))= a_0q+ \sum_{k=1}^{\deg(P)}a_k\|\bA\|_{2k}^{2k}$ is a weighted sum of even Schatten norms $\|\bA\|^{2k}_{2k}$ with $k$ ranging from one to the degree of $P$ and we can rely on the results of Section~\ref{sec:even} to build an unbiased estimator of  this quantity.

Given a polynomial real polynomial $P$ of degree $K$ and a tuning parameter $M>1$,  we therefore define the estimator
\beq\label{eq:definition_polynomial_approxiation_nuclear_norm}
G_s[P; \bY]=  M^{s}(pq)^{s/4}\left[a_0+ \sum_{k=1}^{K} a_k \frac{U_k}{(M(pq)^{1/4})^{2k}}\right]\ ,
\eeq
where $U_k$ is the unbiased estimator of $\|\bA\|_{2k}^{2k}$  defined in~\eqref{eq:definition_u_q}.

For a continuous function $g$ defined on an interval $I$, we denote  $|g|_{\infty,I}=\sup_{x\in I}|g(x)|$ its supremum norm. The following proposition bounds the risk of $G_s[P; \bY]$ in terms of its approximation error to the function $\psi_s$  and of a variance-like expression growing super-exponentially with the degree of the polynomial.

\begin{prp}\label{lem:risk_G_P}
Consider any polynomial $P$ of degree $K\leq q^{1/3}$.  Assuming that $\sigma_1(\bA)\leq M(pq)^{1/4}$, we have 
\beq\label{eq:upper_risk_G_P}
 \mathbb{E}\left[|G_s[P; \bY]- \|\bA\|^{s}_s|\right]\leq M^s(pq)^{s/4} \left[q |\psi_s-P|_{\infty,[0,1]}+ \frac{c^{K}}{M^2\wedge [M(p/q)^{1/4}]} |a|_{\infty}\right]\ .
\eeq
\end{prp}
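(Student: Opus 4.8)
\textbf{Proof plan for Proposition~\ref{lem:risk_G_P}.}

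The strategy is the classical bias–variance split for polynomial-approximation estimators: write $G_s[P;\bY]$ as an unbiased estimator of $\sum_{i=1}^q P(\sigma_i^2(\bA))$ (up to the $a_0 q$ term) and compare the latter to $\|\bA\|_s^s = \sum_{i=1}^q \psi_s(\sigma_i^2(\bA))$. First I would rescale: set $\widetilde{\bA} = \bA/(M(pq)^{1/4})$, so that the assumption $\sigma_1(\bA)\le M(pq)^{1/4}$ becomes $\sigma_i^2(\widetilde{\bA})\in[0,1]$ for all $i$, and observe that by construction $G_s[P;\bY] = M^s(pq)^{s/4}\bigl[a_0 + \sum_{k=1}^K a_k \widetilde{U}_k\bigr]$ where $\widetilde{U}_k = U_k/(M(pq)^{1/4})^{2k}$ is an unbiased estimator of $\|\widetilde{\bA}\|_{2k}^{2k} = \sum_i \sigma_i^2(\widetilde{\bA})^k$. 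Hence $\E[G_s[P;\bY]] = M^s(pq)^{s/4}\sum_{i=1}^q P(\sigma_i^2(\widetilde{\bA}))$ and, since $\|\bA\|_s^s = M^s(pq)^{s/4}\sum_{i=1}^q \psi_s(\sigma_i^2(\widetilde{\bA}))$, the bias is bounded as
\[
 \bigl|\E[G_s[P;\bY]] - \|\bA\|_s^s\bigr| \le M^s(pq)^{s/4}\sum_{i=1}^q \bigl|P(\sigma_i^2(\widetilde{\bA})) - \psi_s(\sigma_i^2(\widetilde{\bA}))\bigr| \le M^s(pq)^{s/4}\, q\, |\psi_s - P|_{\infty,[0,1]}\,,
\]
which is the first term on the right-hand side. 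Then I would control the stochastic fluctuation by $\E[|G_s - \E G_s|] \le \sqrt{\Var(G_s)}$ and apply Jensen/triangle inequality to combine the two pieces.

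The heart of the proof is bounding $\Var(G_s[P;\bY]) = M^{2s}(pq)^{s/2}\Var\bigl(\sum_{k=1}^K a_k \widetilde{U}_k\bigr) \le M^{2s}(pq)^{s/2}\, K \sum_{k=1}^K a_k^2 \Var(\widetilde{U}_k)$ by Cauchy–Schwarz, and then invoking Theorem~\ref{prp:risk_U_k} on each $\Var(U_k)$. After dividing by $(M(pq)^{1/4})^{4k}$, the bound $\Var(U_k)\le c^k[(pq)^k + p\|\bA\|_{4k-4}^{4k-4} + \|\bA\|_{4k-2}^{4k-2} + p^k k^{3k} + k^{6k}]$ turns into
\[
 \Var(\widetilde{U}_k) \le c^k\Bigl[\tfrac{1}{M^{4k}} + \tfrac{p\,\|\widetilde{\bA}\|_{4k-4}^{4k-4}}{M^4(pq)} + \tfrac{\|\widetilde{\bA}\|_{4k-2}^{4k-2}}{M^2(pq)^{1/2}} + \tfrac{p^k k^{3k}}{M^{4k}(pq)^k} + \tfrac{k^{6k}}{M^{4k}(pq)^k}\Bigr]\,,
\]
where I used homogeneity $\|\bA\|_r^r = (M(pq)^{1/4})^r\|\widetilde{\bA}\|_r^r$. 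Since $\sigma_i^2(\widetilde{\bA})\le 1$, we have $\|\widetilde{\bA}\|_{4k-4}^{4k-4}\le q$ and $\|\widetilde{\bA}\|_{4k-2}^{4k-2}\le q$, so the middle two terms are at most $c^k q/(M^4 (pq))$ and $c^k q/(M^2(pq)^{1/2})$; using $pq\ge q$ and $(pq)^{1/2}\ge \sqrt{q}$ and collecting everything, the dominant contributions are of order $c^k\bigl[M^{-4k} + M^{-4}(p/q)^{-1} + M^{-2}\bigr]$ plus the $k$-dependent terms $c^k k^{3k}(p/(pq))^k M^{-4k} = c^k k^{3k} q^{-k} M^{-4k}$ and $c^k k^{6k}(pq)^{-k}M^{-4k}$, which — because $K\le q^{1/3}$ forces $k^{3k}\le q^k$ so $k^{3k}q^{-k}\le 1$ and similarly $k^{6k}(pq)^{-k}$ stays bounded — are all absorbed into $c^k[M^{-4}\vee M^{-2}]$. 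Multiplying back by $M^{2s}(pq)^{s/2}$, taking the square root, and pulling out $M^s(pq)^{s/4}$ yields a stochastic term of the form $M^s(pq)^{s/4}\cdot \frac{c^K}{M^2\wedge[M(p/q)^{1/4}]}\bigl(\sum_k a_k^2\bigr)^{1/2}\sqrt K$, and bounding $\bigl(\sum_k a_k^2\bigr)^{1/2}\sqrt K \le (K+1)|a|_\infty$, which can be absorbed into the constant $c^K$ (at the cost of enlarging $c$), gives exactly the second term in \eqref{eq:upper_risk_G_P}.

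The main obstacle is the careful bookkeeping in the variance step: one must track how each of the five terms in Theorem~\ref{prp:risk_U_k} scales after the $(M(pq)^{1/4})^{-4k}$ rescaling and the summation over $k\le K$, and verify that the combinatorial factors $k^{3k}$ and $k^{6k}$ are tamed by the hypothesis $K\le q^{1/3}$ precisely so that they do not blow up the $c^K$ constant beyond an exponential-in-$K$ quantity — the exponent structure $M^{-4k}$ versus $M^{-4}$ and the split $M^2\wedge[M(p/q)^{1/4}]$ has to be matched term by term (the $M^{-4k}$ geometric-type terms being controlled by $M^{-4}$ when $M>1$, and the rank-dependent term producing the $M(p/q)^{1/4}$ branch). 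A secondary but routine point is justifying the bias computation rigorously: $\E[\widetilde{U}_k] = \|\widetilde{\bA}\|_{2k}^{2k}$ follows directly from the unbiasedness of $U_k$ established around \eqref{eq:definition_u_q}, and linearity of expectation handles the rest, so no additional integrability argument beyond square-integrability of $U_k$ (guaranteed by Theorem~\ref{prp:risk_U_k}) is needed.
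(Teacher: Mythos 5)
Your proposal follows essentially the same route as the paper's proof: bias controlled by unbiasedness of $U_k$ plus the sup-norm approximation error on $[0,1]$, fluctuation controlled by Cauchy--Schwarz over the $K$ coefficients and Theorem~\ref{prp:risk_U_k}, with $K\le q^{1/3}$ absorbing the $k^{3k}$ and $k^{6k}$ factors and $\|\bA\|_{\infty}\le M(pq)^{1/4}$ converting the Schatten-norm terms into the two branches of the minimum. The only caveat is a small bookkeeping slip in your intermediate display (the correctly rescaled middle terms are of order $c^k/M^4$ and $c^k\sqrt{q/p}/M^2$, not $c^k q/(M^4 pq)$ and a bare $c^k/M^2$), but your final attribution of the $M^2$ and $M(p/q)^{1/4}$ branches to the right terms shows the argument closes as in the paper.
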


In order to optimize the bound in~\eqref{eq:upper_risk_G_P}, we want to choose $P$ to minimize the approximation error $|\psi_s-P|_{\infty,[0,1]}$ while having coefficients $|a|_{\infty}$ not too large.

\subsubsection{Nuclear Norm $(s=1)$}

To ease the presentation, we first focus on the nuclear norm, the general case being handled below. 
In general, devising the best polynomial approximation of a function on a compact is a difficult problem. Fortunately, studying the best polynomial approximation of the square root function over $[0,1]$ boils down to studying the best polynomial approximation of the absolute function on $[-1,1]$, the latter problem being well studied in the approximation theory literature~\cite{rivlin1990chebyshev}. Indeed, given a polynomial $Q$, and any $x\in [-1,1]$, we have $||x|-Q(x^2)|= |\sqrt{x^2}- Q(x^2)|$ so that  $\sup_{x\in [-1,1]}||x|- Q(x^2) |= \sup_{x\in [0,1]}|\sqrt{x}-Q(x)|$.

In particular, an explicit and nearly optimal approximation of the absolute function by a degree $K$ Polynomial is given by its Chebychev's expansion. 
The  Chebychev polynomial (of the first kind) of degree $k$ is defined by
\[
 L_k(x)=\sum_{j=0}^{\lfloor k/2\rfloor}(-1)^{j}\frac{k}{k-j}\binom{j}{k-j}2^{k-2j-1}x^{k-2j}\ .
\]
The degree $2K$ expansion of the absolute function in Chebychev basis is  $B_K(x)= \frac{2}{\pi}L_0(x) + \frac{4}{\pi}\sum_{k=1}^{K}(-1)^{k+1}\frac{L_{2k}(x)}{4k^2-1}$ which is a symmetric polynomial of degree $2K$. 
Rewriting this polynomial in the canonical form as $B_{K}(x)= \sum_{k=0}^{K}a^*_k x^{2k}$, we build the polynomial $P^*_K(x)= \sum_{k=0}^{K}a^*_k x^{k}$ so that $P^*_K(x^2)= B_K(x)$. Relying on Bernstein's~\cite{bernstein1913valeur} approximation results, we then obtain the following.

In the sequel, we write $\cP_n$ for the space of polynomials of degree at most $n$. Given a collection $\mathcal{F}$ of functions, an interval $I$ and a function $g:I\mapsto \mathbb{R}$, we denote
\beq \label{eq:definition_best_approximation}
E_{\cF}[g,I]= \inf_{f\in \cF}|f-g|_{\infty,I}\ ,
\eeq
for the best uniform approximation of $g$ by $\cF$. 

\begin{lem}\label{lem:best_approximation_psi}
 Consider any positive integer $K$. The modified Chebychev polynomial $P^*_K$ satisfies
 \[
 |\psi_1 -P^*_K|_{\infty, [0,1]}\leq \frac{2}{\pi(2K+1)}\ ,
 \]
 and $|a^*|_{\infty}\leq 2^{3K}$.  Besides, no degree $K$ polynomial is able to achieve a much lower error. 
 \[
  \lim_{n\rightarrow \infty}2n E_{\cP_n}\left(\psi_1;[0,1]\right)= \beta^*\ ,
 \]
where $\beta^*\approx 0.28$ is the Bernstein constant.  
\end{lem}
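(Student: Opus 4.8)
The plan is to establish the three claims of Lemma~\ref{lem:best_approximation_psi} separately, all by reducing statements about $\psi_1(x)=\sqrt{x}$ on $[0,1]$ to statements about $|x|$ on $[-1,1]$ via the substitution $x\mapsto x^2$ already introduced before the lemma.

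\textbf{Approximation error.} First I would recall the classical Chebychev expansion of the absolute value: $|x|= \frac{2}{\pi}+\frac{4}{\pi}\sum_{k=1}^{\infty}(-1)^{k+1}\frac{L_{2k}(x)}{4k^2-1}$ uniformly on $[-1,1]$, so that $B_K$ is the $2K$-th partial sum. Using $|L_{2k}|_{\infty,[-1,1]}\leq 1$ and summing the tail of the alternating series $\sum_{k>K}\frac{4}{\pi(4k^2-1)}$, I would bound $||x|-B_K(x)|_{\infty,[-1,1]}\leq \frac{4}{\pi}\sum_{k>K}\frac{1}{4k^2-1}= \frac{4}{\pi}\sum_{k>K}\frac12\big(\frac{1}{2k-1}-\frac{1}{2k+1}\big)= \frac{2}{\pi(2K+1)}$, the telescoping sum giving the exact constant. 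Since $P^*_K(x^2)=B_K(x)$, the identity $|\psi_1(x^2)-P^*_K(x^2)|=||x|-B_K(x)|$ for $x\in[-1,1]$ and surjectivity of $x\mapsto x^2$ from $[-1,1]$ onto $[0,1]$ yield $|\psi_1-P^*_K|_{\infty,[0,1]}\leq \frac{2}{\pi(2K+1)}$.

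\textbf{Coefficient bound.} For $|a^*|_\infty\leq 2^{3K}$ I would expand each $L_{2k}$ in the monomial basis using the displayed formula for $L_k$, whose coefficients are bounded in absolute value by $\binom{j}{k-j}2^{k-2j-1}\cdot\frac{k}{k-j}\leq 2^{k}$ crudely (or, more carefully, $\sum_j$ of the absolute values of the coefficients of $L_k$ is at most $L_k(1)\cdot$something; in fact $\sum|{\rm coeff}(L_k)|\le 3^k$ is a standard bound, but $2^{2k}$ suffices here). Then $a^*_k$ is a sum over the relevant Chebychev contributions of these monomial coefficients weighted by $\frac{4}{\pi(4m^2-1)}\le \frac{4}{\pi}$, and there are at most $K$ terms, so a term-by-term triangle inequality gives $|a^*_k|\leq K\cdot\frac{4}{\pi}\cdot 2^{2K}\leq 2^{3K}$ for all $K\ge 1$. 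This is the step requiring the most care with constants, but it is only bookkeeping: any clean bound on the monomial coefficients of $L_{2k}$ combined with the $1/(4k^2-1)$ decay closes it, and the exponent $3K$ is generous enough to absorb polynomial factors in $K$.

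\textbf{Lower bound / optimality.} For the final limit I would simply invoke Bernstein's theorem~\cite{bernstein1913valeur}: $\lim_{n\to\infty} n\, E_{\cP_n}(|x|;[-1,1]) = \beta^*$ with $\beta^*\approx 0.28$. Then I would argue that $E_{\cP_n}(\psi_1;[0,1])$ and $E_{\cP_{2n}}(|x|;[-1,1])$ coincide: given any $Q\in\cP_n$, the even polynomial $x\mapsto Q(x^2)\in\cP_{2n}$ approximates $|x|$ on $[-1,1]$ with the same sup-error as $Q$ approximates $\psi_1$ on $[0,1]$; conversely, given any $R\in\cP_{2n}$ approximating $|x|$, its symmetrization $\tfrac12(R(x)+R(-x))$ is even, does no worse (since $|x|$ is even), hence equals $\tilde Q(x^2)$ for some $\tilde Q\in\cP_n$. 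Therefore $E_{\cP_n}(\psi_1;[0,1])=E_{\cP_{2n}}(|x|;[-1,1])$, and multiplying by $2n$ and letting $n\to\infty$ gives $\lim_{n\to\infty}2n\,E_{\cP_n}(\psi_1;[0,1])=\lim_{m\to\infty} m\,E_{\cP_m}(|x|;[-1,1])=\beta^*$ along even $m$, which is the stated conclusion. The only subtlety is checking the symmetrization does not increase the error, which is immediate from $|x|=|{-x}|$ and the triangle inequality, so there is no real obstacle here beyond citing Bernstein correctly.
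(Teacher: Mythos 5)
Your proposal is correct and follows essentially the same route as the paper: reduce $\sqrt{x}$ on $[0,1]$ to $|x|$ on $[-1,1]$ via $x\mapsto x^2$, bound the error of the Chebychev partial sum, and use the symmetrization argument plus Bernstein's asymptotic for the lower bound. The only difference is that where the paper cites Bernstein's original paper for the $\tfrac{2}{\pi(2K+1)}$ bound and Lemma 2 of Cai--Low for $|a^*|_\infty\leq 2^{3K}$, you supply the (correct) telescoping tail computation and coefficient bookkeeping directly.
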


Plugging this bound in Proposition~\ref{lem:risk_G_P} and choosing well the degree $K$ leads to the following result.

\begin{prp}\label{prp:risk_approximation}
Fix any $M>1$. Choose $K^*= \lfloor c \log(q)\rfloor$  for a suitable numerical constant $c$. If $\sigma_1(\bA)\leq M(pq)^{1/4}$, then the estimator  $G_1[P^*_{K^*}; \bY]$ satisfies
\beq\label{eq:upper_risk_oracle}
 \mathbb{E}\left[|G_1[P^*_{K^*}; \bY]- \|\bA\|_1|\right]\lesssim  M  \frac{q}{\log(q)}  (pq)^{1/4} \ . 
\eeq
\end{prp}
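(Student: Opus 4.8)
\textbf{Proof plan for Proposition~\ref{prp:risk_approximation}.}

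The plan is to apply Proposition~\ref{lem:risk_G_P} with the modified Chebychev polynomial $P = P^*_{K}$ for a degree $K$ to be optimized, and to use the two estimates from Lemma~\ref{lem:best_approximation_psi}: the approximation error $|\psi_1 - P^*_K|_{\infty,[0,1]}\leq \frac{2}{\pi(2K+1)}$ and the coefficient bound $|a^*|_\infty\leq 2^{3K}$. First I would check the hypothesis of Proposition~\ref{lem:risk_G_P}: we need $K\leq q^{1/3}$, which holds once $K = \lfloor c\log(q)\rfloor$ and $q$ is large enough (for small $q$ the claimed rate is vacuous up to adjusting constants), and $\sigma_1(\bA)\leq M(pq)^{1/4}$, which is assumed. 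Plugging $s=1$ into~\eqref{eq:upper_risk_G_P} gives
\beq\label{eq:plan_plug}
\mathbb{E}\left[|G_1[P^*_K;\bY]-\|\bA\|_1|\right]\leq M(pq)^{1/4}\left[q\cdot\frac{2}{\pi(2K+1)} + \frac{c_1^{K}}{M^2\wedge[M(p/q)^{1/4}]}\, 2^{3K}\right]\ ,
\eeq
where $c_1$ is the numerical constant from Proposition~\ref{lem:risk_G_P}.

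The main point is then to balance the two terms inside the bracket by choosing $K$. The first term decreases like $q/K$; the second term grows like $(c_1 2^3)^{K} = c_2^K$ with $c_2 = 8c_1$ a fixed numerical constant, divided by a quantity that is at least $1$ (since $M>1$, we have $M^2\wedge[M(p/q)^{1/4}] \geq M \wedge (p/q)^{1/4}$; in the regime $p\geq q$ this is at least... actually one should just bound the denominator below by a constant, or keep it and note it only helps). To make the second term negligible compared to $q/\log q$ it suffices to take $K$ small enough that $c_2^K \leq q$, i.e. $K \leq \log q / \log c_2$; choosing $K^* = \lfloor c\log q\rfloor$ with $c = 1/(2\log c_2)$ makes the second term at most $M(pq)^{1/4}\sqrt{q}$, which is of smaller order than $M(pq)^{1/4} q/\log q$. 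With this choice the first term is $M(pq)^{1/4}\cdot \frac{2q}{\pi(2K^*+1)} \lesssim M(pq)^{1/4}\, q/\log q$, and~\eqref{eq:upper_risk_oracle} follows after collecting constants.

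The only real obstacle is the bookkeeping around the constant $c$: it must be chosen small enough relative to the hidden numerical constant $c_1$ from Proposition~\ref{lem:risk_G_P} (so that the variance-type term $c_1^K 2^{3K}$ stays dominated by $q$) while being a genuine numerical constant independent of $p,q,M$. One must also handle the edge cases where $q$ is bounded (so $\log q$ does not dominate) by absorbing them into the implied constant, and note that the denominator $M^2\wedge[M(p/q)^{1/4}]$ is bounded below by $1$ since $M>1$ and $p\geq q$, so it can simply be dropped from the upper bound. Everything else is a direct substitution.
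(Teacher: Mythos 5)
Your proposal is correct and follows essentially the same route as the paper's proof: plug the Chebychev approximation error $\tfrac{2}{\pi(2K+1)}$ and the coefficient bound $2^{3K}$ from Lemma~\ref{lem:best_approximation_psi} into Proposition~\ref{lem:risk_G_P}, then pick $K^*=\lfloor c\log(q)\rfloor$ with $c$ small enough that $(8c_1)^{K^*}\leq \sqrt{q}$, so the variance-type term is $O(M(pq)^{1/4}\sqrt{q})$ and the bias term gives the $Mq(pq)^{1/4}/\log(q)$ rate. The bookkeeping you flag (the constant $c$ relative to $c_1$, dropping the denominator $M^2\wedge[M(p/q)^{1/4}]\geq 1$, and small $q$) matches exactly what the paper does.
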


Let us compare this risk bound with that of   the plug-in estimator $T_1$ (Proposition~\ref{prp:upper_risk_nuclear_covariance}). When $M$ is considered as a constant, the risk in~\eqref{eq:upper_risk_oracle} improves by a logarithmic factor $\log(q)$ the bound for $T_1$. As will be established in Section~\ref{sec:lower_general} below, this rate $q (pq)^{1/4}/\log(q)$ turns out to be optimal.

Besides, it is possible to empirically check the hypothesis $\sigma_1(\bA)\leq M(pq)^{1/4}$ using the estimator $U_{\infty}=[\sigma^2_1(\bY)-p]^{1/2}_+$ of $\|\bA\|_{\infty}$ since $\E[|U_{\infty}-\sigma_1(\bA)|]\lesssim (pq)^{1/4}$. Hence, if $U_{\infty}$ is too large, then the modified plug-in estimator $T_1$ is perhaps preferable to  $G_1[P^*_{K^*}; \bY]$.

\subsubsection{General case $(s\geq 1)$}

We turn to the general case $s\geq 1$. In view of the discussion for the nuclear norm, we first have to quantify the best polynomial approximation on $[-1,1]$ of the functions $x\mapsto x^{s}$.  The following result dates from Bernstein~\cite{bernstein1938meilleure} and can also be found in \cite[][Sec.7.2.1]{timan2014theory}.
 \begin{lem}\label{lem:approximation_theory_polynom}
  There exists a function $\mu:\mathbb{R}^+\setminus \{0\}\mapsto \mathbb{R}^+$ such that, for any $s>0$, which is not an even integer, we have 
  \[
  \lim_{n\rightarrow \infty} n^{s}E_{\cP_n}\left(|x|^{s};[-1,1]\right)= \mu(s) > 0\ .
  \]
 \end{lem}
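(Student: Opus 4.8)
\textbf{Proof plan for Lemma~\ref{lem:approximation_theory_polynom}.}

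The statement is a classical theorem of Bernstein, so the plan is essentially to cite and unpack the proof rather than to reinvent it; nonetheless here is the route I would follow if a self-contained argument were wanted. The key fact to exploit is the scaling/self-similarity of the extremal problem: for the function $g_s(x)=|x|^s$ on $[-1,1]$, one has the exact homogeneity $g_s(tx)=t^s g_s(x)$ for $t>0$, and the best approximation error $E_{\cP_n}(|x|^s;[-1,1])$ is itself a (nonincreasing, subadditive-type) sequence in $n$. The standard approach, due to Bernstein, is to pass to the half-line: by a change of variables one relates $E_{\cP_n}(|x|^s;[-1,1])$ to the error of approximating $e^{-|t|}$ (or $|t|^s$ rescaled) on an expanding interval, and to show that $n^s E_{\cP_n}(|x|^s;[-1,1])$ has a finite positive limit one combines (i) an upper bound via an explicit construction (e.g. a Chebyshev-type or integral-kernel construction, exactly as $B_K$ does the job for $s=1$ in Lemma~\ref{lem:best_approximation_psi}), giving $E_{\cP_n}\lesssim_s n^{-s}$, and (ii) a matching lower bound $E_{\cP_n}\gtrsim_s n^{-s}$, which is where the hypothesis that $s$ is \emph{not an even integer} enters: if $s$ were an even integer then $|x|^s=x^s$ is itself a polynomial and the error is $0$, whereas for all other $s$ the function $|x|^s$ fails to be real-analytic (or even $C^\infty$) at the origin, and the local singularity at $0$ forces the error to decay no faster than $n^{-s}$.

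For the existence of the limit (not merely $\Theta(n^{-s})$ behaviour) I would invoke the subadditivity-plus-scaling structure: set $a_n = E_{\cP_n}(|x|^s;[-1,1])$ and show $a_{m+n}$ is controlled by $a_m$ and $a_n$ under the natural rescalings, so that $n^s a_n$ converges by a Fekete-type lemma; the positivity of the limit $\mu(s)$ then follows from the lower bound in (ii). The function $\mu$ is defined simply by $\mu(s):=\lim_{n\to\infty} n^s E_{\cP_n}(|x|^s;[-1,1])$, and one checks it is finite and strictly positive for every $s\in\mathbb{R}^+$ that is not an even integer; its precise value (a Bernstein-type constant generalizing $\beta^*$) is not needed here.

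The main obstacle is the lower bound (ii): proving that no degree-$n$ polynomial can approximate $|x|^s$ faster than order $n^{-s}$. The clean way to get this is a duality / Chebyshev-alternation argument localized near $x=0$: one uses that a good polynomial approximant must, by Markov/Bernstein inequalities for derivatives of polynomials, have controlled high-order derivatives on $[-1,1]$, but $|x|^s$ has a derivative of order $\lceil s\rceil$ blowing up like $|x|^{s-\lceil s\rceil}$ near $0$; quantifying this mismatch via an appropriate linear functional (a divided difference or a suitable signed measure annihilating $\cP_n$) yields the bound. Since all of this is exactly the content of \cite{bernstein1938meilleure} and \cite[Sec.~7.2.1]{timan2014theory}, in the paper I would simply state the lemma with those citations and move on, which is what the surrounding text already does.
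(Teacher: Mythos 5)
Your proposal is correct and matches the paper's treatment: the paper gives no proof of this lemma, attributing it entirely to Bernstein~\cite{bernstein1938meilleure} and to~\cite[Sec.7.2.1]{timan2014theory}, which is exactly the conclusion you reach. Your sketch of how a self-contained argument would go (upper bound by explicit construction, lower bound from the non-smoothness of $|x|^s$ at the origin when $s$ is not an even integer, and a scaling argument for existence of the limit) is a reasonable summary of the classical route, but since both you and the authors ultimately defer to the same citations, there is nothing further to compare.
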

 Given an integer $K$, write $P^*_{K;s}$ a polynomial such that $P^*_{K;s}(x^2)$ achieves $E_{\cP_{2K}}\left(|x|^{s};[-1,1]\right)$. As argued above in the specific case $s=1$, $P^{*}_{K,s}$ approximates well the function $x^{s/2}$ on $[0,1]$. This leads us to the following result.

\begin{prp}\label{prp:risk_approximation_general_s}
Fix any $M>1$ and any $s\geq 1$. Choose $K^*= \lfloor c \log(q)\rfloor$  for a suitable numerical constant $c$. If $\sigma_1(\bA)\leq M(pq)^{1/4}$, then the polynomial $G_s[P^*_{K^*;s}; \bY]$ satisfies
\begin{eqnarray}\label{eq:upper_risk_oracle_s}
 \mathbb{E}\left[|G_s[P^*_{K^*;s}; \bY]- \|\bA\|^s_s|\right]&\lesssim & M^{s}  \frac{q}{\log^{s}(q)}  (pq)^{s/4} \ ; \\
 \mathbb{E}\left[|G_s[P^*_{K^*;s}; \bY]^{1/s}- \|\bA\|_s|\right]&\lesssim & M  \frac{q^{1/s}}{\log(q)}  (pq)^{1/4} \ . 
\end{eqnarray}
\end{prp}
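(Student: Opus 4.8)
The plan is to apply the generic risk bound of Proposition~\ref{lem:risk_G_P} to the extremal polynomial $P=P^*_{K;s}$, to control the two quantities $|\psi_s-P|_{\infty,[0,1]}$ and $|a|_{\infty}$ entering~\eqref{eq:upper_risk_G_P} using respectively Bernstein's estimate (Lemma~\ref{lem:approximation_theory_polynom}) and a crude bound on the monomial coefficients of a polynomial bounded on $[-1,1]$, and then to optimize over the degree $K$. The second inequality is deduced from the first by a standard concavity argument.

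First I would control the two ingredients for $P=P^*_{K;s}$. Since $|x|^{s}$ is even, its best degree-$2K$ approximant on $[-1,1]$ is itself even by uniqueness of the extremal polynomial, so $P^*_{K;s}$ is a genuine degree-$K$ polynomial with $P^*_{K;s}(x^2)$ realizing $E_{\cP_{2K}}(|x|^{s};[-1,1])$; exactly as in the case $s=1$ this gives $|\psi_s-P^*_{K;s}|_{\infty,[0,1]}=E_{\cP_{2K}}(|x|^{s};[-1,1])$, which by Lemma~\ref{lem:approximation_theory_polynom} is at most $c(s)K^{-s}$ once $K$ is large (the finitely many small $K$ are absorbed into constants). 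For the coefficients, I would observe that $|P^*_{K;s}(x^2)|_{\infty,[-1,1]}\le |\,|x|^{s}\,|_{\infty,[-1,1]}+E_{\cP_{2K}}(|x|^{s};[-1,1])\le 2$, and that any polynomial of degree $n$ bounded by $2$ on $[-1,1]$ has, after expansion in Chebychev polynomials (whose coefficients are then bounded by a universal constant via the usual integral formula) and reexpansion in the monomial basis (each $L_j$ contributing monomial coefficients of size at most $3^{j}$), all its monomial coefficients bounded by $C^{n}$ for a numerical $C$; hence $|a^*|_{\infty}\le C^{2K}=(C^{2})^{K}$. This coefficient estimate is the step I expect to require the most care: unlike the case $s=1$, Lemma~\ref{lem:approximation_theory_polynom} furnishes the extremal polynomial only abstractly, so one cannot read off the coefficients from an explicit Chebychev expansion and must instead argue from boundedness on $[-1,1]$.

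Next I would plug these into Proposition~\ref{lem:risk_G_P}, which applies since $\sigma_1(\bA)\le M(pq)^{1/4}$ is precisely the hypothesis and since we will choose $K^*\le q^{1/3}$. Using $p\ge q$, so that $M^{2}\wedge[M(p/q)^{1/4}]\ge M\ge 1$, this yields
\[
\E\cro{|G_s[P^*_{K;s};\bY]-\|\bA\|_s^{s}|}\ \le\ M^{s}(pq)^{s/4}\left[\,c(s)\,\frac{q}{K^{s}}\ +\ \frac{(cC^{2})^{K}}{M}\,\right]\ .
\]
Choosing $K^*=\lfloor c_0\log q\rfloor$ with $c_0>0$ small enough that $c_0\log(cC^{2})<1$, the second bracketed term is at most $q^{1-\delta}$ for some $\delta>0$ and is therefore negligible compared with $q/(\log q)^{s}$, while the first term is $\asymp q/(\log q)^{s}$ with constant depending on $s$ and $c_0$; the constraint $K^*\le q^{1/3}$ holds since $\log q\le q^{1/3}$. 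This proves the first inequality.

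Finally, the second inequality follows from the first by concavity. Taking the norm estimator to be $(G_s[P^*_{K^*;s};\bY])_+^{1/s}$ (the positive part only decreases the error since $\|\bA\|_s^{s}\ge 0$ and $x\mapsto (x)_+$ is a contraction toward any nonnegative point), I would use the elementary inequality $|x^{1/s}-y^{1/s}|\le |x-y|^{1/s}$ for $x,y\ge 0$ and $s\ge 1$, together with Jensen's inequality for the concave map $t\mapsto t^{1/s}$:
\[
\E\cro{|(G_s)_+^{1/s}-\|\bA\|_s|}\ \le\ \E\cro{|G_s-\|\bA\|_s^{s}|^{1/s}}\ \le\ \left(\E\cro{|G_s-\|\bA\|_s^{s}|}\right)^{1/s}\ \lesssim\ M\,\frac{q^{1/s}}{\log(q)}\,(pq)^{1/4}\ ,
\]
which is the claimed bound.
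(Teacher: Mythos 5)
Your proposal is correct and follows essentially the same route as the paper: apply Proposition~\ref{lem:risk_G_P} to $P^*_{K;s}$, bound the approximation error by $c(s)K^{-s}$ via Lemma~\ref{lem:approximation_theory_polynom} (after reducing $E_{\cP_K}[x^{s/2};[0,1]]$ to $E_{\cP_{2K}}[|x|^s;[-1,1]]$ by symmetry), bound $|a|_\infty$ by $c^K$, take $K^*\asymp\log q$, and deduce the second bound from Jensen and $|x^{1/s}-y^{1/s}|\le|x-y|^{1/s}$. The one substep you handle differently is the coefficient bound, which you correctly identified as the delicate point: the paper invokes Theorem E of Qazi--Rahman (each coefficient of a polynomial bounded by $1$ on $[-1,1]$ is dominated by the corresponding Chebychev coefficient, hence $|a|_\infty\le 2^{3K}$ times a constant), whereas you expand the bounded extremal polynomial in the Chebychev basis via the integral formula and reexpand in monomials; both yield $|a|_\infty\le C^K$, and yours is self-contained at the cost of a slightly worse constant. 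Your explicit insertion of the positive part before taking the $1/s$-th power is a small tidiness improvement over the paper, which applies $|x^{1/s}-y^{1/s}|\le|x-y|^{1/s}$ without addressing possible negativity of the estimator.
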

As for $s=1$, the estimator $G_s[P^*_{K^*;s}; \bY]$ improves the rate by a factor $1/\log^{s}(q)$ when the $\sigma_1(\bA)$ is not too large.
Building the estimator $G_s[P^*_{K^*;s}; \bY]$ requires to compute the best approximation polynomial for the function $x\mapsto |x|^{s}$. Still, this can be computed offline and once and for all using the algorithm of Remez~\cite{remez1934determination}. See also the discussion in~\cite{pachon2009barycentric} and in~\cite{jiao2015minimax}.

\subsection{Minimax lower bound}\label{sec:lower_general}

We now prove that the rate $\frac{q}{\log^s(q)\vee 1} (pq)^{s/4}$ is optimal for estimating $\|\bA\|_{s}^{s}$ as long as $s\geq 1$ is not an even integer. Since the pioneering work of Lepski et al.~\cite{lepski1999estimation} and Cai and Low~\cite{cailow2011}, Le Cam's method with fuzzy hypotheses has turned out to be fruitful for establishing tight minimax lower bounds for non-smooth functional estimation problems. See for instance  the discussion in~\cite{polyanskiy2019dualizing} and the survey~\cite{wu2021polynomial}.
Although we follow the same general approach for Schatten norm estimation, the computation of the $\chi^2$ distance between the corresponding distributions turns out to be quite challenging. Indeed, we consider the singular value decomposition $\bA= \bU^T \bD \bV$ of $\bA$ and fix uniform prior distribution on $\bU$ and $\bV$ together with some suitable prior distributions on $\bD$. The resulting $\chi^2$ distance involves complex spherical integrals that we are unable to work out. For this reason, we  introduce a sequence of priors and use some data-processing inequalities to come back to a functional estimation problem for a rank-one matrix which turns out to be tractable.

In the sequel, $\cP^{sym}_{k}$ denote the space of the symmetric polynomials of degree less or equal to $k$.

\begin{thm}\label{thm:lower_general}
There exists a numerical constant $c>0$ such that the following holds for all positive integers $4\leq q\leq p$ and for all continuous functions $f:\mathbb{R}_+\mapsto \mathbb{R}$.  Writing  $I_0=[0,0.125(pq)^{1/4}]$ and $k^*= \lceil \log(4\lceil q/2\rceil )\rceil$, we have 
\beq\label{eq:lower_general_risk}
 \inf_{\widehat{T}}\sup_{\bA: \ \|\bA\|_{\infty}\leq 0.125(pq)^{1/4}}\E\left[\big|\widehat{T}- f_\sigma(\bA)\big|\right]\geq c\left[q E_{\cP^{sym}_{2k^*}}[f;I_0]]- q^{1/2}\|f\|_{\infty,I_0}\right]_+\ ,
\eeq
where we recall that $f_{\sigma}(\bA)= \sum_{i=1}^{q}f(\sigma_i(\bA))$.
\end{thm}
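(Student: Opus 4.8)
The plan is to use Le Cam's two-point method with fuzzy hypotheses, reducing — via a chain of data-processing inequalities — to a scalar estimation problem. First I would reduce to estimating $f_\sigma$ over rank-one matrices: fix a prior on $\bA$ of the form $\bA = t\, \bu\bv^T$ where $\bu \in \R^p$, $\bv\in \R^q$ are uniformly distributed on the spheres and $t$ is a random scalar supported on $I_0' = [0, 0.125(p)^{1/4}q^{-1/4}]$ scaled appropriately so that $\sigma_1(\bA) = t\,|\bu|_2|\bv|_2 \le 0.125(pq)^{1/4}$ with high probability (or after truncation). The point is that for such matrices $f_\sigma(\bA) = f(\sigma_1(\bA)) + (q-1)f(0)$, so estimating $f_\sigma$ is equivalent, up to the additive constant $(q-1)f(0)$, to estimating the scalar functional $f(\sigma_1(\bA))$. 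To get the factor $q$ in the lower bound one instead uses a \emph{block-diagonal} construction: tile the $p\times q$ matrix with roughly $\lceil q/2\rceil$ independent rank-one blocks of size $(p/\lceil q/2\rceil)\times 2$ (or simply place independent signals on disjoint rows/columns), so that $f_\sigma(\bA) = \sum_{b} f(t_b) + (\text{const})$ is a sum of $\asymp q$ i.i.d. contributions; the risk then tensorizes and the per-block lower bound gets multiplied by $q$, while the fluctuation term contributes the $-\sqrt{q}\|f\|_{\infty,I_0}$ correction by a variance/central-limit argument.

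Next, for a single block I would set up the fuzzy-hypothesis pair. By the standard moment-matching principle behind Lepski et al.\ and Cai–Low, the best lower bound for estimating $f(t)$ from a noisy rank-one observation is governed by $E_{\cP^{sym}_{2k^*}}[f; I_0]$: one chooses two priors $\pi_0, \pi_1$ on the signal strength $t$ (equivalently on $\tau = t^2$, which is why the relevant polynomial class is the \emph{even/symmetric} one $\cP^{sym}_{2k^*}$, matching moments of $\tau$ up to degree $k^*$) whose first $k^*$ moments agree but whose expected values of $f$ differ by $\gtrsim E_{\cP^{sym}_{2k^*}}[f;I_0]$; the gap on the functional is, by duality of best polynomial approximation (the same duality used implicitly in Proposition~\ref{prp:risk_approximation_general_s}), exactly of this order. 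The matched-moment condition forces the two mixture distributions of the \emph{sufficient statistic} to be close; here the sufficient statistic for a rank-one Gaussian signal is (a function of) $\sigma_1(\bY)^2$, and a Taylor/Hermite expansion shows that moments of $\tau$ up to order $k^*$ control the $\chi^2$-divergence between the two mixtures up to a factor decaying like $(\text{noise level})^{-2k^*}$, which is $\le 1$ precisely because $k^* \asymp \log q$ and the signal is confined to $I_0 \asymp (pq)^{1/4}$, i.e. to amplitude $\asymp 1$ in the relevant rescaling. This is where the hypothesis $\|\bA\|_\infty \le 0.125(pq)^{1/4}$ and the choice $k^* = \lceil \log(4\lceil q/2\rceil)\rceil$ enter: the extra $\log$-blowup in the $\chi^2$ bound must be absorbed by the $1/q$ coming from the number of blocks, which is exactly balanced when $k^*$ is logarithmic.

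The hard part — and the place where, as the authors themselves flag, the naive prior fails — is bounding the $\chi^2$-divergence between the two Gaussian mixtures when the prior is placed on the \emph{full} singular value decomposition $\bA = \bU^T\bD\bV$ with Haar $\bU, \bV$: this produces spherical (Harish-Chandra–Itzykson–Zuber-type) integrals that are not tractable in closed form. The workaround I would carry out is the one hinted at: rather than comparing the two SVD-priors directly, introduce a sequence of intermediate priors interpolating between the full-rank construction and a rank-one construction, and apply the data-processing inequality for $\chi^2$ (or total variation) at each step — concretely, use that conditioning on $\bU,\bV$ and then marginalizing can only decrease the divergence, and that projecting the observation onto the leading singular subspace is a (randomized) statistic. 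After this reduction the divergence to be controlled is between two \emph{rank-one} noisy mixtures, for which one can diagonalize and reduce to a one-dimensional Gaussian location mixture: there the $\chi^2$-bound is the classical one, $\chi^2(\pi_0 * \cN, \pi_1 * \cN) \le \sum_{m > k^*} \frac{1}{m!}\big|\E_{\pi_0}[\tau^m] - \E_{\pi_1}[\tau^m]\big|^2 \cdot(\text{something})$, and it vanishes once moments match up to $k^*$ and the signal amplitude is $O(1)$. Assembling the block-tensorization, the per-block Le Cam bound $\gtrsim E_{\cP^{sym}_{2k^*}}[f;I_0]$, and subtracting the $\sqrt{q}\|f\|_{\infty,I_0}$ fluctuation term yields~\eqref{eq:lower_general_risk}, with the outer $[\,\cdot\,]_+$ reflecting that the bound is only meaningful when the approximation error dominates the fluctuations.
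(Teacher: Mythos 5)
Your proposal assembles the right ingredients (fuzzy hypotheses, matching of \emph{even} moments of the singular value, duality with $E_{\cP^{sym}_{2k^*}}[f;I_0]$, a Hoeffding-type fluctuation term of order $\sqrt{q}\|f\|_{\infty,I_0}$), but the mechanism you propose for obtaining the factor $q$ --- tensorization over $\lceil q/2\rceil$ disjoint rank-one blocks of size roughly $(p/\lceil q/2\rceil)\times 2$ or $p\times 2$ --- does not work at the amplitude scale required by the theorem. For a rank-one signal $t\,uv^T$ confined to a $p'\times q'$ block, the moment-matching $\chi^2$ computation produces a series whose $k$-th term scales like $\big(t^4/(p'q')\big)^{k}$ (this is exactly the structure of Lemma~\ref{lem:controle_distance_variation_totale}: the decay comes from $\E[u_1^{2k}]\approx (p')^{-k}$ and $\E[v_1^{2k}]\approx (q')^{-k}$). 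Matching the first $k_0$ moments only removes the first $k_0$ terms; the tail is controlled only if $t^4\lesssim p'q'$, i.e.\ $t\lesssim (p'q')^{1/4}$. With blocks of size $p\times 2$ this forces $t\lesssim p^{1/4}\ll (pq)^{1/4}$, so a per-block signal of amplitude $0.125(pq)^{1/4}$ is trivially detectable and no moment matching can save the construction. Your block argument would therefore only prove the bound with $I_0$ replaced by an interval of length $O\big((p/q)^{1/4}\big)$, which, applied to $f=|x|^s$, falls short of the target rate $q(pq)^{s/4}/\log^s(q)$ by a factor $q^{s/2}$.

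The fix --- which your third paragraph essentially describes but does not connect to the factor $q$ --- is to abandon block tensorization altogether: put the i.i.d.\ priors $\nu_0^{\otimes\lceil q/2\rceil},\nu_1^{\otimes\lceil q/2\rceil}$ directly on the $\lceil q/2\rceil$ singular values of a \emph{single} Haar-rotated matrix $\bA=\bU\bD(\theta)\bV^T$. The factor $q$ then comes from the separation of the sums $\sum_i f(|\theta_i|)$ under the two product priors (Hoeffding giving the $-\sqrt{q}\|f\|_{\infty,I_0}$ correction), while the total-variation distance between the two full mixtures is bounded by $\lceil q/2\rceil$ times the distance between two \emph{rank-one} mixtures living in an ambient space of dimension $p'\times q'$ with $p'\geq p/2$ and $q'\geq q/2$ (Lemma~\ref{lem:reduction}, via a chain of $\lceil q/2\rceil$ interpolating priors each differing in one singular value, orthogonal invariance of the noise, and the triangle inequality). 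Because the reduction keeps the ambient dimension essentially full, the critical amplitude for the rank-one comparison remains of order $(p'q')^{1/4}\asymp (pq)^{1/4}$, the per-step $\chi^2$ bound is $e^{-k_0}$, and the choice $k^*=\lceil\log(4\lceil q/2\rceil)\rceil$ absorbs the factor $\lceil q/2\rceil$ from the interpolation. In short: the interpolation/data-processing device is not just a trick to avoid spherical integrals, it is what lets one keep all $\lceil q/2\rceil$ signal components at amplitude $(pq)^{1/4}$ simultaneously; disjoint blocks cannot do this.
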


The above minimax lower bound is valid for all functionals of the form $\sum_{i=1}^q f(\sigma_i(\bA))$, although the bound uninformative if $f$ is a symmetric polynomial of degree less than $\log(q)$. Next, we apply this result to the functional $\psi_s: x\mapsto |x|^s$.

\begin{cor}\label{prp:lower_general_norm}
 For any $s\geq 1$, which is not an even integer, there exist constants $c_s>0$ and $c'_s>0$ such that for all positive integers $q\leq p$, we have 
\begin{eqnarray}
 \inf_{\widehat{T}}\sup_{\bA: \ \|\bA\|_{\infty}\leq 0.125(pq)^{1/4}}\E\left[\big|\widehat{T}- \|\bA\|^s_s\big|\right]&\geq& c_s\frac{q}{\log^s(q)\vee 1} (pq)^{s/4}\ ; \nonumber \\
 \inf_{\widehat{T}}\sup_{\bA: \ \|\bA\|_{\infty}\leq 0.125(pq)^{1/4}}\E\left[\big|\widehat{T}- \|\bA\|_s\big|\right]&\geq& c'_s\frac{q^{1/s}}{\log^{s}(q)\vee 1} (pq)^{1/4}\ ;\label{eq:lower_nuclear_norm} 
 \end{eqnarray}
\end{cor}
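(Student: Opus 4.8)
The plan is to derive Corollary~\ref{prp:lower_general_norm} directly from Theorem~\ref{thm:lower_general} by specializing the continuous function $f$ to $\psi_s:x\mapsto |x|^s$ and then carefully tracking how the right-hand side of~\eqref{eq:lower_general_risk} scales. First I would recall that $f_\sigma(\bA)=\sum_{i=1}^q \sigma_i^s(\bA)=\|\bA\|_s^s$ for this choice of $f$, so the left-hand side of~\eqref{eq:lower_general_risk} is exactly the quantity we want to lower bound, and the constraint $\|\bA\|_\infty\le 0.125(pq)^{1/4}$ matches. The key is to estimate the two terms $qE_{\cP^{sym}_{2k^*}}[\psi_s;I_0]$ and $q^{1/2}\|\psi_s\|_{\infty,I_0}$ on the interval $I_0=[0,0.125(pq)^{1/4}]$ with $k^*=\lceil\log(4\lceil q/2\rceil)\rceil\asymp \log q$.

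Second I would perform a change of variables to reduce the approximation problem on $I_0$ to the standard interval. Writing $R=0.125(pq)^{1/4}$, a polynomial $P$ of degree $2k^*$ approximating $\psi_s$ on $[0,R]$ corresponds, via $x=Ry$, to a polynomial of the same degree approximating $y\mapsto R^s|y|^s$ on $[0,1]$; since $\psi_s$ on $[0,1]$ is $\sqrt{\cdot}$ composed with $|x|^{s}$ on $[-1,1]$ as already observed in the text before Lemma~\ref{lem:approximation_theory_polynom}, and since the relevant polynomials can be taken even so that restricting to $\cP^{sym}_{2k^*}$ costs nothing, I get
\[
E_{\cP^{sym}_{2k^*}}[\psi_s;I_0]= R^{s}\,E_{\cP_{2k^*}}\big(|x|^s;[-1,1]\big)\ .
\]
Now Lemma~\ref{lem:approximation_theory_polynom} gives $E_{\cP_{2k^*}}(|x|^s;[-1,1])\asymp \mu(s)(2k^*)^{-s}\asymp c_s \log^{-s}(q)$ when $s$ is not an even integer; here I need the two-sided asymptotic, and for small $q$ I would absorb the transient into the constant $c_s$ (which is why the statement has $\log^s(q)\vee 1$ and requires $q\le p$ with $q$ an integer — for bounded $q$ the bound is just $c_s(pq)^{s/4}$, which holds trivially by a rank-one construction or by taking constants small). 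Meanwhile $\|\psi_s\|_{\infty,I_0}=R^s=(0.125)^s(pq)^{s/4}$, so the subtracted term is $q^{1/2}(pq)^{s/4}(0.125)^s$, which is of smaller order than $qE_{\cP^{sym}_{2k^*}}[\psi_s;I_0]\asymp q\,c_s\log^{-s}(q)(pq)^{s/4}$ precisely because $q^{1/2}\log^s(q)=o(q)$. Hence the bracket $[\,\cdot\,]_+$ in~\eqref{eq:lower_general_risk} is bounded below by a constant multiple of $q\log^{-s}(q)(pq)^{s/4}$, giving the first display of the corollary. The second display follows from the first by the elementary inequality $|\widehat{T}^{1/s}-\|\bA\|_s|\le |\widehat T-\|\bA\|_s^s|^{1/s}$ combined with $(x_+)^{1/s}$ being $1$-Lipschitz-like after the $M$-ball restriction — more carefully, using $|a-b|\ge |a^{1/s}-b^{1/s}|^s$ does not immediately transfer expectations, so instead I would argue that any estimator $\widehat S$ of $\|\bA\|_s$ yields $\widehat S^s$ as an estimator of $\|\bA\|_s^s$ and, on the event that $\widehat S$ and $\|\bA\|_s$ are both $O((pq)^{1/4}q^{1/s})$ (which can be enforced by truncation without changing the rate), $|\widehat S^s-\|\bA\|_s^s|\lesssim ((pq)^{1/4}q^{1/s})^{s-1}|\widehat S-\|\bA\|_s|$, so a lower bound of order $q\log^{-s}(q)(pq)^{s/4}$ on estimating $\|\bA\|_s^s$ forces a lower bound of order $q^{1/s}\log^{-s}(q)(pq)^{1/4}$ on estimating $\|\bA\|_s$.

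The main obstacle I anticipate is not the scaling arithmetic but making the reduction in the previous paragraph (from the $\|\bA\|_s$ risk to the $\|\bA\|_s^s$ risk) fully rigorous, since raising an estimator to the $s$-th power is only locally Lipschitz and one must control the tail behaviour of an arbitrary estimator $\widehat T$; the clean fix is the standard one of projecting $\widehat T$ onto the a-priori range $[0,\,C q^{1/s}(pq)^{1/4}]$, which can only decrease the risk and on which $x\mapsto x^s$ has Lipschitz constant $O((q^{1/s}(pq)^{1/4})^{s-1})$. A secondary point requiring care is verifying that the best even-polynomial approximation on a symmetric-in-$x^2$ reformulation genuinely lives in $\cP^{sym}_{2k^*}$ so that Theorem~\ref{thm:lower_general}'s restriction to symmetric polynomials is harmless — this is immediate since $|x|^s$ is even and its best degree-$2k^*$ approximant can be symmetrized without increasing the error.
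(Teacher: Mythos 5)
Your proposal is correct and follows essentially the same route as the paper: apply Theorem~\ref{thm:lower_general} to $f=|x|^s$, rescale $I_0$ to $[-1,1]$ and invoke Lemma~\ref{lem:approximation_theory_polynom} (handling small $q$ separately via the rank-one construction), check that the $q^{1/2}\|f\|_{\infty,I_0}$ term is of lower order, and deduce the bound for $\|\bA\|_s$ from that for $\|\bA\|_s^s$ by truncating the estimator to the a priori range and using the local Lipschitz constant of $x\mapsto x^s$ there — which is exactly the paper's contradiction argument phrased as a direct reduction.
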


In view of Propositions~\ref{prp:risk_approximation_general_s} and Corollary~\ref{prp:lower_general_norm}, the optimal rates for estimating $\|\bA\|^s_s$ is of the order of $q(pq)^{s/4}/\log^{s}(q)$. Regarding the $s$-Schatten norm $\|\bA\|_s$, there is a small logarithmic gap between the upper and lower bound except in the case where $s=1$.

\section{Estimating the whole sequence of singular values}\label{sec:wasserstein}

In this section, we are interested in reconstructing the sequence of singular values $\sigma_1(\bA)$, \ldots, $\sigma_q(\bA)$ in  $l_1$ distance. In other words, we aims at building an estimator $\widehat{\sigma}_1$,\ldots, $\widehat{\sigma}_q$ such that $\E[\sum_{i=1}^q |\sigma_i(\bA) - \widehat{\sigma}_i(\bA)|]$ is as small as possible.

First, we study the simple corrected plug-in estimator $\widehat{\sigma}_i= [(\sigma_i^2(\bY)-p)_+]^{1/2}$ introduced in Section~\ref{sec:general}. Arguing as in the proof of Proposition~\ref{prp:upper_risk_nuclear_covariance}, we show the following result.

\begin{prp}\label{prp:upper_l1_spectrum}
For any integers $p\geq q$ and any $p\times q$ matrix $\bA$, the estimator  $\widehat{\sigma}_i= [(\sigma_i^2(\bY)-p)_+]^{1/2}$ satisfies 
\beq\label{eq:rate_l1}
  \E\left[\sum_{i=1}^{q}|\widehat{\sigma}_i-\sigma_i(\bA)|\right]\lesssim q(pq)^{1/4}\enspace .  
\eeq
\end{prp}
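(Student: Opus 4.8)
The plan is to control $\sum_i |\widehat\sigma_i - \sigma_i(\bA)|$ by first comparing $\widehat\sigma_i^2 = (\sigma_i^2(\bY)-p)_+$ to $\sigma_i^2(\bA)$ and then passing to square roots. The starting point is the observation, already used to motivate the estimator, that $\bW = \bY^T\bY - p\bI_q$ is an unbiased estimator of $\bA^T\bA$, and its $i$-th largest eigenvalue is exactly $\sigma_i^2(\bY)-p$. By Weyl's perturbation inequality applied to the symmetric matrices $\bW$ and $\bA^T\bA$,
\[
 \max_{1\le i\le q} \big|(\sigma_i^2(\bY)-p) - \sigma_i^2(\bA)\big| \le \|\bW - \bA^T\bA\|_\infty = \|\bE^T\bA + \bA^T\bE + \bE^T\bE - p\bI_q\|_\infty .
\]
Since the positive part is $1$-Lipschitz and $\sigma_i^2(\bA)\ge 0$, the same bound holds for $|\widehat\sigma_i^2 - \sigma_i^2(\bA)|$. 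I would bound the right-hand side in expectation by splitting into the cross term $\|\bE^T\bA + \bA^T\bE\|_\infty \le 2\|\bE\|_\infty \|\bA\|_\infty$ and the centered Wishart fluctuation $\|\bE^T\bE - p\bI_q\|_\infty$; using $\E[\|\bE\|_\infty]\lesssim \sqrt p$ (Lemma~\ref{lem:spectrum}) and the standard bound $\E[\|\bE^T\bE-p\bI_q\|_\infty]\lesssim \sqrt{pq}+q \lesssim \sqrt{pq}$ (since $q\le p$), one gets $\E\big[\max_i |\widehat\sigma_i^2-\sigma_i^2(\bA)|\big] \lesssim \sqrt{pq} + \sqrt p\,\|\bA\|_\infty$, exactly as in the proof of Proposition~\ref{prp:upper_risk_nuclear_covariance}.

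Next I would convert this into an $l_1$ bound on the singular values themselves. Summing over $i$ gives $\sum_i|\widehat\sigma_i^2 - \sigma_i^2(\bA)| \le q\max_i|\widehat\sigma_i^2-\sigma_i^2(\bA)|$. To pass from squares to the values, use $|a-b| \le |a^2-b^2|^{1/2}$ for $a,b\ge 0$, so $|\widehat\sigma_i - \sigma_i(\bA)| \le |\widehat\sigma_i^2 - \sigma_i^2(\bA)|^{1/2}$; hence
\[
 \sum_{i=1}^q |\widehat\sigma_i - \sigma_i(\bA)| \le \sum_{i=1}^q |\widehat\sigma_i^2 - \sigma_i^2(\bA)|^{1/2} \le q \Big(\max_{1\le i\le q}|\widehat\sigma_i^2-\sigma_i^2(\bA)|\Big)^{1/2}.
\]
Taking expectations and applying Jensen's inequality ($\E[X^{1/2}]\le (\E X)^{1/2}$) yields
\[
 \E\Big[\sum_{i=1}^q|\widehat\sigma_i - \sigma_i(\bA)|\Big] \le q\,\big(\E[\max_i|\widehat\sigma_i^2-\sigma_i^2(\bA)|]\big)^{1/2} \lesssim q\big(\sqrt{pq} + \sqrt p\,\|\bA\|_\infty\big)^{1/2}.
\]

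The remaining issue is that this naive bound carries a $\|\bA\|_\infty^{1/2}$ factor and so is not uniform over all $\bA$, whereas the claimed rate $q(pq)^{1/4}$ is. The fix — and what I expect to be the only real subtlety — is that the truncation at zero in $\widehat\sigma_i = [(\sigma_i^2(\bY)-p)_+]^{1/2}$ makes the estimator self-bounding: since $\widehat\sigma_i \le \sigma_i(\bY) \le \sigma_i(\bA) + \|\bE\|_\infty$ by Weyl again, we have $|\widehat\sigma_i - \sigma_i(\bA)| \le \|\bE\|_\infty$ deterministically on top of the Lipschitz estimate above. More usefully, one should not bound the cross term by the crude $2\|\bE\|_\infty\|\bA\|_\infty$ globally; instead, following the argument in Proposition~\ref{prp:upper_risk_nuclear_covariance}, one handles the regime $\|\bA\|_\infty \gtrsim (pq)^{1/4}$ separately by a direct Lipschitz/concentration argument on $\sigma_i(\bY)$ (where the $+$ truncation is inactive with high probability and $|\sigma_i(\bY)-\sigma_i(\bA)|$ concentrates at scale $\sqrt p$, giving $\sum_i|\widehat\sigma_i-\sigma_i(\bA)|\lesssim q\sqrt p \le q(pq)^{1/4}$ when $q\le p$), and the regime $\|\bA\|_\infty \lesssim (pq)^{1/4}$ by the square-root argument above (where $\sqrt p\,\|\bA\|_\infty \lesssim \sqrt p (pq)^{1/4}$, whose square root is $\lesssim p^{1/4}(pq)^{1/8}$, dominated by $(pq)^{1/4}$ up to lower order — more carefully, $(\sqrt{pq}+\sqrt p (pq)^{1/4})^{1/2}\lesssim (pq)^{1/4}$ precisely because $\sqrt p(pq)^{1/4}\le (pq)^{1/2}\cdot$const iff $p^{1/4}\lesssim q^{1/4}$, which fails, so one genuinely needs the case split). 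I would therefore organize the proof exactly as a two-regime argument mirroring the proof of Proposition~\ref{prp:upper_risk_nuclear_covariance}, reusing its estimates, and in each regime bound $\sum_i|\widehat\sigma_i-\sigma_i(\bA)|$ by $q$ times the appropriate per-coordinate rate $(pq)^{1/4}$.
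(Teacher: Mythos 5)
Your overall shape --- compare the eigenvalues of $\bW=\bY^T\bY-p\bI_q$ to $\sigma_i^2(\bA)$ and then pass to square roots --- is the same as the paper's, but two of your concrete steps fail and leave a genuine gap. First, the fallback you propose for the regime $\|\bA\|_\infty\gtrsim(pq)^{1/4}$ rests on the inequality $q\sqrt p\le q(pq)^{1/4}$, which is backwards: $\sqrt p\le(pq)^{1/4}$ is equivalent to $p\le q$, whereas here $p\ge q$. The uncorrected per-coordinate rate $\sqrt p$ is exactly what the corrected estimator is designed to beat (cf.\ Lemma~\ref{prp:nogestim}), so this branch of your argument only yields $q\sqrt p$, which is strictly worse than the target for $p\gg q$. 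Second, a global case split on $\|\bA\|_\infty$ cannot work in any form, because for a single matrix the values $\sigma_1(\bA),\dots,\sigma_q(\bA)$ typically straddle both regimes simultaneously; the split has to be made coordinate by coordinate.

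The paper achieves this through two ingredients missing from your plan. (i) The perturbation of $\lambda_i(\bW)$ is controlled by Lemma~\ref{lem:local_control_lambda_i} (proved by interlacing, not plain Weyl), whose cross term is $2\sigma_i(\bA)\|\Pi_{I(\bA)}\bE\|_\infty$: the noise enters through the projected matrix $\Pi_{I(\bA)}\bE$, whose operator norm is $\lesssim\sqrt q$ by rotational invariance (as in Lemma~\ref{croise}), not through $\|\bE\|_\infty\lesssim\sqrt p$; your bound $2\|\bE\|_\infty\|\bA\|_\infty$ loses a factor $\sqrt{p/q}$ here. (ii) The passage to square roots uses the pointwise two-sided bound $|(x+y)_+^{1/2}-x^{1/2}|\le|y|^{1/2}\wedge\bigl(2|y|\,x^{-1/2}\bigr)$ with $x=\sigma_i^2(\bA)$, so that on the branch $2|y|x^{-1/2}$ the factor $\sigma_i(\bA)$ in the cross term cancels and coordinate $i$ contributes $\lesssim\|\Pi_{I(\bA)}\bE\|_\infty+\|\bE^T\bE-p\bI_q\|_\infty^{1/2}$, of expectation $\lesssim\sqrt q+(pq)^{1/4}\lesssim(pq)^{1/4}$, uniformly in $\sigma_i(\bA)$. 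This per-$i$ cancellation is precisely why the cross term must carry $\sigma_i(\bA)$ rather than $\|\bA\|_\infty$: with Weyl's inequality alone the cancellation fails for the small singular values. Summing over $i$ gives inequality~\eqref{eq:upper_l_1} with $s=1$ and the result; no Jensen step on a maximum and no regime split on $\|\bA\|_\infty$ are needed.
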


Conversely, by triangular inequality, any estimator ($\tilde{\sigma}_i$) satisfies
\[
  \E\left[\sum_{i=1}^{q}|\tilde{\sigma}_i-\sigma_i(\bA)|\right]\geq   \E\left[|\sum_{i=1}^{q}\widehat{\sigma}_i-\|\bA\|_1|\right]\ , 
\]
which, in light of Corollary~\ref{prp:lower_general_norm} is higher than $c q(pq)^{1/4}/\log(q)$, at least for some matrices $\bA$ with $\sigma_1(\bA)\in [0, 0.125(pq)^{1/4}]$. Hence, one can improve the rate in~\eqref{eq:rate_l1} at best by a factor $1/\log(q)$. We explain how to do this in the remainder of this section by extending the moment matching method of Kong and Valiant~\cite{kong2017spectrum} to our setting. As in the previous section, we shall assume henceforth that the singular values of $\bA$ are bounded by $M(pq)^{1/4}$ for some known $M>1$.

Write $\delta_x$ for  the Dirac measure at $x$. 
Given a vector $\sigma= (\sigma_1, \ldots, \sigma_q)$, let $\mu_{\sigma}=   q^{-1}\sum_{i=1}^{q}\delta_{\sigma_i}$ where $\delta_x$ denote the associated probability measure. For two measures $\mu_1$ and $\mu_2$ on $\mathbb{R}$ with cumulative distribution functions $F_{\mu_1}$ and $F_{\mu_2}$,  the Wasserstein distance $W(\mu_1,\mu_2)$ is defined as 
\[
W(\mu_1,\mu_2)= \int_{\mathbb{R}}|F_{\mu_1}(t)- F_{\mu_2}(t)|dt \ . 
\]
It was previously observed (e.g. in~\cite{kong2017spectrum}) that the $l_1$ distance between two ordered vectors is proportional to  the Wasserstein distance between the two corresponding measure. 
\beq\label{eq:connection_Wasserstein_l1}
\sum_{i=1}^q |\sigma_i- \sigma'_i| = q W(\mu_{\sigma};\mu_{\sigma'}) \ . 
\eeq
Indeed, $ q W(\mu_{\sigma};\mu_{\sigma'})= q\int |F_{\mu_{\sigma}}(t) - F_{\mu_{\sigma'}}(t)|dt= \int \big|\sum_{i=1}^q (\1_{\sigma_i\leq  t} - \1_{\sigma'_i\leq t})\big|dt$. For a fixed $t$, all $q$ differences $(\1_{\sigma_i\leq  t} - \1_{\sigma'_i\leq t})$ are either $0$ or share the same sign. As a consequence  $ q W(\mu_{\sigma};\mu_{\sigma'})=\int \sum_{i=1}^q \big|\1_{\sigma_i\leq  t} - \1_{\sigma'_i\leq t}\big|dt= \sum_{i=1}^q |\sigma_i- \sigma'_i|$ and our problem is equivalent to estimating the measure $\mu_{\sigma(\bA)}$ in Wasserstein distance.

In~\cite{kong2017spectrum}, Kong and Valiant bound the Wasserstein distance between two compactly distributed probability measures in terms of their sequence of $k$ first moments (see Lemma~\ref{lem:kong_valiant_original} below).  This led them to first estimate the $k$-first moment of a measure and then pick a measure whose moments are  closest to the estimated moments. See Algorithm 1  in~\cite{kong2017spectrum}. In our setting, we are only able to estimate efficiently the even moments. Still, we can adapt their approach by nearly matching the first even moments instead of all the first moments.

\medskip 
Let $K$ be a tuning parameter. Denote the size $K$ vector $\widehat{m}$ by $\widehat{m}_k=q^{-1}[M(pq)^{1/4}]^{-2k}U_k$ where $U_k$ defined in~(\ref{eq:definition_u_q}) is our estimator of the $\|\bA\|_{2k}^{2k}$. Fix $\zeta= 1/(M^2q)$ and consider a fine regular grid $x= (x_1,\ldots,x_d)$ on $[0,b(pq)^{1/4}]$ with $x_i=(i-1)\zeta$ and $d= \lceil M(pq)^{1/4}/\zeta \rceil$. 
Define also the  $K\times d$ matrix $\bV$ such that $\bV_{ij}= x_j^{2i}$. We consider the following two step estimator of the singular values vector. 
\begin{enumerate}
 \item Let $p^+\in \mathbb{R}^{d}$ be the solution to the following linear program, which we will regard
as a distribution consisting of point masses at values $x$
\beq\label{eq:definition_p+}
\mathrm{minimize}_{p}|\bV p - \widehat{U}|_1 \text{ subject to } 1^T p =1\text{ and  }p > 0
\eeq
\item Return the singular values $\widehat{\sigma}_i$ by taking the rescaled $i$-th $(q+1)$st quantile of the distribution corresponding to $p^+$, that is $\widehat{\sigma}_i= M(qp)^{1/4}\min\{x_j: \sum_{l\leq j }p_l^+\geq \frac{i}{q+1}\}$.

\end{enumerate}

\begin{thm}\label{thm:consistent_singular_values}
Fix any $M>1$ and choose $K= \lfloor c\log(q)\rfloor$ for a suitable numerical constant $M>1$. If $\sigma_1(\bA) \leq M(pq)^{1/4}$, then the estimator $\widehat{\sigma}_i$ defined above satisfies
\beq\label{eq:upper_risk_wasserstein}
W\left(\mu_{\widehat{\sigma}} ,\mu_{\sigma(\bA)}\right)\lesssim  \frac{M}{\log(q)}(pq)^{1/4}\ .
\eeq
In particular, this implies that 
\[
\E\left[ \sum_{i=1}^q |\widehat{\sigma}_i-\sigma_i(\bA)|\right]\lesssim  \frac{M}{\log(q)}q(pq)^{1/4}\ . 
\]
\end{thm}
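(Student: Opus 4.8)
The plan is to reduce the theorem to a deterministic Wasserstein-approximation bound from Kong and Valiant, combined with the variance control on $U_k$ from Theorem~\ref{prp:risk_U_k}. Recall from~\eqref{eq:connection_Wasserstein_l1} that $\sum_i|\widehat\sigma_i-\sigma_i(\bA)| = qW(\mu_{\widehat\sigma},\mu_{\sigma(\bA)})$, so the second display follows immediately from the first, and it suffices to bound $\E[W(\mu_{\widehat\sigma},\mu_{\sigma(\bA)})]$; since $W$ is bounded by $M(pq)^{1/4}$ deterministically (all atoms lie in $[0,M(pq)^{1/4}]$), I only need the bound to hold up to a small-probability event. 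First I would normalize: set $\sigma_i' = \sigma_i(\bA)/(M(pq)^{1/4})\in[0,1]$ and let $\nu = \mu_{\sigma'}$ be the corresponding empirical measure on $[0,1]$. Then $\widehat m_k = q^{-1}[M(pq)^{1/4}]^{-2k}U_k$ is an estimator of the $2k$-th moment $m_k := \int x^{2k}\,d\nu(x) = q^{-1}\|\bA\|_{2k}^{2k}/(M(pq)^{1/4})^{2k}$, and by Theorem~\ref{prp:risk_U_k} (after dividing by $q^2(M(pq)^{1/4})^{4k}$ and using $\|\bA\|_{4k-4}^{4k-4}\le (M(pq)^{1/4})^{4k-4}\sigma_1^{0}\cdot q$ type bounds together with $\sigma_1(\bA)\le M(pq)^{1/4}$) one gets $\E[(\widehat m_k - m_k)^2]\lesssim c^k\big(q^{-1} + p^{1-2k}M^{-4k}k^{3k} + \cdots\big)$, which for $k\le K = \lfloor c\log q\rfloor$ and suitable $c$ is $\lesssim q^{-1/2}$, say; hence by Cauchy--Schwarz $\E[|\widehat m_k - m_k|]\lesssim q^{-1/4}$ uniformly in $k\le K$.

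Next I would invoke the structural estimate underlying Kong--Valiant's analysis: for two probability measures $\nu,\nu'$ supported on $[0,1]$, $W(\nu,\nu')\lesssim \tfrac{1}{K} + C^K\sum_{k=1}^K|m_k(\nu)-m_k(\nu')|$ where $m_k$ denotes the $k$-th moment — the point being that low-degree (Chebyshev) polynomials approximate $\mathbf 1_{[0,t]}$-type test functions up to error $O(1/K)$, and the polynomial coefficients are at most $C^K$ in absolute value. The only subtlety versus their Lemma is that we control only \emph{even} moments; but reparametrizing via $y = x^2$ maps $\nu$ on $[0,1]$ to a measure $\tilde\nu$ on $[0,1]$ whose ordinary moments are exactly the even moments of $\nu$, and $W(\nu_1,\nu_2)\le W(\tilde\nu_1,\tilde\nu_2)$ (the map $y\mapsto\sqrt y$ is $1$-Lipschitz would go the wrong way, so I would instead note $|x-x'|\le|x^2-(x')^2|$ on... no: rather use that $x\mapsto x^2$ is Lipschitz with constant $2$ on $[0,1]$, giving $W(\tilde\nu_1,\tilde\nu_2)\le 2W(\nu_1,\nu_2)$, and for the reverse direction transport the coupling back; cleanest is to apply the moment bound directly to $\tilde\nu$ and $\widetilde{\mu_{\widehat\sigma}}$ and then relate $W(\mu_{\widehat\sigma},\mu_{\sigma(\bA)})$ to $W$ of the pushforwards using that the quantile construction is monotone). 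This yields $W(\widetilde{\mu_{\widehat\sigma/(M(pq)^{1/4})}},\tilde\nu)\lesssim \tfrac1K + C^K\sum_{k\le K}|\widehat m_k - m_k|$, and in expectation the second term is $\lesssim C^K q^{-1/4}\lesssim 1/\log q$ for $K\le c'\log q$ with $c'$ small enough; undoing the normalization multiplies by $M(pq)^{1/4}$.

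The remaining step is to verify that the output $\widehat\sigma$ of the linear program actually realizes moments close to $\widehat m$: the LP minimizes $|\bV p - \widehat U|_1$ over the probability simplex on the grid $x$, and since $\mu_{\sigma(\bA)}$ itself (rounded to the grid, which costs $O(\zeta K \cdot C^K)$ in moments, negligible for $\zeta = 1/(M^2 q)$) is feasible, the optimal $p^+$ has $|\bV p^+ - \widehat U|_1$ no larger than that of the true rounded spectrum, so $\sum_k|m_k(p^+) - \widehat m_k|\lesssim \sum_k|m_k(\nu) - \widehat m_k| + (\text{grid error})$; then the quantile-rounding in Step 2 changes $W$ by at most $\zeta + 1/(q+1) = O(1/q)$, and pushes the moments of the output measure within $O(1/q)$ of those of $p^+$. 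Chaining: $W(\mu_{\widehat\sigma},\mu_{\sigma(\bA)})\lesssim M(pq)^{1/4}\big(\tfrac1K + C^K(q^{-1/4}+\zeta K)\big) + O(1/q)$, and choosing $K\asymp\log q$ gives the claimed $\tfrac{M}{\log q}(pq)^{1/4}$. The main obstacle I anticipate is the bookkeeping in the even-moments variant of the Kong--Valiant lemma — specifically, checking that restricting to even $k$ does not lose a factor worse than a constant in the Chebyshev-approximation argument (it does not, because $t\mapsto\mathbf1_{[0,t]}(\sqrt\cdot)$ can be approximated by ordinary polynomials in $y=x^2$), and controlling the $C^K$ versus $q^{-1/4}$ trade-off carefully enough that the numerical constant $c$ in $K=\lfloor c\log q\rfloor$ can be pinned down — together with making sure the variance bound of Theorem~\ref{prp:risk_U_k}, once renormalized, is genuinely $o(1)$ for every $k$ up to $K$ under only the hypothesis $\sigma_1(\bA)\le M(pq)^{1/4}$.
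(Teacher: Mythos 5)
Your proposal follows the same route as the paper: estimate the normalized even moments by $\widehat m_k=q^{-1}[M(pq)^{1/4}]^{-2k}U_k$, control their error via Theorem~\ref{prp:risk_U_k} under $\sigma_1(\bA)\leq M(pq)^{1/4}$, use LP feasibility of the (rounded) true spectral measure to transfer that error to the LP output $p^+$, bound the quantile-rounding cost by $O(1/q)$, and close with an even-moment version of the Kong--Valiant Wasserstein-from-moments lemma. The reduction of the second display to the first via \eqref{eq:connection_Wasserstein_l1}, the feasibility argument, and the choice $K\asymp\log q$ all match the paper (your moment-error bound $q^{-1/4}$ is much lossier than the paper's $c^K/q$, but it still suffices after shrinking the constant in $K=\lfloor c\log q\rfloor$).

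The one place where your argument is not yet a proof is exactly the step you flag: the even-moments variant of Lemma~\ref{lem:kong_valiant_original}. The substitution $y=x^2$ does not work off the shelf, because a $1$-Lipschitz test function $f$ of $x$ becomes $g(y)=f(\sqrt y)$, which is only H\"older-$1/2$ near $y=0$; Jackson's theorem then gives a degree-$K$ approximation error of order $K^{-1/2}$, which would degrade the final rate to $M(pq)^{1/4}/\sqrt{\log q}$. The correct statement --- that $E_{\cP_K}[f(\sqrt{\cdot});[0,1]]=O(1/K)$, i.e.\ no loss from keeping only even moments --- is true, but its proof is the symmetrization device of Lemmas~\ref{lem:symmetrie_approximation} and~\ref{lem:kong_valiant:version_valeurs_singuliere}: extend $f$ to the even function $\overline f(x)=f(|x|)$ on $[-1,1]$, take the Kong--Valiant degree-$2K$ approximant $Q$ of $\overline f$ with error $c/(2K)$ and coefficients bounded by $3^{2K}$, and replace $Q$ by its even part $[Q(x)+Q(-x)]/2$, which approximates $\overline f$ at least as well and involves only even powers. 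With that lemma in hand (and the coefficient bound $3^{2K}$ playing the role of your $C^K$), the rest of your chaining goes through and recovers \eqref{eq:upper_risk_wasserstein}.
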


As a simple corollary, we observe that the plug-in estimator $\sum_{i=1}^{q}\widehat{\sigma}_i$ estimates the nuclear norm $\|\bA\|_1$ at a rate similar to that of \eqref{eq:upper_risk_oracle}. 
More generally, the dual representation of Wasserstein distance entails that for any Lipschitz function with Lipschitz constant $L$ on $[0,M(pq)^{1/4}]$, we have 
\[
 \Big|\sum_{i=1}^q f(\widehat{\sigma}_i)-\sum_{i=1}^q f(\sigma_i(\bA))\Big|= q  \big|\inf f (d\mu_{\widehat{\sigma}} -d\mu_{\sigma(\bA)}) \big|\lesssim L M\frac{q}{\log(q)}(pq)^{1/4}\ . 
\]
Hence, the plug-in estimator achieves the rate $Mq(pq)^{1/4}/\log(q)$ uniformly over all Lipschitz functional of the singular values. 
See~\cite{han2018local} for related discussions in discrete settings.   Regarding the nuclear norm, the slight downside of this plug-in approach  is that it requires to solve the linear program~\eqref{eq:definition_p+}, while Bernstein polynomials are more explicit.

\section{Extension to general noise distributions}\label{sec:non_gaussian_noise}

Until now, we assumed that  the noise components $(\bE_{ij})$ follow a  standard normal distribution.  In this section, we extend our results to a non-Gaussian model $\bY= \bA + \bE$ where the entries $(\bE_{i,j})$ of $\bE$ are independent, and identically distributed as a sub-Gaussian random variable $E$ satisfying $\E[E]=0$ and $\Var{(E)}=1$. In the sequel, we write $\|E\|_{\psi_2}$ for its sub-Gaussian norm -- see e.g.~\cite{vershynin2018high} for a definition. As in the first part of the manuscript, we start with the Frobenius and operator norms before turning to general even norms and to non-even norms.   The estimator $U_1=tr[(\bY^T\bY)]-qp$ of $\|\bA\|_F^2$ defined in~\eqref{eq:definition_U_1} is a second degree polynomial and we easily work out its variance. 
\[
  \mathbb{E} \cro{|U_1 - \|\bA\|_2^2|^2} = \var{E^2}pq + 4\|\bA\|^2_2+ 4\E[E^3]\sum_{i,j}\bA_{ij}\leq  3\E[E^4][pq + 2\|\bA\|_2^2] \  ,
\]
where we used Young's and Cauchy Schwarz inequality. Arguing as in the proof of Proposition~\ref{prp:risk_frobenius}, we arrive at
\[
  \mathbb{E} \cro{|(U_1)_+^{1/2} - \|\bA\|_2^2|} \leq 3\sqrt{\E[E^4]}(pq)^{1/4}\  .
\]
The proof is omitted. Similarly, we can extend the analysis of the estimator $U_{\infty}= (\sigma_1^2(\bY)-p)_+^{1/2}$ of $\|\bA\|_{\infty}$ or the estimator $T_{s}= [\sum_{i=1}^q [(\sigma_i^2(\bY)-p)_+]^{s/2}]^{1/s}$ of the $s$-th Schatten norm $\|\bA\|_s$ to obtain similar bounds to the Gaussian case.

  \begin{prp}\label{prp:control_operator_subgaussian}
    For any matrix $\bA\in \mathbb{R}^{p\times q}$, we have
    \begin{equation}\label{eq:conclusion_operator_norm_subgaussian}
    \mathbb{E}\cro{|\left(\sigma_1^2(\bY)-p\right)_+^{1/2} - \|\bA\|_{\infty}|} \leq  c_{\|E\|_{\psi_2}} \pa{pq}^{\frac{1}{4}}\ ,
    \end{equation}
    for a positive numerical quantity $c_{\|E\|_{\psi_2}}$  that only depends on $\|E\|_{\psi_2}$. 
    \end{prp}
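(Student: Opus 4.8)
The strategy is to reproduce the two-step argument behind the Gaussian Proposition~\ref{prp:control_operator}, isolating the single place where Gaussian orthogonal invariance was used and replacing it by a sub-Gaussian concentration estimate. First I would reduce to a purely deterministic inequality. Write
\[
\bY^T\bY-p\bI_q \;=\; \bA^T\bA \;+\;(\bA^T\bE+\bE^T\bA)\;+\;(\bE^T\bE-p\bI_q)
\]
and set $R:=\|\bA^T\bE+\bE^T\bA\|_{\infty}+\|\bE^T\bE-p\bI_q\|_{\infty}$. Since $\sigma_1^2(\bY)-p$ and $\sigma_1^2(\bA)=\|\bA\|_\infty^2$ are the largest eigenvalues of $\bY^T\bY-p\bI_q$ and $\bA^T\bA$, Weyl's inequality gives $|\sigma_1^2(\bY)-p-\|\bA\|_\infty^2|\le R$; because $x\mapsto x_+$ is $1$-Lipschitz, non-decreasing and $\|\bA\|_\infty^2\ge0$, the same holds with $\sigma_1^2(\bY)-p$ replaced by $\widehat T^2:=(\sigma_1^2(\bY)-p)_+$. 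Dividing $|\widehat T^2-\|\bA\|_\infty^2|$ by $\widehat T+\|\bA\|_\infty$ then yields the deterministic bound
\[
|\widehat T-\|\bA\|_\infty|\;\le\;\min\!\Big(\sqrt{R},\ \tfrac{R}{\|\bA\|_\infty}\Big),
\]
with the second term read as $+\infty$ when $\bA=0$; indeed $|\widehat T-\|\bA\|_\infty|\le R/(\widehat T+\|\bA\|_\infty)\le R/\|\bA\|_\infty$, and $|\widehat T-\|\bA\|_\infty|^2\le|\widehat T-\|\bA\|_\infty|(\widehat T+\|\bA\|_\infty)=|\widehat T^2-\|\bA\|_\infty^2|\le R$.

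Next I would bound $\E[R]$. For the cross term, $\bE^T\bA=(\bA^T\bE)^T$, so it suffices to control $\E\|\bA^T\bE\|_\infty$. In the Gaussian case one diagonalizes $\bA$ by orthogonal invariance; here I would instead use an $\varepsilon$-net argument: for fixed unit $u,v\in\mathbb{R}^q$, $u^T\bA^T\bE v=\sum_{i,j}(\bA u)_i v_j\,\bE_{ij}$ is a sub-Gaussian combination of the i.i.d.\ entries with $\psi_2$-norm $\lesssim\|E\|_{\psi_2}\,|\bA u|_2\le\|E\|_{\psi_2}\|\bA\|_\infty$, so a union bound over a $\tfrac14$-net of the sphere (cardinality $\le 9^q$) plus integration of the tail gives $\E\|\bA^T\bE\|_\infty\lesssim c_{\|E\|_{\psi_2}}\|\bA\|_\infty\sqrt q$ (equivalently, the sub-Gaussian form of Chevet's inequality, using $\|\bA\|_F\le\sqrt q\,\|\bA\|_\infty$). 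For the centered Gram term, $\bE^T\bE=\sum_{i=1}^p \be_i\be_i^T$ is a sum of $p\ge q$ i.i.d.\ rank-one terms with $\E[\be_i\be_i^T]=\bI_q$, so the standard sub-Gaussian sample-covariance estimate (see e.g.~\cite{vershynin2018high}) gives $\E\|\bE^T\bE-p\bI_q\|_\infty\lesssim c_{\|E\|_{\psi_2}}\sqrt{pq}$. Altogether $\E[R]\le c_{\|E\|_{\psi_2}}\big(\|\bA\|_\infty\sqrt q+\sqrt{pq}\big)$.

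Finally I would combine the two ingredients. Since $\E\big[\min(\sqrt R, R/\|\bA\|_\infty)\big]\le\min\big(\E[\sqrt R],\,\E[R]/\|\bA\|_\infty\big)\le\min\big(\sqrt{\E R},\,\E R/\|\bA\|_\infty\big)$ by Jensen and linearity, two cases remain. If $\|\bA\|_\infty\le\sqrt p$, then $\E[R]\le 2c_{\|E\|_{\psi_2}}\sqrt{pq}$ and the first term is $\lesssim c_{\|E\|_{\psi_2}}(pq)^{1/4}$. If $\|\bA\|_\infty>\sqrt p$, then $\E[R]/\|\bA\|_\infty\le c_{\|E\|_{\psi_2}}(\sqrt q+\sqrt{pq}/\|\bA\|_\infty)\le 2c_{\|E\|_{\psi_2}}\sqrt q\le 2c_{\|E\|_{\psi_2}}(pq)^{1/4}$, using $p\ge q$. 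This gives the claimed bound. The main obstacle is precisely the uniform-in-$\bA$ estimate $\E\|\bA^T\bE\|_\infty\lesssim\|\bA\|_\infty\sqrt q$ with the $\sqrt q$ (rather than the trivial $\sqrt p$) scaling: the naive bound $\|\bA^T\bE\|_\infty\le\|\bA\|_\infty\|\bE\|_\infty$ only yields $\|\bA\|_\infty\sqrt p$, which is too large in the regime $\|\bA\|_\infty\gg\sqrt p$; recovering the sharp $\sqrt q$ scaling without orthogonal invariance is where sub-Gaussianity enters, and one must track that all implicit constants depend only on $\|E\|_{\psi_2}$.
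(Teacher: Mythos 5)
Your proof is correct and follows essentially the same route as the paper: the same decomposition of $\bY^T\bY-p\bI_q$, a bound $\E\|\bA^T\bE\|_\infty\lesssim c_{\|E\|_{\psi_2}}\|\bA\|_\infty\sqrt q$ obtained from an $\varepsilon$-net of size $e^{cq}$ (the paper routes this through $\|\Pi_{I(\bA)}\bE\|_\infty$ in Lemma~\ref{lem:concentration_pi_A_E_subgaussian}, which is the same computation), the sub-Gaussian covariance bound for $\|\bE^T\bE-p\bI_q\|_\infty$, and the same two-regime comparison of $\sqrt{R}$ versus $R/\|\bA\|_\infty$. The remaining differences (Weyl's inequality in place of the sup-over-unit-vectors argument, and splitting at $\sqrt p$ rather than $(pq)^{1/4}$) are cosmetic.
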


  \begin{prp}\label{prp:upper_risk_nuclear_covariance_subgaussian}
    For  any matrix $\bA\in \mathbb{R}^{p\times q}$ and any $s\geq 1$, we have
    \begin{eqnarray}
    \E[|T_s - \|\bA\|_s|]&\leq& c^{s}_{\|E\|_{\psi_2}} q^{1/s}[(p\vee s)(q\vee s)]^{1/4}\ ,
    \label{eq:upper_risk_nuclear_covariance_sugaussian}
    \end{eqnarray}
    for some positive quantity $c_{\|E\|_{\psi_2}}$ that only depends on $\|E\|_{\psi_2}$. 
    \end{prp}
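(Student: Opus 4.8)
The plan is to follow the proof of Proposition~\ref{prp:upper_risk_nuclear_covariance} essentially line by line: all of its random‑matrix input is concentration of the extreme singular values of $\bE$, and only those bounds change when $\bE$ is merely sub‑Gaussian. The starting observation is that $T_s$ depends on $\bY$ only through its singular values $(\sigma_i(\bY))_i$, which are invariant under left/right orthogonal transformations; hence every deterministic step of the Gaussian argument applies verbatim, with no distributional hypothesis. Concretely, writing $\bW=\bY^T\bY-p\bI_q$ one decomposes $\bW=\bA^T\bA+\bR+\bN$ with $\bR=\bA^T\bE+\bE^T\bA$ of rank at most $2\rank(\bA)$ and $\bN=\bE^T\bE-p\bI_q$; combining Weyl's inequality for $\bN$ with the rank‑$2\rank(\bA)$ interlacing inequalities (Corollary III.1.5 in~\cite{bhatia2013matrix}) for $\bR$ bounds each $\widehat\sigma_i=(\lambda_i(\bW))_+^{1/2}$, up to an additive error governed by $\sigma_1(\bE)$ and by $\|\bN\|_\infty=\max(\sigma_1^2(\bE)-p,\ p-\sigma_q^2(\bE))$, in terms of the corresponding singular values of $\bA$. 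Using $s\geq 1$ (so $\ell_s$ is a norm) and the elementary bound $|x_+^{1/2}-y_+^{1/2}|\leq|x-y|^{1/2}$, the triangle inequality turns $|T_s-\|\bA\|_s|$ into $\big(\sum_i|\widehat\sigma_i-\sigma_i(\bA)|^s\big)^{1/s}$ and reduces the whole estimate, on the complement of a small‑probability event, to a bound of the form $q^{1/s}\big(\sqrt{pq}+(\sigma_1^2(\bE)-p)_++(p-\sigma_q^2(\bE))_+\big)^{1/2}$.

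It then remains to supply the tail bounds on $\sigma_1(\bE)$ and $\sigma_q(\bE)$. In the Gaussian case one uses the Davidson--Szarek bounds (as in Lemma~\ref{lem:DavSza}), namely $\sigma_1(\bE)\leq\sqrt p+\sqrt q+t$ and $\sigma_q(\bE)\geq\sqrt p-\sqrt q-t$ with probability at least $1-2e^{-t^2/2}$. For a matrix with i.i.d.\ mean‑zero, unit‑variance, sub‑Gaussian entries these become $\sigma_1(\bE)\leq C(\sqrt p+\sqrt q+t)$ and $\sigma_q(\bE)\geq\sqrt p-C(\sqrt q+t)$ with probability at least $1-2e^{-ct^2}$, where $C,c>0$ depend only on $\|E\|_{\psi_2}$. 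The upper bound is the standard operator‑norm concentration for sub‑Gaussian matrices; the lower bound is the delicate point, and it is legitimate because $\Var(E)=1$ makes the rows of $\bE$ isotropic sub‑Gaussian vectors, so the general smallest‑singular‑value estimate for matrices with independent isotropic sub‑Gaussian rows applies (see e.g.~\cite{vershynin2018high}). On these events $\|\bN\|_\infty\lesssim\sqrt{pq}+\sqrt p\,t+t^2$, with implicit constant depending only on $\|E\|_{\psi_2}$ — exactly the quantity that controlled $\|\bN\|_\infty$ in the Gaussian proof, up to constants.

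Plugging these into the deterministic reduction and integrating over $t$ reproduces the Gaussian computation: the bulk event $\{t\lesssim\sqrt q\}$ gives the leading contribution $q^{1/s}(pq)^{1/4}$, the residual tail is summable and contributes the same lower‑order and $s$‑dependent corrections as in the Gaussian proof, matching $q^{1/s}[(p\vee s)(q\vee s)]^{1/4}$, and each numerical constant acquires a $\|E\|_{\psi_2}$‑dependent factor, raised to the power $s$ through the $\ell_s$‑aggregation; this is the announced bound with $c=c_{\|E\|_{\psi_2}}$. The only genuinely non‑routine ingredient is the sub‑Gaussian lower bound on $\sigma_q(\bE)$ — Gordon's Gaussian comparison inequality, used in the Gaussian proof, is unavailable — but, as noted, the isotropy of the rows of $\bE$ makes the general sub‑Gaussian version applicable. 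Everything else (the interlacing, the truncation at $0$, the passage between squared and ordinary singular values, and the outer $\ell_s$ combination) is distribution‑free and identical to the Gaussian case.
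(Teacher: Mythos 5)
There is a genuine gap, and it sits exactly at the step you treat as routine: the cross term $\bR=\bA^T\bE+\bE^T\bA$. You say the additive error on $\lambda_i(\bW)$ is ``governed by $\sigma_1(\bE)$'', and your final reduction $q^{1/s}\big(\sqrt{pq}+(\sigma_1^2(\bE)-p)_++(p-\sigma_q^2(\bE))_+\big)^{1/2}$ then contains no trace of $\bR$ at all. Neither version works. If you really control $\|\bA^T\bE\|_{\infty}$ by $\|\bA\|_{\infty}\sigma_1(\bE)\asymp\|\bA\|_{\infty}\sqrt p$, the per-index error $|\widehat\sigma_i-\sigma_i(\bA)|$ (via $(x-y)=(x^2-y^2)/(x+y)$ for large $\sigma_i$) is of order $\sqrt p$, and the $\ell_s$-aggregation gives $q^{1/s}\sqrt p$ — the rate of the naive plug-in estimator of Lemma~\ref{lem:plug_in_nuclear_norm}, not the claimed $q^{1/s}(pq)^{1/4}$, which is strictly smaller for $p\gg q$. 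And the cross term cannot simply be absorbed into the $\sqrt{pq}$ inside your parenthesis, since $\|\bA^T\bE\|_{\infty}$ scales with $\|\bA\|_{\infty}$, which is unbounded over the class. The missing idea is the one driving Lemma~\ref{lem:local_control_lambda_i}: $\bA^T\bE$ only sees $\Pi_{I(\bA)}\bE$, the restriction of $\bE$ to the at most $q$-dimensional column space of $\bA$, so the relevant operator norm is $O(\sqrt q)$ rather than $O(\sqrt p)$; it is this $\sqrt q$ that makes the per-index error $\lesssim \sqrt q+\|\bN\|_{\infty}^{1/2}\lesssim(pq)^{1/4}$. In the sub-Gaussian case this is precisely the new ingredient the paper supplies (Lemma~\ref{lem:concentration_pi_A_E_subgaussian}, an $\varepsilon$-net argument giving $\|\Pi_{I(\bA)}\bE\|_{\infty}\lesssim_{\|E\|_{\psi_2}}\sqrt q+\sqrt t$), and it is what your proposal omits. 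So the ``only genuinely non-routine ingredient'' is misidentified: it is not the lower bound on $\sigma_q(\bE)$ but the projection bound for the cross term.

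The rest of your plan is sound and partly a legitimate variant of the paper's. Controlling $\|\bE^T\bE-p\bI_q\|_{\infty}$ through an upper bound on $\sigma_1(\bE)$ and the sub-Gaussian smallest-singular-value estimate for matrices with isotropic rows is an acceptable alternative to the paper's invocation of Theorem~9 of~\cite{koltchinskii2014concentration}, which bounds $\|\bE^T\bE-p\bI_q\|_{\infty}$ directly; both yield $\lesssim_{\|E\|_{\psi_2}}\sqrt{pq}+q+\sqrt{pt}+t$. Likewise the outer $\ell_s$ reverse triangle inequality is a clean (if slightly cruder) substitute for the paper's two-regime analysis of $|T_s^s-\|\bA\|_s^s|$. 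But without the $\Pi_{I(\bA)}$ refinement of the cross term, the argument as written does not reach the stated rate.
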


In contrast, it is much less clear to what extent the $2k$-Schatten norms $\|\bA\|_{2k}$ can be estimated at the rate $(pq)^{1/4}$ for general sub-Gaussian noise distribution. Indeed, the definition our estimator $U_k$ introduced in Section~\ref{sec:even} delicately depends on the sequence of moments of the standard normal distribution through the use of Hermite polynomials. Since 
only the two first moments of $E$ match those of the standard normal distribution, the estimator $U_k$ is therefore biased for all $k>1$.  Still, we are able to control its risk in the next theorem.  The arguments significantly differ from the Gaussian case. Indeed, we could assume in the proof of Theorem~\ref{prp:risk_U_k} that $\bA$ is diagonal by distribution invariance under orthogonal transformations. This is not the case here and we need to carefully enumerate all the terms in $U_k$ using generalized M\"obius formula for partitions and   graph enumeration arguments.

\begin{thm}\label{thm:non_gaussian}
For any positive integer $k\geq 1$ and any $p\geq q$, we have 
     \beq\label{eq:risk_upper_bound_non_gaussian}
      \E\left[\left(U_k - \|\bA\|_{2k}^{2k}\right)^2\right]\leq   (c\|E\|_{\psi_2}k)^{c'k} \left[   (pq)^{k} +  pq \|\bA\|_{\infty}^{4k-4}+ q\|\bA\|_{\infty}^{4k-2}\right]\ ,
     \eeq
     where $c$ and $c'$ are numerical constants.
\end{thm}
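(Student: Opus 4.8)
\textbf{Proof plan for Theorem~\ref{thm:non_gaussian}.} The plan is to expand $U_k$ using its Hermite representation~\eqref{eq:definition_u_q} and to bound $\E[(U_k-\|\bA\|_{2k}^{2k})^2]$ by grouping the contributing index-tuples according to the combinatorial structure of the multigraph they induce. Unlike the Gaussian case (Theorem~\ref{prp:risk_U_k}), orthogonal invariance is unavailable, so I cannot diagonalize $\bA$; instead I would keep track of all entries $\bA_{rs}$. Write $U_k-\|\bA\|_{2k}^{2k}= \sum_{(i,j)} \big(\prod_{r,s} H_{N_{rs}(i,j)}(\bY_{rs}) - \prod_{r,s} H_{N_{rs}(i,j)}(\bA_{rs})\big)$ after using $\E[H_m(\bA_{rs}+E)]=\bA_{rs}^m$ only when $E$ is Gaussian; for subGaussian $E$ one has instead $\E[H_m(\bA_{rs}+E)] = \sum_{l} \binom{m}{l} \bA_{rs}^{m-l}\, \kappa_l$ where $\kappa_l = \E[H_l(E)]$ (so $\kappa_0=1$, $\kappa_1=\kappa_2=0$ but $\kappa_l\neq 0$ for $l\ge 3$ in general). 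The key point is that every ``error'' term carries at least one factor $\kappa_l$ with $l\ge 3$ attached to some edge of the multigraph, i.e.\ the corresponding edge must have multiplicity $\ge 3$.

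First I would set up the graph-theoretic bookkeeping: to each pair of $k$-tuples $(i,j)$ and $(i',j')$ associate the multigraph $G$ on vertex set $\{i_1,\dots,i_k\}\cup\{j_1,\dots,j_k\}$ (left and right vertices) whose edges are the $2k$ edges from each tuple, weighted by $N_{rs}$. I would compute $\E$ of the product over the two tuples by factorizing over edges $(r,s)$: for each edge with combined multiplicity $m_{rs}$ coming from $N_{rs}(i,j)+N_{rs}(i',j')$, I need $\E[H_{N_{rs}(i,j)}(\bY_{rs})H_{N_{rs}(i',j')}(\bY_{rs})]$, which is a polynomial in $\bA_{rs}$ of degree $m_{rs}$ whose coefficients are products of cumulant-like quantities $\kappa_l$, each of which is bounded by $(c\|E\|_{\psi_2})^l l^{l/2}$ (via subGaussian moment bounds combined with the explicit Hermite coefficients). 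The cross term $-\E[U_k]\|\bA\|_{2k}^{2k}$ and the $\|\bA\|_{2k}^{4k}$ term must be subtracted: I would argue that in $\E[U_k^2]-\E[U_k]^2$-style cancellation, only \emph{connected} multigraphs survive (up to the diagonal $\|\bA\|_{2k}^{2k}$ piece being isolated), and further, for the \emph{bias}-squared contribution every surviving connected graph has either a short cycle structure forcing $\le (pq)^k$ growth, or an edge of multiplicity $\ge 3$ which both kills a factor of $p$ or $q$ in the vertex count and produces a bounded constant.

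The heart of the estimate is then a counting argument: fix the isomorphism type of the connected multigraph $G$ with $v_L$ distinct left vertices and $v_R$ distinct right vertices; the number of index-tuples realizing it is at most $p^{v_L} q^{v_R}$, and each such tuple contributes at most $\prod_{\text{edges}} |\bA|_\infty^{m_{rs}-(\text{something})}$ times a constant $(c\|E\|_{\psi_2}k)^{O(k)}$ absorbing all the $\kappa_l$'s and the number of isomorphism types (which is at most $k^{O(k)}$). The Euler-type relation between edges, vertices and cycles — exactly as in~\cite{anderson2010introduction} and as used in Theorem~\ref{prp:risk_U_k} — gives $v_L+v_R \le$ (number of edges)$/2 + 1 = 2k+1$ for the ``main'' Gaussian-like term, yielding $(pq)^k$, while the presence of a multiplicity-$\ge 3$ edge (forced in every genuinely non-Gaussian error term) lowers the available vertex budget by one, producing $pq\,\|\bA\|_\infty^{4k-4}$ and $q\,\|\bA\|_\infty^{4k-2}$ as the next-order terms (the $\|\bA\|_\infty$ powers account for the edges that must be saturated with signal rather than counted as free vertices). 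Tracking which of $p$ or $q$ is saved in each case (left vs.\ right vertex) is what distinguishes the $pq\|\bA\|_\infty^{4k-4}$ term from the $q\|\bA\|_\infty^{4k-2}$ term.

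\textbf{Main obstacle.} The hard part will be the enumeration of connected multigraphs with a forced high-multiplicity edge and the verification that the subtraction of $\E[U_k]\|\bA\|_{2k}^{2k}$ exactly cancels all disconnected contributions other than the trivial one — in the Gaussian case this was handled implicitly by the variance computation and diagonalization, but here the bias terms mix degrees and one must use a generalized M\"obius/partition-lattice inversion (as flagged in the text) to organize the $\kappa_l$ contributions and to see that every uncancelled term is either $O((pq)^k)$ or carries the claimed $\|\bA\|_\infty$-factors with a strictly smaller free-vertex count. Keeping the constant of the form $(c\|E\|_{\psi_2}k)^{c'k}$ — rather than something worse like $k^{k^2}$ — requires care: the number of partitions/graph types is $e^{O(k\log k)}$ and each $\kappa_l$ is at most $(c\|E\|_{\psi_2})^l l^{l/2}$ with $\sum l \le 4k$ over a graph, so the product telescopes to $(c\|E\|_{\psi_2}k)^{O(k)}$, but one must check no step secretly costs $\binom{p}{\cdot}$ or $q^{\Theta(k)}$ beyond the stated powers.
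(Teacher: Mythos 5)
Your plan follows the same route as the paper's proof: expand $U_k$ in the Hermite basis, exploit the fact that the first two moments of $E$ match the Gaussian so that the corrections $\E[H_m(\theta+E)]-\theta^m$ have degree at most $m-3$ (the paper's Lemma~\ref{lem:hermite_subgaussian}), organize the index tuples by the partition of $[4k]$ they induce, use M\"obius inversion on the partition lattice, and finish with a multigraph vertex count that trades powers of $\|\bA\|_\infty$ against powers of $p$ and $q$. So the strategy is correct and is essentially the paper's.

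However, two points prevent this from being a proof rather than an outline. First, the claim that ``only connected multigraphs survive'' the cancellation is false as stated and cannot be the organizing principle: the paper's final graph bound (Proposition~\ref{prop:graph_majoration}) explicitly carries contributions from isolated vertices and disconnected components, paying factors $p^{|V_{r,0,0}|}q^{|V_{*,0,*}|-|V_{r,0,0}|}$ for them. What the M\"obius inversion actually buys is a \emph{degree drop inside each group of the partition} (each group of size $|\cP_r|$ contributes a monomial of degree at most $|\cP_r|-2$, or $|\cP_r|-3$ when the group sits entirely in $[2k]$ or in $[2k+1:4k]$), not the elimination of disconnected configurations. Second, and more seriously, the ``Euler-type relation'' is not sufficient to close the estimate: after fixing the partition and the degrees, the quantity $W_{\cP,D}$ is a sum $\sum_{s,t}\prod_r \bA_{s_{h_r},t_{h'_r}}^{D_r}\prod_r \bB^{(r)}_{f_r,f'_r}$ in which indices may appear three or more times, so it cannot be written as a trace or a product of traces, and a naive count of free vertices does not yield the stated $p$-versus-$q$ split between $pq\|\bA\|_\infty^{4k-4}$ and $q\|\bA\|_\infty^{4k-2}$. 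The paper resolves this with a multi-page inductive pruning of the multigraph (Propositions~\ref{lem:upper_W_cp,p}, \ref{prop:graph_majoration} and \ref{property_graph_multiple}), tracking operator norms, Frobenius norms and row/column $\ell_2$ norms of the matrices attached to the edges, together with a parity analysis of $a=\sum_r(|\cP_r|-D_r)$ modulo $4$. That argument is the heart of the theorem and is exactly the part your plan defers; without it the bound is not established.
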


In contrast to Theorem~\ref{prp:risk_U_k} for Gaussian noise, the Schatten norms $\|\bA\|_{4k}^{4k-4}$ and $\|\bA\|_{4k}^{4k-2}$ in the risk upper bounds have been respectively replaced by the looser bounds $q \|\bA\|_{\infty}^{4k-4}$ and $q \|\bA\|_{\infty}^{4k-2}$. Still, this mild  modifications does not affect the rates for estimating $\|\bA\|_{2k}$ as stated  in the next corollary.
\begin{cor}\label{cor:schatten_non_gaussian}
  For any positive integer $k$, and any $p\times q$ matrix $\bA$ one has 
  \beq
  \E\left[|(U_k)_+^{1/(2k)}- \|\bA\|_{2k}|\right]\leq (c\|E\|_{\psi_2}k )^{c'k} (pq)^{1/4}\ ,
  \eeq
   where $c$ and $c'$ are positive numerical constants. 
  \end{cor}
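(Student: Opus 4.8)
\textbf{Proof plan for Corollary~\ref{cor:schatten_non_gaussian}.}
The plan is to deduce the bound on $\E[|(U_k)_+^{1/(2k)}- \|\bA\|_{2k}|]$ from the moment bound of Theorem~\ref{thm:non_gaussian} by essentially the same argument as in the Gaussian case (Corollary~\ref{cor:schatten}), treating separately the regime where $\|\bA\|_{2k}$ is small compared to the noise scale and the regime where it dominates. First I would set $R^{2} = (c\|E\|_{\psi_2}k)^{c'k}\big[(pq)^{k} + pq\|\bA\|_\infty^{4k-4} + q\|\bA\|_\infty^{4k-2}\big]$ so that Theorem~\ref{thm:non_gaussian} reads $\E[(U_k - \|\bA\|_{2k}^{2k})^2]\le R^2$, hence $\E[|U_k - \|\bA\|_{2k}^{2k}|]\le R$ by Jensen. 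Then I would use the elementary deterministic inequality $|x_+^{1/(2k)} - y^{1/(2k)}| \le |x-y|^{1/(2k)}$, valid for $x\in\R$, $y\ge 0$ (subadditivity of $t\mapsto t^{1/(2k)}$ together with $x_+ \le \max(x,y)$ when $y\ge 0$... more carefully: if $x\le 0$ then $|0 - y^{1/(2k)}| = y^{1/(2k)} \le (y-x)^{1/(2k)} = |x-y|^{1/(2k)}$; if $x>0$ then WLOG $x\le y$ gives $y^{1/(2k)}\le x^{1/(2k)} + (y-x)^{1/(2k)}$ by subadditivity), so that by Jensen again
\[
\E\big[|(U_k)_+^{1/(2k)} - \|\bA\|_{2k}|\big] \le \E\big[|U_k - \|\bA\|_{2k}^{2k}|^{1/(2k)}\big] \le \big(\E[|U_k - \|\bA\|_{2k}^{2k}|]\big)^{1/(2k)} \le R^{1/(2k)}.
\]

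The remaining work is to check that $R^{1/(2k)} \lesssim (c\|E\|_{\psi_2}k)^{c'k}(pq)^{1/4}$ in the relevant range of $\|\bA\|_\infty$. The prefactor $(c\|E\|_{\psi_2}k)^{c'k}$ raised to the power $1/(2k)$ only improves the constant, so it suffices to bound $[(pq)^k + pq\|\bA\|_\infty^{4k-4} + q\|\bA\|_\infty^{4k-2}]^{1/(2k)}$ by a constant multiple of $(pq)^{1/4}$ after possibly absorbing factors into $(c\|E\|_{\psi_2}k)^{c'k}$. The first term contributes exactly $(pq)^{1/2\cdot 1/2} = (pq)^{1/4}$. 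For the other two I would argue as in the Gaussian proof: when $\|\bA\|_\infty \le (pq)^{1/4}$, one has $pq\|\bA\|_\infty^{4k-4}\le pq\cdot(pq)^{(4k-4)/4} = (pq)^{k}$ and $q\|\bA\|_\infty^{4k-2}\le q\cdot(pq)^{(4k-2)/4} = q\,p^{(2k-1)/2}q^{(2k-1)/2}\le (pq)^k$ using $p\ge q$, so $R^{1/(2k)}\lesssim (pq)^{1/4}$ up to the prefactor; when $\|\bA\|_\infty \ge (pq)^{1/4}$, one instead compares $(U_k)_+^{1/(2k)}$ and $\|\bA\|_{2k}$ directly — here the dominant term is $\|\bA\|_{2k}$ itself and the estimator error is lower order — but since Theorem~\ref{thm:non_gaussian} is stated for all $\bA$, the cleanest route is to observe that $\|\bA\|_{2k}\le \|\bA\|_{\infty}\cdot q^{1/(2k)}$ and $\|\bA\|_\infty \le \|\bA\|_{2k}$, and that the bound $R^{1/(2k)}$ is anyway dominated by $(c\|E\|_{\psi_2}k)^{c'k}\max\{(pq)^{1/4}, p^{1/4}\|\bA\|_\infty^{1-1/k}, q^{1/(2k)}\|\bA\|_\infty^{1-1/(2k)}\}$, and a short case analysis on whether $\|\bA\|_\infty$ exceeds $(pq)^{1/4}$ shows the first term always wins after adjusting the prefactor. (Alternatively, one can simply invoke the same $(pq)^{1/4}$-truncation argument used to pass from Theorem~\ref{prp:risk_U_k} to Corollary~\ref{cor:schatten}, since the structure of the bound is identical up to replacing $\|\bA\|_{4k-4}^{4k-4}$ by $q\|\bA\|_\infty^{4k-4}$ and $\|\bA\|_{4k-2}^{4k-2}$ by $q\|\bA\|_\infty^{4k-2}$, and both replacements are controlled by $(pq)^k$ in exactly the regime that matters.)

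I do not expect any real obstacle here: the corollary is a routine consequence of the hard theorem, and the only mild subtlety is bookkeeping the power-$1/(2k)$ reduction of the mixed terms $pq\|\bA\|_\infty^{4k-4}$ and $q\|\bA\|_\infty^{4k-2}$ against $(pq)^{1/4}$, which is handled by the two-regime split on $\|\bA\|_\infty$ versus $(pq)^{1/4}$ exactly as in the Gaussian case. The prefactor $(c\|E\|_{\psi_2}k)^{c'k}$ is robust under taking $2k$-th roots (it only shrinks the exponent by a constant factor), so it carries through without difficulty, and one ends with $\E[|(U_k)_+^{1/(2k)}- \|\bA\|_{2k}|]\le (c\|E\|_{\psi_2}k)^{c'k}(pq)^{1/4}$ as claimed, possibly with a different numerical constant $c'$.
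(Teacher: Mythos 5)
Your first step is fine: $\E[|(U_k)_+^{1/(2k)}-\|\bA\|_{2k}|]\le \E[|U_k-\|\bA\|_{2k}^{2k}|]^{1/(2k)}\le R^{1/(2k)}$ is correct. The gap is in what you do with it. The claim that $R^{1/(2k)}\lesssim (c\|E\|_{\psi_2}k)^{c'k}(pq)^{1/4}$ ``after a short case analysis on whether $\|\bA\|_\infty$ exceeds $(pq)^{1/4}$'' is false in the large-signal regime: the middle term of $R^{1/(2k)}$ is $(pq)^{1/(4k)}\|\bA\|_\infty^{1-1/k}$ (note your exponent $p^{1/4}$ is also miscomputed), and for $\|\bA\|_\infty\gg (pq)^{1/4}$ this grows without bound, so the first term does \emph{not} always win, no matter how the prefactor is adjusted. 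The root bound $R^{1/(2k)}$ is simply not strong enough when the signal is large; no rearrangement of it gives the corollary.

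The correct route — which is both what the paper does and what your parenthetical gestures at, but which you do not carry out — is the genuine two-regime argument of Corollary~\ref{cor:schatten}, and its large-signal half is \emph{not} a truncation of $R^{1/(2k)}$ but a different elementary inequality. One first uses $\|\bA\|_\infty\le\|\bA\|_{2k}$ to replace $\|\bA\|_\infty$ by $\|\bA\|_{2k}$ in the bound of Theorem~\ref{thm:non_gaussian}, then splits on $\|\bA\|_{2k}\lessgtr(pq)^{1/4}$. In the small regime your computation is right. In the large regime one writes $|(U_k)_+^{1/(2k)}-\|\bA\|_{2k}|=\|\bA\|_{2k}\,|(U_k/\|\bA\|_{2k}^{2k})^{1/(2k)}-1|$ and uses $|(1+x)^{1/(2k)}-1|\le 2|x|$ for $x\ge-1/2$ together with Chebyshev on the event $U_k\le\|\bA\|_{2k}^{2k}/2$, which yields
\[
\E\big[|(U_k)_+^{1/(2k)}-\|\bA\|_{2k}|\big]\;\le\;\|\bA\|_{2k}\,\P\big[U_k\le \|\bA\|_{2k}^{2k}/2\big]+2\|\bA\|_{2k}^{1-2k}\,\E\big[|U_k-\|\bA\|_{2k}^{2k}|\big]\ .
\]
Dividing by $\|\bA\|_{2k}^{2k-1}$ (rather than taking $2k$-th roots) is what tames the signal-dependent terms: e.g.\ $q\|\bA\|_{2k}^{4k-2}/\|\bA\|_{2k}^{4k-1}=q/\|\bA\|_{2k}\le q/(pq)^{1/4}\le(pq)^{1/4}$ and $pq\|\bA\|_{2k}^{4k-4}/\|\bA\|_{2k}^{4k-1}\le pq/(pq)^{3/4}=(pq)^{1/4}$. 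You need to actually run this second case; as written, your ``cleanest route'' does not close.
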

As in the  Gaussian case, we achieve a risk bound of the order of $(pq)^{1/4}$  for any even Schatten norm. Looking more closely at the risk bound~\eqref{eq:risk_upper_bound_non_gaussian}, we observe that the risk bound is super-exponential with respect to $k$, whereas the dependency of the risk bound in the Gaussian case is only exponential with respect to $k$.

Now we can rely on Theorem~\ref{thm:non_gaussian} to analyze the  polynomial approximation estimators of general Schatten norms $\|\bA\|_{s}$ as in Section~\ref{sec:general}, the main difference  being that the degree $K^*$ of the polynomial should be now set to be of the order of $\frac{\log(q)}{\log\log(q)}$ to account for the super exponential dependency with respect to $k$ in~\eqref{eq:risk_upper_bound_non_gaussian}. This results in additional $\log\log(q)$ terms in the rates. For instance, the nuclear norm can now be estimator up to the rate $Mq\frac{\log\log(q)}{\log(q)}(pq)^{1/4}$ instead of the rate $M\frac{q}{\log(q)}(pq)^{1/4}$. In summary, up to these $\log\log(q)$ possible loss, all the results extend to a subGaussian noise distribution.

\section{Discussion}\label{sec:discussion}

\subsection{Application to Effective rank estimation }

Throughout this subsection, we assume that $q\geq 2$, otherwise the problem of effective rank estimation simply corresponds to that of testing the nullity of a vector.  
Coming back to the problem of estimating the  Effective rank of a noisy matrix, we deduce from the previous sections  that one should favor ratio of even Schatten norms as these are much easier to estimate. Let us further focus on the specific choice
\beq\label{eq:effective_rank}
\mathrm{ER}_{2,\infty}(\bA)= \frac{\|\bA\|_2^2 }{\|\bA\|^2_{\infty}}\ . 
\eeq
In light of our analysis in Section~\ref{sec:warm_up} of Frobenius and operator norms, we consider the estimator
\beq\label{eq:effective_rank_estimation}
\widehat{\mathrm{ER}}_{2,\infty}(\bA)= \max\left[\frac{\tr[\bY^T \bY -p \bI_q]}{\|\bY^T \bY -p \bI_q\|_{\infty} },1\right]\ . 
\eeq
We cannot simply plug Propositions~\ref{prp:risk_frobenius} and \ref{prp:control_operator} to control this estimator as we only proved expectation bounds. The following result pushes the analysis slightly further to derive high probability deviation for the effective rank. 

\begin{prp}\label{prp:effective_rank_estimation}
There exist numerical constants $c$--$c_3$ such that the following holds for any matrix $\bA$ and for any $t>0$. Provided that 
\beq\label{eq:condition_rank_estimation}
 \|\bA\|_2^2\geq c (\sqrt{pqt}+t)\ , \text{ and } \|\bA\|_{\infty}\geq c \left[\sqrt{t}+ (pq)^{1/4}+ (pt)^{1/4}\right]\ ,
\eeq
then, with probability higher than $1-c_3 e^{-t}$, it holds that 
\beq\label{eq:prp:rank_estimation}
 \frac{\big|\widehat{\mathrm{ER}}_{2,\infty}(\bA)  - \mathrm{ER}_{2,\infty}(\bA)\big|}{\mathrm{ER}_{2,\infty}(\bA)}\leq c' \left[\frac{\sqrt{pqt}}{\|\bA\|_2^2}+ \frac{\sqrt{pt}+ \sqrt{pq}}{\|\bA\|^2_{\infty}}+ \frac{\sqrt{q}+\sqrt{t}}{\|\bA\|_{\infty}}\right]\ . 
\eeq
\end{prp}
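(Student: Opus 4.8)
The plan is to split the relative error into a numerator piece and a denominator piece and control each by a high‑probability multiplicative deviation bound. Write \(N:=\tr[\bY^T\bY-p\bI_q]=\|\bY\|_2^2-pq\) and \(\bW:=\bY^T\bY-p\bI_q\), so that \(\widehat{\mathrm{ER}}_{2,\infty}(\bA)=\max(N/\|\bW\|_{\infty},1)\), and recall \(\mathrm{ER}_{2,\infty}(\bA)=\|\bA\|_2^2/\|\bA\|_{\infty}^2\ge 1\) since \(\|\bA\|_2^2\ge\sigma_1^2(\bA)\). Because \(s\mapsto\max(s,1)\) is \(1\)-Lipschitz and the target exceeds \(1\),
\[
\frac{|\widehat{\mathrm{ER}}_{2,\infty}(\bA)-\mathrm{ER}_{2,\infty}(\bA)|}{\mathrm{ER}_{2,\infty}(\bA)}\ \le\ \frac{|N/\|\bW\|_{\infty}-\mathrm{ER}_{2,\infty}(\bA)|}{\mathrm{ER}_{2,\infty}(\bA)}\ =\ \Big|\frac{N/\|\bA\|_2^2}{\|\bW\|_{\infty}/\|\bA\|_{\infty}^2}-1\Big|\ ,
\]
so if I can show, on an event of probability at least \(1-c_3e^{-t}\), that \(|N/\|\bA\|_2^2-1|\le\epsilon_N\) and \(|\,\|\bW\|_{\infty}/\|\bA\|_{\infty}^2-1|\le\epsilon_D\le\tfrac12\), the right‑hand side is at most \(2(\epsilon_N+\epsilon_D)\), and it only remains to match \(\epsilon_N,\epsilon_D\) with the three terms of~\eqref{eq:prp:rank_estimation} using the lower bounds~\eqref{eq:condition_rank_estimation} on \(\|\bA\|_2\) and \(\|\bA\|_{\infty}\).

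For the numerator, \(\|\bY\|_2^2\sim\chi^2_{pq}(\|\bA\|_2^2)\), so standard noncentral \(\chi^2\) concentration (Laurent--Massart type) gives \(|N-\|\bA\|_2^2|\le 2\sqrt{(pq+2\|\bA\|_2^2)t}+2t\lesssim\sqrt{pqt}+\|\bA\|_2\sqrt t+t\) with probability at least \(1-2e^{-t}\). Dividing by \(\|\bA\|_2^2\) and using \(\|\bA\|_2\ge\|\bA\|_{\infty}\ge\sqrt t\) (from~\eqref{eq:condition_rank_estimation}) collapses this to \(\epsilon_N\lesssim\tfrac{\sqrt{pqt}}{\|\bA\|_2^2}+\tfrac{\sqrt t}{\|\bA\|_{\infty}}\); the assumption \(\|\bA\|_2^2\ge c(\sqrt{pqt}+t)\) with \(c\) large moreover makes \(\epsilon_N\le\tfrac12\). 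This step is routine.

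The denominator is the crux, and the hard part is to show that the cross term in \(\bW=\bA^T\bA+(\bA^T\bE+\bE^T\bA)+(\bE^T\bE-p\bI_q)\) contributes only at scale \(\|\bA\|_{\infty}\sqrt q\), not \(\|\bA\|_{\infty}\sqrt p\) (the latter would be fatal for~\eqref{eq:prp:rank_estimation}). I would use that \(\bA^T\bE+\bE^T\bA\) only sees the column space of \(\bA\): letting \(\Pi=\bG\bG^T\) be the projector onto \(\mathrm{col}(\bA)\) with \(\bG\in\mathbb{R}^{p\times r}\), \(r=\rank(\bA)\le q\), the matrix \(\bG^T\bE\) has i.i.d.\ standard Gaussian entries, so \(\|\Pi\bE\|_{\infty}=\|\bG^T\bE\|_{\infty}\lesssim\sqrt q+\sqrt t\) by Lemma~\ref{lem:DavSza}; and for any unit vector \(\bv\), \(|\bv^T\bA^T\bE\bv|=\|\bA\bv\|_2\,|\widehat{\bA\bv}^{\,T}\bE\bv|\le\|\bA\|_{\infty}\|\Pi\bE\|_{\infty}\) (where \(\widehat{\bA\bv}=\bA\bv/\|\bA\bv\|_2\in\mathrm{col}(\bA)\)), whence \(\|\bA^T\bE+\bE^T\bA\|_{\infty}\le 2\|\bA\|_{\infty}\|\Pi\bE\|_{\infty}\). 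Together with \(\|\bE^T\bE-p\bI_q\|_{\infty}\lesssim\sqrt{pq}+\sqrt{pt}+q+t\) (Davidson--Szarek applied to \(\sigma_1(\bE)\) and \(\sigma_q(\bE)\)) and Weyl's inequality, this gives \(\sigma_1^2(\bY)-p\le\|\bA\|_{\infty}^2+\Delta\) with \(\Delta\lesssim\|\bA\|_{\infty}(\sqrt q+\sqrt t)+\sqrt{pq}+\sqrt{pt}\) (absorbing \(q\le\sqrt{pq}\) and, via \(\|\bA\|_{\infty}\ge\sqrt t\), the remaining \(t\)). For the matching lower bound I would test against a top right singular vector \(\bv^{\ast}\) of \(\bA\), which is non‑random: \(\sigma_1^2(\bY)\ge\|\bA\bv^{\ast}+\bE\bv^{\ast}\|_2^2=\|\bA\|_{\infty}^2+2\langle\bA\bv^{\ast},\bE\bv^{\ast}\rangle+\|\bE\bv^{\ast}\|_2^2\) with \(\langle\bA\bv^{\ast},\bE\bv^{\ast}\rangle\sim\cN(0,\|\bA\|_{\infty}^2)\) and \(\|\bE\bv^{\ast}\|_2^2\sim\chi^2_p\), giving \(\sigma_1^2(\bY)-p\ge\|\bA\|_{\infty}^2-\|\bA\|_{\infty}\sqrt{2t}-2\sqrt{pt}\ge\|\bA\|_{\infty}^2-\Delta\). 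Finally I must check that \(\|\bW\|_{\infty}\) is the \emph{largest}, not the smallest, eigenvalue of \(\bW\): lower‑bounding each summand of \(\min_{\bv}\{\|\bE\bv\|_2^2+2\langle\bA\bv,\bE\bv\rangle\}\) yields \(\sigma_q^2(\bY)\ge\sigma_q^2(\bE)-2\|\bA\|_{\infty}\|\Pi\bE\|_{\infty}\ge p-\Delta\), so \(p-\sigma_q^2(\bY)\le\Delta\).

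To conclude, I would check that~\eqref{eq:condition_rank_estimation} with a large enough absolute constant forces \(\Delta\le\tfrac12\|\bA\|_{\infty}^2\): indeed \(\tfrac{\sqrt q}{\|\bA\|_{\infty}}\le\tfrac{\sqrt q}{c(pq)^{1/4}}\le\tfrac1c\), \(\tfrac{\sqrt t}{\|\bA\|_{\infty}}\le\tfrac1c\), \(\tfrac{\sqrt{pq}}{\|\bA\|_{\infty}^2}\le\tfrac1{c^2}\), \(\tfrac{\sqrt{pt}}{\|\bA\|_{\infty}^2}\le\tfrac1{c^2}\). Then \(\sigma_1^2(\bY)-p\ge\tfrac12\|\bA\|_{\infty}^2\ge p-\sigma_q^2(\bY)\), so \(\|\bW\|_{\infty}=\sigma_1^2(\bY)-p\) and \(|\,\|\bW\|_{\infty}-\|\bA\|_{\infty}^2|\le\Delta\); dividing by \(\|\bA\|_{\infty}^2\) gives \(\epsilon_D\lesssim\tfrac{\sqrt q+\sqrt t}{\|\bA\|_{\infty}}+\tfrac{\sqrt{pq}+\sqrt{pt}}{\|\bA\|_{\infty}^2}\le\tfrac12\). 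Substituting \(\epsilon_N\) and \(\epsilon_D\) into the display of the first paragraph yields exactly~\eqref{eq:prp:rank_estimation}, and a union bound over the \(O(1)\) Gaussian‑concentration events above (each of failure probability \(\lesssim e^{-t}\)) gives the probability \(1-c_3e^{-t}\). I expect the only genuinely delicate point to be the column‑space estimate bounding the cross term by \(\|\bA\|_{\infty}(\sqrt q+\sqrt t)\), together with the bookkeeping certifying \(\Delta\le\tfrac12\|\bA\|_{\infty}^2\), which is what guarantees that the operative eigenvalue of \(\bW\) is \(\sigma_1^2(\bY)-p\).
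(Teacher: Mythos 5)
Your proof is correct and follows essentially the same route as the paper: concentration of $\tr[\bY^T\bY-p\bI_q]$ around $\|\bA\|_2^2$ via (non-central) $\chi^2$ bounds, control of the cross term $\bA^T\bE$ at scale $\|\bA\|_{\infty}(\sqrt q+\sqrt t)$ by restricting $\bE$ to the column space of $\bA$ (the paper does this via the SVD rotation in Lemma~\ref{croise} plus Lemma~\ref{lem:DavSza}), the Davidson--Szarek bound on $\|\bE^T\bE-p\bI_q\|_{\infty}$, and a ratio sandwich under condition~\eqref{eq:condition_rank_estimation}. You are in fact slightly more explicit than the paper on two points it handles implicitly -- the identification of $\|\bW\|_{\infty}$ with the top eigenvalue $\sigma_1^2(\bY)-p$ rather than $|p-\sigma_q^2(\bY)|$, and the Lipschitz reduction for the $\max(\cdot,1)$ truncation.
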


If  $\sigma_1(\bA)= \|\bA\|_{\infty}$ is large compared to $(pq)^{1/4}$, the above proposition implies that the effective rank is well estimated. 
Condition~\eqref{eq:condition_rank_estimation} is unavoidable for consistent effective risk estimation. Indeed, we established in the proof of Proposition~\ref{prp:lower_frob} that it is impossible to distinguish $\bA$ from the null matrix when $\sigma_1(\bA)$ is less than $(pq)^{1/4}$. It turns out that the plug-in estimator~\eqref{eq:effective_rank_estimation} is optimal  as stated by the next proposition.

\begin{prp}\label{prp:rank_estimatino_optimalite}
  Fix $\alpha\in (0,1/2)$. If $\sigma_1(\bA)\geq c_{\alpha} (pq)^{1/4}$, then, with probability higher than $1-\alpha$, we have 
\beq\label{eq:prp:rank_estimation2}
 \frac{\big|\widehat{\mathrm{ER}}_{2,\infty}(\bA)  - \mathrm{ER}_{2,\infty}(\bA)\big|}{\mathrm{ER}_{2,\infty}(\bA)}\leq c'_{\alpha}\left[\frac{\sqrt{pq}}{\sigma_1^2(\bA)}+ \frac{\sqrt{q}}{\sigma_1(\bA)}\right] \ . 
\eeq
Conversely, for any $r\geq (pq)^{1/4}$, we have the following lower bound on the error
\beq \label{eq:lower}
\inf_{\tilde{R}}\sup_{\bA:\,  \sigma_1(\bA)\geq r }\P\left[\frac{|\widehat{R} -\mathrm{ER}_{2,\infty}(\bA)\big|}{\mathrm{ER}_{2,\infty}(\bA)}\geq c \left(\frac{\sqrt{pq}}{r^2}+ \frac{\sqrt{q}}{r}\right)\right] \geq 0.2\ .
\eeq
\end{prp}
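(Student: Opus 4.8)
The plan is to prove the two parts of Proposition~\ref{prp:rank_estimatino_optimalite} separately, upper bound first and then the matching lower bound.

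\textbf{Upper bound.} The strategy is to reuse the concentration estimates underlying Propositions~\ref{prp:risk_frobenius} and~\ref{prp:control_operator}, but in high-probability form rather than in expectation. Write $\widehat{N}= \tr[\bY^T\bY - p\bI_q]$ and $\widehat{D}= \|\bY^T\bY - p\bI_q\|_{\infty}= \sigma_1^2(\bY)-p$, so that $\E[\widehat N]=\|\bA\|_2^2$ and $\widehat D$ is the natural estimator of $\sigma_1^2(\bA)$. First I would establish, using Gaussian concentration (Hanson--Wright for the quadratic form $\widehat N$, and the Davidson--Szarek / Lipschitz concentration for $\sigma_1(\bY)$ that is already invoked in Lemma~\ref{lem:DavSza} and Lemma~\ref{lem:spectrum}), that for $\alpha$ fixed there is a constant depending only on $\alpha$ with, on an event of probability at least $1-\alpha$,
\[
 |\widehat N - \|\bA\|_2^2|\lesssim_\alpha \sqrt{pq}+\|\bA\|_2\ ,\qquad |\widehat D - \sigma_1^2(\bA)|\lesssim_\alpha \sqrt{pq}+\|\bA\|_\infty\sqrt{q}+\sqrt p\ .
\]
Then, under the hypothesis $\sigma_1(\bA)\geq c_\alpha(pq)^{1/4}$ with $c_\alpha$ large, the deviation of $\widehat D$ is a small fraction of $\sigma_1^2(\bA)$, so $\widehat D$ stays positive and the $\max$ with $1$ is inactive with high probability; hence $\widehat{\mathrm{ER}}_{2,\infty}= \widehat N/\widehat D$. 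Writing the ratio error as
\[
 \frac{\widehat N/\widehat D - \|\bA\|_2^2/\sigma_1^2(\bA)}{\|\bA\|_2^2/\sigma_1^2(\bA)} = \frac{\widehat N - \|\bA\|_2^2}{\|\bA\|_2^2} - \frac{\widehat D - \sigma_1^2(\bA)}{\sigma_1^2(\bA)}\cdot\frac{\widehat N/\widehat D}{\|\bA\|_2^2/\sigma_1^2(\bA)}
\]
and noting that on the good event the last factor is within a constant of $1$, one gets the bound $\lesssim_\alpha \sqrt{pq}/\|\bA\|_2^2 + (\sqrt{pq}+\|\bA\|_\infty\sqrt q + \sqrt p)/\sigma_1^2(\bA)$. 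Since $\sigma_1(\bA)\geq (pq)^{1/4}$ forces $\sqrt p/\sigma_1^2(\bA)\leq \sqrt q/\sigma_1^2(\bA)\cdot\sqrt{p/q}$... more carefully $\sqrt p/\sigma_1^2(\bA)\lesssim \sqrt{pq}/\sigma_1^2(\bA)$ when $q\geq 1$ is absorbed, and $\|\bA\|_\infty\sqrt q/\sigma_1^2(\bA)= \sqrt q/\sigma_1(\bA)$, this collapses to the claimed $\sqrt{pq}/\sigma_1^2(\bA)+\sqrt q/\sigma_1(\bA)$. I would also use $\|\bA\|_2^2\geq \sigma_1^2(\bA)\geq c_\alpha^2\sqrt{pq}$ to absorb the $\|\bA\|_2/\|\bA\|_2^2$ term.

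\textbf{Lower bound.} Here I would reduce to a two-point (or Le Cam) argument built on the rank-one perturbations already used for Proposition~\ref{prp:lower_frob}. Fix $r\geq (pq)^{1/4}$. The idea is to find two signal matrices $\bA_0,\bA_1$, both with $\sigma_1\geq r$, whose effective ranks differ by a relative amount of order $\sqrt{pq}/r^2 + \sqrt q/r$, but whose laws $\bbP_{\bA_0},\bbP_{\bA_1}$ of $\bY$ are statistically indistinguishable (total variation bounded away from $1$). For the $\sqrt q/r$ term, take $\bA_0= r\, u v^\top$ rank one and $\bA_1 = r\,uv^\top + \delta\, u' v'^\top$ with $u'\perp u$, $v'\perp v$, and $\delta\asymp \sqrt q$: this perturbs a single small singular value, changing $\|\bA\|_2^2$ by $\delta^2\asymp q$ hence $\mathrm{ER}_{2,\infty}$ by a relative $q/r^2$... that is too small; instead perturb $\sqrt q$-many coordinates each by $O(1)$, i.e. add to $\bA_0$ an independent Gaussian-type block of Frobenius norm $\asymp \sqrt q$ living in directions orthogonal to $u,v$, so that $\|\bA_1\|_2^2 - \|\bA_0\|_2^2\asymp \sqrt q\cdot\sqrt q$... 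I would calibrate so that the change in $\|\bA\|_2^2$ is of order $\sqrt{pq}$ (giving the $\sqrt{pq}/r^2$ term) while keeping the operator norm essentially unchanged, and simultaneously a second pair of hypotheses perturbing $\sigma_1$ itself by $O(\sqrt q)$ additively (feasible since a $q\times q$ noise block has operator norm $\asymp\sqrt q$) to get the $\sqrt q/r$ term; one then takes the harder of the two sub-problems. Indistinguishability follows because the perturbation has Frobenius norm bounded by a constant multiple of $(pq)^{1/4}$ in the first case (so the KL divergence between the two Gaussian models is $O(1)$, exactly as in the proof of Proposition~\ref{prp:lower_frob}), and in the second case the perturbation to $\sigma_1$ is hidden inside the operator-norm fluctuations of a Gaussian block. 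Finally I would use a mixture over signs/directions (Le Cam's two-point mixture) to push the total variation below, say, $0.6$, and conclude via the standard reduction that any estimator errs by the stated relative amount with probability at least $0.2$.

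\textbf{Main obstacle.} The delicate point is the lower bound: one must simultaneously keep $\sigma_1(\bA)\geq r$ fixed (a hard constraint, unlike the soft $\|\bA\|_\infty\leq 0.125(pq)^{1/4}$ constraint in Theorem~\ref{thm:lower_general}) \emph{and} make the $\chi^2$ or KL divergence between the mixtures $O(1)$ \emph{and} create a relative effective-rank gap of the stated order. The tension is that enlarging $\|\bA\|_2^2$ by $\sqrt{pq}$ while leaving $\sigma_1^2(\bA)=r^2$ untouched requires spreading the perturbation across many orthogonal directions, and one must check that this spread-out perturbation still has small enough total effect on the distribution of $\bY$ — which is exactly where the assumption $r\geq (pq)^{1/4}$ enters, ensuring $\sqrt{pq}/r^2\leq 1$ so the perturbation-to-noise ratio is controlled. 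Getting the constants to line up so that both terms $\sqrt{pq}/r^2$ and $\sqrt q/r$ appear (rather than just their maximum in a disguised form) is the bookkeeping-heavy part, but conceptually it is the same Le Cam machinery as in Proposition~\ref{prp:lower_frob}.
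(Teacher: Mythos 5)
Your upper bound is correct and is essentially the paper's argument: the paper simply invokes Proposition~\ref{prp:effective_rank_estimation} with $t$ a constant depending on $\alpha$, noting that $\sigma_1(\bA)\geq c_\alpha(pq)^{1/4}$ implies $\|\bA\|_2^2\geq c_\alpha^2\sqrt{pq}$ so that condition~\eqref{eq:condition_rank_estimation} holds; your rederivation of the two concentration bounds and the ratio decomposition reproduces exactly that proof. Likewise, your first lower-bound construction (rank-one base $\bA_0$ with $\sigma_1=r$, plus a rank-one perturbation of singular value $\asymp(pq)^{1/4}$ supported on directions orthogonal to the first, detected via the Rademacher mixture of Proposition~\ref{prp:lower_frob}) is the paper's construction for the $\sqrt{pq}/r^2$ term, and your observation that the two terms can be handled by two separate two-point problems is how the paper proceeds.

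The genuine gap is in the second sub-problem, the $\sqrt q/r$ term. You propose to ``perturb $\sigma_1$ itself by $O(\sqrt q)$ additively'' and argue indistinguishability from the $\asymp\sqrt q$ operator-norm fluctuations of a Gaussian block, but you never specify the base matrix, and the choice matters crucially. The functional $\mathrm{ER}_{2,\infty}(\bA)=\|\bA\|_2^2/\sigma_1^2(\bA)$ is completely insensitive to a perturbation of $\sigma_1$ alone when the matrix is (near) rank one: if $\bA$ is rank one, the effective rank equals $1$ whatever $\sigma_1$ is, and more generally the relative change induced by $\sigma_1\mapsto\sigma_1+\delta$ is of order $\tfrac{\delta}{r}\cdot\tfrac{\sum_{i\geq 2}\sigma_i^2}{\|\bA\|_2^2}$, which is only of the claimed order $\sqrt q/r$ if the tail singular values carry a constant fraction of the Frobenius energy. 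So carrying over the rank-one base from the first construction kills the signal entirely. The paper resolves this by switching base matrix: it takes $\bA_1=r\bI_q$ (restricting to square matrices), whose effective rank is $q$, and perturbs it to $r\bI_q+s\,uu^T$ with $s\asymp\sqrt q$; the effective rank then drops to roughly $q\,r^2/(r+s)^2$, a relative change of order $s/r=\sqrt q/r$, while the testing problem reduces (after subtracting the known $r\bI_q$) to detecting a rank-one spike of size $\sqrt q/4$ against $q\times q$ Gaussian noise, which is impossible by the same Rademacher-mixture $\chi^2$ computation. Without this change of base matrix your second construction does not produce the required effective-rank gap, so the $\sqrt q/r$ term is not established as written; the rest of your plan (sign mixtures, the reduction from testing to estimation, and the role of $r\geq(pq)^{1/4}$ in the first construction) is sound.
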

The optimal convergence rate of the effective rank is of the order of $\frac{\sqrt{pq}}{\sigma^2_1(\bA)}$ when $\sigma_1(\bA)$ is in $[(pq)^{1/4}; \sqrt{p}]$ and of the order of $\frac{\sqrt{q}}{\sigma_1(\bA)}$ for $r\geq \sqrt{p}$.

\subsection{Low-Rank matrices}

The optimal risk of estimating even Schatten norms does not depend on the rank of $\bA$. In other words, estimating $\|\bA\|_{2k}$ for a rank-one matrix is as difficult as for a full rank matrix. The situation seems quite different for general Schatten norms as we have obtained the much  slower convergence rates $q^{1/s}(pq)^{1/4}$ for $\|\bA\|_s$ when $s$ is not even. In the extreme case of rank-one matrix, this rates can be drastically improved to $(pq)^{1/4}$ as $\|\bA\|_s=\|\bA\|_2$ so that we can simply use $(U_1)_+^{1/2}$. More generally, we could hope to derive faster rates for $\|\bA\|_s$ when $\|\bA\|$ is a small rank (or approximately a small rank matrix). This is an interesting direction for future research.

\subsection{Estimating general $\|\bA\|_s$ without any constraint on $\|\bA\|_{\infty}$}

In Section~\ref{sec:general}, we have introduced a simple modified plug-in estimator $T_1$ for $\|\bA\|_1$ up to an an error $q(pq)^{1/4}$. Then, assuming that $\|\bA\|_{\infty}\leq M(pq)^{1/4}$, for some $M>0$, we have analyzed a moment-based estimator that achieves the rate $q(pq)^{1/4}/\log(q)$. This rate turns out to be minimax optimal, at least for bounded $\|\bA\|_{\infty}$. For other functional estimation problems in the Gaussian vector model~\cite{cailow2011} or in discrete distribution~\cite{jiao2015minimax,han2018local}, the authors introduce an unified estimator by considering separately observations corresponding to parameters in the smooth part of the functional and observations corresponding to the non-smooth part of the functional. In our setting, this could amount to estimating separately the large singular values  $\sum_{i=1}^{s}\sigma_i(\bA)$ and then use a polynomial approximation method to estimate the part $\sum_{i=s+1}^{q}\sigma_i(\bA)$ of $\|\bA\|_1$ arising from the bulk of the singular values. Unfortunately, this cannot be easily done in our matrix setting because of the intertwined dependencies between the singular values of ${\bf Y}$.

\subsection*{Acknowledgements}
We are grateful to Christophe Giraud for many stimulating discussions.

\section{Proofs}

\subsection{Frobenius and operator norm }

 \begin{proof}[Proof of Proposition \ref{prp:risk_frobenius}]
The difference $U_1 - \|\bA\|_2^2$ decomposes as 
\begin{equation*}
U_1 - \|\bA\|_2^2 = \|\mathbf{\bY}\|_{2}^2 - pq - \|\bA\|_2^2 = \pa{\|\bE\|_{2}^2 - pq} + 2 \langle\bA, \bE \rangle ,
\end{equation*}
where $\langle \bB,\bC\rangle= \tr(\bB^T \bC)$. First, $\|\bE\|_{2}^2$ follows a $\chi^2$ distribution with $pq$ degrees of freedom. Hence, we derive from Cauchy-Schwarz inequality that  $\mathbb{E}\cro{|\|\bE\|_{2}^2 - pq|}\leq  \mathrm{var}^{1/2}(\|\bE\|_{2}^2)=\sqrt{2pq}$. Since $\bA$ is deterministic, $\langle\bA, \bE \rangle/\|\bA\|_2$  follows a standard normal distribution and $\mathbb{E} \cro{|\langle\bA,\bE \rangle|} \leq \|\bA\|_2$. The first result follows. 
Since $|\sqrt{a}-\sqrt{b}|\leq \sqrt{|a-b|}$, we derive from Cauchy-Schwarz inequality and the first result that 
\beqn 
\mathbb{E}\cro{|(U_1)_+^{1/2} - \|\bA\|_2|} &\leq & \left[\mathbb{E}\left(|(U_1)_+ - \|\bA\|_2 |\right)\right]^{1/2}\\
&\leq & \left[\mathbb{E}\left(|U_1 - \|\bA\|_2 |\right)\right]^{1/2}
\leq  \sqrt{(2pq)^{1/2}+ 2\|\bA\|_{2}}\ \ , 
\eeqn 
which is smaller than $2(pq)^{1/4}$ as long as $\|\bA\|_2\leq \sqrt{pq}$.  Now assume that $\|\bA\|_2\geq\sqrt{pq}$. Since $(x-y)= (x^2-y^2)/(x+y)$, it follows again from the first result  that 
\beqn 
\mathbb{E} \cro{|(U_1)_+^{1/2} - \|\bA\|_2| }  \leq \mathbb{E}\cro{\frac{|U_1 - \|\bA\|_2^2|}{ \|\bA\|_2} } 
\leq \frac{\sqrt{2pq} + 2\|\bA\|_2}{\|\bA\|_2}\leq 2+ \sqrt{2}\leq 3(pq)^{1/4}\ . 
\eeqn 
This concludes the proof.
\end{proof}

\begin{proof}[Proof of Lemma \ref{prp:nogestim}]
By triangular inequality, we have $\E[|\|\bY\|_{\infty}-\|\bA\|_{\infty}|]\leq \E[\|\bE\|_{\infty}]$ and $\E[|\|\bY\|_{\infty}-\E[\|\bE\|_{\infty}]-\|\bA\|_{\infty}|]\leq 2\E[\|\bE\|_{\infty}]$. By Lemma~\ref{lem:spectrum}, the expectation of the operator norm of $\bE$ is less or equal to $\sqrt{p}+\sqrt{q}$. This concludes the first part of the proof. 
Let us now lower bound the risk of $\|\bY\|_{\infty}$ and $\|\bY\|_{\infty}-\mathbb{E}\cro{\|\bE\|_{\infty}}$.
Choose $\bA$ to be the null matrix. The risk of $\|\bY\|_{\infty}$ is then $\E[\|\bE\|_{\infty}]$. Consider the size $q$ vector $e_1=(1,0,\ldots,0)$. Then, 
\[
 \E[|\|\bY\|_{\infty}-\|\bA\|_{\infty}|]\geq \E[|\bE e_1|_2]\ . 
\]
Since $|\bE e_1|_2$ is the norm of a standard Gaussian vector of dimension $p$, the expectation  $\E[|\bE e_1|_2]$ is that of the norm of a standard Gaussian vector. Relying on deviation inequalities for $\chi^2$ random variables, we deduce that 
\[
\E[|\bE e_1|_2]\geq \sqrt{\frac{p}{3}}\P[|\bE e_1|^2_2\geq p/3]\geq  c\sqrt{p}(1- e^{-p/32})\geq c' \sqrt{p} \ , 
\]
for a constant $c>0$. 
\end{proof}

\begin{proof}[Proof of Proposition~\ref{prp:control_operator}]
It follows from the triangular inequality that 
\[
\|\bY^T\bY\|_{\infty} - p - \|\bA^T\bA\|_{\infty} \leq 2\|\bA^T\bE\|_{\infty} + \|\bE^T\bE\|_{\infty} - p\ .
\]
Conversely, we have 
\beqn 
\|\bY^T\bY\|_{\infty}&=& \sup_{u, |u|_2=1} \left(u^T \bA^T\bA u+ 2u^T \bA^T \bE u +  u^T\bE^T\bE u \right)\\ & \geq&  \sup_{u, |u|_2=1} \left(u^T \bA^T\bA u\right) - 2\|\bA^T \bE\|_{\infty} + \sigma_q(\bE^T \bE)\\
&\geq & \|\bA^T \bA \|_{\infty}- 2\|\bA^T \bE\|_{\infty} + \sigma_q(\bE^T \bE)\ . 
\eeqn 
This allows us to bound the error
\begin{eqnarray} \nonumber
 \big|\|\bY^T\bY\|_{\infty} - p - \|\bA^T\bA\|_{\infty}\big| &\leq& 2\|\bA^T\bE\|_{\infty} + [\sigma_1(\bE^T\bE) - p]\vee [\sigma_q(\bE^T\bE) - p] 
\\
&\leq &  2\|\bA^T\bE\|_{\infty}+ \|\bE^T\bE - p\bI\|_{\infty}\ .  \label{eq:upper_error_operator}
\end{eqnarray}
Then, the first risk bound is a straightforward consequence of the two following lemmas whose proof is given below.
\begin{lem}\label{croise}
For all matrices $\bA$, we have  $\mathbb{E} \cro{\|\bA^T\bE\|_{\infty}} \leq 2\|\bA\|_{\infty}\sqrt{q}$. 
\end{lem}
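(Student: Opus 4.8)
The plan is to bound $\|\bA^T\bE\|_\infty = \sup_{|u|_2=1,\,|v|_2=1} u^T\bA^T\bE v$ by a Gaussian process argument, combining a net on the $v$-sphere in $\R^q$ with an explicit supremum over $u$ for each fixed $v$. First I would observe that for a fixed unit vector $v\in\R^q$, the vector $\bE v\in\R^p$ is standard Gaussian (since $\bE$ has i.i.d.\ $\cN(0,1)$ entries and $|v|_2=1$), so $\sup_{|u|_2=1} u^T\bA^T\bE v = |\bA^T\bE v|_2 \leq \|\bA^T\|_\infty\,|\bE v|_2 = \|\bA\|_\infty\,|\bE v|_2$. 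Taking expectations, $\E[\sup_u u^T\bA^T\bE v]\le \|\bA\|_\infty\sqrt{p}$ for each fixed $v$; but this is not yet what we want because we still need the supremum over $v$, and a crude union bound over a net on the $q$-sphere would give $\|\bA\|_\infty(\sqrt p+\sqrt q)$ rather than $\|\bA\|_\infty\sqrt q$.

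The cleaner route, which should yield the stated constant, is to use the variational formula the other way: $\|\bA^T\bE\|_\infty = \|\bE^T\bA\|_\infty = \sup_{|u|_2=1}|\bA^T\bE\,(\cdot)|$ — more precisely, write $\bA^T\bE\in\R^{q\times p}$, so $\|\bA^T\bE\|_\infty=\sup_{|w|_2=1,\,w\in\R^q}|\,( \bA^T\bE)^T w|_2 = \sup_{|w|_2=1}|\bE^T\bA w|_2$. For fixed unit $w\in\R^q$, $\bA w\in\R^p$ has $|\bA w|_2\le\|\bA\|_\infty$, and $\bE^T(\bA w)$ is a Gaussian vector in $\R^q$ with covariance $|\bA w|_2^2\,\bI_q\preceq\|\bA\|_\infty^2\bI_q$. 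So the process $w\mapsto |\bE^T\bA w|_2$ is, up to the factor $\|\bA\|_\infty$, dominated by $\sup_{|w|_2=1}|\bG w|$-type quantities with $\bG$ a $q\times q$ Gaussian matrix. Concretely I would bound $\|\bA^T\bE\|_\infty\le \|\bA\|_\infty\,\|\bG\|_\infty$ where $\bG$ is $q\times q$ standard Gaussian (this domination can be made precise via Slepian/Gordon-type comparison, or more elementarily by noting $\bE^T\bA = \bE^T\bA$ and $\bA = \bU\bD\bV$ with $\bD$ diagonal of size $q$, so $\bA^T\bE = \bV^T\bD\bU^T\bE$ and $\bU^T\bE$ has the same law as the top $q$ rows of $\bE$, call it $\tilde\bE\in\R^{q\times p}$; then $\|\bA^T\bE\|_\infty = \|\bD\tilde\bE\|_\infty\le \sigma_1(\bA)\|\tilde\bE\|_\infty$... which still has the wrong dimension $p$).

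Given that subtlety, the approach I would actually commit to is the SVD reduction followed by a direct estimate: write $\bA=\bU\bD\bV^T$ with $\bD=\diag(\sigma_1,\dots,\sigma_q)$, so $\bA^T\bE = \bV\bD\bU^T\bE$ and hence $\|\bA^T\bE\|_\infty = \|\bD\,\bU^T\bE\|_\infty$. Now $\bU^T\bE$ has orthonormal rows applied to $\bE$; since only the column space matters, $\|\bD\bU^T\bE\|_\infty=\|\bD\bU^T\bE\bE^T\bU\bD\|_\infty^{1/2}$, and bounding $\bD\preceq\sigma_1(\bA)\bI_q$ gives $\|\bA^T\bE\|_\infty\le\sigma_1(\bA)\,\|\bU^T\bE\|_\infty = \sigma_1(\bA)\,\sigma_1(\bU^T\bE)$. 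Finally $\bU^T\bE$ is a $q\times p$ matrix whose entries are i.i.d.\ $\cN(0,1)$ (orthogonal invariance of the Gaussian: each row of $\bU^T\bE$ is a linear combination of independent standard Gaussian rows of $\bE$ with orthonormal coefficient vector, and distinct rows remain independent because $\bU$ has orthonormal columns), so by Lemma~\ref{lem:spectrum} in the appendix $\E[\sigma_1(\bU^T\bE)]\le\sqrt{p}+\sqrt{q}\le 2\sqrt{p}$. This gives $\E[\|\bA^T\bE\|_\infty]\le 2\sigma_1(\bA)\sqrt p = 2\|\bA\|_\infty\sqrt p$, which has a $\sqrt p$ where the lemma claims $\sqrt q$; the resolution — and the step I expect to be the main obstacle — is that one must instead keep $\bA^T\bE$ as a $q\times p$ object and bound $\sigma_1(\bA^T\bE)$ directly using $\bA^T\bE=\bV\bD\tilde\bE$ where $\tilde\bE=\bU^T\bE$, projecting onto the row space: $\sigma_1(\bA^T\bE)=\sigma_1(\bD\tilde\bE)$ and since $\bD$ is $q\times q$ one applies the bound $\E\sigma_1(\bD\tilde\bE)\le \|\bD\|_\infty\,\E\sigma_1(\tilde\bE)$ but then re-examines that $\tilde\bE\in\R^{q\times p}$ so $\E\sigma_1(\tilde\bE)\asymp\sqrt p$; obtaining $\sqrt q$ instead requires exploiting that $\bA^T\bE v$ for the maximizing $v$ lives in the $q$-dimensional column space of $\bA$, i.e.\ one should write $\bA^T\bE = \bA^T P_{\bA}\bE$ where $P_\bA$ is the rank-$\le q$ projection onto the column space of $\bA$, and $P_\bA\bE$ restricted to that $q$-dimensional space is a $q\times q$ Gaussian matrix $\bG$ (by orthogonal invariance), whence $\|\bA^T\bE\|_\infty\le\|\bA\|_\infty\|\bG\|_\infty$ and $\E\|\bG\|_\infty\le 2\sqrt q$ by Lemma~\ref{lem:spectrum}, giving exactly $2\|\bA\|_\infty\sqrt q$. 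I would present this last version as the proof.
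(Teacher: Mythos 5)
Your final argument is correct and is essentially the paper's own proof: the paper uses the SVD $\bA=\bU^T\bD\bV$ and orthogonal invariance of $\bE$ to reduce $\|\bA^T\bE\|_{\infty}$ to $\|\overline{\bD}\,\overline{\bE}\|_{\infty}$ with $\overline{\bE}$ a $q\times q$ Gaussian matrix, which is the same reduction as your projection $\bA^T\bE=\bA^TP_{\bA}\bE$ onto the (at most $q$-dimensional) column space of $\bA$ followed by Lemma~\ref{lem:spectrum}. The two preliminary detours yielding $\sqrt{p}$ are unnecessary, but the version you commit to is sound.
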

\begin{lem}\label{lem:noiseop}
$\mathbb{E} \cro{\|\bE^T\bE - p\bI\|_{\infty}} \leq 3\sqrt{pq} + 8e^{-p/4} + \sqrt{64\pi p}.$
\end{lem}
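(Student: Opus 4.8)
The plan is to reduce the operator norm of the centered Wishart matrix $\bE^T\bE-p\bI$ to one-sided deviations of the extreme singular values of $\bE$. The eigenvalues of $\bE^T\bE-p\bI$ are $\sigma_i^2(\bE)-p$, $i=1,\dots,q$, so, $x\mapsto|x-p|$ being convex, the operator norm is attained at $\sigma_1^2(\bE)$ or $\sigma_q^2(\bE)$ and equals $\max\big((\sigma_1^2(\bE)-p)_+,\,(p-\sigma_q^2(\bE))_+\big)$. Setting $\alpha:=(\sigma_1(\bE)-\sqrt p)_+$ and $\beta:=(\sqrt p-\sigma_q(\bE))_+\in[0,\sqrt p]$, the two terms factor as $(\sigma_1^2-p)_+=\alpha(2\sqrt p+\alpha)=2\sqrt p\,\alpha+\alpha^2$ and $(p-\sigma_q^2)_+=\beta(2\sqrt p-\beta)\le 2\sqrt p\,\beta$. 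Hence, with $X:=\alpha\vee\beta$,
\[
\|\bE^T\bE-p\bI\|_\infty \ \le\ 2\sqrt p\,X + X^2\ .
\]

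I would then control $X$ by Gaussian concentration of the singular values. The maps $\bE\mapsto\sigma_1(\bE)$ and $\bE\mapsto\sigma_q(\bE)$ are $1$-Lipschitz for the Frobenius norm, and $\E[\sigma_1(\bE)]\le\sqrt p+\sqrt q$, $\E[\sigma_q(\bE)]\ge\sqrt p-\sqrt q$ (Lemma~\ref{lem:spectrum}), so the Davidson--Szarek bounds (Lemma~\ref{lem:DavSza}) give $\P[\sigma_1(\bE)\ge\sqrt p+\sqrt q+t]\le e^{-t^2/2}$ and $\P[\sigma_q(\bE)\le\sqrt p-\sqrt q-t]\le e^{-t^2/2}$ for every $t\ge0$. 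Defining $\gamma:=(\sigma_1(\bE)-\sqrt p-\sqrt q)_+\vee(\sqrt p-\sqrt q-\sigma_q(\bE))_+$, this yields $X\le\sqrt q+\gamma$ and, by a union bound, $\P[\gamma>t]\le 2e^{-t^2/2}$; integrating, $\E[\gamma]\le\int_0^\infty 2e^{-t^2/2}\,dt=\sqrt{2\pi}$ and $\E[\gamma^2]=\int_0^\infty 2t\,\P[\gamma>t]\,dt\le 4$.

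It then remains to assemble the pieces. Expanding $2\sqrt p\,X+X^2\le 2\sqrt p(\sqrt q+\gamma)+(\sqrt q+\gamma)^2=2\sqrt{pq}+q+2(\sqrt p+\sqrt q)\gamma+\gamma^2$ and taking expectations gives
\[
\E\big[\|\bE^T\bE-p\bI\|_\infty\big] \ \le\ 2\sqrt{pq}+q+4\sqrt p\,\E[\gamma]+\E[\gamma^2]\ ,
\]
and since $q\le\sqrt{pq}$ and $\sqrt q\le\sqrt p$ (as $p\ge q\ge1$) the right-hand side is at most $3\sqrt{pq}+4\sqrt{2\pi}\,\sqrt p+4$, which is comfortably below $3\sqrt{pq}+\sqrt{64\pi p}$; the extra $8e^{-p/4}$ in the stated bound is harmless slack that can alternatively be used to dispose of the rare event that $\sigma_q(\bE)$ lands far below $\sqrt p$, on the complement of which only the crude estimate $(p-\sigma_q^2)_+\le p$ is available (relevant if one prefers to invoke Lemma~\ref{lem:DavSza} only in a fixed-confidence form).

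The step I expect to require the most care is obtaining the constant $3$, rather than $4$, in front of $\sqrt{pq}$: one must not bound the operator norm by the \emph{sum} $(\sigma_1^2-p)_++(p-\sigma_q^2)_+$ of the two one-sided deviations, each of order $2\sqrt{pq}$, but genuinely use the maximum, while noting that the bulk-shift term $\alpha^2\approx q$ is dominated by $\sqrt{pq}$. Everything else is routine Gaussian-integral bookkeeping to keep the numerical constants within the stated budget.
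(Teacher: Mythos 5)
Your proof is correct and takes essentially the same route as the paper's: both reduce $\|\bE^T\bE-p\bI\|_{\infty}$ to the one-sided deviations of $\sigma_1(\bE)$ and $\sigma_q(\bE)$, invoke Lemma~\ref{lem:DavSza}, and integrate the resulting Gaussian tails. Your bookkeeping via the overshoot variable $\gamma$, bounding $\E[\gamma]$ and $\E[\gamma^2]$ separately, is a slightly cleaner organization than the paper's direct tail integration of the operator norm, and your constants do fit within the stated bound.
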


As in the proof of Proposition \ref{prp:risk_frobenius}, we deal differently with small and large values of $\|\bA\|_{\infty}$. Since $\sqrt{x}-\sqrt{y}\leq \sqrt{|x-y|}$, we obtain by Cauchy-Schwarz inequality that 
\beqn 
\mathbb{E}\cro{|(\|\bY^T\bY\|_{\infty}-p)_+^{1/2} - \|\bA\|_{\infty}|} & = &\mathbb{E}\cro{|\sqrt{(\|\bY^T \bY\|_{\infty}-p)_+} - \sqrt{\|\bA\|_{\infty}^2}|} \\
 &\leq& \mathbb{E}\cro{\sqrt{|(\|\bY^T \bY\|_{\infty}-p)_+ - \|\bA^T\bA\|_{\infty}|}} \\
&\leq& \pa{\mathbb{E}\cro{|\|\bY^T\bY\|_{\infty} -p  -\|\bA^T\bA\|_{\infty}|}}^{\frac{1}{2}}\\
&\leq &\pa{3\sqrt{pq} + 4\|\bA\|_{\infty}\sqrt{q} + c'(\sqrt{p}+ 1)}^{\frac{1}{2}} 
\eeqn 
If we assume that $\|\bA\|_{\infty}\leq (pq)^{1/4}$, this leads us to 
\[
\mathbb{E}\cro{|(\|\bY^T\bY\|_{\infty}-p)_+^{1/2} - \|\bA\|_{\infty}|}\leq  c(pq)^{1/4} \ .
\]
Now assume that  $\|\bA\|_{\infty}\geq (pq)^{1/4}$, which is equivalent to $\|\bA^T\bA\|_{\infty}\geq \sqrt{pq}$. Since $|x-y|\leq |x^2-y^2|/|y|$, it follows that 
\beqn 
\mathbb{E}\cro{|(\|\bY^T \bY\|_{\infty}-p)_+^{1/2} - \|\bA\|_{\infty}|}&\leq &\frac{1}{\|\bA\|_{\infty}}\mathbb{E}\cro{|(\|\bY^T \bY\|_{\infty}-p)_+ - \|\bA\|_{\infty}^2|}\\
&\leq & 3(pq)^{1/4}+ 4\sqrt{q}+ c'(p/q)^{1/4}\lesssim (pq)^{1/4}\ , 
\eeqn 
which concludes the proof. 
\end{proof}

\begin{proof}[Proof of lemma \ref{croise}]
Let $\bA= \bU^T \bD \bV$ denote a singular value decomposition of $\bA$. Since the distribution of $\bE$ is invariant by left and right orthogonal transformation, it follows that $\|\bA\bE\|_{\infty}$ follows the same distribution as $\|\bV\bD^T  \bE\|_{\infty}= \|\bD^T\bE\|_{\infty}$. We shall bound the expectation of this last random variable. Since $\bD^T$ is a $q\times p$ diagonal matrix, $\bD^T\bE$ does not depend on the entries $\bE_{ij}$ with $i\geq q$. Write $\overline{\bD}$ and $\overline{\bE}$ for the restriction of $\bD$ and $\bE$ to their $q$ first rows. We obtain 
\beqn
\E\cro{\|\bA^T\bE\|_{\infty}} & =  & \E\cro{\|\overline{\bD}\overline{\bE}\|_{\infty}}\leq \|\overline{\bD}\|_{\infty}\E\cro{\|\overline{\bE}\|_{\infty}}\\
&\leq & 2\|\bA\|_{\infty}\sqrt{q}\ , 
\eeqn 
by Lemma \ref{lem:spectrum} and since  $\|\overline{\bD}\|_{\infty}= \|\bD\|_{\infty}= \|\bA\|_{\infty}$. 

\end{proof}

\begin{proof}[Proof of lemma \ref{lem:noiseop}]
Observe that $\|\bE^T\bE-p\bI\|_{\infty}= \max(\sigma^2_1(\bE)- p,p - \sigma_q^2(\bE))$.  We deduce from  Lemma~\ref{lem:DavSza} (taken from~\cite{Davidson2001}) that, for any $t>0$, with probability higher than $1-2e^{-t}$ we have
\beqn 
\sigma_1(\bE) \leq \sqrt{p}+\sqrt{q}+ \sqrt{2t} \ ;  \sigma_q(\bE) \leq \sqrt{p}-\sqrt{q}-  \sqrt{2t}\ . 
\eeqn 
Hence, with probability higher than $1-2e^{-t}$, we have 
$ \|\bE^T\bE-p\bI\|_{\infty}\leq q +  2\sqrt{pq}+ 2t + 4\sqrt{2pt}  $, which implies that 
\[
 \P\left[\|\bE^T\bE-p\bI\|_{\infty}-q - 2\sqrt{pq}\geq x \right]\leq 2\exp\left[-\frac{1}{4}\left(x \wedge \frac{x^2}{16p}\right)\right]\ . 
\]
for any $x>0$. Integrating this inequality we conclude that 
\[
 \E\left[\|\bE^T\bE-p\bI\|_{\infty}\right]\leq 3\sqrt{pq} + \int_{\R} e^{-x^2/(64p)}+  2\int_{16p }^{\infty}e^{-x/4}= 3\sqrt{pq} + 8e^{-4p}+ \sqrt{64\pi p }\ . 
\]

\end{proof}

\subsection{Minimax lower bound}

\begin{proof}[Proof of Proposition~\ref{prp:lower_frob}]
We follow the general Le Cam's approach of fuzzy hypotheses as may be found in~\cite[][Sec.2.7.4]{tsybakov_book}. Given a matrix $\bA$, we denote in the section of $\P_{\bA}$ for the distribution of $\bY$. Consider a prior distribution $\mu$ on $\bA$, such that, for some $s>0$, $\|\bA\|_2\geq s$, $\mu$ almost surely. Define the set $\Theta$ as the union of the support of $\mu$ and the null matrix $0$.  Denoting $\mathbf{P}= \int_{\bA}\P_{\bA}\mu(d\bA)$ for the integrated distribution, it follows from Theorem 2.15 in~\cite{tsybakov_book} that
\[
 \inf_{\widehat{T}}\sup_{\bA\in \Theta}\mathbb{P}_{\bA}[|\widehat{T}-\|\bA\|_2|\geq s/2]\geq \frac{1-\sqrt{\chi^2(\P_{0},\mathbf{P})/2}}{2}\ .
\]
If we further assume that $\Theta$ only contains matrices of $\rank$ less or equal to one, this implies that 
\beq\label{eq:lower_general}
 \inf_{\widehat{T}}\sup_{\bA, \ \rank(\bA)\leq 1}\E_{\bA}[|\widehat{T}-\|\bA\|_2|]\geq \frac{s}{4}\left[1-\sqrt{\chi^2(\P_{0},\mathbf{P})/2}\right]\ . 
 \eeq
It remains to choose $\mu$ and bound the corresponding $\chi^2$ distance. 

Let $s>0$ be a positive quantity that will be fixed later.  Let $\nu_1$ be the uniform distribution over the vectors of the form $p^{-1/2}(\eta_1,\ldots, \eta_p)$ where the $\eta_i$'s belong to $\{-1,1\}$. Let $\nu_2$ be corresponding distribution where $q$ is replaced by $p$. Finally, let $\mu$ be the distribution of $\bA= s u v^T$ where $u$ and $v$ are sampled independently from $\nu_1$ and $\nu_2$. By construction, $\mu$ almost surely, $\bA$ is a rank-one matrix and $\|\bA\|_2= s$.

\medskip 

The main part of the proof amounts to upper bounding the $\chi^2$ discrepancy between $\P_{0}$ and $\mathbf{P}$. Writing $L= d\mathbf{P}/\P_0$ the likelihood ratio between $\mathbf{P}$ and $\P_0$, we have by definition~\cite{tsybakov_book}  that
\beq\label{eq:chi^2}
\chi^2(\P_{0},\mathbf{P})= \E_0[(L-1)^2]= \E_0[L^2]-1\ . 
\eeq

\begin{lem}\label{lem:controlL2}
Taking $s=(pq/4)^{1/4}$, we have $\mathbb{E}_0\cro{L^2}\leq \frac{5}{3}$. 
\end{lem}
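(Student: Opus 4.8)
The plan is to write the likelihood ratio $L=d\mathbf{P}/d\P_0$ in closed form, expand $\E_0[L^2]$ using Gaussian moment generating functions, and reduce the whole quantity to an exponential moment of a squared Rademacher sum, which is then controlled by an elementary inequality.

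First I would use that $\bY=\bE$ has i.i.d.\ $\cN(0,1)$ entries under $\P_0$, so that for each fixed $\bA$ one has $\frac{d\P_\bA}{d\P_0}(\bY)=\exp\!\big(\langle\bA,\bY\rangle-\tfrac12\|\bA\|_2^2\big)$. Averaging over the prior $\mu$ and using $\|\bA\|_2=s$ $\mu$-almost surely gives $L(\bY)=e^{-s^2/2}\,\E_{u,v}\!\big[e^{s\,u^\top\bY v}\big]$, where $u\sim\nu_1$ and $v\sim\nu_2$ are independent. Squaring, introducing an independent copy $(u',v')$, and exchanging expectations (all integrands are nonnegative, so Tonelli applies), I would apply the identity $\E_0\big[e^{\langle M,\bY\rangle}\big]=e^{\|M\|_2^2/2}$ with $M=s(uv^\top+u'v'^\top)$. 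Since $uv^\top$ and $u'v'^\top$ have unit Frobenius norm ($|u|_2=|v|_2=1$) and $\langle uv^\top,u'v'^\top\rangle=(u^\top u')(v^\top v')$, the exponent becomes $s^2+s^2(u^\top u')(v^\top v')$, which cancels the $e^{-s^2}$ prefactor and leaves
\[
\E_0[L^2]=\E_{u,v,u',v'}\!\Big[\exp\!\big(s^2\,(u^\top u')(v^\top v')\big)\Big].
\]

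Next I would recognize the two inner products as normalized Rademacher sums: with $u'$ and $v'$ the independent copies of $u$ and $v$, one has $u^\top u'=p^{-1}\sum_{i=1}^p\epsilon_i$ and $v^\top v'=q^{-1}\sum_{j=1}^q\epsilon'_j$, where $(\epsilon_i)$ and $(\epsilon'_j)$ are independent families of i.i.d.\ Rademacher variables (the entrywise product of two independent Rademachers is Rademacher). Writing $X=\sum_{i\le p}\epsilon_i$ and $Y=\sum_{j\le q}\epsilon'_j$ and substituting $s^2=\sqrt{pq}/2$, so that $s^2/(pq)=1/(2\sqrt{pq})$, I get $\E_0[L^2]=\E\big[\exp\big(XY/(2\sqrt{pq})\big)\big]$. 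Conditioning on $X$ and computing the moment generating function of $Y$ coordinatewise yields $\E_0[L^2]=\E_X\big[\cosh(X/(2\sqrt{pq}))^{q}\big]$, and the bound $\cosh t\le e^{t^2/2}$ gives $\E_0[L^2]\le\E_X\big[\exp(X^2/(8p))\big]$. Finally, $X$ is a sum of $p$ i.i.d.\ Rademacher variables, so the standard sub-Gaussian estimate $\E_X[\exp(\lambda X^2/p)]\le(1-2\lambda)^{-1/2}$ for $\lambda\in[0,1/2)$ (itself obtained from $e^{X^2/(8p)}=\E_Z[e^{XZ/(2\sqrt p)}]$ with $Z\sim\cN(0,1)$, followed by another application of $\cosh t\le e^{t^2/2}$) applied with $\lambda=1/8$ gives $\E_0[L^2]\le(1-\tfrac14)^{-1/2}=2/\sqrt3<5/3$.

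There is no genuine obstacle here: the argument is driven entirely by the Gaussian MGF identity and the product structure of the prior, and the only points needing a little care are the Fubini/Tonelli exchanges and the two elementary inequalities $\cosh t\le e^{t^2/2}$ and $\E_Z[e^{\lambda Z^2}]=(1-2\lambda)^{-1/2}$ on $[0,1/2)$. In fact the computation produces the slightly stronger bound $2/\sqrt3$, so the stated constant $5/3$ leaves room to spare.
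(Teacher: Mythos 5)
Your proposal is correct and follows essentially the same route as the paper: reduce $\E_0[L^2]$ to $\E[\exp(s^2(u^\top u')(v^\top v'))]$ via the Gaussian MGF, identify the inner products as normalized Rademacher sums, and apply $\cosh t\le e^{t^2/2}$ to reach an exponential moment of a squared Rademacher sum. The only difference is the final elementary step, where the paper bounds $\E[\exp(Z^2/(8q))]$ by Hoeffding plus tail integration (giving $5/3$) while you use a second Gaussian decoupling to get the slightly sharper closed form $2/\sqrt{3}$; both are valid.
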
 

Equipped with this choice of $s$, we have $\chi^2(\P_{0},\mathbf{P})\leq 2/3$ and we conclude thanks to~\eqref{eq:lower_general} that 
\[
  \inf_{\widehat{T}}\sup_{\bA, \ \rank(\bA)\leq 1}\E_{\bA}[|\widehat{T}-\|\bA\|_2|]\geq \frac{(pq/4)^{1/4}}{4}(1-\sqrt{1/3})\geq \frac{(pq)^{1/4}}{20}\ . 
\]

\end{proof}

\begin{proof}[Proof of Lemma \ref{lem:controlL2}]
In this proof, we write, $\langle , \rangle$ for the inner product associated to the the Frobenius norm. 
First, we work out the likelihood ratio $L$. Since the density of $\P_{\bA}$ with respect to  the Lebesgue measure is $(2\pi)^{-(pq)/2}e^{-\|\bY-\bA\|_2^2/2}$, it follows that 
\beqn 
 L &=& \int \exp\left[-\frac{1}{2}\|\mathbf{\bY} - \bA\|_2^2 + \|\mathbf{\bY}\|_2^2\right] \mu(d\bA)\\
 & =&  \int \exp\left[-\frac{1}{2}\|\bA\|^2_2 + \langle \bY,\bA \rangle \right]\mu(d\bA)\ .
\eeqn
As a consequence, the second moment of the likelihood writes as 
\beqn
\E_0[L^2] &= &\E_0\left[\int\int \exp\left[-\frac{1}{2}\|\bA_1\|^2_2- \frac{1}{2}\|\bA_2\|^2_2 + \langle \bY,(\bA_1+\bA_2) \rangle \right]\right]\\
&= &\int\int\E_0\left[\exp\left[-\frac{1}{2}\|\bA_1\|^2_2- \frac{1}{2}\|\bA_2\|_2^2 + \langle \bY,(\bA_1+\bA_2) \rangle \right]\right] \mu(d\bA_1)\mu(d\bA_2)\\
&= & \int\int \exp\left[-\frac{1}{2}\|\bA_1\|_2^2- \frac{1}{2}\|\bA_2\|_2^2+ \|\bA_1+\bA_2\|^2/2\right] \mu(d\bA_1)\mu(d\bA_2)\\
& =& \int\int \exp\left[\langle \bA_1 ,\bA_2\rangle \right] \mu(d\bA_1)\mu(d\bA_2)\ , 
\eeqn 
where we used the Laplace transform of the normal random distribution in the third line. In view of the definition of $\mu$, this integral further decomposes as 
\beqn
 \E_0[L^2]&=& \int\int \exp\left[s^2 \tr[v_1 u_1^T u_2 v_2^T \right] \nu_1(du_1)\nu_1(d u_2)\nu_2(dv_1)\nu_1(d v_2)\\
 &=& \int\int \exp\left[s^2 (v_1^T v_2) (u_1^T u_2) \right] \nu_1(du_1)\nu_1(d u_2)\nu_2(dv_1)\nu_1(d v_2)\ . 
\eeqn 
Since $Z_1= pu_1^Tu_2$ and $Z_2= qv_1^T v_2$ are respectively distributed as sums of $p$ and $q$ independent Rademacher random variables, arrive at 
\[
 \E_0[L^2]= \E\left[\exp\left(\frac{s^2Z_1Z_2}{pq} \right)\right]
\]
For $a\in \mathbb{R}$ and $X$ a Rademacher random variable, we have $\E[e^{aX}]= \cosh(ax)\leq e^{a^2x^2/2}$ (compare the power series). We obtain by integration with respect to $Z_1$, 
\[
 \E_0[L^2]= \E\left[\cosh\left(\frac{s^2 Z_2}{pq}\right)^p \right]\leq \E\left[\exp\left(\frac{s^4Z_2^2}{2pq^2}\right)\right]= \E\left[\exp\left(\frac{1}{8q}Z_2^2\right)\right]\ ,
\]
since we fixed $s=(pq/4)^{1/4}$. Since $Z_2$ is a sum of Rademacher random variables, we can apply Hoeffding's inequality~\cite{book_concentration}, which leads us to $\mathbb{P}\pa{|Z_2| \geq u} \leq 2e^{\frac{-u^2}{2q}}$. As a consequence, 
\beqn 
\E_0[L^2] &\leq & \E\left[\exp\left(\frac{1}{8q}Z_2^2\right)\right] = \int_{0}^{\infty} \mathbb{P}\left[\exp\left(\frac{Z_2^2}{8q}\right)\geq t\right] dt\\
& \leq& 1 + \int_{1}^{\infty} \mathbb{P}\left[\exp\left(\frac{Z_2^2}{8q}\right)\geq t\right] dt\\
&\leq & 1+ 2\int_{1}^{\infty}\frac{1}{t^4}dt = 5/3\ . 
\eeqn
The result follows.
\end{proof}

\subsection{Proof for general even norms $\|\bA\|_{2k}$}
We start with a lemma summarizing important properties of the Hermite polynomials~\eqref{eq:definition_hermite}. It is a slight variation of Lemma 3 in~\cite{cailow2011}. 
\begin{lem} \label{lem:hermite}
Let $Z\sim \cN(x,1)$. Then,  $\E[H^2_r(Z)]\leq [e(x^2\vee r)]^r$ and $\var{H_r(Z)}\leq er[e(x^2\vee r)]^{r-1}$. Besides, 
\beq\label{eq:orthogonal_hermite}
 \E[H^2_r(Z)]= r!\ , \text{ and } \E[H_r(Z)H_{l}(Z)]=0\ , \text{ for }r\neq l\ . 
\eeq

\end{lem}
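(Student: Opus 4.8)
The plan is to derive a single exact formula for the mixed moments $\E[H_m(Z)H_n(Z)]$ with $Z\sim\cN(x,1)$ and then extract all three assertions from it. First I would record the generating function $\sum_{r\geq 0}\frac{t^r}{r!}H_r(y)=e^{ty-t^2/2}$, which follows from the defining relation~\eqref{eq:definition_hermite} by Taylor-expanding the identity $\phi(y-t)=\phi(y)e^{ty-t^2/2}$. Multiplying two copies of this series in variables $s$ and $t$, taking expectations, and using $\E[e^{(s+t)Z}]=e^{(s+t)x+(s+t)^2/2}$, one gets $\sum_{m,n}\frac{s^mt^n}{m!n!}\E[H_m(Z)H_n(Z)]=e^{(s+t)x}e^{st}$; reading off the coefficient of $s^mt^n$ yields
\[
 \E[H_m(Z)H_n(Z)]=\sum_{k=0}^{m\wedge n}\binom{m}{k}\binom{n}{k}k!\,x^{m+n-2k}\ .
\]
(Equivalently, one may combine the classical linearization formula for Hermite polynomials with $\E[H_r(Z)]=x^r$.)

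Specializing to $m=n=r$ and substituting $j=r-k$ gives the exact expression
\[
 \E[H_r^2(Z)]=\sum_{j=0}^{r}\binom{r}{j}^2(r-j)!\,x^{2j}=\sum_{j=0}^{r}\frac{(r!)^2}{(j!)^2(r-j)!}\,x^{2j}\ ,
\]
from which~\eqref{eq:orthogonal_hermite} is immediate: at $x=0$ only the term with $m+n-2k=0$ survives in the general formula, which forces $m=n$ and $k=r$, so that $\E[H_r^2(Z)]=r!$ and $\E[H_r(Z)H_l(Z)]=0$ for $r\neq l$.

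For the two inequalities I would use only the elementary bound $\frac{r!}{j!}=(j+1)(j+2)\cdots r\leq r^{r-j}$, which gives $\frac{(r!)^2}{(j!)^2(r-j)!}=\binom{r}{j}\frac{r!}{j!}\leq\binom{r}{j}r^{r-j}$. Writing $a=x^2\vee r$, so that $x^{2j}\leq a^j$ and $r^{r-j}\leq a^{r-j}$, the second-moment formula gives
\[
 \E[H_r^2(Z)]\leq\sum_{j=0}^{r}\binom{r}{j}a^{r-j}a^{j}=2^ra^r\leq\big[e(x^2\vee r)\big]^r\ .
\]
For the variance, since $\E[H_r(Z)]=x^r$ and the $j=r$ summand above equals exactly $x^{2r}$, we have $\Var(H_r(Z))=\E[H_r^2(Z)]-x^{2r}=\sum_{j=0}^{r-1}\frac{(r!)^2}{(j!)^2(r-j)!}x^{2j}$; bounding each coefficient by $r\binom{r}{j}r^{r-1-j}$ (extracting one factor $r$, legitimate since $j\leq r-1$) and arguing exactly as above yields $\Var(H_r(Z))\leq r\,2^r(x^2\vee r)^{r-1}\leq er\big[e(x^2\vee r)\big]^{r-1}$.

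None of this presents a real difficulty; the only points requiring a little care are using sufficiently crude estimates ($2^r\leq e^r$ and $r!/j!\leq r^{r-j}$) so that the constants match the stated form, and observing that the orthogonality relations~\eqref{eq:orthogonal_hermite} are precisely the $x=0$ instance of the moment formula.
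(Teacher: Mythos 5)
Your proof is correct, and it is more self-contained than the paper's: the paper simply declares the orthogonality relations classical and imports both the exact expansion $\E[H_r^2(Z)]=x^{2r}+r!\sum_{j=0}^{r-1}\binom{r}{j}x^{2j}/j!$ and the bound $\E[H_r^2(Z)]\leq[e(x^2\vee r)]^r$ from Lemma 3 of Cai--Low, whereas you derive the full mixed-moment formula $\E[H_m(Z)H_n(Z)]=\sum_k\binom{m}{k}\binom{n}{k}k!\,x^{m+n-2k}$ from the generating function and read everything off from it (including orthogonality as the $x=0$ case). The underlying identity is the same — your coefficient $(r!)^2/((j!)^2(r-j)!)$ is exactly the paper's $r!\binom{r}{j}/j!$ — so the two arguments are the same computation packaged differently. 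The one place where your route is genuinely cleaner is the variance: the paper splits into the cases $x^2\leq r$ (where it falls back on $\var{H_r(Z)}\leq\E[H_r^2(Z)]$) and $x^2>r$ (where it bounds the tail sum by $rx^{2r-2}2^r$), while your uniform estimate $\frac{(r!)^2}{(j!)^2(r-j)!}\leq r\binom{r}{j}r^{r-1-j}$ for $j\leq r-1$, combined with $x^{2j}\leq a^j$ and $r^{r-1-j}\leq a^{r-1-j}$ for $a=x^2\vee r$, handles both regimes at once and gives the stated constant $er[e(x^2\vee r)]^{r-1}$ directly. The only step you glossed over is the interchange of expectation and the double power series when taking $\E$ of the product of generating functions, but this is justified by absolute convergence against the Gaussian density and is standard.
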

\begin{proof}[Proof of Lemma~\ref{lem:hermite}]
The identity~\eqref{eq:orthogonal_hermite} is classical. The bound $\E[H^2_r(Z)]\leq [e(x^2\vee r)]^r$ is taken from Lemma 3 in~\cite{cailow2011}. Since $\var{H_r(Z)}\leq \E[H^2_r(Z)]$, we only have to prove that, 
for $x^2> r$, $\E[H_r^2(Z)]\leq x^{2r}+ r(ex^2)^{r-1}$. We know from the proof of Lemma 3 in~\cite{cailow2011} that 
\beqn
 \E[H_r^2(Z)]-x^{2k}&=& r!\sum_{j=0}^{r-1} \binom{r}{j} \frac{x^{2j} }{j!}
 \leq r!\sum_{j=0}^{r-1} \binom{r}{j} \frac{x^{2(r-1)} }{(r-1)!}\leq rx^{2r-2}2^{r}\ . 
\eeqn
The result follows.
\end{proof}

\begin{proof}[Proof of Theorem \ref{prp:risk_U_k}]

Let us upper bound $\Var({U_k})= \E[\left(U_k- \sum_{i=1}^q\sigma_i^{2k}(\bA)\right)^2]$. Recall that we assume without loss of generality that $\bA$ is of singular value form, that is
$\bA_{rs}= \sigma_{r}(\bA)\1_{r=s}$. Given two sequences $i=(i_1,\ldots,i_k)\in [p]^k$ and $j=(j_1,\ldots j_k)\in [q]^k$ we write $W_{ij}= \prod_{r,s} H_{N_{rs}(i,j)}(\bY_{rs})$. 
Relying on the Hermite decomposition of $U_k$, we observe that 
 \begin{equation}\label{eq:definition_T_ij}
 \var{U_{k}}= \sum_{i,j}\sum_{i',j'}\underset{= T_{iji'j'}}{\underbrace{\E\left[\left[ W_{ij}- \E\left(W_{ij}\right)\right]\left[ W_{i'j'}- \E\left(W_{i'j'}\right)\right]\right]}}\ . 
 \end{equation}
 In the remainder of this proof, we use combinatorial arguments to count the number of such non-zero $T_{iji'j'}$ and to bound each of these non-zero terms.  
 The expectation  $\E\left(W_{ij}\right)$ is non-zero if and only if $N_{rs}(i,j)= 0$ for all $r\neq s$.  This is possible only if $i_1=i_2=\ldots=i_k=j_1=\ldots = j_k$. As a consequence,  only terms of the form $H_{2k}(Y_{rr})$ have a non-zero expectation.  To control $T_{iji'j'}$, we consider two cases depending on the nullity of  $\E[W_{ij}]$.
  
 \noindent 
 {\bf Case 1}: $\E[W_{ij}]\neq 0$. It follows from the above discussion that, for some $r\in [q]$, $\E[W_{ij}]= \mathbb{E}[H_{2k}(Y_{rr})] = \sigma_{r}^{2k}(\bA)$. Then, $T_{iji'j'}$ is non-zero only if $r$ occurs in both sequences $i'$ and $j'$ . If $i'\neq i$ or if $j'\neq j$, this implies that $(i',j')$ is not constant so that $\E[W_{i'j'}]=0$. Besides, if $i'\neq i$ or if $j'\neq j$,   there exist $s_1\neq s_2$ such that $N_{s_1s_2}(i',j')>0$. Since $\bA_{s_1s_2}=0$ and  $N_{s_1s_2}(i,j)=0$ this implies  that $T_{iji'j'}=0$ by independence of the noise components. If $(i'=i)$ and $(j'=j)$, then Lemma~\ref{lem:hermite} ensures that   
 \beq\label{eq:upper_T0}
 T_{iji'j'}= \Var{H_{2k}(\bY_{rr})}\leq  (2ke)[e(\sigma^2_{r}(\bA)\vee 2k)]^{2k-1}\ . 
 \eeq
In summary, if $\E[W_{ij}]\neq 0$ or $\E[W_{i'j'}]\neq 0$, we have $T_{iji'j'}=0$ unless $(i=i')$ and $(j'=j)$. In the latter case, $T_{iji'j'}$ satisfies~\eqref{eq:upper_T0}.

 \noindent 
 {\bf Case 2}: $\E\left(W_{ij}\right)=\E\left(W_{i'j'}\right)= 0$. Then, $T_{iji'j'}$ simplifies as 
 \[
  T_{iji'j'}= \E[W_{ij}W_{i'j'}]  = \prod_{r=1}^{p}\prod_{s=1}^q \E[H_{N_{rr}(i,j)}(\bY_{rs})H_{N_{rr}(i,j)}(\bY_{rs})]  
 \]
Since $\E[\bY_{rs}]=\bA_{rs}=0$ for any $r\neq s$, it follows from Lemma~\ref{lem:hermite} that $\E[H_{l}(\bY_{rs})H_{l'}(\bY_{rs})]= l! \1_{l=l'}$ for $r\neq s$. We deduce that $T_{iji'j'}\neq 0$ only if $N_{rs}(ij)= N_{rs}(i'j')$ for all $r\neq s$. By definition of $N_{rs}$, we have $\sum_{r=1}^p\sum_{s=1}^qN_{rs}(ij)= \sum_{r=1}^p\sum_{s=1}^qN_{rs}(i'j')=2k$. Together with the previous property, we deduce $T_{iji'j'}\neq 0$ only if $\sum_{r}N_{rr}(ij)= \sum_{r}N_{rr}(i'j')$.
As explained in the proof of Proposition~\ref{prp:representation_somme_newton} below, we also have $\sum_{s=1}^qN_{rs}(ij)\equiv 0[2]$ for each $r\in[p]$, because each entry in the sequence $i$  appears twice in the monomial $W_{ij}$. Hence, $\E[T_{iji'j'}]\neq  0$ implies that, for each $r\in [q]$ 
\[
  N_{rr}(ij)+ N_{rr}(i'j') = \underset{\equiv 0[2]}{\underbrace{\sum_{s=1}^q N_{rs}(ij)+ N_{rs}(i'j')}} - \sum_{s=1, \ s\neq r}^q \underset{\equiv 0[2]}{\underbrace{N_{rs}(ij)+ N_{rs}(i'j')}}\enspace ,
\]
which implies that $N_{rr}(ij)+ N_{rr}(i'j')\equiv 0[2]$. 
Furthermore, if for some $r\in [q]$,  $N_{rr}(i'j')>0$, then there exists either $s\neq r\in [q]$ such that $N_{rs}(i'j')>0$ or $s\neq r\in [p]$ such that $N_{rs}(i'j')>0$ --otherwise $\E[W_{i'j'}]\neq 0$. Hence, we have $T_{iji'j'}\neq 0$ unless either $N_{rs}(ij)>0$ for some $s\in [q]$  or $N_{sr}(ij)>0$ for some $s\in [p]$. Let us summarize our findings. When  $\E\left(W_{ij}\right)=\E\left(W_{i'j'}\right)= 0$, we have $T_{iji'j'}\neq  0$ only if the four following properties are satisfied
%
%
%
%
 \begin{itemize} \label{prop:a_b_c_d}
  \item[(a)] $N_{rs}(ij)=  N_{rs}(i'j')$ for all $(r,s)\in [p]\times [q]$ such that $r\neq s$.
  \item[(b)] $\sum_{r=1}^qN_{rr}(ij)= \sum_{r=1}^qN_{rr}(i'j')$. 
  \item[(c)] $N_{rr}(ij)+N_{rr}(i'j')\equiv 0  [2]$ for all  $r\in [q]$.
  \item[(d)] For any $l=1,\ldots, k$, there exists $l'$ such that either  $i_{l}=i'_{l'}$ or $i_{l}=j'_{l'}$. Similarly, for any $l=1,\ldots, k$, there exists $l'$ such that either  $j_{l}=i'_{l'}$ or $j_{l}=j'_{l'}$.
 \end{itemize}
The last property does not ensure that $i$ and $i'$ take same value, but if $i$ takes some value that does not appear in $i'$, then this value must appear in $j'$. When $T_{iji'j'}\neq 0$, then Lemma~\ref{lem:hermite} yields 
 \begin{eqnarray}
T_{iji'j'}&=& \prod_{rs} \E\left[H_{N_{rs}(ij)}(\bY_{rs})H_{N_{rs}(i'j')}(\bY_{rs})\right] \nonumber \\ 
&\leq &\prod_{r\neq  s}[N_{rs}(ij)]! \prod_{r=1}^q \left[e^{1/2}\left(\sigma_{r}(\bA)\vee \sqrt{N_{rr}(ij)\vee N_{rr}(i'j')}\right)\right]^{N_{rr}(ij)+N_{rr}(i'j') } \nonumber \\
&\leq & \prod_{r\neq  s}[N_{rs}(ij)]! \prod_{r=1}^q \left[e^{1/2}\left(\sigma_{r}(\bA)\vee \sqrt{2k}\right)\right]^{N_{rr}(ij)+N_{rr}(i'j') }
\label{eq:upper_T}\ , 
   \end{eqnarray}
   by Cauchy-Schwarz inequality.

   \medskip 
  Based on the above observations, we shall now bound $\var{U_k}$ by suitably partitioning the sequences $(i,j)$ and $(i',j')$. Let us introduce some notation. 
  Given a $k$-tuple  $i$, we introduce the $2k$-tuples $i_{ev}= (i_1,i_1,i_2,i_2,\ldots, i_k,i_k)$ and $i_{od}=(i_1,i_2,i_2,\ldots, i_k,i_k,i_1)$. Given two $k$-tuples $i$ and $j$, we write $i\oplus j\in (\mathbb{N}^2)^{2k}$ by 
 \beq\label{eq:definition_i+j}
   i\oplus j= ((i_1,j_1), (i_1,j_2),(i_2,j_2),\ldots, (i_k,j_k), (i_k,j_1))\enspace .
\eeq
Finally, we define $S^{(id)}[i, j]=\cup_{r=1}^q\{(i\oplus j)^{-1}[\{(r,r)\}]\}$ as the collection of indices $t$ such that $(i\oplus j)_t= (r,r)$ for some $r\in [q]$. 
For any two subsets $S$ and $S'$ of $[2k]$, we consider $R_{S,S'}$ as the sum of $T_{iji'j'}$ over all sequences $(i,j)$ and $(i',j')$ satisfying $S^{(id)}[i, j]=S$ and $S^{(id)}[i', j']=S'$. Since, for such sequences $(i,j)$, we have $\sum_{r=1}^qN_{rr}(ij)=|S|$ and $\sum_{r=1}^qN_{rr}(i'j')=|S'|$, it follows from Property (b) above that 
$R_{S,S'}=0$ if $|S|\neq |S'|$. This leads us to the following decomposition
\[
 \var{U_k}= \sum_{s=0}^{k}\quad \sum_{S,S'\text{ s.t.} |S|=|S'|=s} R_{S,S'}
\]
Observe that no sequence $(i,j)$ satisfies $|S^{(id)}[i, j]|=k-1$. Besides, in view of Case 1 above and \eqref{eq:upper_T0}, we have 
\[
 R_{[k],[k]}\leq 2ke\sum_{u=1}^q[e(\sigma^2_{r}(\bA)\vee 2k)]^{2k-1}\ . 
\]
Denoting $\bA^+$ the modification of $\bA$ where each singular value has been replaced by $\sigma_r(\bA)\vee \sqrt{2k}$, we deduce that 
\beq \label{eq:upper_var_Uk2}
\var{U_k}\leq  \sum_{s=0}^{k-2}\quad \sum_{S,S'\text{ s.t.} |S|=|S'|=s} R_{S,S'}+ 2ke^{2k}\|\bA^{+}\|_{2k-1}^{2k-1}\ . 
\eeq

\begin{figure}[h]
  \centering 
  \subfloat[Sequence $(i,j)$]{{\includegraphics[width=6.5cm]{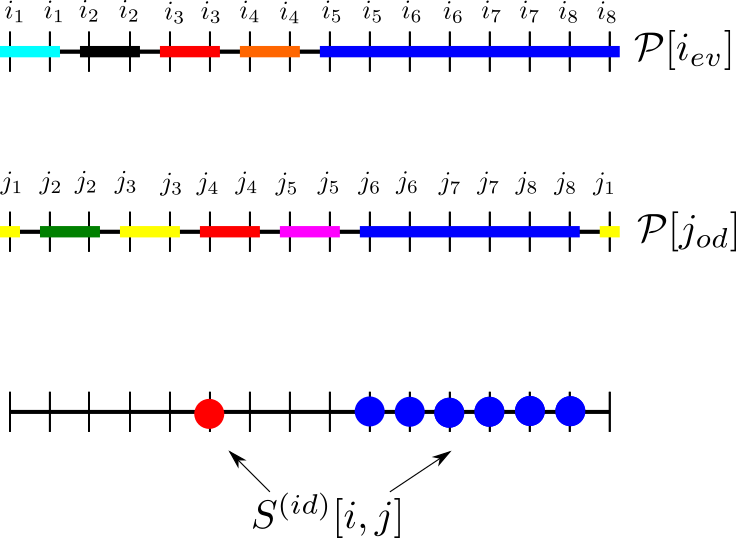}}}\hspace{2cm}
  \subfloat[Sequence $(i',j')$]{{\includegraphics[width=6.5cm]{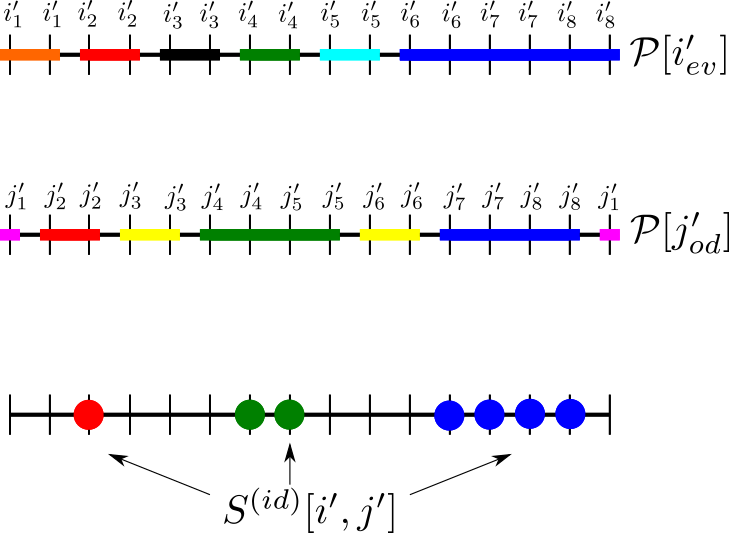}}}
  \caption{Examples of sequences two sequences $(i,j)$ and $(i',j')$ such that $T_{iji'j'}\neq 0$. Identical values for $i$, $i'$, $j$, and $j'$ are depicted using the same colour. 
  The first rows stands for $i_{ev}$ (resp. $i'_{ev}$) and the corresponding partition $\cP[i_{ev}]$ (resp. $\cP[i'_{ev}]$). The second rows stands for $j_{od}$ (resp. $j'_{od}$) and the corresponding partition $\cP[j_{od}]$ (resp. $\cP[j'_{od}]$). The last row depicts the sets $S^{(id)}[i,j]$ and $S^{(id)}[i',j']$.
  \label{fig:proof_partition_graph} }    

\end{figure}

Consider two sequences $(i,j)$ and $(i',j')$ such that $S^{(id)}[i, j]=S$, $S^{(id)}[i', j']=S'$, and $T_{iji'j'}\neq 0$. Since $N_{rs}(ij)= N_{rs}(i',j')$ for any $r\neq s$, it follows that the restriction of $(i'\oplus j')$ to $(S^{'})^c$ is a permutation of the restriction of $(i\oplus j)$ to $S^c$.  Define $\cP[i_{ev}]$ (resp. $\cP[j_{od}]$) the partition of $[2k]$ corresponding to identical values of $i_{ev}$ (resp. $j_{od}$).  There exist unique maps $F_1:([2k]\setminus S')\mapsto \cP[i_{ev}]$ and $F_2:([2k]\setminus S')\mapsto \cP[j_{od}]$ such that, for any $s\notin S'$, $(i'_{ev})_s= (i_{ev})_t$ for all $t$ in $F_1(s)$ and $(j'_{od})_s= (j_{od})_t$ for all $t$ in $F_2(s)$. As a consequence of these definitions, given $(S,S',i,j,\cP[i_{ev}],\cP[j_{od}],F_1,F_2)$, the  values of $(i'\oplus j')$ outside $S'$ are uniquely defined. Furthermore, it follows from the definition~\eqref{eq:definition_i+j} of $(i'\oplus j')$ and of $S^{(id)}[i',j']$ that $(i'\oplus j')$ is uniquely defined by its values outside $S'=S^{(id)}[i',j']$, unless $S'=[2k]$. Indeed, if $t\in [2k]\in S'$ is even, then one has 
\[
  (i'\oplus j')_t=(i'_{t/2},i'_{t/2})= \big(((i'\oplus j')_{t-1})_1,((i'\oplus j')_{t-1})_1\big)\enspace  .
\]
Besides, if $t\in [2k]\in S'$ is odd, then one easily checks
\[
  (i'\oplus j')_t=(i'_{(j'+1)/2},j'_{(t+1)/2}) = \big(((i'\oplus j')_{t-1})_2,((i'\oplus j')_{t-1})_2\big)\enspace .  
\] 
with the convention that $t-1=2k$ when $t=0$. As a consequence, for $t\in S'$, $(i'\oplus j')_t$ is uniquely defined by $(i'\oplus j')_{t-1}$ and, unless $(S')^c=\emptyset$, the vector $(i'\oplus j')$ is uniquely defined by its restriction to $S'^c$. In summary, given $(S,S',i,j,\cP[i_{ev}],\cP[j_{od}],F_1,F_2)$
such that $S= S^{(id)}[i,j]$, $\cP_1= \cP[i_{ev}]$ and $\cP_2=\cP[i_{od}]$. 
There exists at most one $2$-tuple of sequences $(i',j')$ satisfying $S'= S^{(id)}[i',j']$, and
\begin{eqnarray}\label{eq:prop_1_i'}
  (i'_{ev})_s= (i_{ev})_t \text{ for any } s\in S'^c\text{ and } t\in F_1(s)\enspace ;\\
  (j'_{od})_s= (j_{od})_t \text{ for any } s\in S'^c\text{ and } t\in F_2(s)\enspace  .\label{eq:prop_1_j'}
\end{eqnarray}
Since all terms $ T_{iji'j'}$ are non-negative, this allows us to further decompose the expression $R_{S,S'}$
\begin{eqnarray}\label{eq:definition_RSS'} 
R_{S,S'}\leq  \sum_{\cP_1,\cP_2,F_1,F_2} R_{S,S';\cP_1,\cP_2;F_1,F_2}\ ,\quad \text{ where }   \quad R_{S,S';\cP_1,\cP_2;F_1,F_2} = 
 \sum_{\substack{(i,j),\, S^{id}[i,j]=S \\   \cP[i_{ev}]= \cP_1\ ,\, \cP[j_{od}]= \cP_2}} T_{iji'j'}\ , 
\end{eqnarray}
 where, in the rhs,  $(i',j')$ is the unique (if it exists) $2$-tuple of sequences satisfying $S'= S^{(id)}[i',j']$, \eqref{eq:prop_1_i'}, and \eqref{eq:prop_1_j'}.

 Fix any $S$, $S'$,  $\cP_1$, $\cP_2$, $F_1$, and $F_2$ and let us upper bound the expression $T_{iji'j'}$  for sequences $(i,j,i',j')$ satisfying the constraints in~\eqref{eq:definition_RSS'}. Define 
 \beq\label{eq:definition_gamma}
 \gamma= \sum_{P\in \cP_1} \1_{P\cap S\neq \emptyset}= \sum_{P\in \cP_2} \1_{P\cap S\neq \emptyset}\ ,
 \eeq
 as the number of groups of $\cP_1$ (or equivalently $\cP_2$) that intersect $S$. 
 For any any sequence $(i,j)$ satisfying $S= S^{(id)}[i,j]$, $\cP_1= \cP[i_{ev}]$, and $\cP_2= \cP[j_{od}]$, one easily checks that $\gamma= \sum_{r=1}^q \1_{N_{rr}(ij)>0}$ and therefore also corresponds to the number of distinct values of $(i\oplus j)$  on $S^{(id)}[i,j]$. For instance, in Figure~\ref{fig:proof_partition_graph}, we have $\gamma=2$ for this specific value $\cP[i_{ev}]$  . 

\begin{lem}\label{lem:number_degrees}
Consider any $S$, $\cP_1$ and $\cP_2$. For any sequences $(i,j)$  such that $\cP[i_{ev}]=\cP_1$ and $\cP[j_{od}]=\cP_2$, we have
\[
 \prod_{r\neq s}[N_{rs}(ij)]!\leq (2k)^{2k-|S|-|\cP_1|-|\cP_2|+\gamma+1}\ ,
\]
 \end{lem}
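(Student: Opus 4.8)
The plan is to rewrite $\prod_{r\neq s}[N_{rs}(ij)]!$ in terms of the number of nonempty off\nobreakdash-diagonal ``cells'' of the pair-sequence $i\oplus j$, and then to bound that number from below by a graph-connectivity argument. As in the surrounding discussion, $S$ stands for $S^{(id)}[i,j]$ and $\gamma$ is the quantity of~\eqref{eq:definition_gamma}, so that $|S|=\sum_r N_{rr}(ij)$ and $\gamma=\sum_r \1_{N_{rr}(ij)>0}$.

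First I would introduce, for each pair $(r,s)$, the cell $C_{rs}=\{t\in[2k]:(i\oplus j)_t=(r,s)\}$; these cells partition $[2k]$, one has $|C_{rs}|=N_{rs}(ij)$, and the first- and second-coordinate sequences of $i\oplus j$ are precisely $i_{ev}$ and $j_{od}$. Writing $m$ for the number of nonempty off-diagonal cells and using the elementary bound $n!\leq(2k)^{n-1}$ valid for $1\leq n\leq 2k$ (the product $2\cdot3\cdots n$ consists of $n-1$ factors, each at most $2k$; for $n=1$ it reads $1\leq1$), one obtains
\[
 \prod_{r\neq s}[N_{rs}(ij)]! \;=\; \prod_{\substack{r\neq s\\ N_{rs}(ij)>0}} N_{rs}(ij)! \;\leq\; (2k)^{\sum_{r\neq s}N_{rs}(ij)-m} \;=\; (2k)^{2k-|S|-m}\,,
\]
since the diagonal cells account for $\sum_r N_{rr}(ij)=|S|$ of the $2k$ positions. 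It therefore suffices to prove $m\geq |\cP_1|+|\cP_2|-\gamma-1$.

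For that, I would form the bipartite multigraph $G$ on vertex set $\cP_1\sqcup\cP_2$ with one edge $e_t$ per position $t\in[2k]$, joining the block of $\cP_1$ containing $t$ to the block of $\cP_2$ containing $t$. Two positions contribute parallel edges exactly when they lie in the same cell, so the distinct edges of $G$ are in bijection with the nonempty cells $C_{rs}$, and $\gamma$ of them are diagonal; hence $m$ equals the number of distinct edges of $G$ minus $\gamma$. The crucial point is that $G$ is connected: reading the positions cyclically as $1,2,\dots,2k,1$, the positions $2u-1$ and $2u$ of $i\oplus j$ share their first coordinate (so $e_{2u-1}$ and $e_{2u}$ share their $\cP_1$-endpoint), while $2u$ and $2u+1$ (indices taken cyclically, so also $2k$ and $1$) share their second coordinate (so $e_{2u}$ and $e_{2u+1}$ share their $\cP_2$-endpoint). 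Thus $e_1,e_2,\dots,e_{2k},e_1$ is a closed walk in $G$ meeting every vertex, so $G$ is connected; a connected graph on $|\cP_1|+|\cP_2|$ vertices has at least $|\cP_1|+|\cP_2|-1$ distinct edges, which gives $m\geq|\cP_1|+|\cP_2|-1-\gamma$. Plugging this into the displayed inequality and using $2k\geq1$ yields $\prod_{r\neq s}[N_{rs}(ij)]!\leq(2k)^{2k-|S|-|\cP_1|-|\cP_2|+\gamma+1}$.

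The only step I expect to require care is the connectivity of $G$: one must recognize that the ``necklace'' structure of $i\oplus j$ — consecutive positions alternately keep the $i$-coordinate and then the $j$-coordinate fixed — translates into a closed walk in the cell graph. The factorial estimate and the spanning-tree edge count are routine, and no degenerate case needs separate treatment, since $m\geq0$ already gives the claim whenever $|\cP_1|+|\cP_2|-\gamma-1\leq0$.
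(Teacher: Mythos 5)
Your proof is correct and follows essentially the same route as the paper's: the factorial product is bounded by $(2k)^{2k-|S|-m}$ with $m$ the number of nonempty off-diagonal cells, and the key inequality $m\geq|\cP_1|+|\cP_2|-\gamma-1$ is obtained from the alternating (necklace) structure of $i\oplus j$. Your spanning-tree/connectivity phrasing of the bipartite multigraph on $\cP_1\sqcup\cP_2$ is just a cleaner repackaging of the paper's ``iterative discovery'' count, which shows that the number of distinct cells is at least $|\cP_1|+|\cP_2|-1$.
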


From \eqref{eq:upper_T} and Lemma~\ref{lem:number_degrees} we deduce that 
\beq\label{eq:upper_T_lem}
T_{iji'j'}
\leq (2k)^{2k-|S|-|P_1|-|P_2|+\gamma+1} \prod_{r=1}^q\left[e^{1/2}\sigma_r(\bA^+)\right]^{N_{rr}(ij)+ N_{rr}(i'j')}\enspace ,
\eeq
where we recall that  $\sigma_r(\bA^+)= \sigma_r(\bA)\vee \sqrt{2k}$. To work out the quantity $R_{S,S';\cP_1,\cP_2;F_1,F_2}$, we need to introduce a few additional notation. Since  $\cP_1\neq \{[2k]\}$ and $\cP_2\neq \{[2k]\}$, one easily checks that no element of $\cP_1$ and $\cP_2$ are equal. Indeed, any $P$ in $\cP_1$ is an union of sets of the form $\{2t, 2t+1\}$ whereas any $P\in \cP_2$ in an union of sets of the form $\{2t+1,2t+2\}$. Hence $P\in \cP_1\cap \cP_2$ implies that $P=[2k]$. As a consequence, one can unambiguously define the collection $\cP_1\cup \cP_2$ of subsets of $[2k]$. Then, we can extend the maps $F_1: [2k]\setminus S' \rightarrow \cP_1$ and $F_2: [2k]\setminus S' \rightarrow \cP_2$ as maps $F_{1,ext}: [2k]\rightarrow \cP_1\cup \cP_2$ and $F_{2,ext}: [2k]\rightarrow \cP_1\cup \cP_2$ as follows. Recall that the values of $(i'\oplus j')$ on $S'$ are uniquely defined by its values outside $S'$. Besides, for $t\in S'$, $(i'\oplus j')_{t}$ is of the form $(r,r)$ where $r$ is either equal to $i_{t'}$ for some $t'$ or $j_{t'}$ for some $t'$ -- see Property (d) in Page~\pageref{prop:a_b_c_d}. Hence, we define $F_{1,ext}(t)$ as the unique $P\in \cP_1$ (if there exists one) such that for $t'\in P$, $(i'_{ev})_{t}=(i_ev)_{t'}$. In case there is no such $P\in \cP_1$, we define $F_{1,ext}(t)$ as the unique $P\in \cP_2$  such that for $t'\in P$, $(i'_{ev})_{t}=(j_{od})_{t'}$. In Figure~\ref{fig:proof_partition_graph}, we observe for instance that $F_{1,ext}(7)= F_{1,ext}(8)$ is a group of $\cP[j_{od}]$. 
We define similarly $F_{2,ext}$ be reversing the role of $\cP_2$ and $\cP_1$. One easily checks that the definitions of $F_{1,ext}$ and $F_{2,ext}$ only depend on $S$, $S'$, $\cP_1$, $\cP_2$, $F_1$ and $F_2$ but not on the specific values of $i$ and $j$.

Finally, we introduce $\gamma'$, $\gamma'_1$, and $\gamma'_2$. First, $\gamma'$ corresponds to the number values of the form $(r,r)$ that occur in $(i'\oplus j')$ but not in $(i\oplus j)$. This quantity does not depend on the specific values of $(i,j)$ and can equivalently be defined by 
\[
  \gamma' = \sum_{P\in F_{1,ext}(S')} \1_{P\cap S=\emptyset}= \sum_{P\in F_{2,ext}(S')}\1_{P\cap S=\emptyset}\enspace .  
\]
Then, $\gamma'_1$ (resp. $\gamma'_2$) is the number of groups $P\in \cP_1$ that do not intersect $S$ but whose inverse image through $F_{1,ext}$ (resp. $F_{2,ext}$) intersects $S'$. Equivalently, $\gamma'_1$ is the number of distinct values $r$ of $i'_{ev}$ such that $N_{rr}(i'j')>0$, $N_{rr}(ij)=0$, and  $N_{rs}(ij)>0$ for some $s\neq r$. 
\[
  \gamma'_1 = \sum_{P\in \cP_1}\1_{P\cap S=\emptyset}\1_{(F_{1,ext}^{-1}(P))\cap S'\neq \emptyset}\ ; \quad   \gamma'_2 = \sum_{P\in \cP_2}\1_{P\cap S=\emptyset}\1_{(F_{2,ext}^{-1}(P))\cap S'\neq \emptyset}\enspace .
\]
One easily checks that $\gamma'\leq \gamma'_1+\gamma'_2$. In fact, we even have $\gamma'=\gamma'_1+\gamma'_2$. For instance, in Figure~\ref{fig:proof_partition_graph}, we have $\gamma'=1$, $\gamma'_2=1$, and $\gamma'_1=1$.


Among the $|\cP_1|$ groups of the partition $\cP_1$ of $[2k]$ of distinct values of $i_{ev}$, $\gamma$ groups intersect $S$ and correspond to values $r$ such that $N_{rr}(ij)>0$, $\gamma'_1$ groups correspond to values $r$ such that $N_{rr}(i'j')>0$ and $N_{rr}(ij)=0$, and $|\cP_1|-\gamma-\gamma'_1$ groups  correspond to values $r$ such that $N_{rr}(ij)+ N_{rr}(i'j')=0$. Among the $|\cP_2|$ groups of the partition $\cP_2$ of $[2k]$ of distinct values of $j_{od}$, $\gamma$ groups intersect $S$ and correspond to values $r$ such that $N_{rr}(ij)>0$, $\gamma'_2$ groups correspond to values $r$ such that $N_{rr}(i'j')>0$ and $N_{rr}(ij)=0$, and $|\cP_2|-\gamma-\gamma'_2$ groups  correspond to values $r$ such that $N_{rr}(ij)+ N_{rr}(i'j')=0$.

The upper bound~\eqref{eq:upper_T_lem} of $T_{iji'j'}$ does not depend on the specific values of $(i,j)$ in these $|\cP_1|-\gamma-\gamma'_1$ and $|\cP_2|-\gamma-\gamma'_2$  groups. 
Recall that there are $\gamma+\gamma'$ distinct values $r$ of such that  $N_{rr}(ij)+ N_{rr}(i'j')>0$. In fact, there exists a vector of positive integers $\alpha_{(1)}\geq \alpha_{(2)}\geq \ldots  \geq \alpha_{(\gamma+\gamma')}>0 $ satisfying $\sum_{l=1}^{\gamma+\gamma'} \alpha_{(l)}= |S|+|S'|=2|S|$ and a $\gamma+\gamma'$-tuple $l_1,\ldots,l_{\gamma+\gamma'}$ of distinct values (that only depend on the values of $i_{ev}$ in the $\gamma+\gamma'_1$ groups of $\cP_1$ and on the values of $j_{od}$ in the $\gamma+\gamma'_2$ groups in $\cP_2$) such that 
\[
T_{iji'j'}  \leq (2k)^{2k-|S|-|P_1|-|P_2|+\gamma+1} \prod_{s=1}^{\gamma+\gamma}\left[e^{1/2}\sigma_{l_s}(\bA^+)\right]^{\alpha_{(l)}}\enspace .
\]
 Summing over all possible sequences $(i,j)$, we conclude that 
\begin{eqnarray}
R_{S,S';\cP_1,\cP_2; F_1;F_2} &\leq&e^{k}(2k)^{2k-|S|-|\cP_1|-|\cP_2|+\gamma+1} p^{|\cP_1|-\gamma -\gamma'_1}q^{|\cP_2|-\gamma-\gamma'_2} \sum_{l_1,\ldots, l_{\gamma+\gamma'}=1}^q \prod_{s=1}^{\gamma+\gamma'}\sigma_{l_s}(\bA^+)^{\alpha_{(s)}}\nonumber \\
&\leq & e^{k}(2k)^{2k-|S|-|\cP_1|-|\cP_2|+\gamma+1} p^{|\cP_1|-\gamma -\gamma'_1}q^{|\cP_2|-\gamma-\gamma'_2} \prod_{s=1}^{\gamma+\gamma'} \|\bA^+\|_{\alpha_{(s)}}^{\alpha_{(s)}}
\nonumber \\
&\leq & e^{k}(2k)^{2k-|S|-|\cP_1|-|\cP_2|+\gamma+1} p^{|\cP_1|-\gamma -\gamma'_1}q^{|\cP_2|+\gamma'-\gamma'_2-1} \|\bA^+\|_{2|S|}^{2|S|} \nonumber \\
&\leq & e^{k}(2k)^{2k-|S|-|\cP_1|-|\cP_2|+\gamma+1} p^{|\cP_1|-\gamma -\gamma'_1}q^{|\cP_2|+\gamma'_1-1} \|\bA^+\|_{2|S|}^{2|S|} \nonumber \\
&\leq & e^{k}(2k)^{2k-|S|-|\cP_1|-|\cP_2|+\gamma+1}  p^{|\cP_1|- \gamma}q^{|\cP_2|-1} \|\bA^+\|_{2|S|}^{2|S|}\enspace ,
\label{eq:R_S_Q_P1P2}
\end{eqnarray}
where we used  H\"older's inequality in the third line, $\gamma'\leq \gamma'_1+\gamma'_2$ in the fourth line and $p\geq q$ in the last line. This upper bound does not depend on the values of $F_1$ and $F_2$. Given $S$, $S'$, $\cP_1$, and $\cP_2$, let us count the number of possible maps $F_1$ and $F_2$ such that $R_{S,S';\cP_1,\cP_2; F_1;F_2} >0$.


\begin{lem}\label{lem:number_F}
Given $S$, $S'$ with $|S'|=|S|$ and partitions  $\cP_1$ and $\cP_2$ of $[2k]$, there are at most 
\beq\label{eq:upper_P1}
 (2e)^{2k}k^{2k-|S|+1 -|\cP_1|-|\cP_2|+\gamma}
\eeq
possible maps $F_1$ and $F_2$ such that  $R_{S,S';\cP_1,\cP_2; F_1;F_2} >0$.
\end{lem}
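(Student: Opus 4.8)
The plan is to show that an admissible pair $(F_1,F_2)$ is essentially the same datum as a "valid layout" of a fixed off-diagonal edge multiset along the arcs of $[2k]\setminus S'$, and then to count such layouts by walking along each arc and tallying the genuinely free choices.

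\emph{Reduction.} First I would pin down the hard constraints that $R_{S,S';\cP_1,\cP_2;F_1,F_2}>0$ imposes on $(F_1,F_2)$. Since $i'_{ev}$ is constant on every pair $\{2m-1,2m\}$ and $j'_{od}$ on every pair $\{2m,2m+1\}$, the map $F_1$ must be constant on each such $i$-pair contained in $(S')^c$ and $F_2$ on each such $j$-pair contained in $(S')^c$; when exactly one element of a pair lies in $S'$, the extension rule stated before the lemma forces the value on the other element. Reading $[2k]$ as a cycle, $(S')^c$ is a disjoint union of at most $|S|\vee 1$ arcs, and along each arc consecutive entries of $i'\oplus j'$ share one coordinate, alternately the first (via $\cP_1$) and the second (via $\cP_2$). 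Next, property (a) of Page~\pageref{prop:a_b_c_d} says $N_{rs}(i'j')=N_{rs}(ij)$ for $r\neq s$, i.e.\ the multiset $\{(i'\oplus j')_s:s\in (S')^c\}$ coincides with the fixed multiset $\{(i\oplus j)_t:t\in S^c\}$. Encoding each class of $\cP_1$ (resp.\ $\cP_2$) as a vertex and each of the $2k-|S|$ off-diagonal positions as an edge between its $\cP_1$-class and its $\cP_2$-class, an admissible $(F_1,F_2)$ is precisely a way of re-laying this fixed edge multiset along the arcs so that the alternating chain adjacency is respected, and $(F_1,F_2)$ is recovered from such a layout. Hence it suffices to bound the number of layouts.

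\emph{Counting the layouts.} I would process the arcs one at a time, and inside an arc traverse the positions in order. The first position of an arc needs both of its coordinates, i.e.\ one $\cP_1$-class and one $\cP_2$-class, so at most $|\cP_1|\,|\cP_2|\le k^2$ choices; every subsequent position inherits one coordinate from its predecessor and needs only the other, at most $|\cP_1|\vee|\cP_2|\le k$ choices. The matching constraint forces most of these: once the partial layout already "uses up" a vertex, any later visit to that vertex is pinned, so the number of truly free coordinate-choices is a cyclomatic-type quantity, (number of off-diagonal edges) $-$ (number of incident vertices) $+$ (number of connected pieces). Of the $|\cP_1|+|\cP_2|$ vertices, the $\gamma$ classes of $\cP_1$ and $\gamma$ classes of $\cP_2$ meeting $S$ are already accounted for in the $|S|$-budget (they carry diagonal values and cost no free choice along $(S')^c$), leaving $|\cP_1|+|\cP_2|-2\gamma$ "free" vertices whose values get pinned once a spanning forest of the remaining edges is fixed; combining this with the at most $|S|\vee1$ pieces produces the exponent $2k-|S|+1-|\cP_1|-|\cP_2|+\gamma$. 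Finally, the leftover combinatorial factors — the choice of which positions form each arc (a subset of the cycle, $\le 2^{2k}$), which coordinate-slot is the free one at each step, and the identification of parallel edges — are each at most $2^{O(k)}$ and are absorbed into the prefactor $(2e)^{2k}$ of \eqref{eq:upper_P1}.

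\emph{Main obstacle.} The delicate point is that $F_1$ and $F_2$ are coupled: a free choice for $F_1$ at one position forces $F_2$ at the neighbouring position and conversely, so the free-choice tally must be carried out on the combined bipartite structure on $\cP_1\sqcup\cP_2$ rather than on $F_1$ and $F_2$ separately, and one must verify that the $\gamma$ "diagonal" classes are not charged both to the arc contribution and to the vertex count — this is exactly why the exponent involves $-|\cP_1|-|\cP_2|+\gamma$ and not $-|\cP_1|-|\cP_2|+2\gamma$. Making the spanning-forest bookkeeping precise — in particular checking that the "$+1$" corresponds to the single piece arising when $S'=\varnothing$ and the whole cycle is one arc, and that nothing worse occurs when several arcs touch the same connected component of the edge graph — is the part that will require the most care; the rest is a routine counting of subsets and permutations bounded crudely by $2^{O(k)}$.
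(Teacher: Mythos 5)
Your overall strategy is sound and is, at bottom, the same combinatorics as the paper's proof: an admissible pair $(F_1,F_2)$ is determined by how the fixed multiset of off-diagonal values $\{(i\oplus j)_t : t\in S^c\}$ is re-laid along $[2k]\setminus S'$, and the number of such re-layouts is governed by $d=\sum_{r\neq s}\1_{N_{rs}(ij)>0}$, the number of \emph{distinct} off-diagonal pairs. The paper packages this by observing that $(F_1,F_2)$ induces a partition of $[2k]\setminus S'$ into $d$ groups that must coincide (up to relabelling) with the one induced by $(\cP_1,\cP_2)$ on $[2k]\setminus S$, bounds the count by the Stirling number $\binom{2k-|S'|}{d}d^{2k-|S'|-d}\leq (2e)^{2k}k^{2k-|S'|-d}$, and then invokes $d\geq |\cP_1|+|\cP_2|-\gamma-1$, which was already established in the proof of Lemma~\ref{lem:number_degrees}. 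Your arc-by-arc traversal is an alternative way to bound the same quantity, and would be acceptable if the free-choice tally were carried out correctly.

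It is not, as written, and this is a genuine gap rather than a polish issue. Your stated accounting — subtract $|\cP_1|+|\cP_2|-2\gamma$ ``free'' vertices and add ``at most $|S|\vee 1$ pieces'' — yields an exponent of the form $2k-|S|-|\cP_1|-|\cP_2|+2\gamma+(|S|\vee 1)$, which is not the claimed $2k-|S|+1-|\cP_1|-|\cP_2|+\gamma$; you flag the $2\gamma$ versus $\gamma$ discrepancy yourself but do not resolve it. Two corrections are needed. First, the ``$+$ pieces'' term must count connected components of the bipartite incidence graph on $\cP_1\sqcup\cP_2$ spanned by the off-diagonal positions, not the arcs of $(S')^c$; these are different objects and only the former enters the cyclomatic count. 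Second, the right bookkeeping is: free choices $\leq$ (off-diagonal positions) $-$ (\emph{all} $|\cP_1|+|\cP_2|$ classes) $+$ (components), and the number of components is at most $\gamma+1$, because the full walk $(i\oplus j)$ connects all classes and deleting the $\gamma$ diagonal pairs $(r,r)$ can create at most $\gamma$ additional components. This is exactly the inequality $d\geq |\cP_1|+|\cP_2|-\gamma-1$ from the proof of Lemma~\ref{lem:number_degrees}; citing it closes your argument, whereas the $-2\gamma$ vertex count and the $|S|\vee 1$ piece count do not.
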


Upon defining $R_{S,S';\cP_1,\cP_2}= \sum_{F_1,F_2}R_{S,S',\cP_1,\cP_2,F_1,F_2}$, we derive that 
\beq\label{eq:upper_S_S'_P_1_P_2}
R_{S,S';\cP_1,\cP_2} \leq 2^{4k}e^{3k}k^{	2(2k-|S|-|\cP_1|-|\cP_2|+\gamma+1)}p^{|\cP_1|- \gamma}q^{|\cP_2|-1} \|\bA^+\|_{2|S|}^{2|S|}\enspace .
\eeq
Now, we shall sum these expressions $R_{S,S';\cP_1,\cP_2}$ over the partitions $\cP_1$ and $\cP_2$.  Fix $l_1$, $l_2$, and $r \leq l_1\wedge l_2$ and define $R_{S,S';r,l_1,l_2}= \sum_{\cP_1,\cP_2\ |\cP_1|=l_1,\ |\cP_2|=l_2,\  \gamma=r} R_{S,S',\cP_1,\cP_2}$ the sum over partitions $\cP_1$ and $\cP_2$ with sizes $l_1$ and $l_2$ and where $\gamma$ is equal to $r$.

\begin{lem}~\label{lem:number_P1_P2_r}
The number of partitions $\cP_1$, $\cP_2$ with respective sizes $l_1$ and $l_2$ and common number of groups $r$ is at most
\[
e^{3k}k^{2k-|S|-l_1-l_2+r}\enspace .
\]
\end{lem}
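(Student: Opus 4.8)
The plan is to reduce the count to pairs of partitions of $[k]$ and then exploit the rigidity that the fixed set $S$ imposes on $\cP_1$ and $\cP_2$ jointly. Since $i_{ev}=(i_1,i_1,\dots,i_k,i_k)$, the partition $\cP_1$ is constant on each pair $A^{(1)}_t=\{2t-1,2t\}$, hence is the lift along $t\mapsto A^{(1)}_t$ of a partition of $[k]$; likewise $\cP_2$ is the lift along $A^{(2)}_1=\{2k,1\}$ and $A^{(2)}_t=\{2t-2,2t-1\}$ $(t\ge 2)$ of a partition of $[k]$. With $S$ fixed, set $T_i=\{t:\ A^{(i)}_t\cap S\neq\emptyset\}$, $\tau_i=|T_i|$, and $\beta_i=\#\{t:\ A^{(i)}_t\subseteq S\}$, so that $|S|=\tau_i+\beta_i$. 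The hypothesis ``$\gamma=r$'' says exactly that the $\tau_1$ atoms of $T_1$ meet precisely $r$ blocks of $\cP_1$, and symmetrically for $\cP_2$.

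The first ingredient is the identity $\tau_1+\tau_2=|S|+r$. After relabelling the values, an admissible pair comes from a closed walk $i_1,j_1,i_2,j_2,\dots$, and for any value $v$ appearing on the diagonal the positions $p$ with $(i\oplus j)_p=(v,v)$ form a single cyclic arc $I_v$ of $[2k]$: if $p$ and $p+2$ are both diagonal with value $v$, then so is $p+1$, because $p$ and $p+1$ share exactly one coordinate. An arc $I_v$ with $n_v:=|I_v|$ positions has $n_v-1$ consecutive pairs, each of which is exactly one of the atoms $A^{(1)}_\bullet$ or $A^{(2)}_\bullet$, and these are precisely the atoms of $\cP_1$, resp. $\cP_2$, contained in $S$ with colour $v$. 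Summing over the $\gamma$ diagonal values gives $\beta_1+\beta_2=\sum_v(n_v-1)=|S|-\gamma$, hence $\tau_1+\tau_2=2|S|-(\beta_1+\beta_2)=|S|+r$.

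The second ingredient is that, once $\cP_1$ is chosen, the restriction $\cP_2|_{T_2}$ is forced: $2t-1\in S$ means $i_t=j_t$, so $A^{(2)}_t$ has the colour of $A^{(1)}_t$, and $2t\in S$ forces $A^{(2)}_{t+1}$ to have the colour of $A^{(1)}_t$; thus two atoms of $T_2$ lie in the same block of $\cP_2$ iff the corresponding atoms of $T_1$ lie in the same block of $\cP_1$. Hence the number of admissible pairs is at most $N_1\cdot N_2$, where $N_1$ counts the partitions of $[k]$ with $l_1$ blocks whose $T_1$-atoms meet exactly $r$ blocks, and $N_2$ counts the partitions of $[k]$ with $l_2$ blocks whose restriction to $T_2$ is a prescribed $r$-block partition. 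Each factor is estimated by elementary set-partition arithmetic: choosing the $r$-block partition on $T_1$ costs at most $e^{r}r^{\tau_1-r}$ (via the standard bound $e^{k}k^{n-k}$ on the number of set partitions of an $n$-set into $k$ nonempty blocks), and extending a prescribed partition on an $m$-subset by the remaining points to a partition with $\ell$ blocks costs, after seeding the new blocks, a factor of order $l_i^{\,k-\tau_i}$. Multiplying, substituting $\tau_1+\tau_2=|S|+r$ and $|S|=\tau_i+\beta_i$, and then collapsing the residual powers of $r,l_1,l_2$ into powers of $k$ should yield the bound $e^{3k}k^{2k-|S|-l_1-l_2+r}$.

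The main obstacle is exactly this last accounting step. Replacing $r$ or $l_i$ by $k$ too early destroys the $|S|$-dependence that makes the exponent smaller than the naive $2k-l_1-l_2$, so one must carry $\tau_1,\tau_2$ (equivalently $\beta_1,\beta_2$) explicitly through the estimate and invoke $\tau_1+\tau_2=|S|+r$ only at the end, after also using the rigidity of $\cP_1$ and $\cP_2$ at the non-diagonal positions to keep the extension factors genuinely of order $l_i^{\,k-\tau_i}$ rather than $k^{k}$. An alternative, possibly cleaner, route is to bypass the $\cP_1,\cP_2$ split altogether and enumerate the underlying closed walk of length $2k$ directly, using that $l_1+l_2$ of its steps are forced first-visits of a vertex and a further $|S|-r$ steps are forced revisits of an already-seen diagonal colour.
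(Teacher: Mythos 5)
Your overall skeleton (lift $\cP_1,\cP_2$ to partitions of $[k]$ via the atoms $\{2t-1,2t\}$ and $\{2t-2,2t-1\}$, fix the induced structure on $S$, count extensions, finish with Stirling) is the same as the paper's, and your ``second ingredient'' (the restriction of $\cP_2$ to $S$ is forced by that of $\cP_1$) is correct. But your first ingredient contains a genuine error. The claim that the diagonal positions of a given value $v$ form a \emph{single} cyclic arc of $[2k]$ does not follow from the two-step observation you give, and it is false: take $k=4$ and $i=j=(1,2,1,2)$, so that $i\oplus j=((1,1),(1,2),(2,2),(2,1),(1,1),(1,2),(2,2),(2,1))$. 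The value $1$ sits on the diagonal at positions $1$ and $5$, which are not contiguous. Here $S=\{1,3,5,7\}$, $\gamma=r=2$, $\tau_1=\tau_2=4$, so $\tau_1+\tau_2=8\neq |S|+r=6$. The correct identity is $\beta_1+\beta_2=|S|-K$ and hence $\tau_1+\tau_2=|S|+K$, where $K$ is the number of connected components of $S$ on the torus $[2k]$; one only has $K\geq r$, with equality exactly when each diagonal value occupies a single arc. This is the quantity the paper's proof tracks (splitting $K$ into $K_1+K_2$ according to the parity of each component's leftmost point), and the $K$'s cancel exactly in the final exponent.

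This matters for the accounting, which you yourself leave open (``should yield the bound''). Substituting the correct identity into your scheme does not close the gap, because your cost $e^{r}r^{\tau_1-r}$ for the induced partition on $S$ assigns each of the $\tau_1$ atoms of $T_1$ to a group independently; the paper instead assigns only the $K$ connected components of $S$ (all points of a component are forced into one group), paying $r^{K}/r!$. With your larger factor and $\tau_1+\tau_2=|S|+K$ rather than $|S|+r$, the exponent you obtain is of the form $2k-\tau_2-r-l_1-l_2$, and bounding this by $2k-|S|+r-l_1-l_2$ would require $\beta_2\leq 2r$, which fails (e.g.\ for $S$ a single long arc carrying one diagonal value, where $r=K=1$ but $\beta_2\approx|S|/2$). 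So the two missing pieces are: replace the per-atom assignment on $S$ by a per-connected-component assignment, and carry $K$, $K_1$, $K_2$, $|S_1|$, $|S_2|$ (not $r$, $\tau_1$, $\tau_2$) through the Stirling bookkeeping.
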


We deduce that 
\begin{eqnarray}\nonumber
R_{S,S';r,l_1,l_2}& \leq& k^2(2^{4k}e^{6k})k^{3[2k-|S|-l_1-l_2+r]}p^{l_1- r}q^{l_2-1} \|\bA^+\|_{2|S|}^{2|S|}\\
&\leq &  k^2(2^{4k}e^{6k}) k^{3(2k-|S|)}\left(\frac{\ol{p}}{k^3}\right)^{l_1- r}\left(\frac{\ol{q}}{k^3}\right)^{l_2-1} \|\bA^+\|_{2|S|}^{2|S|}\ ,\label{eq:upper_R_SS'rl}
\end{eqnarray}
where we denote $\ol{p}= p\vee k^3$ and $\ol{q}= q\vee k^3$.  
Since $\ol{p}\geq \ol{q}\geq k^3$, the above rhs is maximized for $l_1-r$ and $l_2$ the largest possible. For $|S|=0$, we simply use $l_1\leq k$ and $l_2\leq k$ and $r=0$.

For $|S|=s>0$, we first observe that each of the $l_1-r$ groups of the partition $\cP_1$ outside $S$ must contain at least two points. This implies that 
$2l_1-2r\leq 2k-s$. Since $l_1-r$ is an integer, we deduce that $l_1-r\leq k- \lceil s/2\rceil$. 

Furthermore, we denote $\Delta_{S,(-)}=\{t\in S^c: t+1\in S\}$ and $\Delta_{S,(+)}=\{t\in S^c: t-1\in S\}$ the left and right boundaries of $S$. Since the subsets of the form $\{2t+1,2t+2\}$ are in the same group in $\cP_1$ and the subsets of the form $\{2t,2t+1\}$ are in the same group in $\cP_2$, it follows that each $t\in \Delta_{S,(-)}\cup\Delta_{S,(+)}$  either belongs to a group of $\cP_1$ that intersects $S$ or a group in $\cP_2$ that intersects $S$. Besides, each  $t\in \Delta_{S,(-)}\cap\Delta_{S,(+)}$ belongs to a group of $\cP_1$ that intersects $S$ and a group in $\cP_2$ that intersects $S$.
Since each of the $l_1-r$ groups of $\cP_1$ and $l_2-r$ groups of $\cP_2$ that do not intersect $S$ are at least of size $2$, we deduce that $2(l_1-r+l_2-r)\leq 4k -2s -|\Delta_{S,(-)}|-|\Delta_{S,(+)}|$ which implies that
\[
l_1-r+l_2-1\leq 2k - s -\frac{|\Delta_{S,(-)}|+|\Delta_{S,(+)}| }{2} + r-1 \leq 2k-s-1  \enspace ,
\]
since both $|\Delta_{S,(-)}|$ and $|\Delta_{S,(+)}|$ equal the number of connected components of $S$ which is larger or equal to $r$. Together with the bound $l_1-r\leq k- \lceil s/2\rceil$ and the assumption $k^3\leq \overline{q}\leq \overline{p}$, we deduce from~\eqref{eq:upper_R_SS'rl} that 
\[
R_{S,S';r,l_1,l_2} \leq  k^5(2^{4k}e^{6k})   \ol{p}^{k- \lceil |S|/2\rceil}\ol{q}^{k- \lfloor |S|/2\rfloor- 1} \|\bA^+\|_{2|S|}^{2|S|}\enspace . 
\]
There are at most $2^{4k}$ possible sets $(S,S')$ and at most $k^3$ possible values for $r$, $l_1$, and $l_2$. Hence, together with~\eqref{eq:upper_var_Uk2} we conclude that 
\[
 \var{U_k}\leq k^8c_0^k \left[(\ol{p}\ol{q})^{k}+  \sum_{s=1}^{2k-1}\ol{p}^{k - \lceil s/2\rceil  } \ol{q}^{k-\lfloor s/2\rfloor-1} \|\bA^+\|_{2s}^{2s} \right]\enspace ,
\]
for $c_0=(2^{8}e^{6})$. 
Applying again H\"older's inequality, we get $\|\bA^+\|_{2s}^{2s}\leq [\|\bA^+\|^{2s}_{4k-4}q^{1-s/(2k-2)}]\wedge [\|\bA^+\|^{2s}_{4k-2}q^{1-s/(2k-1)}]$ for $s\leq 2k-2$. Hence,
\beqn 
\var{U_{k}}&\leq &  k^8c_0^k\left[  \sum_{t=0}^{k-1}\ol{p}^{k-t}\ol{q}^{k-t\frac{k}{k-1}}\|\bA^+\|^{4t}_{4k-4}+ \ol{p}^{k-t-1}\ol{q}^{k-t-\frac{2t+1}{2k-1} }\|\bA^+\|^{4t+2}_{4k-2} \right]\\
&\leq & k^9c_0^k\left[ (\ol{p}\ol{q})^k+ \ol{p}^{k-1}\ol{q}^{k-\frac{1}{2k-1}}\|\bA^+\|^{2}_{4k-2}+  \ol{p}\|\bA^+\|^{4k-4}_{4k-4}+ \|\bA^+\|^{4k-2}_{4k-2} \right]\\
&\leq & k^9c_0^k\left[ (\ol{p}\ol{q})^k+ [(\ol{p}\ol{q})^{k}]^{1-\frac{1}{2k-1}}\left[\|\bA^+\|^{4k-2}_{4k-2}\right]^{\frac{1}{2k-1}}+  \ol{p}\|\bA^+\|^{4k-4}_{4k-4}+ \|\bA^+\|^{4k-2}_{4k-2} \right]\\
&\leq & 2k^9c_0^k\left[ (\ol{p}\ol{q})^k+   \ol{p}\|\bA^+\|^{4k-4}_{4k-4}+ \|\bA^+\|^{4k-2}_{4k-2} \right]\ ,
\eeqn 
where we used that the sequences are monotonic with respect to $t$ in the second line, that $\ol{p}\geq \ol{q}$ in the third line, and that $x^{a}y^{1-a}\leq x\vee y$ for any $a\in [0,1]$.  Since $\|\bA^+\|^{2s}_{2s}\leq \|\bA\|_{2s}^{2s}+ q(2k)^{s}$, we conclude that 
\[
 \var{U_k}\leq  k^{c_1}(c'_1)^k\left[   (pq)^{k} +  p \|\bA\|_{4k-4}^{4k-4}+ \|\bA\|_{4k-2}^{4k-2} + p^kk^{3k}+  k^{6k}\right]\enspace \ , 
\]
for some $c_1$ and $c'_1>0$, which concludes the proof.

\end{proof}

\begin{proof}[Proof of Lemma~\ref{lem:number_degrees}]

  Since $S^{id}[i,j]=S$, we have $\sum_{r\neq s}N_{rs}(ij)= 2k -|S|$. Write $d= \sum_{r\neq s}\1_{N_{rs}(ij)>0}$. We have therefore
  \[
   \prod_{r\neq s}[N_{rs}(ij)]!\leq (2k)^{2k-|S|-d}\ ,
  \]
  and it suffices to prove that $d\geq |\cP_1|+|\cP_2|-\gamma -1$. Write $d'=\sum_{r,s} \1_{N_{rs}(ij)>0}$. By definition of $\gamma$, we have $d+\gamma\geq d'$ and we shall prove that $d'\geq |\cP_1|+|\cP_2|-1$. Exploring iteratively the sequence $(i\oplus j)$ in order, we have $N_{i_1,j_1}(ij)>0$. Then, each time we visit the first element of one group of $\cP_1$ or one group of $\cP_2$, we discover a new  $(r,s)$ such $N_{rs}(ij)>0$. Since, except at the first coordinate of $(i\oplus j)$,   we cannot simultaneously visit for the first time an element of $\cP_1$ and element of $\cP_2$, we deduce that  $d'\geq |P_1|+|P_2|-1$ and the result follows.
  \end{proof}

  \begin{proof}[Proof of Lemma~\ref{lem:number_F}]
    The two partitions $\cP_1$ and $\cP_2$ induce a partition of $[2k]\setminus S$ into groups of the form $(r,s)$ where $r\in [|\cP_1|]$ and $s\in [|\cP_2|]$.
    Note that the number of groups of this partition is exactly  $d= \sum_{r\neq s}\1_{N_{rs}(ij)>0}$, where $i$ and $j$ are any sequence that are compatible with $\cP_1$ and $\cP_2$.
    The maps $F_1$ and $F_2$ uniquely induce  a  partition of $[2k]\setminus S'$. By property (a) in Page~\pageref{prop:a_b_c_d}, $R_{S,S';\cP_1,\cP_2; F_1;F_2} >0$ only if this partition of $[2k]\setminus S'$ is, up a permutation, the same as one induced  $\cP_1$ and $\cP_2$ on $[2k]\setminus S'$. In particular, the number of possible $(F_1,F_2)$ is less than the Stirling number of the second kind with parameters $n=(2k-|S'|)$ and $d$, which in turn is smaller than 
    \[
    \binom{2k-|S'|}{d}d^{2k-|S'|-d}\leq e^{2k}(2k)^{2k-|S'|-d}\leq (2e)^{2k}k^{2k-|S'|-d}
     \ . 
    \]
    We have shown in the proof of Lemma~\ref{lem:number_degrees} that $d\geq |\cP_1|+|\cP_2|-\gamma-1 $. Since $|S'|=|S|$, the result follows. 
    \end{proof}

\begin{proof}[Proof of Lemma~\ref{lem:number_P1_P2_r}]
  Let us count the number of possible partitions $\cP_1$ and $\cP_2$ of $[2k]$ with respective size $l_1$ and $l_2$. 
  Write $K$ the number of connected components of $S$ when one consider $[2k]$ as a torus. Among those $K$ connected components, denote $K_1$ (resp. $K_2$) the number of connected components whose most left entry lies at an even (resp. odd) position. As explained earlier, if the  most left entry $t$ of a connected component $T$ of $S$ is an even integer than $t-1$ belongs to the same group as $t$ in $\cP_1$. Besides, all the points of a connected component $T$ of $S$  belong to the same group. 
  Since there are $r$ groups of $\cP_1$ and $\cP_2$ that intersect $S$, we start by considering the total number of induced partitions on $S$ with $r$ groups which is at most $r^K/r!$. Given this induced partition it remains to count the number of possible partitions $\cP_1$ outside $S$. Since $\cP_1$ is completely defined by its restrictions to odd integers of $[2k]$, it remains to define the partition $\cP_1$ on $k-K_1-|S_1|$ points, where $S_1$ (resp. $S_2$) is the intersection of $S$ with the set of odd (resp. even) integers. Arguing similarly for $\cP_2$, we derive that that there are at most 
  \[
  \frac{r^K l_1^{k- |S_1|-K_1}l_2^{k-|S_2|-K_1}}{r!(l_1-r)!(l_2-r)!}\enspace ,
  \]
  possible partitions $\cP_1$ and $\cP_2$. Using Stirling's lower bound $r!\geq (r/e)^r$, that $K_1+K_2=K$, $|S_1|+|S_2|=|S|$ and that $l_1\vee l_2\leq k$, we conclude that the total number of partitions it at most
  \[
    e^{r}\frac{r^{K-r} l_1^{k+r- |S_{1}|-K_1}l_2^{k+r-|S_2|-K_2}}{l_1!l_2!}\leq e^{3k}k^{2k-|S|-l_1-l_2+r}  \enspace .
  \]
  \end{proof}

\subsection{Proof of Proposition~\ref{prp:representation_somme_newton}}

The result is a consequence of the following lemma.
 \begin{lem}\label{lem:representation_V}
 There exist coefficients $\alpha_s$ such  that 
 \beq\label{eq:definition:representation_somme_newton_bis}
V_k = \tr[(\bY^T\bY)^k]+ \alpha_0 + \sum_{l=1}^{k-1} \sum_{s=(s_1,\ldots, )\ , 1\leq  s_1\leq s_2 \leq s_3 ; \sum s_i=l} \alpha_s \prod \tr[(\bY^T \bY)]^{s_i}
 \eeq
 satisfies $\mathbb{E}[U_k]= \mathbb{E}[V_k]$ for all $\bA\in \R^{p\times q}$.
 \end{lem}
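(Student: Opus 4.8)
The plan is to reduce the lemma to a triangular system of identities and invert it. Since the unbiasedness established at~\eqref{eq:definition_u_q} gives $\E[U_k]=\tr[(\bA^T\bA)^k]=\|\bA\|_{2k}^{2k}$, it suffices to produce coefficients $\alpha_s$, depending only on $p,q$, for which the right-hand side $V_k$ of~\eqref{eq:definition:representation_somme_newton_bis} has expectation exactly $\tr[(\bA^T\bA)^k]$. The elementary objects will be the expectations $\E\big[\prod_i\tr[(\bY^T\bY)^{s_i}]\big]$ for $s\in\cS[l]$, and the first task is to identify them as functions of $\bA$.

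\textbf{Structural claim.} For every $l\geq 0$ and every $s\in\cS[l]$,
\[
\E\Big[\prod_i\tr[(\bY^T\bY)^{s_i}]\Big]=\prod_i\tr[(\bA^T\bA)^{s_i}]+\sum_{l'=0}^{l-1}\ \sum_{s'\in\cS[l']} b^{(s)}_{s'}\prod_j\tr[(\bA^T\bA)^{s'_j}]\,,
\]
for constants $b^{(s)}_{s'}$ depending only on $p,q$ (empty product $=1$). I would argue this in two steps. First, orthogonal invariance: for $\bU\in O(p)$ and $\bV\in O(q)$, the matrix $\bU\bY\bV$ has the law of the observation built from the signal $\bU\bA\bV$ (the transformed noise $\bU\bE\bV$ is again i.i.d.\ standard Gaussian), while $(\bU\bY\bV)^T(\bU\bY\bV)=\bV^T(\bY^T\bY)\bV$ leaves $\prod_i\tr[(\bY^T\bY)^{s_i}]$ unchanged; hence $\bA\mapsto\E\big[\prod_i\tr[(\bY^T\bY)^{s_i}]\big]$ is $\bA\mapsto\bU\bA\bV$–invariant, and being a polynomial in the entries of $\bA$ it is a symmetric polynomial in the squared singular values $\sigma_1^2(\bA),\dots,\sigma_q^2(\bA)$, hence, through Newton's identities, a polynomial in the power sums $p_j=\sum_i\sigma_i^{2j}(\bA)=\tr[(\bA^T\bA)^j]$. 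Second, a degree count: $\prod_i\tr[(\bY^T\bY)^{s_i}]$ has degree $2l$ in the entries of $\bY=\bA+\bE$, so its expectation has degree $\le 2l$ in the entries of $\bA$, and its homogeneous part of degree exactly $2l$ receives contributions only from the monomials containing no factor $\bE$, i.e.\ equals the deterministic quantity $\prod_i\tr[(\bA^T\bA)^{s_i}]$. Since a symmetric polynomial in the $\sigma_i^2$ that is homogeneous of $\sigma^2$-degree $d$ corresponds to a polynomial in $\bA$ homogeneous of degree $2d$, matching top homogeneous parts forces the $\sigma^2$-degree-$l$ component of the expectation to be $\prod_i p_{s_i}$, and what is left has $\sigma^2$-degree $\le l-1$, which is exactly a linear combination of the $\prod_j\tr[(\bA^T\bA)^{s'_j}]$ with $s'\in\cS[l']$, $l'\le l-1$. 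This proves the claim (no uniqueness of such a combination is needed).

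\textbf{Triangular inversion.} I would then show by strong induction on $l\geq 1$ that for every $s\in\cS[l]$ there are constants $\alpha^{(s)}_{s'}$ depending only on $p,q$ with
\[
\prod_i\tr[(\bA^T\bA)^{s_i}]=\E\Big[\prod_i\tr[(\bY^T\bY)^{s_i}]+\sum_{l'=0}^{l-1}\sum_{s'\in\cS[l']}\alpha^{(s)}_{s'}\prod_j\tr[(\bY^T\bY)^{s'_j}]\Big]\,.
\]
The base case $l=1$ is $\tr[\bA^T\bA]=\E[\tr[\bY^T\bY]]-pq$. For the step, apply the structural claim to $s$, move $\E\big[\prod_i\tr[(\bY^T\bY)^{s_i}]\big]$ to the other side, and replace each lower-weight factor $\prod_j\tr[(\bA^T\bA)^{s'_j}]$ ($s'\in\cS[l']$, $1\le l'\le l-1$) by the expression furnished by the induction hypothesis (the weight-$0$ term being $1=\E[1]$), then collect the coefficients. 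Taking $s=(k)$, $l=k$ yields the desired $\alpha_0=\alpha^{((k))}_\emptyset$ and $\alpha_s=\alpha^{((k))}_s$, so that $V_k$ as in~\eqref{eq:definition:representation_somme_newton_bis} satisfies $\E[V_k]=\tr[(\bA^T\bA)^k]=\E[U_k]$.

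\textbf{Main obstacle.} The delicate point is the structural claim, specifically the implication "an $O(p)\times O(q)$–invariant polynomial in $\bA$ is a symmetric polynomial in $\sigma_i^2(\bA)$" together with the homogeneity argument that pins the leading coefficient to $1$ (this is what makes the system genuinely triangular, hence invertible) and the bookkeeping needed to certify that the remainder lives on $\bigcup_{l'<l}\cS[l']$. The inductive inversion and the rewriting of power sums as traces $\tr[(\bA^T\bA)^j]$ are then routine.
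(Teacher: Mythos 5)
Your proposal is correct and follows essentially the same route as the paper: orthogonal invariance plus a degree count to show that $\E\big[\prod_i\tr[(\bY^T\bY)^{s_i}]\big]$ equals $\prod_i\tr[(\bA^T\bA)^{s_i}]$ plus a symmetric polynomial of strictly lower total degree in the $\sigma_i^2(\bA)$ (expanded in Newton sums), followed by the same triangular debiasing induction. The "main obstacle" you flag is handled in the paper by reducing to diagonal $\bA$ and checking that every monomial with nonzero expectation involves each $\sigma_i(\bA)$ to an even power, which is a concrete way to justify the step you leave as the delicate point.
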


 Obviously, both  $\mathbb{E}[V_k]$  and $\E[U_k]$  are polynomials with respect to the entries of $\bA$. Since these two polynomials take the same value for all $\bA \in \R^{p\times q}$ and since the polynomial evaluation morphism is injective (see Corollary 1.6 in~\cite{lang2002graduate}), this implies that both $\E[U_k]$ and $\mathbb{E}[V_k]$ correspond to the same polynomial in $\R[X_{ij}, 1\leq i\leq p; 1\leq j\leq q]$. 
 
 We now deduce that $U_k=V_k$. Let $Y\sim \mathcal{N}(x,1)$.  For any non-negative integer $\beta$, there exist a polynomial $G_{\beta}$ of degree $\beta$ such that $\E[Y^\beta]= G_{\beta}(x)$. Besides, its term of degree $\beta$ is exactly $x^\beta$. To see this, it suffices to observe to use the binomial formula on $\E[Y^\beta]= \E[(x + (Y-x))^\beta]$.
 
 Consider the vector space morphism $\psi:  \R[(X_{ij})]\rightarrow \R[(X_{ij})]$   such that $\psi[\prod_{i=1,j=1}^{p,q} X_{ij}^{\beta_{ij}}]= \prod_{i=1,j=1}^{p,q} G_{\beta_{ij}}[X_{ij}]$. Then, $\mathbb{E}(V_k)= \psi[(V_k)](\bA)$ and $\mathbb{E}(U_k)= \psi[(U_k)](\bA)$. This implies that  $\psi(U_k)= \psi(V_k)$.  We claim that $\psi$ is a injective. As a consequence,  $U_k= V_k$ and the result follows.
 
 Let us show that $\psi$ is injective. Consider any monomial of the form $Z= \prod_{i=1,j=1}^{p,q} X_{ij}^{\beta_{ij}}$. Observe that $\psi(Z)-Z$ has a total degree less than $\sum_{ij}\beta_{ij}$. Given a polynomial $P$ and the sum $P_l$  of monomials of largest total degree, we derive $\psi(P)-P_l$ has a total degree less than that of $P$. Thus, $\psi(P)$ has the same total degree as $P$ and $\psi$ is therefore injective.

\begin{proof}[Proof of Lemma \ref{lem:representation_V}]

Given a sequence of integers $1\leq \alpha_1\leq \alpha_2\leq  \ldots \leq \alpha_r$, we claim that 
\beq\label{CLAIM_1representation}
\E\left[\prod_{t=1}^r \tr[(\bY^T\bY)^{\alpha_t}]\right]- \prod_{t=1}^r \tr[(\bA^T\bA)^{\alpha_t}]
\eeq 
is a symmetric polynomial with total degree less than $\sum_{i}\alpha_i$ with respect to ($\sigma^2_{i}(\bA)$, $i=1,\ldots, q$). The proof of this claim is postponed to the end of the proof. Since the space of symmetric polynomials is spanned by the Newton sums,
there exist coefficients $\beta_s$ such that 
\beqn 
\E\left[\prod_{t=1}^r \tr[(\bY^T\bY)^{\alpha_t}]\right]=  \prod_{t=1}^r \tr[(\bA^T\bA)^{\alpha_t}]+  \sum_{s: \sum s_t < \sum \alpha_t} \beta_s \prod_{t=1}^{l(s)}\tr[(\bA^T\bA)^{s_t}]\ . 
\eeqn 

Then, we prove the existence of $V_k$ by induction. First, $\E[\tr[(\bY^T\bY)^k]]-\|\bA\|_{2k}^{2k}= Z_1$ where $Z_1$ is of the form 
$Z_1= \sum_{s: \sum s_t < k }\beta^{(1)}_s \prod_{t=1}^{l(s)}\tr[(\bA^T\bA)^{s_t}]$. Then, considering each term of total degree $k-1$, we can estimate them by $\beta^{(1)}_s\prod_{t=1}^{l(s)}\tr[(\bY^T\bY)^{s_t}]$. The resulting bias has a total degree less or equal to $k-2$. At each step, one can correct the estimator to decrease the total degree of the bias. The result follows.

\medskip 

\noindent 
{\bf Proof of the claim in \eqref{CLAIM_1representation}}. 
Write $Z= \prod_{t=1}^r \tr[(\bY^T\bY)^{\alpha_t}]$. Since $Z$ is invariant by left and right orthogonal transformation of $\bY$, we  assume without loss of generality that $\bA$ has null non-diagonal entries. As a consequence, $\E[Z]$ is a polynomial with respect to $\sigma_i(\bA)$. Since $Z$ is invariant by permutation of the rows and columns of $\bY$, $\E[Z]$ is symmetric. It remains to prove that (i) $\E[Z]- \prod_{t=1}^r \prod \tr[(\bA^T\bA)^{\alpha_t}]$ has a total degree less than $2k$ and (ii) that, for all $i=1,\ldots, q$, the order of $\sigma_i(\bA)$ in each monomial is even:

For $X\sim \cN(x,1)$, $\E[X^r]-x^r$ is polynomial of degree less than $r$. Since the decomposition of $Z$ leads to a sum of monomials in $\bY_{ij}$ of total degree equals to  $\sum_{t=1}^r\alpha_t$, it follows that $\E[Z]$ contains the exact same monomial in $A_{ij}$ of total degree $\sum_{t=1}^r\alpha_t$ and the remaining terms have a total degree less than that. (i) follows. 

Turning to (ii), we decompose again $Z$ into monomials.  $Z$ writes as 
\[
 Z= \sum_{(i^{(1)},j^{(1)})\in p^{\alpha_1}\times q^{\alpha_1}}\ldots \sum_{(i^{(r)},j^{(r)})\in p^{\alpha_r}\times q^{\alpha_r}}\prod_{l=1}^{\alpha_1}\bY_{i^{(1)}_lj^{(1)}_l}\bY_{i^{(1)}_{l+1}j^{(1)}_{l}}\ldots \prod_{l=1}^{\alpha_r} \bY_{i^{(r)}_lj^{(r)}_l}\bY_{i^{(r)}_{l+1}j^{(r)}_{l}}
\]
As a consequence, for any index $r\in [p]$,  the set $\{\bY_{rs}; s=1,\ldots, q\}$ is visited an even number of times in each monomial. Besides, the expectation of one monomial is non-zero only if each non diagonal element $\bY_{rs}$ is visited an even number of times since $\bA_{rs}=0$ and the normal distribution is symmetric. As a consequence of these two observations, each  $\bY_{ss}$ must be visited an even number of times in the monomial so that its expectation is non-zero. Since, for $X\sim \cN(x,1)$, $\E[X^{2r}]$ is an even polynomial in $x$, this implies that the expectation of each monomial of $Z$ only involves terms of the form $\sigma^{2r}_i(\bA)$ and (ii) follows.

\end{proof}

\begin{proof}[Proof of Corollary \ref{cor:schatten}]

Since $l_r$ norms of vectors are non increasing with respect to $r$, we have  $\|\bA\|^{4k-2}_{4k-2}\leq \|\bA\|^{4k-2}_{2k}$ and $\|\bA\|^{4k-4}_{4k-4}\leq \|\bA\|^{4k-4}_{2k}$. Hence, we derive from Theorem~\ref{prp:risk_U_k} and Chebychev inequality that 
\beq\label{eq:upper_risk_U_k_1}
\var{U_k}^{1/2}\leq  c^{k}\left[(pq)^{k/2}+ p^{1/2} \|\bA\|^{2k-2}_{2k}+ \|\bA\|^{2k-1}_{2k}+(pk^3)^{k/2}+ k^{3k}\right]\ .
\eeq
We consider two cases depending on $\|\bA\|^{2k}_{2k}$. If  $\|\bA\|^{2k}_{2k} \leq   [(p\vee k^3)(q\vee k^3)]^{k/2}$, then the above risk bound simplifies in 
$\E\left[|(U_k)-\|\bA\|_{2k}^{2k}|\right]\leq  5c^{k} [(p\vee k^3)(q\vee k^3)]^{k/2}$. Then, together with the inequality $|x^{1/(2k)}-y^{1/(2k)}|\leq |x-y|^{1/(2k)}$ and H\"older's inequality, we have 
\beq\label{eq:upper_1_cor:schatten}
\E\left[|(U_k)_+^{1/(2k)}- \|\bA\|_{2k}|\right]\leq \E\left[|U_k- \|\bA\|^{2k}_{2k}|\right]^{1/2k}\lesssim  [(p\vee k^3)(q\vee k^3)]^{1/4}\ .
\eeq
Let us turn to the case where  $\|\bA\|^{2k}_{2k} \geq   [(p\vee k^3)(q\vee k^3)]^{k/2}$. Since 
$|(1+x)^{1/(2k)}-1|\leq  2|x|$ for any $x\geq - 1/2$, we have 
\begin{eqnarray}
  \lefteqn{
\E\left[|(U_k)_+^{1/(2k)}- \|\bA\|_{2k}|\right]} &&  \nonumber \\&=&\E\left[ \|\bA\|_{2k} \big|[U_k/\|\bA\|^{2k}_{2k}]^{\frac{1}{2k}} - 1\big|\right] \nonumber\\
&\leq & \|\bA\|_{2k}\P\left[U_k\leq \|\bA\|_{2k}^{2k}/2\right]+ 2\|\bA\|^{1-2k}_{2k}\E\left[|U_k- \|\bA\|_{2k}^{2k}|\right] \nonumber \\
&\leq & \|\bA\|_{2k}\left[4\frac{\var{U_k}}{\|\bA\|^{4k}_{2k}}+ 2\frac{\var{U_k}^{1/2}}{\|\bA\|^{2k}_{2k}} \right]\nonumber \\
&\leq &  c'c^{k}\left[\frac{(p\vee k^3)(q\vee k^3)]^k}{\|\bA\|^{4k-1}_{2k}}+ \frac{(p\vee k^3)(q\vee k^3)]^{k/2}}{\|\bA\|^{2k-1}_{2k}}+ \frac{p}{\|\bA\|_{2k}^{3}}+\frac{p^{1/2}}{\|\bA\|_{2k}}+  \frac{1}{\|\bA\|_{2k}}+1 \right] \nonumber
\\
&\lesssim & c^{k}[(p\vee k^3)(q\vee k^3)]^{1/4}
\label{eq:up_risk_2}\ ,
\end{eqnarray}
where we used \eqref{eq:upper_risk_U_k_1} in the fourth line  and the condition $\|\bA\|_{2k} \geq  (p\vee k^3)(q\vee k^3)^{1/4}$ in the last line. 
 
\end{proof}

\begin{proof}[Proof of Proposition \ref{prp:U2}]
The definition of the estimators $U_2$ and $U_3$ is a consequence of the following moment identities
\begin{eqnarray}
\E[\tr(\bY^T\bY)]&=& \|\bA\|_2^2 + pq \ ;  \label{eq:moment1} \\
\E[\tr((\bY^T\bY)^2)]&=  &\|\bA\|_4^4+ 2(p+q+1)\|\bA\|_2^2+pq(1+q+p)\ ;   \label{eq:moment2} \\
\E[\tr((\bY^T\bY)^3)] &= &\|\bA\|_6^6 + 3(p+q+1)\|\bA\|_4^4+ 3(p^2+ q^2 + 3pq+3p +3q +4)\|\bA\|_2^2 \nonumber \\ 
 && + pq [p^2 + q^2+ 3pq+ 3p+ 3q + 4]\ .\label{eq:moment3} 
\end{eqnarray}
After some tedious computations, we see that the expressions in the right-hand side in Proposition~\ref{prp:U2} are unbiased estimators of $\|\bA\|_2^2$ and $\|\bA\|_4^4$ respectively. 

In the remainder of the proof, we show (\ref{eq:moment1}--\ref{eq:moment3}).
The first identity has already been proved to analyze $U_1$. In order to derive the second and the third identity, we decompose $\bY\bY^T$ into a sum involving its expectation. 
\[
\bY^T\bY= \left(\bA^T\bA + p\bI_q\right) + \left(\bE^T\bE- p\bI_q+ \bE^T\bA + \bA^T \bE\right)=: \bS + \bN\ . 
\]
Since $\bS$ is deterministic and $\bN$ is centered, we have 
\[
\E[\tr[(\bY^T\bY)^2]]=  \tr(\bS^2) +\tr[\E(\bN^2)]\ . 
\]
Since the Gaussian distribution is symmetric, we obtain by straightforward computation that 
 \beq\label{eq:decomposition_2-0}
 \E[\bN^2] = \E[(\bE^T\bE-p\bI_q)^2]+ \E[(\bA^T \bE + \bE^T\bA)^2]= (q+1)\left[p\bI_q + 2\bA^T\bA\right]\ . 
\eeq 
Combining the two previous identities leads to \eqref{eq:moment2}. Turning to \eqref{eq:moment3}, we have 
\beq\label{eq:decomposition_3-1}
\E[\tr[(\bY^T\bY)^2]]=  \tr(\bS^3) +\tr[\E(\bN^3)]+ 3\tr[\bS\E(\bN^2)]\ ,
\eeq
since $\bN$ is centered. For $\E[\tr(\bN^3)]$, we use a again that odd moments of centered normal distributions are null to derive
\beq\label{eq:decomposition_3-2}
\E[\tr(\bN^3)]= \E[\tr[(\bE^T\bE-p\bI_q)^3]]+3\E\left[\tr[ (\bE^T\bE-p\bI_q)(\bA^T \bE + \bE^T\bA)^2]\right]\ . 
\eeq
Starting with the first expression, we derive from moments of standard Wishart distribution (see e.g. \cite{letac2004all}) that 
\[
\E[\tr[(\bE^T\bE-p\bI_q)^3]]= pq [q^2+3q+4] \ . 
\]
Tedious computations also lead us to 
\[
 \E\left[\tr[ (\bE^T\bE-p\bI_q)(\bA^T \bE + \bE^T\bA)^2]\right]= \|\bA\|_2^2 [(q-1)(q-2)+3(q-1)2+ 8]= \|\bA\|_2^2 [q^2+3q+4]\ . 
\]
Combining the two previous identities in \eqref{eq:decomposition_3-2}, we obtain 
\[
 \E[\tr(\bN^3)]= [q^2+3q+4] (pq + 3\|\bA\|_2^2)\ . 
\]
Then, coming back to \eqref{eq:decomposition_3-1}, and relying on \eqref{eq:decomposition_2-0}, we get 
\beqn 
 \E[\tr((\bY^T\bY)^3)]&=& \tr[(\bA^T\bA+p\bI_q)^3]+ [q^2+3q+4] (pq + 3\|\bA\|_2^2)\\ && + 3 (q+1)\tr[(\bA^T\bA+p\bI_q)( 2\bA^T\bA+ p\bI_q )]]\\
 & =& \|\bA\|_6^6 + 3(p+q+1)\|\bA\|_4^4+ 3(p^2+ q^2 + 3pq+3p +3q +4)\|\bA\|_2^2\\ 
 && + p^3q + q^3p+ 3p^2q^2+ 3p^2q+ 3q^2p + 4pq\ , 
\eeqn 
which is exactly \eqref{eq:moment3}.

\end{proof}

\subsection{Proofs for general norms $\|\bA\|_{s}$}

Given a symmetric matrix $\bB$, we denote $\lambda_1(\bB)\geq \lambda_2(\bB)\geq \ldots$ its sequence of eigenvalues.
We start with a technical lemma. 

\begin{lem}\label{lem:local_control_lambda_i}
 Define $\bW= \bY^T\bY-p\bI_q$. 
Write $I(\bA)$ for the image of $\bA$ and $\Pi_{I(\bA)}$ for  any orthogonal projection matrix in  $\mathbb{R}^p$ onto $I(\bA)$. For any $i=1,\ldots, q$, we have  
 \beq\label{eq:upper_loss_sigma_i_A}
 |\lambda_i(\bW) - \sigma_i^2(\bA)|\leq 2\sigma_i(\bA)\|\Pi_{I(\bA)}\bE\|_{\infty} + \|\Pi_{I(\bA)}\bE\|^2_{\infty}+ \|\bE^T\bE- p \bI_q\|_{\infty}\ .
 \eeq
\end{lem}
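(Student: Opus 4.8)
The plan is to isolate the component of the noise that lives in the signal subspace. Write $\bF := \Pi_{I(\bA)}\bE$ and $b := \|\bF\|_{\infty}$. Since every column of $\bA$ belongs to $I(\bA)$ we have $\Pi_{I(\bA)}\bA = \bA$, and because $\Pi_{I(\bA)}$ is symmetric this yields $\bA^T\Pi_{I(\bA)} = \bA^T$, hence $\bA^T\bE = \bA^T\bF$. Expanding $\bY = \bA+\bE$ I would therefore write
\[
 \bW = \bA^T\bA + \bA^T\bE + \bE^T\bA + (\bE^T\bE - p\bI_q) = \bM + (\bE^T\bE - p\bI_q), \qquad \bM := \bA^T\bA + \bA^T\bF + \bF^T\bA,
\]
where $\bM$, $\bW$ and $\bE^T\bE - p\bI_q$ are all symmetric. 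By Weyl's perturbation inequality, $|\lambda_i(\bW) - \lambda_i(\bM)| \le \|\bE^T\bE - p\bI_q\|_{\infty}$, so after a triangle inequality it remains to prove $|\lambda_i(\bM) - \sigma_i^2(\bA)| \le 2\sigma_i(\bA)\,b + b^2$.

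For the upper bound I would use the identity $(\bA+\bF)^T(\bA+\bF) = \bM + \bF^T\bF$ together with $\bF^T\bF \succeq 0$, so that $\bM \preceq (\bA+\bF)^T(\bA+\bF)$ and hence $\lambda_i(\bM) \le \sigma_i^2(\bA+\bF)$; Weyl's inequality for singular values gives $\sigma_i(\bA+\bF) \le \sigma_i(\bA) + b$, whence $\lambda_i(\bM) \le \sigma_i^2(\bA) + 2\sigma_i(\bA)\,b + b^2$.

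The lower bound is the delicate point and the main obstacle: applying Weyl directly to $\bM = \bA^T\bA + (\bA^T\bF + \bF^T\bA)$ only produces the error term $2\|\bA\|_{\infty} b = 2\sigma_1(\bA)\,b$, which is far too crude for $i\ge 2$. Instead I would use the Courant--Fischer characterization $\lambda_i(\bM) = \max_{\dim S = i}\,\min_{x\in S,\ |x|_2 = 1} x^T\bM x$ and take $S$ to be the span of the $i$ leading eigenvectors of $\bA^T\bA$. For any unit vector $x\in S$ one has $|\bA x|_2^2 = x^T\bA^T\bA x \ge \sigma_i^2(\bA)$ while $|\bF x|_2 \le b$, so $x^T\bM x = |\bA x|_2^2 + 2\langle \bA x,\bF x\rangle \ge |\bA x|_2^2 - 2|\bA x|_2\,b$. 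The elementary inequality $t^2 - 2tb \ge \sigma^2 - 2\sigma b - b^2$, valid for $t\ge\sigma\ge 0$ and $b\ge 0$ (it rearranges to $(t-\sigma)(t+\sigma-2b)\ge -b^2$, which is clear when $t+\sigma\ge 2b$, while when $t+\sigma<2b$ one has $t^2-2tb\ge -b^2\ge \sigma^2-2\sigma b-b^2$), applied with $t = |\bA x|_2$ and $\sigma = \sigma_i(\bA)$, then gives $x^T\bM x \ge \sigma_i^2(\bA) - 2\sigma_i(\bA)\,b - b^2$ uniformly over $x\in S$, hence $\lambda_i(\bM) \ge \sigma_i^2(\bA) - 2\sigma_i(\bA)\,b - b^2$. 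Combining this with the upper bound yields $|\lambda_i(\bM) - \sigma_i^2(\bA)| \le 2\sigma_i(\bA)\,b + b^2$, and with the first paragraph this completes the argument; the only tools used are Weyl's inequalities and Courant--Fischer, that is the interlacing-type perturbation results (see e.g.~\cite{bhatia2013matrix}) referenced above.
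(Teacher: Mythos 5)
Your proof is correct. The lower bound is essentially the paper's own argument: Courant--Fischer restricted to the span of the $i$ leading eigenvectors of $\bA^T\bA$, followed by an elementary analysis of the quadratic $t\mapsto t^2-2tb$ (your case distinction at $t+\sigma=2b$ is a valid, slightly different packaging of the paper's minimization of $x\mapsto x^2-2xz$). Where you genuinely diverge is in the upper bound and in the overall organization. The paper first reduces to diagonal $\bA$ by orthogonal invariance and then invokes the Cauchy interlacing inequality (Corollary III.1.5 in \cite{bhatia2013matrix}) applied to the principal submatrix $\bW_{[i:q]}$, bounding its top eigenvalue directly. You instead split off $\bE^T\bE-p\bI_q$ once and for all by Weyl's inequality, and control the remaining symmetric matrix $\bM=\bA^T\bA+\bA^T\bF+\bF^T\bA$ (with $\bF=\Pi_{I(\bA)}\bE$) via the semidefinite domination $\bM\preceq(\bA+\bF)^T(\bA+\bF)$ together with Weyl's inequality for singular values, $\sigma_i(\bA+\bF)\leq\sigma_i(\bA)+\|\bF\|_\infty$. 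Your route avoids the diagonalization step entirely, makes the role of the projected noise $\Pi_{I(\bA)}\bE$ transparent through the identity $\bA^T\bE=\bA^T\bF$, and treats the upper and lower bounds symmetrically around the single matrix $\bM$; the paper's interlacing argument is more in the spirit of classical perturbation theory for ordered eigenvalues and generalizes readily when one wants bounds on several eigenvalues simultaneously from a single submatrix. Both yield exactly the stated constants.
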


\begin{proof}[Proof of Proposition~\ref{prp:upper_risk_nuclear_covariance}]
Define $\bW= \bY^T\bY-p\bI_q$, then $\sigma_i^2(\bY)-p=\lambda_i(\bW)$ so that  $T_s= [\sum_{i=1}^q (\lambda_i(\bW))_+^{s/2}]^{1/s}$. We first consider the case $s/2\geq 1$.  For any $x\in \mathbb{R}_+$, we have $|(x+y)_+^{s/2}-x^{s/2}|\leq s|y|(2x)^{s/2-1}+ (2|y|)^{s/2}$. It then follows from the triangular inequality and Lemma~\ref{lem:local_control_lambda_i} that 
\begin{eqnarray}
|T^s_s - \|\bA\|^s_s|&\leq& \sum_{i=1}^q |[\lambda_i(\bW)]_{+}^{s/2}- \sigma^{s}_i(\bA)|  \nonumber \\
&\leq & 2^{s/2}s \sum_{i=1}^{q}\left[\|\Pi_{I(\bA)}\bE\|^2_{\infty}+ \|\bE^T\bE- p \bI_q\|_{\infty}+ 2 \sigma_i(\bA)\|\Pi_{I(\bA)}\bE\|_{\infty}\right]\sigma^{s-2}_i(\bA) \nonumber \\ & &\quad \quad\quad \quad  + 3^{s/2-1}\left[\|\Pi_{I(\bA)}\bE\|^s_{\infty}+ \|\bE^T\bE- p \bI_q\|^{s/2}_{\infty}+ 2^{s/2} \sigma^{s/2}_i(\bA)\|\Pi_{I(\bA)}\bE\|^{s/2}_{\infty}\right]\ .\nonumber \\
\label{eq:upper_T_S}
\end{eqnarray}
Since $\bE$ is distribution invariant by a left orthogonal transformation $\|\Pi_{I(\bA)}\bE\|_{\infty}$ is distributed as the operator norm of a $\mathrm{dim}(I(\bA))\times q$ noise matrix. By integration, we deduce  from Lemma~\ref{lem:DavSza} that, for any $s\geq 1$,  $\E[\|\Pi_{I(\bA)}\bE\|^{s}_{\infty}]\leq c^{s}(q\vee s)^{s/2}$. Similarly, arguing as in the proof of Lemma~\ref{lem:noiseop} and integrating the tail distribution of $\|\bE^T\bE- p \bI_q\|_{\infty}$, we derive that $\E[\|\bE^T\bE- p \bI_q\|^{s}_{\infty}]\leq c^{s}[(p\vee s)(q\vee s)]^{s/2}$ for any $s\geq 1$. This allows us to conclude that 
\begin{eqnarray}
 \E\left[|T^s_s - \|\bA\|^s_s|\right]&\leq& c^{s} \left[\sqrt{pq} \|\bA\|_{s-2}^{s-2}+ \sqrt{q}\|\bA\|_{s-1}^{s-1}+ q[(p\vee s)(q\vee s)]^{s/4}+ (q\vee s )^{s/4}\|\bA\|_{s/2}^{s/2}\right] \nonumber \\
&\leq & c^{s} \left[\sqrt{pq} \|\bA\|_{s-2}^{s-2}+ \sqrt{q}\|\bA\|_{s-1}^{s-1}+ q[(p\vee s)(q\vee s)]^{s/4}\right]\ , \label{eq:upper_risk_moment1}
\end{eqnarray}
where we used H\"older's inequality and $xy\leq x^\alpha + y^{\beta}$ for $1/\alpha + 1/\beta=1$. 

For $s\in [1,2)$, we use the inequality $|(x+y)_+^{s/2} - x^{s/2}|\leq |y|^{s/2} \wedge (2|y||x|^{s/2-1})$. We then deduce from Lemma~\ref{lem:local_control_lambda_i} that 
\beq
|T^s_s - \|\bA\|^s_s|= \sum_{i=1}^q |[\lambda_i(\bW)]_{+}^{s/2}- \sigma^{s}_i(\bA)| 
\leq  \sum_{i=1}^{q}\|\Pi_{I(\bA)}\bE\|^{s}_{\infty}+ \|\bE^T\bE- p \bI_q\|^{s/2}_{\infty}+ 4 \sigma^{s-1}_i(\bA)\|\Pi_{I(\bA)}\bE\|_{\infty}\ . \label{eq:upper_l_1}
\eeq
Bounding the moments as above, we conclude that 
\beq\label{eq:upper_risk_moment2}
 \E\left[|T^s_s - \|\bA\|^s_s|\right]\lesssim \left[q(pq)^{s/4}+ \sqrt{q}\|\bA\|^{s-1}_{s-1}\right]\ . 
\eeq
The first result follows from~\eqref{eq:upper_risk_moment1} and \eqref{eq:upper_risk_moment2}. 
Regarding the second result, we use again that, for $s>1$, 
 $|(x+y)_+^{1/s} - x^{1/s}|\leq |y|^{1/s} \wedge (2|y||x|^{1/s-1})$. First, assume that $\|\bA\|_{s}\leq q^{1/s}[(p\vee s)(q\vee s)]^{1/4}$. 
\beqn
\E\left[|T_s - \|\bA\|_s|\right]&\leq& c\left[q^{1/s}[(p\vee s)(q\vee s)^{1/4}+  q^{1/(2s)}\|\bA\|^{1-1/s}_{s-1}+  \1_{s\geq 2} (pq)^{1/(2s)}\|\bA\|^{1-2/s}_{s-2}\right]\\ &\leq & c q^{1/s}[(p\vee s)(q\vee s)]^{1/4}\ ,
\eeqn 
where we used H\"older's inequality.  If $\|\bA\|_{s}\geq q^{1/s}[(p\vee s)(q\vee s)]^{1/4}$, then we have 
\beqn 
\E\left[|T_s - \|\bA\|_s|\right]&\leq& c^{s}\|\bA\|_s^{1-s}\left[q[(p\vee s)(q\vee s)]^{s/4}+  \sqrt{q}\|\bA\|_{s-1}^{s-1} +  \1_{s\geq 2}\sqrt{pq} \|\bA\|_{s-2}^{s-2}\right] \\
&\leq &  c^{s} q^{1/s}(pq)^{1/4}\ , 
\eeqn 
where we used again H\"older's inequality. 

\end{proof}

\begin{proof}[Proof of Lemma~\ref{lem:local_control_lambda_i}]
 Without loss of generality, assume that the matrix  $\bA$  is in diagonal form, that is $\bA_{ii}=\sigma_i(\bA)$ for $i=1,\ldots, q$ and $\bA_{ij}$ is zero otherwise. Then, we deduce from the interlacing inequality (Corollary III. 1.5 in~\cite{bhatia2013matrix}) that the $i$-th largest  eigenvalues of $\bW= \bY^T\bY-p\bI_q$ is less of or equal to to the first eigenvalue of the restriction of $\bW$ to indices in $[i,\ldots,  q]\times [i,\ldots,  q]$. Writing $\bW_{[i:q]}$ for this restriction, we arrive at 
\beqn 
 \lambda_i(\bW)&\leq& \lambda_1(\bW_{[i:q]})\\&\leq& \sigma^2_i(\bA) +\|[\bE^T \bA+ \bA^T \bE+ \bE^T\bE- p \bI_q]_{[i:q]} \|_{\infty}\\
 &\leq  & \sigma^2_i(\bA) + \|\bE^T\bE- p \bI_q\|_{\infty}+ 2\|[\bE^T \bA]_{[i:q]}\|_{\infty} \\
&\leq &  \sigma^2_i(\bA) + \|\bE^T\bE- p \bI_q\|_{\infty}+ 2\sigma_i(\bA)\|[\bE_{[1:q]}\|_{\infty}\ .
\eeqn

 \medskip 
 
 Turning the lower bound of $\lambda_i(\bW)$, we define $V_i$ the subspace of dimension $i$ spanned by the $i$-th largest eigenvectors of $\bA^T\bA$ (pick any such subspace if it is not unique). It follows from Courant-Fischer min-max theorem that 
\begin{eqnarray}
 \lambda_i(\bW)&\geq& \inf_{x\in  V_i} (x^T\bW x)/|x|^2_2\nonumber \\ 
 &\geq& \inf_{x: |x|_2=1 }  \sum_{j=1}^i x_j^2 \sigma_j^2(\bA) +  x^T (\bE^T \bE -p \bI_q) x +2 \sum_{j_1,j_2=1}^i x_{j_1}x_{j_2} \sigma_{j_1}(\bA)\bE_{j_1j_2}  \nonumber\\
 &\geq & \inf_{x: |x|_2=1 }  \sum_{j=1}^i x_j^2 \sigma_j^2(\bA)- 2[\sum_{j=1}^i x_j^2 \sigma_j^2(\bA)]^{1/2}\|\bE_{[1:q]}\|_{\infty}  - 
 \|\bE^T \bE -p \bI_q\|_{\infty}  \ ,  \label{eq:lower_bound_lambda_i_W}
\end{eqnarray}
where we applied Cauchy-Schwarz inequality and used that $\inf f+ g \geq \inf f + \inf g$. The quantity $\sum_{j=1}^i x_j^2 \sigma_j^2(\bA)$ lies in $[\sigma_i^2(\bA); \sigma_1^2(\bA)]$. 
The function $x\mapsto x^2 -2xz$ is decreasing for $x\leq z$ and increasing for $x\geq z$ and its minimum equals $-z^2$. When $\sigma_i(\bA)\geq \|\bE_{[1:q]}\|_{\infty}$, the minimum of  the left-hand side  expression in \eqref{eq:lower_bound_lambda_i_W} is achieved at $\sigma_i(\bA)$ and we have 
\beqn 
\lambda_i(\bW)&\geq& \sigma_i^2(\bA) - 2\sigma_i(\bA) \|\bE_{[1:q]}\|_{\infty}-  \|\bE^T \bE -p \bI_q\|_{\infty}\\ &\geq&   \sigma_i^2(\bA) - 2\sigma_i(\bA) \|\bE_{[1:q]}\|_{\infty}-  \|\bE^T \bE -p \bI_q\|_{\infty} - \|\bE_{[1:q]}\|^2_{\infty} \  . 
\eeqn
When  $\sigma_i(\bA)<\|\bE_{[1:q]}\|_{\infty}$, this implies that $\sigma_i^2(\bA) - 2\sigma_i(\bA) \|\bE_{[1:q]}\|_{\infty} < 0$, and we also have 
\[
\lambda_i(\bW)\geq   \sigma_i^2(\bA) - 2\sigma_i(\bA) \|\bE_{[1:q]}\|_{\infty}-  \|\bE^T \bE -p \bI_q\|_{\infty} - \|\bE_{[1:q]}\|^2_{\infty} \  .  
\]
All in all, we have proved that 
\[
  \big|\lambda_i(\bW)-  \sigma^2_i(\bA)\big|\leq  \|\bE^T\bE- p \bI_q\|_{\infty}+ 2\sigma_i(\bA)\|[\bE_{[1:q]}\|_{\infty}+ \|\bE_{[1:q]}\|^2_{\infty} \ .
\]
Since $\bE_{[1:q]}$ is a submatrix of $\Pi_{I(\bA)}(\bE)$ (recall that $\bA$ is assumed to be diagonal in the proof), we get the desired result.  
\end{proof}

\begin{proof}[Proof of Proposition~\ref{prp:upper_l1_spectrum}]
We only have to gather some arguments of Proposition~\ref{prp:upper_risk_nuclear_covariance}. Indeed, we have established in~\eqref{eq:upper_l_1} that 
\[
  \sum_{i=1}^q |[\lambda_i(\bW)]_{+}^{1/2}- \sigma_i(\bA)| 
  \leq  q(1+2\sqrt{2})\|\Pi_{I(\bA)}\bE\|_{\infty}+ q \|\bE^T\bE- p \bI_q\|^{1/2}_{\infty} \ .
\]
Besides, we have shown that $\E[\|\Pi_{I(\bA)}\bE\|_{\infty}]\lesssim q^{1/4}$ and $\E[\|\bE^T\bE- p \bI_q\|^{1/2}_{\infty}]\lesssim (pq)^{1/4}$. The result follows.

\end{proof}

\begin{proof}[Proof of Proposition~\ref{lem:risk_G_P}]
Since $U_k$ is an unbiased estimator of $\|\bA\|_{2k}^{2k}$, it follows that 
\beqn 
 \E[G_s[P; \bY]]&=& M^s(pq)^{s/4}\left[a_0+ \sum_{k=1}^{\deg(P)} a_k \left\|\frac{\bA}{M(pq)^{1/4}} \right\|^{2k}_{2k}\right]\\ &=&  M^s(pq)^{s/4} \sum_{j=1}^q P\big[\lambda_i\big(M^{-2}(pq)^{-1/2}\bA^T\bA\big)\Big]\ .
\eeqn 
 Assuming that $\|\bA\|_{\infty}\leq M(pq)^{1/4}$, we deduce that the bias of $G_s[P; \bY]$ satisfies 
\begin{eqnarray}
|\E[G_s[P; \bY]] - \|\bA\|_s^{s}|&\leq &  M^s(pq)^{s/4} \sum_{i=1}^q \left|\psi_s\left[\lambda_i\left(\frac{\bA^T\bA}{M^2(pq)^{1/2}}\right)\right] - P\left[\lambda_i\left(\frac{\bA^T\bA}{M^{2}(pq)^{1/2}}\right)\right]\right|  \nonumber \\ 
&\leq& M^s(pq)^{s/4}  q |\psi-P|_{ [0, 1],\infty}\ . \label{eq:upper_bias_polynomial}
\end{eqnarray}
Regarding the variance of $G_s[P; \bY]$, we deduce from Theorem~\ref{prp:risk_U_k} that 
\beqn 
 \var{G_s[P; \bY]}&\leq&  M^{2s}(pq)^{s/2}K\sum_{k=1}^K a_k^2 \frac{\var{U_k}}{M^{4k}(pq)^k} \\ 
 & \leq & M^{2s}(pq)^{s/2}K\sum_{k=1}^K c^{k} \frac{a_k^2}{M^{4k}} [1+ p^{1-k}q^{1-k} \lambda_1^{2k-2}(\bA^T\bA)+ q(pq)^{-k} \lambda_1^{2k-1}(\bA^T\bA)]\ ,
\eeqn 
where we used that $k\leq K\leq q^{1/3}$. Assuming that $\|\bA\|_{\infty}\leq M (pq)^{1/4}$, this simplifies in 
\[
 \frac{\var{G_s[P; \bY]}}{M^{2s}(pq)^{s/2}}\leq K \sum_{k=1}^K c^{k} a_k^2 \left(\frac{1}{M^{4k}}+ \frac{1}{M^4}+ \frac{\sqrt{q}}{\sqrt{p}M^2}\right) \lesssim \frac{K}{M^4\wedge[(p/q)^{1/2}M^2]}\sum_{k=1}^K c^{k} a_k^2\ , 
\]
which, together with~\eqref{eq:upper_bias_polynomial}, leads to the desired result. 
\end{proof}

The following simple result is used several times throughout the proofs.
\begin{lem}\label{lem:symmetrie_approximation}
Let $f$ be a symmetric function defined on $[-1,1]$. For any positive integer $n$, we have
\[
 E_{\cP_n}\left(f;[-1;1]\right)= E_{\cP^{sym}_n}\left(f;[0;1]\right)
\]
\end{lem}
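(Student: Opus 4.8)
The plan is to prove both inclusions by relating approximants on $[-1,1]$ to approximants on $[0,1]$ via the substitution $x \mapsto x^2$, exactly as was done informally for the square root in the discussion preceding Lemma~\ref{lem:best_approximation_psi}. The key observation is that $f$ being symmetric on $[-1,1]$ means it depends only on $x^2$: there is a well-defined function $g$ on $[0,1]$ with $f(x) = g(x^2)$ for all $x \in [-1,1]$, namely $g(t) = f(\sqrt{t})$.

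\textbf{Step 1 (the easy inclusion $\le$).} Given any $Q \in \cP^{sym}_n$ realizing (or approaching) $E_{\cP^{sym}_n}(g;[0,1])$... wait, more carefully: take $Q \in \cP_n^{sym}[0,1]$ approximating $g$ on $[0,1]$. Since $Q$ is symmetric it contains only even powers, so $Q(x) = R(x^2)$ for a polynomial $R$ of degree $\le n/2 \le n$. Then for $x \in [-1,1]$, $|f(x) - R(x^2)| = |g(x^2) - Q(x^2)|$, and as $x$ ranges over $[-1,1]$, $x^2$ ranges over $[0,1]$, so $\sup_{x\in[-1,1]}|f(x)-R(x^2)| = \sup_{t\in[0,1]}|g(t)-Q(t)|$. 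Hmm — I need to be careful about which direction is which; let me restate. The cleaner route: for \emph{any} polynomial $R$ of degree $\le n$, the polynomial $x \mapsto R(x^2)$ has degree $\le 2n$, which is too big. So I should parametrize by: symmetric polynomials of degree $\le n$ on $[-1,1]$ are exactly $\{x \mapsto R(x^2) : \deg R \le \lfloor n/2 \rfloor\}$, and these are in bijection (via $R$) with polynomials of degree $\le \lfloor n/2\rfloor$ used to approximate $g$ on $[0,1]$. Meanwhile, the best \emph{unrestricted} approximant of $f$ on $[-1,1]$ of degree $\le n$ can be symmetrized: if $f$ is even and $P$ approximates $f$, then $\tilde P(x) = \tfrac12(P(x)+P(-x))$ is even, has degree $\le n$, and $|f - \tilde P|_{\infty,[-1,1]} \le |f-P|_{\infty,[-1,1]}$ by the triangle inequality since $f(-x)=f(x)$. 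Hence $E_{\cP_n}(f;[-1,1]) = E_{\cP^{sym}_n}(f;[-1,1])$, and the latter, by the bijection with $R$ of degree $\le \lfloor n/2\rfloor$ and the change of variable $t = x^2$, equals $E_{\cP_{\lfloor n/2\rfloor}}(g;[0,1])$.

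\textbf{Step 2 (identifying $E_{\cP^{sym}_n}(f;[0,1])$).} On the other side, a symmetric polynomial of degree $\le n$ restricted to $[0,1]$: again $\cP_n^{sym}$ consists of polynomials in $x^2$ of degree $\le \lfloor n/2\rfloor$, and as $x$ ranges over $[0,1]$, $x^2$ also ranges over $[0,1]$, so via $t = x^2$ we get $E_{\cP^{sym}_n}(f;[0,1]) = E_{\cP_{\lfloor n/2\rfloor}}(g;[0,1])$ as well — here using $f(x) = g(x^2)$ on $[0,1]$ too (valid since $f$ is defined on $[-1,1] \supseteq [0,1]$ and symmetric, so $f(x)=g(x^2)$ there). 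Combining Steps 1 and 2 gives $E_{\cP_n}(f;[-1,1]) = E_{\cP^{sym}_n}(f;[0,1])$, which is the claim.

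\textbf{Main obstacle.} The only genuinely delicate point is bookkeeping with the degree: a symmetric (even) polynomial of degree $\le n$ corresponds to an arbitrary polynomial in the variable $t=x^2$ of degree $\le \lfloor n/2\rfloor$, and one must check this floor is the same on both sides of the claimed identity — which it is, since the reduction $E_{\cP_n}(f;[-1,1]) \to E_{\cP_{\lfloor n/2\rfloor}}(g;[0,1])$ and $E_{\cP^{sym}_n}(f;[0,1]) \to E_{\cP_{\lfloor n/2\rfloor}}(g;[0,1])$ use the same variable change. The symmetrization argument $P \mapsto \tfrac12(P(x)+P(-x))$ establishing $E_{\cP_n}(f;[-1,1]) = E_{\cP^{sym}_n}(f;[-1,1])$ is the one substantive (if standard) ingredient; everything else is the change of variables $t = x^2$, whose image is $[0,1]$ whether $x$ runs over $[-1,1]$ or over $[0,1]$. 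I would keep the write-up to a short paragraph invoking these two facts.
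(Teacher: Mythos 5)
Your proof is correct and uses the same key ingredient as the paper, namely the symmetrization $P\mapsto \tfrac12(P(x)+P(-x))$ showing $E_{\cP_n}(f;[-1,1])=E_{\cP^{sym}_n}(f;[-1,1])$. The paper then finishes in one line by noting that for an even function the supremum over $[-1,1]$ equals the supremum over $[0,1]$, whereas you route both sides through $E_{\cP_{\lfloor n/2\rfloor}}(g;[0,1])$ with $g(t)=f(\sqrt t)$ — a slightly longer but equally valid bookkeeping of the same fact.
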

\begin{proof}[Proof of Lemma~\ref{lem:symmetrie_approximation}]
First, we prove that  $E_{\cP_n}\left(f;[-1;1]\right)$ is achieved by a symmetric polynomial. Consider any Polynomial $P$. By symmetry of $f$, 
$\sup_{x\in [-1,1]}|[P(x)+P(-x)]/2 - f(x)|\leq \sup_{x\in [-1;1]}|f(x)-P(x)|$ and the symmetric polynomial $[P(x)+P(-x)]/2$ achieves a no higher approximation error. This implies that $  E_{\cP_n}\left(f;[-1;1]\right)= E_{\cP^{sym}_n}\left(f;[-1;1]\right)=  E_{\cP^{sym}_n}\left(f;[0;1]\right)$, where we used again the symmetry of $f$. 

\end{proof}

\begin{proof}[Proof of Lemma~\ref{lem:best_approximation_psi}]
 The first result is equivalent to $||x|- B_K(x)|_{\infty,[-1,1]}\leq \frac{2}{\pi(2K+1)}$ which has been proved in Bernstein's original paper~\cite{bernstein1913valeur}. The bound $|a^*|_{\infty}\leq 2^{3K}$ is proved in Lemma 2 of~\cite{cailow2011}. 
 Regarding the last result, we first apply Lemma~\ref{lem:symmetrie_approximation} to deduce that 
 \[
  E_{\cP_{2K}}[|x|;[-1,1]]= E_{\cP^{sym}_{2K}}[|x|;[0,1]]
 \]
Since any polynomial $P\in \cP_k$ satisfies $\sup_{x\in[0,1]}|P(x^2)-|x||= \sup_{x\in[0,1]}|P(x)-\sqrt{x}|$ and since any symmetric polynomial of degree at most $2K$ can be represented as $P(x^2)$ where $P\in \cP_K$, we deduce that 
 \[
 E_{\cP^{sym}_{2K}}[|x|;[0,1]]= E_{\cP_K}[\sqrt{x};[0,1]]
\]
Then, the result follows again from Bernstein's paper~\cite{bernstein1913valeur} on the approximation of the absolute function.

\end{proof}

\begin{proof}[Proof of Proposition~\ref{prp:risk_approximation}] 
 Taking $K^*=\lfloor 0.5(\log^{-1}((8c)\vee 2)) \log(q)\rfloor$
where $c$ is the constant in~\eqref{eq:upper_risk_G_P}, we have $(c2^{3})^{ K^*}\leq \sqrt{q}$. Then, we deduce from Proposition~\ref{lem:risk_G_P} and Lemma~\ref{lem:best_approximation_psi} that 
 \[
   \mathbb{E}\left[|G_1[P^*_{K^*}; \bY]- \|\bA\|_1|\right]\leq M  (pq)^{1/4}\left[\frac{2q}{\pi(2K^*+1)}+ c_1\sqrt{q} \right]\lesssim M  \frac{q}{\log(q)}  (pq)^{1/4}  \ .
 \]

\end{proof}

\begin{proof}[Proof of Proposition~\ref{prp:risk_approximation_general_s}]
Arguing as in the proof of Lemma~\ref{lem:best_approximation_psi}, we deduce that $E_{\cP_K}[x^{s/2};[0,1]]= E_{\cP_{2K}}[|x|^{s};[-1,1]]$.
Then, combining Proposition~\ref{lem:risk_G_P} with Lemma~\ref{lem:approximation_theory_polynom}, we deduce that, for any positive integer $K$
 \[
 \mathbb{E}\left[|G_s[P^*_{K;s}; \bY]- \|\bA\|^{s}_s|\right]\lesssim  M^s(pq)^{s/4} \left[ \frac{q}{K^s}+ \frac{c^{K}}{M^2\wedge [M(p/q)^{1/4}]} |a|_{\infty}\right]\ ,
 \]
where $|a|_{\infty}$ refers to the largest coefficient of $P^*_{K;s}$. Then,  we rely on Theorem E of~\cite{qazi2007some} to bound $|a|_{\infty}$: 
\begin{lem}[\cite{qazi2007some}]
Let $P_n(x)= \sum_{k=0}^n a_k x^{k}$  be a polynomial of degree at most $n$ such that $|P_n(x)|\leq 1$ for all $x\in [-1,1]$. Then,
$|a_{n-2\nu}|$ is bounded above by the modulus of the corresponding
coefficient of $L_n$ for $\nu= 0, 1,\ldots, \lfloor n/2\rfloor$, and $|a_{n-1-2\nu}|$ is
bounded above by the modulus of the corresponding coefficient
of $L_{n-1}$ for $\nu=0,1,\ldots, \lfloor (n-1)/2\rfloor$.  Here, we recall that  $L_n(x)$ is the $n$-th Chebychev polynomial. 
\end{lem}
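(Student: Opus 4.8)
The plan is to recognise the statement as the classical extremal problem for the coefficients of sup-norm-bounded polynomials, and to prove it by Lagrange interpolation at the Chebyshev extrema together with a sign analysis of the fundamental polynomials.

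First I would reduce to the case where $k$ and $n$ have the same parity. If they do not, split $P_n=P_n^{\mathrm{ev}}+P_n^{\mathrm{odd}}$ into its even and odd parts and let $Q$ be the summand containing the monomial $x^k$. Since $Q(x)=\tfrac12\bigl(P_n(x)\pm P_n(-x)\bigr)$ we still have $\|Q\|_{\infty,[-1,1]}\le 1$, while $\deg Q\le n-1$ and $Q$ has the parity of $n-1$, hence of $k$; moreover $a_k=[Q]_k$. So it suffices to prove the following for every integer $m$: if $R$ has degree at most $m$, $\|R\|_{\infty,[-1,1]}\le 1$ and $k\equiv m\ (\mathrm{mod}\ 2)$, then $|[R]_k|\le |[L_m]_k|$. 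Applying this with $m=n$ yields the first half of the Lemma (the bound on $|a_{n-2\nu}|$ by the coefficient of $L_n$), and applying it with $m=n-1$ after the symmetrisation above yields the second half (the bound on $|a_{n-1-2\nu}|$ by the coefficient of $L_{n-1}$).

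For this equal-parity statement, let $\xi_j=\cos(j\pi/m)$, $j=0,\dots,m$, be the $m+1$ extrema of $L_m$ (the Chebyshev polynomial of the first kind), so that $L_m(\xi_j)=(-1)^j$. Lagrange interpolation gives $R(x)=\sum_{j=0}^m R(\xi_j)\,\ell_j(x)$ with $\ell_j(x)=\prod_{i\ne j}\frac{x-\xi_i}{\xi_j-\xi_i}$; reading off the coefficient of $x^k$ and using $|R(\xi_j)|\le 1$ gives $|[R]_k|\le\sum_{j=0}^m|[\ell_j]_k|$. The crux is the following \emph{sign lemma}: when $k\equiv m$, the numbers $(-1)^j[\ell_j]_k$, $j=0,\dots,m$, all have the same sign. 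Granting it, $\sum_j|[\ell_j]_k|=\bigl|\sum_j(-1)^j[\ell_j]_k\bigr|$, and $\sum_j(-1)^j\ell_j(x)=\sum_j L_m(\xi_j)\ell_j(x)=L_m(x)$ by uniqueness of interpolation, so $\sum_j(-1)^j[\ell_j]_k=[L_m]_k$ and $|[R]_k|\le|[L_m]_k|$, as wanted.

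It remains to prove the sign lemma, which is the technical heart. Writing $\omega(x)=\prod_{i=0}^m(x-\xi_i)=\sum_l w_l x^l$ for the node polynomial, one has $\ell_j(x)=\omega(x)/\bigl((x-\xi_j)\,\omega'(\xi_j)\bigr)$, hence $[\ell_j]_k=b_k(\xi_j)/\omega'(\xi_j)$ where $b_k(\xi)=\sum_{l\ge k+1}w_l\,\xi^{\,l-1-k}$ is the $x^k$-coefficient of the quotient of $\omega$ by $x-\xi$. Since $\sign\omega'(\xi_j)=(-1)^j$ (count the negative factors $\xi_j-\xi_i$), $(-1)^j[\ell_j]_k$ has the sign of $b_k(\xi_j)$, so it suffices to show $b_k$ has constant sign on $[-1,1]$. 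Using $\omega(x)=2^{1-m}(x^2-1)U_{m-1}(x)$, where $U_{m-1}=L_m'/m$ is the Chebyshev polynomial of the second kind (whose coefficients satisfy $\sign(u_{m-1-2l})=(-1)^l$), one checks that the nonzero $w_l$ alternate in sign and that, because $k\equiv m$, only even powers of $\xi$ survive in $b_k$, i.e. $b_k(\xi)=\sum_{r\ge0}w_{k+1+2r}\,\xi^{2r}$. The remaining point — that this polynomial in $\xi^2$ has no zero in $[0,1]$ — is the main obstacle; I would handle it by passing to the factorisation $\omega(x)=x^{\varepsilon}\widetilde\omega(x^2)$, with $\varepsilon=(m+1)\bmod 2$, whose roots $\xi_i^2$ all lie in $(0,1]$, and by identifying $b_k$ with a shifted tail of $\widetilde\omega$. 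This is precisely the nontrivial ingredient underlying V. A. Markov's extremal coefficient inequalities; the special case $k=m$ is just the classical extremal property of the leading coefficient of $L_m$.
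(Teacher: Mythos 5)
The paper does not actually prove this lemma: it is quoted verbatim as Theorem~E of Qazi--Rahman \cite{qazi2007some} (ultimately V.~A.~Markov's 1892 coefficient inequality) and used as a black box in the proof of Proposition~\ref{prp:risk_approximation_general_s}. So there is no in-paper argument to compare against; you are supplying a proof where the authors supply a citation. Your overall route is the correct classical one, and the reductions are sound: the even/odd splitting $Q=\tfrac12(P_n(x)\pm P_n(-x))$ preserves the sup-norm bound and reduces both halves of the statement to the equal-parity case with $m=n$ and $m=n-1$ respectively; Lagrange interpolation at the Chebyshev--Lobatto points $\xi_j=\cos(j\pi/m)$ gives $|[R]_k|\le\sum_j|[\ell_j]_k|$; the identity $\sum_j(-1)^j\ell_j=L_m$ and $\sign\omega'(\xi_j)=(-1)^j$ are both correct; and the parity bookkeeping showing that only even powers of $\xi$ survive in $b_k(\xi)=\sum_{l\ge k+1}w_l\xi^{l-1-k}$ is right.

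The genuine gap is the sign lemma, which you yourself flag as ``the main obstacle'' but do not close. Reducing it to the statement that the tail $\sum_{s\ge s_0}\tilde w_s t^s$ of $\tilde\omega$ has no zero on $(0,1]$ does not finish the job, because the only structural facts you extract --- that the nonzero $w_l$ alternate in sign and that the roots $\xi_i^2$ of $\tilde\omega$ lie in $[0,1]$ --- are not sufficient: a polynomial with alternating coefficients and all roots in $[0,1]$ can perfectly well have a tail that changes sign there (already $t^2-3t+1$, with roots $(3\pm\sqrt5)/2$, has the ``tail'' $-3t+1$ vanishing at $t=1/3$, and the full polynomial itself vanishes at $t=(3-\sqrt5)/2\in(0,1)$). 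Establishing that the \emph{specific} tails arising from the Chebyshev node polynomial $\omega=2^{1-m}(x^2-1)U_{m-1}$ keep a constant sign is exactly the hard core of V.~A.~Markov's proof; the known routes are either an induction on the nodes exploiting interlacing, or an explicit trigonometric evaluation of $[\ell_j]_k$ using $\ell_j(x)=\tfrac{2\delta_j}{m}{\sum_r}''\,T_r(\xi_j)T_r(x)$ together with the known signs of the coefficients of $T_r$. As written, your argument proves the lemma only modulo this step, so it cannot yet replace the citation to \cite{qazi2007some}.
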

Since $\sup_{x\in [-1,1]}|P^*_{K;s}(x^2) - |x|^{s}|\leq 1$ for $K$ large enough and since the largest coefficient of the Chebychev polynomial is less or equal to $2^{3K}$ (Lemma~\ref{lem:best_approximation_psi}), it follows that $|a|_{\infty}\leq c'^K$ and 
\[
 \mathbb{E}\left[|G_s[P^*_{K;s}; \bY]- \|\bA\|^{s}_s|\right]\lesssim  M^s(pq)^{s/4} \left[ \frac{q}{K^s}+ \frac{c^{K}}{M^2\wedge [M(p/q)^{1/4}]} \right]\ . 
\]
Taking $K= \lfloor \log^{-1}(c\vee 2)\log(q)\rfloor$ leads to the first result. The second result is a straightforward consequence of Jensen inequality and the inequality $|x^{1/s}-y^{1/s}|\leq |x-y|^{1/s}$.

\end{proof}

\subsection{Proof of Theorem~\ref{thm:lower_general}}

In comparison to the proof of Proposition~\ref{prp:lower_frob}, we now follow a generalized Le Cam's approach, by building two prior distributions $\mu_0$ and $\mu_1$ on matrices $\bA$ such that the corresponding integrated distributions of the observations $\mathbf{P}_{\mu_i}=\int \P_{\bA}\mu_i(d\bA)$ are close in total variation distance, while the distributions of $f_{\sigma}(\bA)$ under $\mu_0$ and $\mu_1$ are highly different. The statement of the following lemma is provided for a general functional $T$.

\begin{lem}[\cite{tsybakov_book}, Thm. 2.15]\label{lem:lecam_general}
Consider any function $T:\ \mathbb{R}^{p\times q}\mapsto \mathbb{R}$. Suppose that there exist $s>0$, $\zeta\in \mathbb{R}$ and a collection $\cA$ of matrices 
such that $\mu_0$ and $\mu_1$ are supported in $\cA$ and
\[\mu_0 \big[\bA:\  T(\bA)\leq \zeta-s\big]\geq 1- \beta_0\ ; \quad \quad 
\mu_1\big[\bA: \ T(\bA)\geq  \zeta+ s\big] \geq 1- \beta_1\ . 
\]
If $\|\mathbf{P}_{\mu_1} - \mathbf{P}_{\mu_0}\|_{TV}\leq \eta < 1$, then 
\[
 \inf_{\widehat{T}}\sup_{\bA\in \mathcal{A}}\P_{\theta}\left[|\widehat{T}- T(\bA)|\geq s\right] \geq \frac{1-\eta-\beta_0 - \beta_1}{2}\ . 
\]
\end{lem}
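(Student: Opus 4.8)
The plan is to run the classical reduction from estimation to hypothesis testing, with the "fuzzy" exceptional sets absorbed into the parameters $\beta_0$ and $\beta_1$. Fix an arbitrary estimator $\widehat{T}=\widehat{T}(\bY)$ and associate to it the $\{0,1\}$-valued test $\psi=\1\{\widehat{T}\geq \zeta\}$. The first step is to record the deterministic implications linking $\psi$ to the estimation error. If $\bA$ satisfies $T(\bA)\leq \zeta-s$, then on the event $\{\psi=1\}=\{\widehat{T}\geq\zeta\}$ we have $\widehat{T}-T(\bA)\geq \zeta-(\zeta-s)=s$, hence $|\widehat{T}-T(\bA)|\geq s$; symmetrically, if $T(\bA)\geq\zeta+s$, then on $\{\psi=0\}=\{\widehat{T}<\zeta\}$ we get $T(\bA)-\widehat{T}>s$, hence again $|\widehat{T}-T(\bA)|\geq s$.

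Next I would lower bound the supremum risk over $\cA$ by the average of the two prior-averaged risks. Since $\mu_0$ and $\mu_1$ are supported in $\cA$,
\[
\sup_{\bA\in\cA}\P_{\bA}\big[|\widehat{T}-T(\bA)|\geq s\big]\;\geq\;\tfrac12\Big(\textstyle\int\P_{\bA}\big[|\widehat{T}-T(\bA)|\geq s\big]\,\mu_0(d\bA)+\int\P_{\bA}\big[|\widehat{T}-T(\bA)|\geq s\big]\,\mu_1(d\bA)\Big).
\]
For the first integral, the implication above gives $\P_{\bA}[|\widehat{T}-T(\bA)|\geq s]\geq\P_{\bA}[\psi=1]$ whenever $T(\bA)\leq\zeta-s$, and this set has $\mu_0$-mass at least $1-\beta_0$; splitting $\mathbf{P}_{\mu_0}[\psi=1]=\int\P_{\bA}[\psi=1]\,\mu_0(d\bA)$ over this set and its complement and bounding the complement's contribution by $\beta_0$ yields $\int\P_{\bA}[|\widehat{T}-T(\bA)|\geq s]\,\mu_0(d\bA)\geq\mathbf{P}_{\mu_0}[\psi=1]-\beta_0$. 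The same reasoning with $\psi=0$ gives $\int\P_{\bA}[|\widehat{T}-T(\bA)|\geq s]\,\mu_1(d\bA)\geq\mathbf{P}_{\mu_1}[\psi=0]-\beta_1$.

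Finally I would invoke the elementary lower bound on the total error of any test, $\mathbf{P}_{\mu_0}[\psi=1]+\mathbf{P}_{\mu_1}[\psi=0]\geq 1-\|\mathbf{P}_{\mu_1}-\mathbf{P}_{\mu_0}\|_{TV}\geq 1-\eta$, which follows directly from the definition of total variation distance (write the left side as $1-\int\1_{\{\psi=0\}}\,(d\mathbf{P}_{\mu_1}-d\mathbf{P}_{\mu_0})$ and bound the integral by $\|\mathbf{P}_{\mu_1}-\mathbf{P}_{\mu_0}\|_{TV}$). Combining the three displays gives $\sup_{\bA\in\cA}\P_{\bA}[|\widehat{T}-T(\bA)|\geq s]\geq\tfrac12(1-\eta-\beta_0-\beta_1)$, and since $\widehat{T}$ was arbitrary, taking the infimum concludes. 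There is no real obstacle here; the only point needing care is the bookkeeping of the exceptional sets — one must pass from the prior-averaged risk to $\mathbf{P}_{\mu_i}[\psi=\cdot]$ by subtracting $\beta_i$ (rather than, say, restricting the test probabilities), and the cutoff of $\psi$ at $\zeta$, with a non-strict inequality, must be checked to be compatible with the inequalities $T(\bA)\leq\zeta-s$ and $T(\bA)\geq\zeta+s$ appearing in the hypotheses.
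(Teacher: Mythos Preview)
Your proof is correct and is exactly the standard reduction-to-testing argument behind Tsybakov's Theorem~2.15. The paper does not actually give a proof of this lemma: it is stated with a direct citation to \cite{tsybakov_book} and used as a black box, so there is nothing to compare against beyond noting that your argument matches the textbook one. One trivial slip in your parenthetical: writing $\mathbf{P}_{\mu_0}[\psi=1]+\mathbf{P}_{\mu_1}[\psi=0]=1-\int\1_{\{\psi=0\}}(d\mathbf{P}_{\mu_1}-d\mathbf{P}_{\mu_0})$ should have $\1_{\{\psi=1\}}$ (or equivalently flip the sign), but the bound by $\|\mathbf{P}_{\mu_1}-\mathbf{P}_{\mu_0}\|_{TV}$ is of course unaffected.
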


For a positive integer $n$, denote $\cU(n)$ the Haar measure on orthogonal matrices of size $n$. Let $\nu_0$ and $\nu_1$ be two probability measures supported  on $[0,0.125(pq)^{1/4}]$ whose definition is postponed at the end of the proof. 
Given a vector $\theta$ of size $\lceil q/2\rceil$, Let $\bD(\theta)$ denote the $p\times q$ matrix such that $[\bD(\theta)]_{aa}=\theta_a$ for $a\leq \lceil q/2\rceil$ and is zero otherwise.

For $i=0,1$, consider the prior distributions  $\mu_0$ and $\mu_1$ such that, under $\mu_i$,  $\bA$ is distributed as $\bU\bD(\theta) \bV^T $ where $\bU$, $\bV$, and $\theta$ are independent and respectively sampled according to the Haar measures $\cU(p)$ and $\cU(q)$ and $\nu_i^{\otimes \lceil q/2\rceil}$. Note that, up to a permutation,  the non-zero singular values of $\bA$ equal $\theta_a$ for $a=1,\ldots, \lceil q/2\rceil$, which implies that $f_{\sigma}(\bA)= \sum_{a}f(|\theta_a|)$.

\medskip

The measure $\mathbf{P}_{\mu_0}$ and $\mathbf{P}_{\mu_1}$ are challenging to handle because the corresponding matrices $\bA$ have ranks $\lfloor q/2\rfloor$ (if $\nu_0(\{0\})=0$) and  involve intricate integrals with respect to the Haar measure. The following lemma reduces $\|\mathbf{P}_{\mu_0}- \mathbf{P}_{\mu_1}\|_{TV}$ to distance between distributions involving rank 1 matrices $\bA$. 

Define $q'= q+1 - \lceil q/2\rceil$ , $p'= p+1- \lceil q/2\rceil$. Let $\underline{\mu}_0$ (resp. $\underline{\mu}_1$) denote the distribution on $\bA\in \mathbb{R}^{p'\times q'}$ such that $\bA= \sigma u v^T$ where $\sigma\sim \nu_0$ (resp. $\nu_1$), $u$ and $v$ are sampled uniformly on the unit sphere of $\mathbb{R}^{p'}$ and $\mathbb{R}^{q'}$. Finally, we denote  $\underline{\mathbf{P}}_{i}= \int \P'_{\bA}\underline{\mu}_i(d\bA)$ for $i=0,1$ the marginal distributions of the noisy $p'\times q'$ observations $\bY$ when $\bA\sim \underline{\mu}_i$.
\begin{lem} \label{lem:reduction}
For  any $\nu_0$ and $\nu_1$, we have 
\[
\|\mathbf{P}_{\mu_0}- \mathbf{P}_{\mu_1}\|_{TV}\leq \lceil q/2 \rceil  \|\underline{\mathbf{P}}_{0}- \underline{\mathbf{P}}_{1}\|_{TV} \ . 
\]
\end{lem}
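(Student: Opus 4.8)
The plan is to prove Lemma~\ref{lem:reduction} by constructing an explicit randomization (a Markov kernel) that maps the $p' \times q'$ observation model into the $p \times q$ observation model in a way that intertwines $\underline{\mathbf{P}}_i$ with $\mathbf{P}_{\mu_i}$, and then to invoke the data-processing inequality for total variation together with a hybrid (telescoping) argument to pick up the factor $\lceil q/2 \rceil$. The key observation is that a matrix $\bU \bD(\theta)\bV^T$ with $\lceil q/2\rceil$ nonzero singular values $\theta_1, \dots, \theta_{\lceil q/2\rceil}$ can be built as an orthogonally-invariant ``sum'' of $\lceil q/2\rceil$ rank-one pieces living in mutually orthogonal row- and column-subspaces, so that conditionally on the subspaces each piece is, up to an orthogonal change of basis, a rank-one matrix of the form $\sigma u v^T$ with $\sigma \sim \nu_i$.

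First I would set up the randomization. Fix a deterministic embedding: identify $\mathbb{R}^{p'} \hookrightarrow \mathbb{R}^p$ and $\mathbb{R}^{q'}\hookrightarrow \mathbb{R}^q$ on the last $p'$ (resp. $q'$) coordinates, so that a $p'\times q'$ matrix $\bB$ is viewed as a $p\times q$ matrix supported on the block of indices $\{\lceil q/2\rceil, \dots, p\}\times\{\lceil q/2\rceil,\dots, q\}$. Given an input observation $\bY' = \bB + \bE'$ of size $p'\times q'$ (with $\bB\sim\underline{\mu}_i$), the kernel does the following: (i) independently sample $\lceil q/2\rceil - 1$ i.i.d.\ scalars $\sigma_2,\dots,\sigma_{\lceil q/2\rceil}\sim \nu_i$ and place them on the first $\lceil q/2\rceil-1$ diagonal entries of an otherwise-zero $p\times q$ matrix $\bD'$, while leaving the $(\lceil q/2\rceil, \lceil q/2\rceil)$-th ``slot'' to be filled by the embedded $\bY'$; (ii) add fresh independent $\cN(0,1)$ noise on all the remaining entries of the $p\times q$ array not already carrying noise; (iii) apply independent Haar-random orthogonal matrices $\bU\sim\cU(p)$, $\bV\sim\cU(q)$ on the left and right. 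One checks directly that when $\bB\sim\underline{\mu}_i$ — i.e.\ $\bB = \sigma u v^T$ with the correct distributions — the output has exactly the law $\mathbf{P}_{\mu_i}$, because rotational invariance absorbs the arbitrary embedding of $u,v$ into the Haar matrices, the fresh noise completes $\bE'$ to a full $p\times q$ Gaussian matrix, and the injected $\sigma_2,\dots,\sigma_{\lceil q/2\rceil}$ together with $\sigma$ reconstitute an i.i.d.\ $\nu_i^{\otimes \lceil q/2\rceil}$ diagonal. Crucially, the kernel itself does not depend on $i$.

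With this common kernel $\Phi$ in hand, the data-processing inequality gives $\|\Phi_*\underline{\mathbf{P}}_0 - \Phi_*\underline{\mathbf{P}}_1\|_{TV} \le \|\underline{\mathbf{P}}_0 - \underline{\mathbf{P}}_1\|_{TV}$, but that only yields the bound without the factor $\lceil q/2\rceil$. To get the stated inequality I would instead run a hybrid argument: define intermediate measures $\mathbf{Q}^{(j)}$, $j = 0,1,\dots,\lceil q/2\rceil$, on $p\times q$ matrices, where $\mathbf{Q}^{(j)}$ corresponds to drawing the first $j$ singular values from $\nu_1$ and the remaining $\lceil q/2\rceil - j$ from $\nu_0$ (then rotating by Haar matrices and adding noise), so that $\mathbf{Q}^{(0)} = \mathbf{P}_{\mu_0}$ and $\mathbf{Q}^{(\lceil q/2\rceil)} = \mathbf{P}_{\mu_1}$. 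By the triangle inequality $\|\mathbf{P}_{\mu_0} - \mathbf{P}_{\mu_1}\|_{TV} \le \sum_{j=1}^{\lceil q/2\rceil} \|\mathbf{Q}^{(j-1)} - \mathbf{Q}^{(j)}\|_{TV}$, and for each consecutive pair a randomization that differs only in whether the single $j$-th singular value is drawn from $\nu_0$ or $\nu_1$ (all other singular values, the noise, and the Haar rotations being shared) shows, again via data processing, that $\|\mathbf{Q}^{(j-1)} - \mathbf{Q}^{(j)}\|_{TV} \le \|\underline{\mathbf{P}}_0 - \underline{\mathbf{P}}_1\|_{TV}$. Summing the $\lceil q/2\rceil$ terms yields the claim.

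The main obstacle I anticipate is checking rigorously that step (i)--(iii) of the kernel reproduces $\mathbf{Q}^{(j)}$ \emph{exactly} — in particular that the arbitrary (non-random) coordinate embedding of the rank-one factor $u v^T$ becomes, after left/right multiplication by independent Haar matrices, distributed precisely as $\bU \bD(\theta) \bV^T$ conditionally on $\theta$, and that the three independent noise contributions (the original $\bE'$, the fresh noise, and the orthogonal invariance of Gaussian matrices) assemble into a single standard Gaussian matrix after rotation. This is where one uses that the standard Gaussian matrix ensemble is invariant under $\bE \mapsto \bU \bE \bV^T$ and that orthogonal conjugation of a block-diagonal matrix with a Haar-random orthogonal matrix, together with independent Haar rotations of the blocks, gives again a Haar-type law; all of these are standard facts about the orthogonal group and Gaussian ensembles, but laying out the conditional-distribution bookkeeping carefully (especially handling the dimensions $p' = p + 1 - \lceil q/2\rceil$ and $q' = q + 1 - \lceil q/2\rceil$ so the blocks fit) is the delicate part.
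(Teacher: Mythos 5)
Your proof is correct and follows essentially the same route as the paper: telescoping through the hybrid measures that swap one singular value's prior from $\nu_0$ to $\nu_1$ at a time, and reducing each consecutive pair to the rank-one $p'\times q'$ problem by conditioning on (equivalently, building a shared randomization from) the remaining singular values, the Haar rotations, and the extra noise — the paper just phrases this reduction as an explicit integral/triangle-inequality computation rather than a data-processing inequality. One small slip: the single kernel $\Phi$ of your first paragraph \emph{does} depend on $i$ (it draws the other $\lceil q/2\rceil-1$ singular values from $\nu_i$), so the "bound without the factor $\lceil q/2\rceil$" is not actually available; but your hybrid argument correctly uses, for each $j$, a kernel that is genuinely common to $\mathbf{Q}^{(j-1)}$ and $\mathbf{Q}^{(j)}$, so the final proof stands.
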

Intuitively, $\|\mathbf{P}_{\mu_0}- \mathbf{P}_{\mu_1}\|_{TV}$ quantifies the difficulty to decipher rank $\lceil q/2\rceil $ matrices whose singular values are either i.i.d. distributed according to $\nu_0$ or $\nu_1$, whereas $\|\underline{\mathbf{P}}_{0}- \underline{\mathbf{P}}_{1}\|_{TV}$ quantifies the difficulty to decipher rank $1$ matrices whose unique non-zero singular value is either distributed according to $\nu_0$ or to $\nu_1$. Fortunately, the distance  $\|\underline{\mathbf{P}}_{0}- \underline{\mathbf{P}}_{1}\|_{TV}$ is now tractable and we are now in position to apply moment matching techniques~\cite{cailow2011}.

Define $k_0=k_0(\nu_0,\nu_1)$ be the smallest integer such that, for all $0\leq k\leq k_0$, $\int x^{2k} \nu_0(dx)= \int x^{2k}\nu_1(dx)$. In other words, the $k_0$ first even moments of $\nu_0$ and $\nu_1$ are matching. Note that $k_0\geq 0$ as $\nu_0$ and $\nu_1$ are probability measures.
\begin{lem}\label{lem:controle_distance_variation_totale}
If the supports of $\nu_0$ and $\nu_1$ are included in $[0,0.125(pq)^{1/4}]$ and if the first $k_0$ even moments of $\nu_0$ and $\nu_1$ are matching, then we have 
\[
\|\underline{\mathbf{P}}_{0}- \underline{\mathbf{P}}_{1}\|^2_{TV}\leq  e^{-k_0}\ . 
\]
\end{lem}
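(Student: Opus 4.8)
\textbf{Proof plan for Lemma~\ref{lem:controle_distance_variation_totale}.}

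The plan is to bound the total variation distance via the $\chi^2$-divergence with respect to the noise-only distribution $\underline{\mathbf{P}}_{\mathrm{noise}}$ (the law of $\bY$ when $\bA=0$), using the inequality $\|\underline{\mathbf{P}}_0-\underline{\mathbf{P}}_1\|_{TV}^2 \le \tfrac12(\chi^2(\underline{\mathbf{P}}_0,\underline{\mathbf{P}}_{\mathrm{noise}}) + \chi^2(\underline{\mathbf{P}}_1,\underline{\mathbf{P}}_{\mathrm{noise}}))$, or more efficiently by writing $\|\underline{\mathbf{P}}_0-\underline{\mathbf{P}}_1\|_{TV}^2 \le \E_{\mathrm{noise}}[(L_0-L_1)^2]$ where $L_i = d\underline{\mathbf{P}}_i/d\underline{\mathbf{P}}_{\mathrm{noise}}$. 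First I would compute the likelihood ratio exactly as in the proof of Lemma~\ref{lem:controlL2}: since the density of $\P'_{\bA}$ relative to Lebesgue is proportional to $e^{-\|\bY-\bA\|_2^2/2}$, one gets $L_i = \int \exp[\langle \bY,\bA\rangle - \tfrac12\|\bA\|_2^2]\,\underline{\mu}_i(d\bA)$, and since $\|\bA\|_2^2 = \sigma^2$ for a rank-one $\bA = \sigma uv^T$ with $|u|_2=|v|_2=1$, this becomes $L_i = \int e^{-\sigma^2/2}\,\E_{u,v}[\exp(\sigma\, u^T\bY v)]\,\nu_i(d\sigma)$.

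Next I would expand the cross term. Computing $\E_{\mathrm{noise}}[(L_0-L_1)^2]$ produces a double integral over $\sigma,\sigma'$ (one from each copy of $L_i - L_j$) of $e^{-(\sigma^2+\sigma'^2)/2}\,\E_{\mathrm{noise}}\E_{u,v,u',v'}[\exp(\sigma u^T\bY v + \sigma' u'^T\bY v')]$. Integrating out the Gaussian $\bY$ using its Laplace transform (as in the third display of the proof of Lemma~\ref{lem:controlL2}) gives $\E_{u,v,u',v'}[\exp(\sigma\sigma'(u^Tu')(v^Tv'))]$. Expanding the exponential in its power series, only even powers survive because $u^Tu'$ and $v^Tv'$ have symmetric distributions (their odd moments vanish); one obtains a series in $(\sigma\sigma')^{2k}$ with coefficients $c_k(p',q') := \E[(u^Tu')^{2k}]\,\E[(v^Tv')^{2k}]/(2k)!$. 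The key point is that when the first $k_0$ even moments of $\nu_0$ and $\nu_1$ coincide, the moment $\int \sigma^{2k}\nu_0(d\sigma) = \int\sigma^{2k}\nu_1(d\sigma)$ for $k\le k_0$, so all terms with $k\le k_0$ cancel in $L_0-L_1$ and the surviving series starts at $k = k_0+1$.

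The remaining work is to bound the tail $\sum_{k>k_0} c_k(p',q')\,(\sup\sigma)^{2k}(\sup\sigma')^{2k}$ with $\sup\sigma \le 0.125(pq)^{1/4}$. For the spherical moments, the standard bound $\E[(u^Tu')^{2k}] \le (2k-1)!!/(p')^k \le (2k)^k/(p'\,)^k$ (or the sharper $\prod_{j}(2j-1)/(p'+2j-2)$) gives $c_k(p',q') \le (2k)^{2k}/((2k)!\,(p'q')^k) \le (ce)^{2k}/(p'q')^k$ by Stirling, which after plugging $\sigma^2\sigma'^2 \le (0.125)^4 pq \le pq/4000$ and noting $p'q' \ge pq/4$ yields each term bounded by $(c'/4000)^{2k}\cdot 4^{2k} \le 2^{-k}$ say, so the geometric tail from $k_0+1$ sums to at most $2\cdot 2^{-(k_0+1)} \le e^{-k_0}$ after adjusting constants. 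The main obstacle is getting the spherical-integral moment bound and the numerical constant $0.125$ to interact cleanly so that the per-term ratio is bounded by a constant strictly less than $1$; this requires being careful about whether one uses $p'$ or $p$ (they differ, but $p' \ge p/2 \ge (pq)^{1/2}/2$ since $q\le p$, which suffices) and about the precise form of Stirling's bound on $(2k)!$. Once the geometric decay with ratio below $e^{-1}$ is established the conclusion $\|\underline{\mathbf{P}}_0-\underline{\mathbf{P}}_1\|_{TV}^2 \le e^{-k_0}$ follows immediately.
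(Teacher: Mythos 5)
Your proposal is correct and follows essentially the same route as the paper's proof: bound the TV distance by $\E_{0}[(L_0-L_1)^2]^{1/2}$ via Cauchy--Schwarz against the noise density, integrate out the Gaussian to obtain $\E[\exp(\sigma\sigma'(u^Tu')(v^Tv'))]$, expand in a power series where odd terms vanish by symmetry and terms up to $k_0$ cancel by moment matching, then bound the geometric tail using the spherical moments $\E[(u^Tu')^{2k}]$ and the support constraint. Your direct bound $\E[u_1^{2k}]\leq (2k-1)!!/(p')^k$ is in fact a cleaner substitute for the paper's beta-representation-plus-$\chi^2$-deviation argument, but this is a detail within the same proof.
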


Define $k_0^*:= \lceil \log(4\lceil q/2\rceil )\rceil$. Provided that  $k_0\geq k_0^*$, we then  derive from Lemma~\ref{lem:reduction} that 
\beq\label{eq:upper_TV}
\|\mathbf{P}_{\mu_0}- \mathbf{P}_{\mu_1}\|_{TV}\leq   e^{-k_0}\lceil q/2 \rceil \leq  \frac{1}{4} \ . 
\eeq

\medskip

Since the support of $\nu_0$ and $\nu_1$ are included in $[0,(pq)^{1/4}/8]$ we  deduce from Hoeffding inequality 
\[
\nu_0^{\otimes\lceil q/2\rceil}\left[\sum_{i=1}^{\lceil q/2\rceil} f(|\theta_i|)\geq \lceil q/2\rceil\int f(t) \nu_0(dx) + (\lceil q/2\rceil\log(2))^{1/2}\|f\|_{\infty,[0,(pq)^{1/4}/8]}\right]\leq \frac{1}{4} 
\]
A similar bound holds for the left deviations of $\sum_{i}f(\theta_i)$ under $\nu_1^{\otimes \lceil q/2\rceil}$. Using \eqref{eq:upper_TV}, we are now in position to apply Lemma~\ref{lem:lecam_general}. This leads us to 
\begin{eqnarray}\nonumber
 \lefteqn{\inf_{\widehat{T}}\sup_{\bA ;\, \|\bA\|_{\infty}\leq 0.125(pq)^{1/4}}\E\left[\big|\widehat{T}- f_{\sigma}(\bA)\big|\right]}&& \\ &\geq &\frac{1}{16} \left[\lceil q/2\rceil\left[\int f(t) \left(\nu_1(dt)-\nu_0(dt)\right)\right]  - 2(\lceil q/2\rceil\log(2))^{1/2}\|f\|_{\infty,[0,(pq)^{1/4}/8]} \right]_+\ , \label{eq:resultat_intermediaire}
 \end{eqnarray}
provided that the even moments up to  $k_0^*$  of $\nu_0$ and $\nu_1$ are matching. To conclude, it remains to consider $\nu_0$ and $\nu_1$ in such a way that
(a) $\nu_0$ and $\nu_1$ are supported in $[0,(pq)^{1/4}/8]$, (b) the first even moments up to $k_0^*$ of $\nu_0$ and $\nu_1$ are matching, and (c)  $\int f(t) (\nu_1(dt) -  \nu_0(dt))$ is the largest possible. Fortunately, such  extremum problem  is now well understood. 
The following lemma is a straightforward extensions of Lemma 1 in~\cite{cailow2011} (see also Lemma 5.4 in~\cite{mukherjee2017}). The proof being the same as in Cai and Low~\cite{cailow2011}, we omit it.
\begin{lem}\label{lem:existence_extremal_measure_moment_method}
For any bounded interval $I\subset \mathbb{R}$, any continuous function $f$ on $I$, any collection $(g_1,\ldots, g_q)$ continuous functions on $I$ with $g_1=1$,
there exist two probability measures $\nu_0$ and $\nu_1$ such that 
\begin{enumerate}
 \item $\int_{I} g_l(t)\nu_0(dt)= \int_{I} g_l(t)\nu_1(dt)\ ,\quad \text{ for all } l=1,\ldots, q $ 
 \item $\int_{I}f\nu_1(dt) - \int_{I}f\nu_0(dt) = 2E_{G}[f;I]$, 
\end{enumerate}
where $G= \mathrm{Vect}(g_1,\ldots, g_q)$.  Furthermore, it is not possible to built $\nu_0$ and $\nu_1$ satisfying (1) and $\int_{I}f\nu_1(dt) - \int_{I}f\nu_0(dt) > 2E_{G}[f;I]$. 
\end{lem}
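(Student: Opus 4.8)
The plan is to recast the extremal problem as a Hahn--Banach duality statement between finite signed measures and continuous functions. Since $I$ is a bounded interval I would first pass to its closure (harmless in all our applications, where $I$ is already compact), so that the space $M(I)$ of finite signed Borel measures on $I$ is the dual of $C(I)$ with the supremum norm, under the pairing $\langle\lambda,h\rangle=\int_I h\,d\lambda$. Writing $G=\mathrm{Vect}(g_1,\ldots,g_q)\subset C(I)$ and $G^{\perp}=\{\lambda\in M(I):\langle\lambda,g\rangle=0\ \text{for all }g\in G\}$ for its annihilator, the whole statement reduces to analyzing $\sup\{\langle\lambda,f\rangle:\lambda\in G^{\perp},\ \|\lambda\|_{TV}\le 2\}$.

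The first step is a dictionary between pairs of probability measures matching on $G$ and elements of $G^{\perp}$ of total variation at most $2$. In one direction, if $\nu_0,\nu_1$ satisfy condition (1) then $\lambda:=\nu_1-\nu_0\in G^{\perp}$, and the Jordan decomposition $\lambda=\lambda_+-\lambda_-$ satisfies $\lambda_+\le\nu_1$ and $\lambda_-\le\nu_0$, so $\|\lambda\|_{TV}=\lambda_+(I)+\lambda_-(I)\le 2$; moreover $\int f\,d\nu_1-\int f\,d\nu_0=\langle\lambda,f\rangle$. In the other direction, given $\lambda\in G^{\perp}$ with $\|\lambda\|_{TV}\le 2$, the constraint $\langle\lambda,g_1\rangle=0$ together with $g_1=1$ forces $\lambda(I)=0$, hence $\lambda_+(I)=\lambda_-(I)=:m\le 1$; then, fixing any $x_0\in I$, the measures $\nu_1:=\lambda_++(1-m)\delta_{x_0}$ and $\nu_0:=\lambda_-+(1-m)\delta_{x_0}$ are probabilities with $\nu_1-\nu_0=\lambda$, so they match on $G$. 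Thus both conclusions of the lemma follow once I show that the above supremum equals $2E_G[f;I]$ and is attained.

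For the evaluation of the supremum I would use the classical identification with a quotient norm. The easy direction is immediate: for $\lambda\in G^{\perp}$ with $\|\lambda\|_{TV}\le 2$ and any $h\in G$, using $\langle\lambda,h\rangle=0$ gives $\langle\lambda,f\rangle=\langle\lambda,f-h\rangle\le\|\lambda\|_{TV}\,|f-h|_{\infty,I}\le 2\,|f-h|_{\infty,I}$, and taking the infimum over $h\in G$ yields $\langle\lambda,f\rangle\le 2E_G[f;I]$; this already proves the optimality (``furthermore'') part verbatim. For the matching lower bound together with attainment, I dispose first of the trivial case $E_G[f;I]=0$, where $f\in G$ since $G$ is finite-dimensional hence closed, and $\nu_0=\nu_1=\delta_{x_0}$ works; otherwise, on the finite-dimensional subspace $G\oplus\mathbb{R}f$ I define the linear functional $\phi(h+tf)=t\,E_G[f;I]$, observe that $|\phi(h+tf)|=|t|\,E_G[f;I]\le|t|\,|f+h/t|_{\infty,I}=|h+tf|_{\infty,I}$ for $t\ne 0$ (and $\phi$ vanishes when $t=0$), so $\|\phi\|\le 1$, extend $\phi$ by Hahn--Banach to a functional of norm at most $1$ on $C(I)$, and represent it by Riesz as integration against $\lambda^{*}\in M(I)$ with $\|\lambda^{*}\|_{TV}\le 1$; by construction $\lambda^{*}\in G^{\perp}$ and $\langle\lambda^{*},f\rangle=E_G[f;I]$. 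Then $2\lambda^{*}$ achieves the supremum, and pushing it through the dictionary of the previous paragraph produces the required $\nu_0,\nu_1$.

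The only place calling for care is bookkeeping rather than genuine difficulty: in the dictionary step one must pad the Jordan parts by $(1-m)\delta_{x_0}$ so that both measures have total mass exactly one, and the degenerate case $f\in G$ must be handled separately. The sole analytic inputs are Hahn--Banach and the Riesz representation theorem on a compact interval, both standard, which is precisely why the result is a direct extension of Lemma~1 in~\cite{cailow2011}.
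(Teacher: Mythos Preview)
Your proof is correct. The paper itself does not give a proof of this lemma at all: it simply remarks that the result is a straightforward extension of Lemma~1 in Cai and Low~\cite{cailow2011} (see also~\cite{mukherjee2017}) and omits the argument. Your Hahn--Banach/Riesz duality argument, together with the dictionary between pairs $(\nu_0,\nu_1)$ and signed measures $\lambda\in G^{\perp}$ with $\|\lambda\|_{TV}\le 2$, is exactly the standard route used in those references, so there is nothing to compare.
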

We apply this lemma with $G= \cP^{sym}_{2k^*_0}$. Together with~\eqref{eq:resultat_intermediaire}, this concludes the proof.

\begin{proof}[Proof of Lemma~\ref{lem:controle_distance_variation_totale}]
  Let $\phi$ denote the density of the  standard normal normal distribution. With a slight abuse of notation, we define $\phi(\bB)= (2\pi)^{-pq/2}e^{-\|\bB\|_2^2/2}$ for a $p'\times q'$ matrix $\bB$. 
 Relying on Fubini Theorem and Cauchy-Schwarz inequality, we get 
\beqn 
\|\underline{\mathbf{P}}_{0}- \underline{\mathbf{P}}_{1}\|_{TV}&=& \int \big|\int \phi(\bY-\bA) [\underline{\mu}_0(d\bA) -\underline{\mu}_1(d\bA) ]   \Big| d\bY\\
& =& \int \phi(\bY) \big|\int e^{\langle \bY, \bA\rangle - \|\bA\|_2^2 /2} [\underline{\mu}_0(d\bA) -\underline{\mu}_1(d\bA) ]   \Big| d\bY \\
&\leq & \left[\int  \left[\int e^{\langle \bY, \bA\rangle - \|\bA\|_2^2 /2} [\underline{\mu}_0(d\bA) -\underline{\mu}_1(d\bA) ]   \right]^{2} \phi(\bY)d\bY\right]^{1/2}\\
 && =\left[ \sum_{j=0,1}\sum_{l=0,1}\int \left( \int \phi(\bY)e^{\langle \bY, \bA+\bA'\rangle - \|\bA\|_2^2 /2-  \|\bA'\|_2^2 /2} d\bY\right)(-1)^{j+l} \underline{\mu}_j(d\bA)\underline{\mu}_l(d\bA)\right]^{1/2} \\ 
 &&= \left[ \sum_{j=0,1}\sum_{l=0,1}(-1)^{j+l}e^{\langle \bA, \bA'\rangle}\underline{\mu}_j(d\bA)\underline{\mu}_l(d\bA)\right]^{1/2}\enspace .
\eeqn 
The inner product $\langle  \bA, \bA'\rangle$ decomposes as  $\langle  \bA, \bA'\rangle = \eta \eta' (u^T u') (v^T v')$. Since $u$ and $u'$ are independently sampled on the unit sphere, $(u^{T}u')$ is distributed as the first coordinate $u_1$. Similarly  $v^T v'$ is distributed as the first coordinate $v_1$. Decomposing $e^{\langle A, A'\rangle}= \sum_{k=0}^{\infty} (k!)^{-1} \eta^k \eta'^k (u^T u')^k (v^T v')^k$, this leads us to
\beqn 
\|\underline{\mathbf{P}}_{0}- \underline{\mathbf{P}}_{1}\|^{2}_{TV}&\leq &\sum_{k=0}^{\infty}\frac{1}{k!}\E[u_1^{k}]\E[v_1^{k}]\left[\sum_{j=0,1}\sum_{l=0,1}(-1)^{j+l} \int \eta^k \nu_j(d\eta)  \int \eta^{'k}\nu_l(d\eta') \right]\\
&\leq & \sum_{k=0}^{\infty}\frac{1}{(2k)!}\E[u_1^{2k}]\E[v_1^{2k}]\left(\int \eta^{2k} (\nu_0(d\eta)-\nu_1(d\eta))\right)^2\ , 
\eeqn 
since the distributions of $u_1$ and $v_1$  are symmetric. Since the $k_0$ first even moments of $\nu_0$ and $\nu_1$ are matching and since those measures are supported in $[0,0.125(pq)^{1/4}]$, this simplifies as 
\[
\|\underline{\mathbf{P}}_{0}- \underline{\mathbf{P}}_{1}\|^{2}_{TV}\leq  \sum_{k=k_0}^{\infty}\frac{\E[u_1^{2k}]\E[v_1^{2k}]}{(2k)!} \left(\frac{1}{8}\right)^{4k}(pq)^{k}\leq  \sum_{k=k_0}^{\infty}\frac{1}{\sqrt{4\pi k}}\cdot \frac{\E[u_1^{2k}(ep)^k]}{2^{7k}k^k} \cdot \frac{\E[v_1^{2k}(eq)^k]}{2^{7k}k^k} \ . 
\]
The random variable $u_1^2$ is distributed as $X^2/[X^2+Z^2]$ where $X\sim \cN(0,1)$ and $Z\sim \chi^2(p'-1)$. As a consequence, $\E[u_1^{2k}]\leq \frac{4^k\E[X^{2k}]}{(p')^k}+ \P[X^2 + Z^2 \leq p'/4]$. From Lemma~\ref{lem:chi2}, we deduce that  $\P[X^2+ Z^2 \leq p'/4]\leq e^{- p'/12}$, for any $p'\geq 1$. Besides, 
\[
\E[X^{2k}]= \frac{2^{k}}{\sqrt{\pi}}\Gamma(k+1/2)\leq  \frac{2^k}{\sqrt{\pi}} k!\leq e\sqrt{k}\left(\frac{2k}{e}\right)^{k}\ ,
\]
where we used Stirling's upper bound. 
Coming back to $\E[u_1^{2k}]$, we deduce from Stirling lower bound that 
\[
\frac{\E[u_1^{2k}(ep)^k]}{2^{7k}k^k}\leq e\sqrt{k}2^{-4k}\left(\frac{p}{p'}\right)^k+ 2^{-2k}\left(\frac{pe}{2^5k}\right)^{k}e^{-p'/12}\ . 
\]
By definition of $p'$, we have $p'\geq p/2$. 
The function $x \mapsto (\tfrac{pe}{2^6 x})^{x}$ achieves its maximum for $x=p/2^5$. As a consequence, we derive that 
\[
  \frac{\E[u_1^{2k}(ep)^k]}{2^{7k}k^k}\leq e\sqrt{k}2^{-3k}+ 2^{-2k}e^{p/32-p/24}\leq  2^{-2k}\left[1+ e\sqrt{k}2^{-k}\right]\leq 2^{-2k}(e/2+1)
\]
Arguing similarly for $v_1$, we arrive at 
\beqn 
\|\underline{\mathbf{P}}_{0}- \underline{\mathbf{P}}_{1}\|^2_{TV}&\leq& \sum_{k=k_0}^{\infty}\frac{(e/2+1)^2}{\sqrt{4\pi k}} 2^{-4k}\leq  3\cdot  2^{-4k_0}\leq e^{-k_0}\ .
\eeqn

\end{proof}

\begin{proof}[Proof of Lemma~\ref{lem:reduction}]
 In this proof, we interpolate the measures $\mu_0$ and $\mu_1$ by $\lceil q/2\rceil-1$ intermediary measures. 
  For $j=1,\ldots, \lceil q/2\rceil-1$, define $\mu^{(j)}$ the distribution on $\bA$ such that $\bA= \bU \bD(\theta) \bV^T$ where $\bU\sim \cU(p)$, $\bV\sim \cU(q)$, and $\theta\sim \nu_0^{\otimes ( \lceil q/2 \rceil-j )}\otimes \nu_1^{\otimes j}$, so that the distributions $\mu^{(j)}$ and $\mu^{(j+1)}$ only differ though the distribution of one entry. Denote $\mathbf{P}_{\mu^{(j)}}=\int \P_{\bA}\mu^{(j)}(d\bA)$ the corresponding marginal distributions on $\bY$.  Writing $\mu_0= \mu^{(0)}$ and $\mu_1=\mu^{(\lceil q/2\rceil)}$, we derive from triangular inequality that 
 \[
 \|\mathbf{P}_{\mu_0}- \mathbf{P}_{\mu_1}\|_{TV}\leq \sum_{j=0}^{\lceil q/2\rceil-1 }\|\mathbf{P}_{\mu^{(j)}}- \mathbf{P}_{\mu^{(j+1)}}\|_{TV}\ . 
\]
 We shall bound each of these total variation distance independently. Fix any integer $0\leq j\leq \lceil q/2\rceil-1$.

 Define the set $S=\{1,\ldots, \lceil q/2\rceil- j-1,\lceil q/2 \rceil- j+1,\ldots, \lceil q/2\rceil\}$.  Denote $\bU_{[S]}$, and $\bV_{[S]}$ the restrictions of $\bU$ and $\bV$ to its columns in $S$. If $\bU$ is sampled on the Haar measure, then conditionally to $\bU_{[S]}$, the $\lceil q/2 \rceil- j$-th column of $\bU$ is sampled uniformly on the intersection of the unit sphere of $\mathbb{R}^p$ and  the orthogonal subspace $U$ of that induced by the columns of  $\bU_{[S]}$. The same Property holds for the $\lceil q/2 \rceil- j$-th column of $\bV$. We respectively write $u$ and $v$ for these two columns.

 As a consequence, under $\mu^{(j)}$ (resp. $\mu^{(j+1)}$), $\bA$ decomposes as 
 \beq
  \bA = \eta u v^T + \bU_{[S]}^T\bD(\theta_{-j}) \bV_{[S]}=:\eta u v^T + \bB \ , 
 \eeq
where $\eta\sim \nu_0$ (resp. $\nu_1$),  conditionally to $\bU_{[S]}$ and $\bV_{[S]}$ and $\theta_{-j}\sim \nu_0^{\otimes \lceil q/2\rceil -j-1 }\otimes\delta_0 \otimes \nu^{\otimes  j }$. Writing $\underline{\pi}$ for the distribution of $(\bU_{[S]},  \bV_{[S]}, \bD(\theta_{-j}))$ and  $\pi_0$ (resp.$\pi_1$) for the conditional distribution of ($u$, $v$, $\eta$) given  $(\bU_{[S]},  \bV_{[S]}, \bD(\theta_{-j}))$ where $\eta\sim \nu_0$ (resp. $\nu_1$), we decompose the total variation distance  as follows 
\beqn 
\lefteqn{\|\mathbf{P}_{\mu^{(j)}}- \mathbf{P}_{\mu^{(j+1)}}\|_{TV}}&& \\ & =& \int \Big|  \int \phi(\bY-\bA  )\mu^{(j)}(d\bA) - \int \phi(\bY - \bA)\mu^{(j+1)}(d\bA) \Big|d\bY\\
&= &  \int \Big|  \int \phi\left[\bY- \eta u v^T -\bB   \right]\pi_0(du,dv,d\eta) \underline{\pi}(\bU_{[S]}^T,\bV_{[S]},\theta_{-j}) \\ &&  \quad - \int(\phi\left[\bY -\eta'u'  v^{'T} - \bB)\bV_{[S]}\pi_1(du',dv',d\eta'\right] \underline{\pi}(\bU_{[S]}^T,\bV_{[S]},\theta_{-j}) \Big|d\bY\\
&\leq & \int \left|\int \phi\left[\bY- \eta u v^T -  \bB \right]\pi_0(du,dv,d\eta) - \int \phi\left[\bY- \eta' u' v^{'T} - \bB \right]\pi_1(du',dv',d\eta')\right| \\ &&\quad \quad\quad \underline{\pi}(d\bU_{[S]}^T,d\bV_{[S]},d\theta_{-j}) d\bY\\
&\leq & \int \left|\int \phi\left[\bY- \eta u v^T \right]\pi_0(du,dv,d\eta) - \int \phi\left[\bY- \eta' u' v^{'T}  \right]\pi_1(du',dv',d\eta')\right| d\bY\underline{\pi}(d\bU_{[S]}^T,d\bV_{[S]},d\theta_{-j})  \ ,
\eeqn
where in the last line we replaced $\bY$ by $\bY-\bB$. The above integral does not depend on specific values of $U_{[S]}$ and $V_{[S]}$ since the normal distribution is invariant by orthogonal transformation.

Define $q'= q+1 - \lceil q/2\rceil$ , $p'= p+1- \lceil q/2\rceil$. Let $\underline{\mu}_0$ (resp $\underline{\mu}_1$) denote the distribution on $\bA$ such that $\bA= \sigma u v^T$ where $\sigma\sim \nu_0$ (resp. $\nu_1$), $u$ and $v$ are sampled uniformly on the unit sphere of $\mathbb{R}^{p'}$ and $\mathbb{R}^{q'}$. Then, upon considering the integrated distribution $\underline{\mathbf{P}}_{i}= \int \P_{\bA}\underline{\mu}_i(d\bA)$ for $i=0,1$, we have proved that 

\[
 \|\mathbf{P}_{\mu^{(j)}}- \mathbf{P}_{\mu^{(j+1)}}\|_{TV}\leq \|\underline{\mathbf{P}}_{0}- \underline{\mathbf{P}}_{1}\|_{TV}\ . 
\]
The result follows.

\end{proof}

\subsection{Proof of Corollary~\ref{prp:lower_general_norm}}

We start with $\|\bA\|_s^{s}$. First observe that the minimax estimation risk  is non-decreasing in $q$. Indeed, for $q_1\leq q_2$ one can extend each matrix $\bA\in \mathbb{R}^{p\times q_1}$ in $\bA'\in \mathbb{R}^{p\times q_2}$ by adding null column vectors. Since both $\bA$ and $\bA'$ share the same non-zero singular values we have
 \[
  \inf_{\widehat{T}}\sup_{\bA\in\mathbb{R}^{p\times q_1}:\ \|\bA\|_{\infty}\leq 0.125(pq_1)^{1/4} }\E\left[\big|\widehat{T}- \|\bA\|_s^s\big|\right]\leq \inf_{\widehat{T}}\sup_{\bA\in\mathbb{R}^{p\times q_2}:\ \|\bA\|_{\infty}\leq 0.125(pq_2)^{1/4} }\E\left[\big|\widehat{T}- \|\bA\|_s^{s}\big|\right]\ . 
 \]
 Besides, for $q=1$, $\bA$ has rank at most one and $\|\bA\|^s_s =\|\bA\|^{s}_2$. It then readily follows from the proof of Proposition~\ref{prp:lower_frob} that, for all $q\leq p$, 
 \[
   \inf_{\widehat{T}}\sup_{\bA \in \mathbb{R}^{p\times q}}\E\left[\big|\widehat{T}- \|\bA\|_s^{s}\big|\right]\geq c p^{s/4}
 \]
Thus, it suffices to prove~\eqref{eq:lower_nuclear_norm} for $q$ large enough. We apply Theorem~\ref{thm:lower_general} with the function $x\mapsto |x|^s$. Since for any $x$ and $M>0$, $|Mx|^s= M^s|x|^s$, we have 
$E_{\cP^{sym}_{2k^*}}(|x|^s;I_0])= \left((pq)^{1/4}/8\right)^s E_{\cP^{sym}_{2k^*}}\left(|x|^s;[0;1]\right)$. Since this function is symmetric, it follows from Lemma~\ref{lem:symmetrie_approximation} that  $E_{\cP^{sym}_{2k^*}}[|x|^s;[0;1]]=E_{\cP_{2k^*}}\left(|x|^s;[-1;1]\right)$. 
\[
 \inf_{\widehat{T}}\sup_{\bA: \ \|\bA\|_{\infty}\leq 0.125(pq)^{1/4}}\E\left[\big|\widehat{T}- \|\bA\|_s^s\big|\right]\geq c'_s (pq)^{s/4}\left[q E_{\cP_{2k^*}}[f;[0.1]]- q^{1/2}\right]_+\ . 
\]
Then, we deduce from Lemma~\ref{lem:approximation_theory_polynom}  that, for $q$ large enough, $E_{\cP_{2k^*}}\left(|x|^s;[-1;1]\right)\geq c/\log^s(q)$. The first result follows. 

Let us prove the second result by contradiction. Assume that there exists an estimator $\widehat{T}$ and a constant $c_s$ such that 
\[
 \sup_{\bA: \ \|\bA\|_{\infty}\leq 0.125(pq)^{1/4}}\E\left[\big|\widehat{T}- \|\bA\|_s\big|\right]\leq c_s \frac{q^{1/s}(pq)^{1/4}}{\log^{s}(q)\vee 1}
\]
Without loss of generality, we can also assume that $\widehat{T}\leq 0.125q^{1/s}(pq)^{1/4}$. Then, for any matrix $\bA$ with $\|\bA\|_{\infty}\leq 0.125(pq)^{1/4}$, we have 
\[
 \E\left[\big|\widehat{T}^{s}- \|\bA\|^{s}_s\big|\right]\leq s  [q^{1/s}(pq)^{1/4}]^{s-1}\E\left[\big|\widehat{T}- \|\bA\|_s\big|\right]\leq 
 c_s s \frac{q(pq)^{1/4}}{\log^s (q)}\ .
\]
Hence, if $c_s$ is take small enough, this contradicts the first result.

\subsection{Proofs for Wasserstein estimation}

We start with the key lemma in~\cite{kong2017spectrum} that allows them to control the Wasserstein distance between the spectral measures.

\begin{lem}\cite{kong2017spectrum}\label{lem:kong_valiant_original} Consider two probability measures  $\nu_1$ and $\nu_2$ supported on $[-1,1]$. Denote their first $k$ moments 
$\alpha =(\alpha_1,\ldots, \alpha_k)$ and $\beta= (\beta_1,\ldots, \beta_k)$, respectively. Then, 
\[
 W_1(\nu_1,\nu_2)\leq \frac{c}{k}+ c'3^k |\alpha-\beta|_2\ ,
\]
where $c$ and $c'$ are numerical constant. 
 
\end{lem}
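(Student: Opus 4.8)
The plan is to combine the Kantorovich--Rubinstein dual representation of the $W_1$ distance with a polynomial approximation of Lipschitz test functions. First I would recall that for probability measures on the real line one has $W_1(\nu_1,\nu_2)=\sup\{\,|\int f\,d\nu_1-\int f\,d\nu_2| :\ \mathrm{Lip}(f)\leq 1\,\}$; since $\nu_1$ and $\nu_2$ are supported on $[-1,1]$, the supremum may be taken over $1$-Lipschitz functions $f:[-1,1]\mapsto\mathbb{R}$, and subtracting the constant $f(0)$ (which changes neither side of the inequality) we may additionally assume $\|f\|_{\infty,[-1,1]}\leq 1$.

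Next, for such an $f$ I would invoke Jackson's theorem (see e.g.~\cite[Sec.~5.1]{timan2014theory}): there is a polynomial $P$ of degree at most $k$ with $\|f-P\|_{\infty,[-1,1]}\lesssim \omega(f,1/k)\lesssim 1/k$, where $\omega(f,\cdot)$ is the modulus of continuity. Writing $P(x)=\sum_{j=0}^k a_j x^j$, the quantity $|\int f\,d(\nu_1-\nu_2)|$ splits into $|\int (f-P)\,d\nu_1|+|\int (f-P)\,d\nu_2|+|\int P\,d(\nu_1-\nu_2)|$. The first two terms are each $\lesssim 1/k$; in the third term the degree-$0$ coefficient drops out because both measures have total mass one, leaving $|\sum_{j=1}^k a_j(\alpha_j-\beta_j)|\leq (\sum_{j=1}^k a_j^2)^{1/2}\,|\alpha-\beta|_2$ by the Cauchy--Schwarz inequality.

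The remaining --- and main --- step is to control $\sum_j a_j^2$. Since $\|P\|_{\infty,[-1,1]}\leq \|f\|_\infty+c/k\leq 2$, I would expand $P$ in the Chebyshev basis, note that its Chebyshev coefficients are then $O(1)$, and pass back to the monomial basis using that the canonical coefficients of the degree-$k$ Chebyshev polynomial are bounded in modulus by $2^k$ (cf.~\cite{rivlin1990chebyshev,qazi2007some}); this yields $\sum_j |a_j|\lesssim 3^k$ and hence $(\sum_j a_j^2)^{1/2}\lesssim 3^k$. Combining the three estimates and taking the supremum over all $1$-Lipschitz $f$ gives $W_1(\nu_1,\nu_2)\lesssim 1/k+3^k|\alpha-\beta|_2$, which is the claim. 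I expect the only delicate points to be bookkeeping the absolute constants in Jackson's theorem and in the coefficient estimate; both are classical and their precise values are immaterial here.
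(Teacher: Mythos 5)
Your argument is correct and is essentially the Kong--Valiant proof that the paper simply cites: Kantorovich--Rubinstein duality, Jackson-type polynomial approximation of $1$-Lipschitz test functions at rate $c/k$, cancellation of the constant coefficient, Cauchy--Schwarz against the moment vector, and a Chebyshev-basis bound of order $3^k$ on the monomial coefficients. This is the same route the paper itself follows when it adapts the lemma to even moments in the proof of its symmetric variant, so there is nothing to add beyond the constant bookkeeping you already flag as immaterial.
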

It turns out that we can extend the above approximation bound by only considering the sequence of even moments. 

\begin{lem}\label{lem:kong_valiant:version_valeurs_singuliere} Consider two probability measures  $\nu_1$ and $\nu_2$ supported on $[0,1]$. Denote their first $k$ {\bf even} moments 
$\alpha =(\alpha_1,\ldots, \alpha_k)$ and $\beta= (\beta_1,\ldots, \beta_k)$, respectively. Then, 
\[
 W_1(\nu_1,\nu_2)\leq \frac{c}{2k}+ c'3^{2k} |\alpha-\beta|_2\ ,
\]
where $c$ and $c'$ are the same numerical constants as in Lemma~\ref{lem:kong_valiant_original}.

 \end{lem}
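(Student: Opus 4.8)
The plan is to reduce Lemma~\ref{lem:kong_valiant:version_valeurs_singuliere} to the original result of Kong and Valiant (Lemma~\ref{lem:kong_valiant_original}) by a pushforward/change-of-variables argument. Given a probability measure $\nu$ supported on $[0,1]$, let $\widetilde{\nu}$ denote the pushforward of $\nu$ under the squaring map $x\mapsto x^2$, that is $\widetilde{\nu}(B)= \nu(\{x: x^2\in B\})$. Then $\widetilde{\nu}$ is a probability measure supported on $[0,1]\subset [-1,1]$, and crucially its $k$-th moment is exactly $\int t^k\, \widetilde{\nu}(dt)= \int x^{2k}\,\nu(dx)$, the $k$-th \emph{even} moment of $\nu$. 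So if $\nu_1,\nu_2$ have first $k$ even moments $\alpha$ and $\beta$, then $\widetilde{\nu}_1,\widetilde{\nu}_2$ have first $k$ moments $\alpha$ and $\beta$ respectively, and Lemma~\ref{lem:kong_valiant_original} applied to $\widetilde{\nu}_1,\widetilde{\nu}_2$ gives
\[
 W_1(\widetilde{\nu}_1,\widetilde{\nu}_2)\leq \frac{c}{k}+ c'3^{k}|\alpha-\beta|_2\ .
\]

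The remaining step is to relate $W_1(\nu_1,\nu_2)$ to $W_1(\widetilde{\nu}_1,\widetilde{\nu}_2)$. Here I would use the characterization of $W_1$ via cumulative distribution functions: $W_1(\mu_1,\mu_2)= \int_{\mathbb{R}}|F_{\mu_1}(t)-F_{\mu_2}(t)|\,dt$. For measures on $[0,1]$, the squaring map is an increasing bijection of $[0,1]$ onto itself, so $F_{\widetilde{\nu}}(t)= \nu(\{x: x^2\le t\})= F_\nu(\sqrt t)$ for $t\in[0,1]$. Hence
\[
 W_1(\widetilde{\nu}_1,\widetilde{\nu}_2)= \int_0^1 |F_{\nu_1}(\sqrt t)- F_{\nu_2}(\sqrt t)|\,dt= \int_0^1 |F_{\nu_1}(u)- F_{\nu_2}(u)|\,2u\,du\ ,
\]
after the substitution $u=\sqrt t$. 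Since $\nu_1,\nu_2$ are supported on $[0,1]$, the integrand $|F_{\nu_1}(u)-F_{\nu_2}(u)|$ is supported on $[0,1]$ and we compare it with $W_1(\nu_1,\nu_2)= \int_0^1 |F_{\nu_1}(u)-F_{\nu_2}(u)|\,du$. This is where a slight obstacle appears: the weight $2u$ can be as small as $0$ near $u=0$, so a naive bound $W_1(\widetilde{\nu}_1,\widetilde{\nu}_2)\geq c\,W_1(\nu_1,\nu_2)$ fails in general (mass concentrated near $0$ is "compressed" by squaring).

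To handle this I would split the integral at a threshold $\delta$. On $[\delta,1]$ the weight $2u\ge 2\delta$, giving $\int_\delta^1 |F_{\nu_1}-F_{\nu_2}|\,du \le \frac{1}{2\delta}W_1(\widetilde{\nu}_1,\widetilde{\nu}_2)$. On $[0,\delta]$ we simply bound $|F_{\nu_1}(u)-F_{\nu_2}(u)|\le 1$, contributing at most $\delta$. Thus $W_1(\nu_1,\nu_2)\le \delta + \frac{1}{2\delta}W_1(\widetilde{\nu}_1,\widetilde{\nu}_2)$; optimizing over $\delta$ (choosing $\delta \asymp W_1(\widetilde{\nu}_1,\widetilde{\nu}_2)^{1/2}$) yields $W_1(\nu_1,\nu_2)\lesssim W_1(\widetilde{\nu}_1,\widetilde{\nu}_2)^{1/2}$, which by the square-root structure $\sqrt{a+b}\le\sqrt a+\sqrt b$ gives a bound of the form $\frac{c}{\sqrt k}+ c' 3^{k/2}|\alpha-\beta|_2^{1/2}$ — which is \emph{weaker} than what the lemma claims. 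So I expect the main difficulty to be getting the \emph{linear} dependence $\frac{c}{2k}+c'3^{2k}|\alpha-\beta|_2$ rather than a square-root-type bound. To recover the sharp form, I would instead revisit the actual proof of Lemma~\ref{lem:kong_valiant_original}, which goes through Chebyshev-polynomial approximation of $|x-y|$ (or of indicator-type functions) on $[-1,1]$: one writes $W_1$ in its dual Lipschitz form, approximates the Lipschitz test function by a degree-$2k$ polynomial whose error is $O(1/k)$ and whose coefficients, evaluated against the moment difference, contribute $O(3^{2k}|\alpha-\beta|_2)$. The key point is that on $[0,1]$ (the support after restricting to nonnegative singular values) one may use a polynomial approximation of the test function $f$ by a \emph{polynomial in $x^2$} of degree $2k$: since $|f(x)-f(y)|$ for $x,y\ge 0$ and the modulus behaves well, the best degree-$2k$ even-polynomial approximation of a $1$-Lipschitz function on $[0,1]$ still has error $O(1/k)$, while pairing only with even moments $\alpha_j-\beta_j$ for $j\le k$ replaces $3^k$ by $3^{2k}$. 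Carrying this through — re-deriving the Kong--Valiant estimate with "even polynomials of degree $2k$" in place of "polynomials of degree $k$" — gives exactly the stated constants, since $c$ and $c'$ are inherited verbatim from their argument. I would present the proof in this second form: restate the approximation-theoretic core of Lemma~\ref{lem:kong_valiant_original}, observe it applies to even polynomials on $[0,1]$ with degree $k\mapsto 2k$, and conclude.
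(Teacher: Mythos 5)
Your second, final plan is exactly the paper's proof: the paper writes $W_1$ in its dual Lipschitz form, extends the test function $f$ to the symmetric function $\overline{f}(x)=f(|x|)$ on $[-1,1]$, takes the degree-$2k$ Kong--Valiant approximant $Q_{2k}$ with error $c/(2k)$ and coefficient norm at most $3^{2k}/2$, and then symmetrizes it to $\tfrac{1}{2}[Q_{2k}(x)+Q_{2k}(-x)]$ so that only even powers — hence only even moments — appear, with error and coefficients no worse than those of $Q_{2k}$. Your first (pushforward) attempt is rightly abandoned for the reason you give; the only detail your sketch leaves implicit is this symmetrization of the approximating polynomial, which is precisely what guarantees the even approximant inherits both the $O(1/k)$ error and the $3^{2k}$ coefficient bound.
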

 .
 
\begin{proof}[Proof of Lemma~\ref{lem:kong_valiant:version_valeurs_singuliere}]

We start from  Kantorovich-Rubinstein variational representation of the Wasserstein distance:
$ W_{1}(\nu_1,\nu_2)=\sup_{f: Lip(f)\leq 1}\int f(x) (\nu_1(dx) - \nu_2(dx))$, where $f$ is supported on $[0,1]$ and $Lip(f)\leq 1$ means that $f$ is a Lipschitz function with constant $1$. Let $\cP^{sym}_{k}$ denote the space of the symmetric polynomials of degree less or equal to $k$. As Kong and Valiant~\cite{kong2017spectrum}, we approximate   
$\int f(x)(\nu_1(dx) - \nu_2(dx))dx$ by $\int P(x)(\nu_1(dx) - \nu_2(dx))dx$ where $P= \sum_{i=0}^{k} a_k x^{2k} \in \cP^{sym}_{k}$, the twist being that we now consider symmetric polynomials. 
\beqn 
 \int f(x)(\nu_1(dx) - \nu_2(dx))dx & = &  \int (f(x)-P(x))(\nu_1(dx) - \nu_2(dx))dx+ \int P(x)(\nu_1(dx) - \nu_2(dx))dx
 \\
 &\leq&  2\|f-P\|_{\infty}+ \sum_{i=1}^{k}|a_i||\alpha_i-\beta_i|\ . 
\eeqn 
Coming back to the Wasserstein distance, we have 
\beq\label{eq:objective_wasserstein}
 W_{1}(\nu_1,\nu_2)\leq \sup_{f: Lip(f)\leq 1} \inf_{P\in \cP^{sym}_{k}} 2\|f-P\|_{\infty} + \sum_{i=1}^{k}|a_i||\alpha_i-\beta_i|\ . 
\eeq
Fix any such Lipschitz function $f$ defined in $[0,1]$ and consider the symmetric function $\overline{f}$ on $[-1,1]$, such that $\overline{f}(x)=f(|x|)$. In the proof of Proposition 1 in~\cite{kong2017spectrum}, Kong and Valiant show that there exists a polynomial $Q_{2k}(x)= \sum_{i=0}^{2k} c_i x^{i}$ 
such that $\|Q-\overline{f}\|_{\infty}\leq c/(2k)$ ($c$ is the same universal contant as in Lemma~\ref{lem:kong_valiant_original}) and $(\sum_{i=1}^k c_i^2)^{1/2}\leq 3^{2k}/2$. Since $\overline{f}$ is symmetric, the symmetric  polynomial $Q^{sym}= \sum_{i=0}^{k}c_{2i}x^{2i}$ satisfies 
\beqn
\|Q^{sym}_{2k}-\overline{f}\|_{\infty}&=&  \sup_{x\in [-1,1]}\left| \frac{Q_{2k}(x)+ Q_{2k}(-x)}{2}-\overline{f}(x)\right|= \sup_{x\in [-1,1]}\left| \frac{Q_{2k}(x)+ Q_{2k}(-x)-\overline{f}(x)-\overline{f}(-x)}{2}\right|
\\
&\leq& \|Q- \overline{f}\|_{\infty}\leq \frac{c}{2k}\ .
\eeqn
In particular, this implies that $\|Q- f\|_{\infty}\leq c/(2k)$. Coming back to~\eqref{eq:objective_wasserstein}, we have proved that 
\[
 W_{1}(\nu_1,\nu_2)\leq \frac{c}{2k}+  \frac{3^{2k}}{2}|\alpha-\beta|_2 \enspace . 
\]

\end{proof}

\begin{proof}[Proof of Theorem~\ref{thm:consistent_singular_values}]
The proof closely follows that of Theorem 2 in Kong and Valiant~\cite{kong2017spectrum}.  Define $\overline{m}$ the vector of size $K$ of even moments such that $\overline{m}_k= \frac{1}{q}\big[\frac{\|\bA\|_{2k}}{M(pq)^{1/4}}\big]^{2k}$, for $k=1,\ldots, K$. Since we assume that $\sigma_1(\bA)\leq M(pq)^{1/4}$, it follows from Proposition~\ref{prp:risk_frobenius} and Theorem~\ref{prp:risk_U_k} that 
\[
 \E\left[|\overline{m}- \widehat{m}|_1\right] \leq \frac{1}{q} \sum_{k=1}^K c^{k}\left[\frac{1}{M^2}+ \left(\frac{q}{p}\right)^{1/4}\frac{1}{M}\right]\leq \frac{c^{K}}{q[M^2\wedge [(p/q)^{1/4}M]]}\ . 
\]
Consider a rounding $\mu_0$ of the true measure $\mu_{\sigma(\bA)/[M(pq)^{1/4}]}$ that is supported on the grid. By definition, the $2k$-th moments of $\mu_0$ is at most $[1- (1-\zeta)^2k]$ far from that of $\mu_{\sigma(\bA)/[M(pq)^{1/4}]}$. As a consequence, this distribution $\mu_0$ is a feasible point of the linear program and its objective value is less or equal to $|\overline{m}-\widehat{m}|_1+ 2\zeta \sum_{k=1}^{K} k$. 
As a consequence, the even moments vector $m^+$ of the  solution $p^+$ of the linear program satisfies in expectation
\beqn 
\E[|m^+-\overline{m} |_2]&\leq&\E[|m^+-\widehat{m}|_1]+ \E[\overline{m}- \widehat{m}|_1]\\
&\leq & \frac{c^{K}}{q}\frac{1}{M^2\wedge [(p/q)^{1/4}M]}+c \zeta K^{2} \\
&\leq & 2\frac{c^{K}}{q}\frac{1}{M^2\wedge [(p/q)^{1/4}M]}\ ,
\eeqn 
since $\zeta\leq 1/(M^2q)$. Denote $\mu^+$ the measure associated to the vector $p^+$ and  $\mu^+_{disc}$ the discrete distribution made of  $q$ Dirac which is derived from $p^+$ after step $2$ of the procedure. By triangular inequality, we have 
 $W(\mu^+_{disc}, \mu_{\sigma(\bA)})\leq W(\mu^+, \mu_{\sigma(\bA)})+ W(\mu^+_{disc}, \mu^+)$. Since $W(\mu^+_{disc}, \mu^+)\leq 1/q$, we deduce from Lemma~\ref{lem:kong_valiant:version_valeurs_singuliere} that 
 \[
  \E[W(\mu^+_{disc}, \mu_{\sigma(\bA)})] \leq  \frac{c}{K}+\frac{1}{q}+  \frac{c'^{K}}{q}\frac{1}{M^2\wedge [(p/q)^{1/4}M]} \ . 
 \]
From~\eqref{eq:connection_Wasserstein_l1}, we conclude that 
\[
 \sum_{i=1}^q |\widehat{\sigma}_i-\sigma_i(\bA)|\leq qM(pq)^{1/4} \left[ \frac{c}{K}+\frac{1}{q}+  \frac{c'^{K}}{q}\frac{1}{M^2\wedge [(p/q)^{1/4}M]} \right]\ . 
\]
Plugging the value $K = \lfloor c_1\log(q)\rfloor $ where $c_1$ is chosen small enough so that $c'^{K}\leq \sqrt{q}$ yields the desired result.

\end{proof}

\subsection{Proof of Propositions~\ref{prp:effective_rank_estimation} and \ref{prp:rank_estimatino_optimalite}}

\begin{proof}[Proof of Proposition~\ref{prp:effective_rank_estimation}]

 Recall that $\tr[\bY^T \bY- p\bI_q]= \|\bA\|_2^2+2 \tr[\bA^T \bE]+ (\tr[\bE^T\bE]-pq)$. The second expression follows a centered normal distribution with variance $4\|\bA\|_2^2$ whereas the last expression follows a $\chi^2$ distribution with $pq$ degrees of freedom. 
 From Lemma~\ref{lem:chi2}, we deduce that, with probability higher than $1-4e^{-t}$, 
 \[
 \big|\tr[\bY^T \bY- p\bI_q]- \|\bA\|_2^2 \big| \leq 2\sqrt{pqt}+ 2t + 2\|\bA\|_2\sqrt{2t}\ .
 \]
 Regarding the operator norm, we start from \eqref{eq:upper_error_operator}
\[
 \big|\sigma_1(\bY^T \bY -p \bI_q) - \|\bA^T\bA\|_{\infty}\big| \leq 2\|\bA^T\bE\|_{\infty}+ \|\bE^T\bE - p\bI_q\|_{\infty}\ . 
\]
From Lemma~\ref{lem:DavSza}, we deduce that, with probability higher than $1-e^{-t}$,  $\|\bE^T\bE - p\bI_q\|_{\infty}\leq q+ 2\sqrt{pq}+ 4\sqrt{2pt}+ 2t$. In the proof of Lemma~\ref{croise}, we have shown that $ \|\bA^T\bE\|_{\infty}/\|\bA\|_{\infty}$ is stochastically dominated by the operator norm of a $q\times q$ matrix with independent normal entries. Invoking again Lemma~\ref{lem:DavSza}, we derive that $2\|\bA^T\bE\|_{\infty}\leq 2\sigma_1(\bA)(2\sqrt{q}+\sqrt{2t})$ with probability higher than $1-e^{-t}$. Putting everything together, this yields 
\[
     \frac{1 - [2\sqrt{pqt}+2t]/\|\bA\|_2^2- 2\sqrt{2t}/\|\bA\|_2 }{1 +2\frac{2\sqrt{q}+\sqrt{2t}}{\sigma_1(\bA)}+ \frac{3\sqrt{pq}+ 4\sqrt{2pt}+ 2t}{\sigma^2_1(\bA)} } \leq  \frac{\sigma^2_1(\bA)}{\|\bA\|_2^2}\widehat{\mathrm{ER}}_{2,\infty}(\bA)\leq
    \frac{1 + [2\sqrt{pqt}+2t]/\|\bA\|_2^2+ 2\sqrt{2t}/\|\bA\|_2 }{1 -2\frac{2\sqrt{q}+\sqrt{2t}}{\sigma_1(\bA)}- \frac{3\sqrt{pq}+ 4\sqrt{2pt}+ 2t}{\sigma^2_1(\bA)} }\ , 
\]
which, assuming that both $\|\bA\|_2$  and  $\|\bA\|_{\infty}$ are  large enough, implies that 
\[
 \frac{\big|\widehat{\mathrm{ER}}_{2,\infty}(\bA)  - \mathrm{ER}_{2,\infty}(\bA)\big|}{\mathrm{ER}_{2,\infty}(\bA)}\lesssim  \frac{\sqrt{pqt}}{\|\bA\|_2^2}+ \frac{\sqrt{pt}+ \sqrt{pq}}{\|\bA\|^2_{\infty}}+ \frac{\sqrt{q}+\sqrt{t}}{\|\bA\|_{\infty}}\enspace .
\]
 
\end{proof}

\begin{proof}[Proof of Proposition~\ref{prp:rank_estimatino_optimalite}]
The first bound is a straightforward consequence of \eqref{eq:condition_rank_estimation}, so we focus on the minimax lower bound. 
 Consider any $r\geq (pq)^{1/4}$. We shall first prove that 
\beq\label{eq:lower_11}
 \inf_{\widehat{R}}\sup_{\bA: \sigma_1(\bA)\geq r }\P\left[\frac{|\widehat{R} -\mathrm{ER}_{2,\infty}(\bA)\big|}{\mathrm{ER}_{2,\infty}(\bA)}\geq c \frac{\sqrt{pq}}{r^2} \right] \geq 0.2\ .  
\eeq
Define the  matrices $\bE_{i,j}$  by $(\bE_{ij})_{kl}=\1_{i=k}\1_{j=l}$. Fix $s \in (0, (pq)^{1/4})$.    We consider the matrix $\bA_0= r \bE_{11}$  and 
the class $\mathcal{C}_{s}$ of matrices of the form $\bA= \bA_0 + s \bB$, where $\bB$ is a rank-one matrix whose only non-zero singular is equal to one and whose  first row and first column are zero. Obviously, the effective rank of $\bA_0$ is $1$, whereas that of $\bA$ in $\mathcal{C}_{s}$ is $1+ s^2/r^2$. This allows us to reducing the problem of estimating the effective rank to that of testing whether $\bA=\bA_0$ versus $\bA\in \mathcal{C}_{s}$. 
\beqn 
\inf_{\widehat{R}}\sup_{\bA: \sigma_1(\bA)\geq r }\P\left[\frac{|\widehat{R} -\mathrm{ER}_{2,\infty}(\bA)\big|}{\mathrm{ER}_{2,\infty}(\bA)}\geq  \frac{s^2}{4r^2}\right] &\geq& \inf_{\widehat{T}\in \{0,1\}}\left[\max \left(\P_{\bA_0}[\widehat{T}=1], \max_{\bA\in \mathcal{C}_s}\P_{\bA}[\widehat{T}=0]\right)\right]\ . 
\eeqn
Since the first row and the first column of $\bY$ are uninformative for this test, this reduces to the problem of testing whether a $(p-1)\times (q-1)$ matrix $\bA$ is null versus $\bA$ is a rank-one matrix with non-zero singular value $s$. We rely again on Le Cam's method of fuzzy hypotheses~\cite{tsybakov_book}. Considering again the same mixture distribution $\mu$ as in the proof of Proposition~\ref{prp:lower_frob}, we deduce that 
\beqn 
\inf_{\widehat{R}}\sup_{\bA: \sigma_1(\bA)\geq r }\P\left[\frac{|\widehat{R} -\mathrm{ER}_{2,\infty}(\bA)\big|}{\mathrm{ER}_{2,\infty}(\bA)}\geq  \frac{s^2}{4r^2}\right] &\geq& \frac{1- \sqrt{\chi^2(\mathbb{P}_0, \mathbf{P})}}{2}. 
\eeqn
By Lemma~\ref{lem:controlL2}, the latter expression is larger than $0.2$ if we take $s= [0.25(p-1)(q-1)]^{1/4}$. We proved~\eqref{eq:lower_11}.

\bigskip 

Let us now prove that 
\beq\label{eq:lower_12}
 \inf_{\widehat{R}}\sup_{\bA: \sigma_1(\bA)\geq r }\P\left[\frac{|\widehat{R} -\mathrm{ER}_{2,\infty}(\bA)\big|}{\mathrm{ER}_{2,\infty}(\bA)}\geq c \frac{\sqrt{q}}{r} \right] \geq 0.2\ .  
\eeq
Without loss of generality, we restrict ourselves to square matrices of dimension $q$. Introduce $\bA_1= r \bI_q$ and, for $s>0$, $\mathcal{C}'_s$ the collection of matrices of the form $\bA= \bA_1 + s uu^T$ where $u$ is a unit vector. The effective rank of $\bA_1$ is $q$, whereas that of matrices $\bA$ in $\mathcal{S}'$ equals $q - (q-1)\frac{s^2 + 2sr}{(r+ s)^2}$. As long as $r\geq 2s$ and $q\geq 2$, the relative difference between these effective ranks is at least $\tfrac{4s}{9r}$. Arguing as above, we deduce that 
\beqn 
\inf_{\widehat{R}}\sup_{\bA: \sigma_1(\bA)\geq r }\P\left[\frac{|\widehat{R} -\mathrm{ER}_{2,\infty}(\bA)\big|}{\mathrm{ER}_{2,\infty}(\bA)}\geq  \frac{2s}{9r}\right] &\geq& \inf_{\widehat{T}\in \{0,1\}}\left[\max \left(\P_{\bA_1}[\widehat{T}=0], \max_{\bA\in \mathcal{C}'_{s}}\P_{\bA}[\widehat{T}=0]\right)\right]\ , 
\eeqn
the latter quantity being equivalent to the optimal error of the test of the hypotheses ``$\bA=0$'' versus ``$\bA= s uu^T$ for some unit vector $u$''.
Introduce the uniform distribution $\nu$ on the hypercube $\{-1,1\}^q$ and define $\mu$ the corresponding distribution of $\bA= s/q v v^T$ when $v$ is sampled according to $\nu$. Writing $\mathbf{P}'$ the marginal distribution of $Y$ when $\bA$ is sampled according to $\mu$, we deduce that 
\beqn 
\inf_{\widehat{R}}\sup_{\bA: \sigma_1(\bA)\geq r }\P\left[\frac{|\widehat{R} -\mathrm{ER}_{2,\infty}(\bA)\big|}{\mathrm{ER}_{2,\infty}(\bA)}\geq  \frac{2s}{9r}\right] &\geq& \frac{1- \sqrt{\chi^2(\mathbb{P}_0, \mathbf{P}')}}{2}. 
\eeqn 
We claim that, for $s=\sqrt{q/16}$, we we have $\chi^2(\mathbb{P}_0, \mathbf{P}')\leq 1/3$. Before showing this claim, let us finish this proof. Since $r\geq (pq)^{1/4}\geq \sqrt{q}$, we have $r\geq 2s$ and \eqref{eq:lower_12} follows from the previous inequality.

It remains to prove the claim. Recall that $\chi^2(\mathbb{P}_0, \mathbf{P}')=\E_{0}[L^2]-1$, where $L$ is the likelihood ratio. 
 Arguing as in the beginning of the proof of Lemma~\ref{lem:controlL2}, we deduce that 
\[
 \E_0[L^2]= \E\left[\exp\left(\frac{s^2Z^2}{q^2}\right)\right]= \E\left[\exp\left(\frac{Z^2}{16q}\right)\right]\ , 
\]
where $Z$ is distributed as a sum of $q$ independent Rademacher random variables. By Hoeffding's inequality $\P[|Z|\geq u]\leq 2e^{-u^2/(2q)}$. Hence, 
\[
 \E_0[L^2]-1\leq \int_{1}^{\infty}\P\left[\exp\left(\frac{Z^2}{16q}\right)\geq t\right]dt\leq 2\int_{1}^{\infty}\frac{1}{t^8}dt= \frac{2}{9}\leq \frac{1}{3} \ . 
\]
The result follows.
\end{proof}

\section{Proofs for Sub-Gaussian noise}

\subsection{Proof of Theorem~\ref{thm:non_gaussian}}

\begin{proof}[Proof of Theorem~\ref{thm:non_gaussian}]

  Recall that our estimator $U_k$ is defined as 
  \[
  U_k = \sum_{i=i_1,\ldots, i_k}\sum_{j=j_1,\ldots, j_k}\prod_{r,s}H_{N_{rs}(ij)}(\bY_{rs})\enspace , 
  \]
  so that its square risk satisfies 
  \beq\label{eq:defi_risk_U_k}
  \E\left[\left(U_k - \|\bA\|_{2k}^{2k}\right)^2\right]= \sum_{i,j,i'j'}\underset{= T_{iji'j'}}{\underbrace{\E\left[\left(\prod_{r,s}H_{N_{rs}(ij)}(\bY_{rs})-  \prod_{rs}\bA_{rs}^{N_{rs}(ij)}\right)\left(\prod_{r,s}H_{N_{rs}(i'j')}(\bY_{rs})-  \prod_{rs}\bA_{rs}^{N_{rs}(i'j')}\right)\right]}} \ . 
  \eeq  
  The random variables $Y_{rs}$ are independent. Besides, we have  $\E[H_1(Y_{rs})]= \bA_{rs}$  and $\E[H_2(Y_{rs})]= \bA_{rs}^2$ since  $\var{E_{rs}}=1$. As a consequence, $T_{iji'j'}$ is equal to zero if 
  \[
    \max_{rs}N_{rs}(ij) N_{rs}(i'j')=0\quad  \text{ and }\quad  [\max_{rs}N_{rs}(ij)]\wedge [\max_{rs}N_{rs}(i'j')]\leq 2\enspace .
  \]
  Indeed, the first condition ensures that $T_{iji'j'}$ is the expectation of a product of two independent random variables $Z_1$ and $Z_2$ whereas the second condition ensures that either $\E[Z_1]=0$ or $\E[Z_2]=0$ so that $T_{iji'j'}=0$. In order to work out the expression of the non-zero $T_{iji'j'}$, we shall rely on the following lemma which control that expectation of $H_{k}(Y)$ and of the products $H_{k}(Y)H_{l}(Y)$ when $Y$ is not normally distributed.

  \begin{lem}\label{lem:hermite_subgaussian}
  Let $Y=\theta + E$ where $E$ is a mean-zero random variable satisfying $\var{E}=1$. There exists a universal constant $c>0$, such that the following holds. For any integer $k\geq 3$, we have
  \[
    \E[H_k(Y)]= \theta^k+ P_{k}(\theta) \ , 
  \]
  where $P_{k}$ is a polynomial of degree at most $k-3$ and whose coefficients are, in absolute value, uniformly bounded by $(c\|E\|_{\psi_2})^k k^{k+1}$. 
  Let $k$ and $l$ be two non-negative integers satisfying  $k+l \geq 2$. Then,  
  \[
    \E[H_kH_l(Y)]= \theta^{k+l}+ Q_{k,l}(\theta)  
  \]
  where the degree of $Q_{k,l}$ is at most $k+l-2$ and whose coefficients are, in absolute value, uniformly bounded by $(c\|E\|_{\psi_2})^{k+l} (k+l)^{k+l+2}$.  Besides, if $k+l=2= k\vee l$, then  $Q_{k,l}=0$. 
  \end{lem}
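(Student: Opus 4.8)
The plan is to prove Lemma~\ref{lem:hermite_subgaussian} by exploiting the change-of-basis between the monomial basis $\{y^m\}$ and the Hermite basis $\{H_m(y)\}$, which is valid for \emph{any} random variable and in particular does not use normality. Recall that Hermite polynomials satisfy the expansion $y^m = \sum_{j=0}^{\lfloor m/2\rfloor} \frac{m!}{2^j j! (m-2j)!} H_{m-2j}(y)$, and conversely $H_m(y) = \sum_{j=0}^{\lfloor m/2\rfloor} (-1)^j \frac{m!}{2^j j! (m-2j)!} y^{m-2j}$. First I would take expectations termwise: writing $Y = \theta + E$ with $\E[E]=0$, $\Var(E)=1$, and denoting $\mu_r = \E[E^r]$ (so $\mu_0=1$, $\mu_1=0$, $\mu_2=1$), the binomial theorem gives $\E[Y^r] = \sum_{a=0}^{r} \binom{r}{a}\theta^{r-a}\mu_a$. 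The leading term is $\theta^r$, and since $\mu_1=0$ the $\theta^{r-1}$ coefficient vanishes. Plugging this into the Hermite-to-monomial expansion of $H_k$, I obtain $\E[H_k(Y)] = \sum_{j=0}^{\lfloor k/2\rfloor} (-1)^j \frac{k!}{2^j j! (k-2j)!}\, \E[Y^{k-2j}]$; the $j=0$ term contributes $\theta^k$ plus lower-order terms, and I would check that the $\theta^{k-1}$ and $\theta^{k-2}$ coefficients of $\E[H_k(Y)]$ cancel — the $\theta^{k-1}$ coefficient is zero because every $\E[Y^{r}]$ has vanishing $\theta^{r-1}$ coefficient, and the $\theta^{k-2}$ coefficient cancels precisely because of the identity defining the Hermite polynomials (it would be $\theta^{k-2}$ coefficient of $\E[H_k(Z)]$ for normal $Z$, which is $0$ for $k\geq 1$, combined with $\mu_2 = 1$ matching the normal second moment). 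This is the structural reason for the ``degree at most $k-3$'' claim.

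Next I would bound the coefficients of the remainder polynomial $P_k$. Using sub-Gaussianity, $|\mu_a| = |\E[E^a]| \leq (c_0 \|E\|_{\psi_2})^a a^{a/2} \leq (c_0\|E\|_{\psi_2})^a a^a$ for a universal $c_0$ (a standard moment bound for sub-Gaussian variables, e.g.\ from \cite{vershynin2018high}). Then each coefficient of $\E[H_k(Y)]$, as a polynomial in $\theta$, is a sum of at most $k^2$ terms, each of the form $\pm \frac{k!}{2^j j!(k-2j)!}\binom{k-2j}{a}\mu_a$; bounding $k! \leq k^k$, the binomial coefficients by $2^k$, and $|\mu_a|\leq (c_0\|E\|_{\psi_2})^k k^k$, I get each coefficient bounded by $k^2 \cdot k^k \cdot 2^k \cdot (c_0\|E\|_{\psi_2})^k k^k = (c\|E\|_{\psi_2})^k k^{2k+2}$ for a suitable $c$. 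This is slightly weaker than the stated $(c\|E\|_{\psi_2})^k k^{k+1}$, so I would need to be more careful — the point is that $\frac{k!}{2^j j! (k-2j)!}$ is the number of ways to match $2j$ indices into pairs, which is $(2j-1)!! \binom{k}{2j} \leq k^{2j}/(2^j j!) \cdot$ (something), and combined with the fact that only $\mu_a$ with $a\leq k$ appear and $|\mu_a| \le (c\|E\|_{\psi_2})^a a^{a/2}$; a tighter accounting using $a^{a/2}$ rather than $a^a$, and using that the monomials $\theta^{k-2j-a}$ of a fixed degree $d = k - 2j - a$ receive contributions summing to at most $\binom{k}{d}$ times the worst moment, should give the claimed $k^{k+1}$. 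I would carry out this bookkeeping carefully; if the exponent comes out as $k^{k+2}$ or so, one simply absorbs it by enlarging $c$ and noting $k^{k+2} \leq (c')^k k^{k+1}$ is false, so the exact polynomial power genuinely matters and I'd track it precisely. The same computation handles $\E[H_k H_l(Y)]$: expand $H_k H_l$ in the monomial basis (the product of two Hermite polynomials is a polynomial of degree $k+l$ with integer-combinatorial coefficients bounded by $(k+l)!$), take expectations, and observe the leading term $\theta^{k+l}$ survives while the $\theta^{k+l-1}$ term vanishes (again since $\mu_1=0$); the degree-$(k+l-2)$ bound is automatic, and when $k+l = 2$ with $k\vee l = 2$ (i.e.\ $(k,l)=(2,0)$ or $(0,2)$), $\E[H_2(Y)] = \theta^2$ exactly because $\mu_2 = 1 = \E[Z^2]$ for normal, giving $Q_{2,0} = 0$.

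The main obstacle I anticipate is \textbf{getting the polynomial power of $k$ in the coefficient bound exactly right} (the difference between $k^{k+1}$, which the statement claims, and the naive $k^{2k}$-type bound one gets by crude counting). The resolution requires recognizing the combinatorial coefficients in the Hermite expansions as (signed) counts of partial matchings, bounding $\frac{k!}{2^j j! (k-2j)!} \leq \binom{k}{2j}(2j)!! \leq \frac{k^{2j}}{2^j j!}$ and then summing the geometric-type series $\sum_j \frac{k^{2j}}{2^j j!} |\mu_a|$ with the sub-Gaussian moment bound $|\mu_a| \leq (C\|E\|_{\psi_2}\sqrt{a})^a$, and carefully tracking that for a coefficient of $\theta^d$ the relevant sum is over $2j + a = k - d$ so that the total degree budget is fixed, which caps the number of terms and the size of each. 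A secondary (minor) obstacle is keeping the two cases $k+l=2=k\vee l$ versus $k+l=2$, $k\vee l=1$ (i.e.\ $(k,l)=(1,1)$, where $\E[H_1^2(Y)] = \theta^2 + \Var(E) = \theta^2 + 1$, so $Q_{1,1}$ is the nonzero constant $1$) straight — the lemma correctly excludes only the former. Once these bounds are in hand, substituting into the decomposition \eqref{eq:defi_risk_U_k} and carrying out the graph-enumeration argument (counting sequences $i,j,i',j'$ with $T_{iji'j'}\neq 0$ via the generalized Möbius/partition formula alluded to in the text) yields Theorem~\ref{thm:non_gaussian}.
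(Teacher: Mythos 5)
Your proposal is correct and follows essentially the same route as the paper: expand $H_k$ (and $H_kH_l$) in the monomial basis, take expectations via the binomial theorem, use $\mu_1=0$, $\mu_2=1$ together with the Gaussian identity $\E[H_k(\theta+Z)]=\theta^k$ (equivalently $a_{k,k}=1$, $a_{k-1,k}=0$, $a_{k-2,k}+\binom{k}{2}=0$) to kill the $\theta^{k-1}$ and $\theta^{k-2}$ coefficients, and bound the remaining coefficients by combining the explicit Hermite coefficients $|a_{l,k}|\leq C^k k^{(k-l)/2}$ with the sub-Gaussian moment bound $|\E[E^a]|\leq (c\|E\|_{\psi_2}\sqrt{a})^a$. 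One small aside: your remark that ``$k^{k+2}\leq (c')^k k^{k+1}$ is false'' is itself incorrect (since $k\leq e^k$, any fixed extra polynomial factor in $k$ is absorbed into $c^k$); what genuinely cannot be absorbed is the crude $k^{2k}$-type bound, and your refined accounting (half-power moment bounds plus the fixed degree budget $2j+a=k-d$) does close that gap exactly as the paper does.
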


  Given a partition $\cP=\{\cP_1,\ldots, \cP_l\}$ of $[4k]$ with $l$ groups, we consider the function  $F_{\cP}$ defined by 
  \begin{eqnarray}
  F_{\cP}[x_1,\ldots, x_l]&=& \prod_{r=1}^{l} \mathbb{E}\left[H_{|\cP_r\cap [2k]|}(y_r)H_{|\cP_r\cap [2k+1:4k]|}(y_r)\right] - \prod_{r=1}^{l} x_r^{|\cP_r\cap [2k]|}\E\left[H_{|\cP_r\cap [2k+1:4k]|}(y_r)\right]\nonumber \\  & &- \prod_{r=1}^{l} x_r^{|\cP_r\cap [2k+1:4k]|}\E\left[H_{|\cP_r\cap [2k]|}(y_r)\right]+  \prod_{r=1}^{l} x_r^{|\cP_r|}\ , \nonumber
  \end{eqnarray}
  where, for $r=1,\ldots, l$,  $y_r=x_r+ \epsilon_r$ and the $\epsilon_r$'s are independent and identically distributed.
  By Lemma~\ref{lem:hermite_subgaussian} above, $F_{\cP}$ is a polynomial of $l$ variables.
  \begin{eqnarray}\nonumber
    F_{\cP}[x_1,\ldots, x_l]&=& \prod_{r:\ |\cP_r|=1} x_r \left[\prod_{r:\ |\cP_r|\geq 2}  x_r^{|\cP_r|} + \prod_{r:\ |\cP_r|\geq 2} \left[ x_r^{|\cP_r|} + Q_{|\cP_r\cap [2k]|,|\cP_r\cap [2k+1:4k]|}(x_k)\right]  \right.\\ 
  &&  \quad \quad - \prod_{r:\ |\cP_r|\geq 2}\left[  x_r^{|\cP_r|}+ x_r^{|\cP_r\cap [2k]|}P_{|\cP_r\cap [2k+1:4k]|}(x_r) \right] \nonumber \\ & &  \quad \quad  \left. -   \prod_{r:\ |\cP_r|\geq 2}\left[  x_r^{|\cP_r|}+ x_r^{|\cP_r\cap [2k+1:4k]|}P_{|\cP_r\cap [2k]|}(x_r) \right]  \right]\ .\label{eq:definition_F_P}
    \end{eqnarray}

  \medskip

  Given $m=(i,j,i',j')$, we define the following sequence $\underline{m}$  of $2$-tuples of size $4k$
  \begin{eqnarray}\label{eq:definition_u_m} 
  \underline{m}&= &((i_1,j_1), (i_1,j_2),\ldots, (i_{k},j_1), (i'_1,j'_1),\ldots, (i'_k,j'_1)) \ .
  \end{eqnarray}
  Then, we define $\cP[m]$ as the partition of $[4k]$ that groups identical 2-tuples in $\underline{m}$. 
  Besides, we consider the two sequences $\underline{m}^{ev}$ and $\underline{m}^{od}$ defined by
  \begin{eqnarray}\label{eq:definition_u_m_ev} 
  \underline{m}^{ev}= (i_1,i_1,i_2,i_2,\ldots, i_k,i_k, i'_1,i'_1,\ldots,i'_k,i'_k)\ ;\,\,\, 
  \underline{m}^{od}= (j_1,j_2,j_2,\ldots, j_k,j_1, j'_1,j'_2,\ldots,j'_k,j'_1)\ .\nonumber
  \end{eqnarray}
  Given a partition $\cP$ of $[4k]$ into $l$ groups, we pick representatives $s_1, \ldots, s_l$ for each group. Then, 
  for all $m$ satisfying $\cP[m]=\cP$, we have 
  \[
   T_m= F_{\cP}[\bA_{\underline{m}_{s_1}}, \ldots, \bA_{\underline{m}_{s_l}}]\ .
  \]
   For short, we write henceforth $F_{\cP}[m]$ for $F_{\cP}[\bA_{\underline{m}_{s_1}}, \ldots, \bA_{\underline{m}_{s_l}}]$. 
  Write $\boldsymbol{\cP}_k$ for the collection of all  partitions $\cP$ of $[4k]$. Equipped with this notation, we group the expressions $T_m$ according to their partition $\cP[m]$. 
  \begin{equation}\label{eq:upper_0_risk_GP}
  \E\left[\left(U_k - \|\bA\|_{2k}^{2k}\right)^2\right]= \sum_{\cP\in \boldsymbol{\cP}_k} \quad\sum_{m: \ \cP[m]=\cP }F_{\cP}(m)\ .
  \end{equation}
Unfortunately, it is not simple to enumerate the sequences such that $\cP[m]=\cP$ and these sums $\sum_{m: \ \cP[m]=\cP }F_{\cP}(m)$ do not simplify much. For this reason, we prefer to reformulate the expressions using sums over sequences $m$ such that $\cP[m]$ is coarser than $\cP$ 
  For two partitions $\cP'$ and $\cP$, we write $ \cP' \trianglelefteq \cP$ if $\cP'$ is finer or equal to $\cP$. First, we extend the definition of $F_{\cP}(m)$ to any sequence $m$ whose partition $\cP[m]$ is coarser than $\cP$, that is $\cP\trianglelefteq \cP[m]$. Then, we  define a new functional $G_{\cP}$ for $\cP\in \boldsymbol{\cP}_k$ by recursion. 
  If $\cP$ is the finest partition (with $4k$ groups), we fix $G_{\cP}=F_{\cP}=0$. Otherwise, we define
  \[
   G_{\cP}(m) = F_{\cP}(m)- \sum_{\cP'\in \boldsymbol{\cP}_k: \ \cP'\triangleleft \cP}G_{\cP'}(m)\ , 
  \]
  where $F_{\cP}(m)$ is defined in~\eqref{eq:definition_F_P}.  
  One then easily deduce from this definition that  
  \beq\label{eq:upper_risk_GP}
  \E\left[\left(U_k - \|\bA\|_{2k}^{2k}\right)^2\right]= \sum_{\cP\in \boldsymbol{\cP}_k}W_{\cP}\quad \text{ where }\quad  W_{\cP}=\sum_{m: \ \cP \trianglelefteq \cP[m]}G_{\cP}(m)\ .
  \eeq
  In the remainder of the proof, we shall carefully control $W_{\cP}$. To do this, we need to evaluate the functional $G_{\cP}$, which is done using 
  M\"obius inversion formula for posets (see the proof of the next lemma). Equipped with this formula, we first establish some structural properties of $G_{\cP}$. 
  
  \begin{lem}\label{lem:decomposition_G_P}
  $G_{\cP}$ can be represented as a polynomial in $|\cP|$ variables that satisfies the two following properties:
  \begin{enumerate}
  \item[(a)] if $\cP$ contains at least one group, say $\cP_1$, of size $2$ such that either $\cP_1\subset [2k]$ or $\cP_1\subset [2k+1:4k]$, then $G_{\cP}=0$. 
  \item[(b)] For any $l=1,\ldots, |\cP|$ such that $|\cP_l|\geq 2$, the degree of $\cP$ in $x_l$ is at most $|\cP_l|-3$ if $\cP_l\subset[2k]$ or  $\cP_l\subset[2k+1:4k]$  and at most $|\cP_l|-2$ otherwise. 
  \end{enumerate}

  \end{lem}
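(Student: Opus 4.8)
The plan is to prove Lemma~\ref{lem:decomposition_G_P} by establishing the explicit M\"obius-inversion formula for $G_{\cP}$ and then reading off (a) and (b) from the structure of $F_{\cP}$. First I would recall that $G_{\cP}$ is defined by the recursion $G_{\cP}=F_{\cP}-\sum_{\cP'\triangleleft\cP}G_{\cP'}$ over the partition lattice of $[4k]$; by M\"obius inversion on the poset $(\boldsymbol{\cP}_k,\trianglelefteq)$ this is equivalent to
\[
 G_{\cP}(m)= \sum_{\cP'\trianglelefteq \cP}\mu(\cP',\cP)\, F_{\cP'}(m)\ ,
\]
where $\mu$ is the M\"obius function of the partition lattice and $F_{\cP'}(m)$ is interpreted (as in the statement) via the extension of $F$ to sequences whose associated partition is coarser than the indexing partition. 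Because $F_{\cP'}$ is a polynomial in the $|\cP'|$ free variables $\bA_{\underline m_{s}}$ (Lemma~\ref{lem:hermite_subgaussian} guarantees polynomiality), and each coarsening $\cP'\trianglelefteq\cP$ only identifies variables of $F_{\cP'}$ according to which blocks of $\cP'$ lie in which block of $\cP$, the right-hand side is a polynomial in the $|\cP|$ variables indexed by the blocks of $\cP$. This gives the first assertion, that $G_{\cP}$ is a polynomial in $|\cP|$ variables.

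For (a): suppose $\cP$ has a block $\cP_1$ of size $2$ with $\cP_1\subset[2k]$ (the case $\cP_1\subset[2k+1:4k]$ is symmetric). I would show every term $F_{\cP'}$ with $\cP'\trianglelefteq\cP$ vanishes. Since $\cP_1$ has size $2$, any $\cP'$ finer than $\cP$ either keeps $\cP_1$ as a block or splits it into two singletons. In both cases, look at the factor of $F_{\cP'}$ (as written in~\eqref{eq:definition_F_P}) corresponding to the block(s) covering $\cP_1$: a singleton contributes a linear factor $x_r$, which is the ``null'' contribution already cancelled in the telescoping structure of $F$, and a size-$2$ block contained entirely in $[2k]$ has $|\cP'_r\cap[2k+1:4k]|=0$, so both $P$ and $Q$ terms in~\eqref{eq:definition_F_P} are evaluated with second Hermite index $0$; by the last sentence of Lemma~\ref{lem:hermite_subgaussian}, $Q_{2,0}=0$, and the $P$-correction is likewise absent since the corresponding $\E[H_0]=1$ contributes nothing beyond $x_r^{|\cP'_r|}$. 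Carefully unwinding~\eqref{eq:definition_F_P}, all four product terms then coincide and cancel, so $F_{\cP'}\equiv 0$; summing over $\cP'$ gives $G_{\cP}=0$. (The cleanest way to phrase this is: the defining alternating sum for $F$ telescopes to $0$ whenever some block sits entirely on one side and has size $\le 2$.)

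For (b): fix a block $\cP_l$ with $|\cP_l|\ge 2$ and bound the degree of $G_{\cP}$ in the variable $x_l$. Using the M\"obius formula it suffices to bound $\deg_{x_l}F_{\cP'}$ for all $\cP'\trianglelefteq\cP$. In $F_{\cP'}$, the variable $x_l$ appears only through the blocks of $\cP'$ contained in $\cP_l$; these blocks partition $\cP_l$, so their sizes sum to $|\cP_l|$ and the two-sided counts (intersections with $[2k]$ and $[2k+1:4k]$) sum to those of $\cP_l$. From~\eqref{eq:definition_F_P}, each such block of size $a=a_{\mathrm{ev}}+a_{\mathrm{od}}$ contributes either a factor $x_r^{a}$ (from the last product) or a factor of the form $x_r^{a}+Q_{a_{\mathrm{ev}},a_{\mathrm{od}}}(x_r)$ or $x_r^{a}+x_r^{a_{\mathrm{ev}}}P_{a_{\mathrm{od}}}(x_r)$ etc., where by Lemma~\ref{lem:hermite_subgaussian} $\deg Q_{a_{\mathrm{ev}},a_{\mathrm{od}}}\le a-2$ and $\deg P_{a_{\mathrm{od}}}\le a_{\mathrm{od}}-3$, so $\deg_{x_r}$ of each factor is at most $a$. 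The telescoping structure of $F_{\cP'}$ then kills the top-degree contribution $\prod_r x_r^{a_r}$, lowering the degree: when the union of the relevant blocks equals $\cP_l$ on one side (i.e.\ all blocks inside $\cP_l$ are one-sided in a compatible way), the strongest available saving is $3$ per the $P$-bound, giving $|\cP_l|-3$; otherwise the $Q$-bound gives a saving of $2$, i.e.\ degree $\le|\cP_l|-2$. I expect the main obstacle to be this last bookkeeping step — tracking exactly how the alternating cancellation in $F_{\cP'}$ interacts with the degree bounds on $P$ and $Q$ across all coarsenings $\cP'$, in particular verifying that the ``$-3$ versus $-2$'' dichotomy is governed precisely by whether $\cP_l$ is one-sided. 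The coefficient bounds claimed implicitly (needed later for the quantitative risk bound) follow by multiplying the $(c\|E\|_{\psi_2})^{\bullet}$ bounds of Lemma~\ref{lem:hermite_subgaussian} over the at most $4k$ blocks and over the at most $(4k)^{4k}$ coarsenings, absorbing everything into $(c\|E\|_{\psi_2}k)^{c'k}$.
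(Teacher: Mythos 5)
Your overall architecture (M\"obius inversion over the partition lattice, then reading off the two properties from the structure of $F_{\cP}$ and the degree bounds of Lemma~\ref{lem:hermite_subgaussian}) is the right one and matches the paper's, but both of your key cancellation claims are wrong as stated. For (a), it is \emph{not} true that $F_{\cP'}\equiv 0$ for every $\cP'\trianglelefteq\cP$. When $\cP_1=\{a,b\}\subset[2k]$, the factor contributed by $\cP_1$ (or by its two singletons) is the same, namely $x_1^2$, in \emph{each} of the four products defining $F_{\cP'}$ — but the factors coming from the \emph{other} blocks differ between the four products, so the alternating sum does not collapse: one gets $F_{\cP'}=x_1^2\cdot(\text{the }F\text{-expression over the remaining blocks})$, which is generically nonzero. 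The correct mechanism is cancellation \emph{across} the M\"obius sum: the refinement keeping $\cP_1$ intact and the one splitting it into two singletons produce identical values of $F$ (precisely because $Q_{2,0}=Q_{0,2}=P_2=P_0=0$) but carry opposite M\"obius weights, so they cancel in pairs. This is exactly what the paper's induction encodes via $G_{\cP}=F_{\cP}-F_{\cP^{(-1)}}=0$.

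For (b), your reduction ``it suffices to bound $\deg_{x_l}F_{\cP'}$ for all $\cP'\trianglelefteq\cP$'' cannot work: each individual $F_{\cP'}$ has degree exactly $|\cP_l|$ in $x_l$. The four-term alternation inside $F_{\cP'}$ kills only the single monomial $\prod_r x_r^{|\cP'_r|}$ of top \emph{total} degree; monomials such as $x_l^{|\cP_l|}\cdot Q(x_{l'})\cdot(\cdots)$, of top degree in $x_l$ but lower degree in the other variables, survive (indeed the degree-$|\cP_l|$ part of $F_{\cP'}$ in $x_l$ is $x_l^{|\cP_l|}$ times the $F$-expression over the other blocks). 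The missing — and essential — step is that in the full M\"obius sum the coefficient of $x_l^{|\cP_l|}$ (as a polynomial in the remaining variables) factors through $M_{\cP_l}=\sum_{\underline{\cP'_l}\trianglelefteq\cP_l}(-1)^{1-|\underline{\cP'_l}|}(|\underline{\cP'_l}|-1)!$, which vanishes by M\"obius inversion applied to the constant functional. Only after this global cancellation do your (correct) observations $\deg Q_{a,b}\le a+b-2$ and $\deg Q_{a,0}=\deg P_a\le a-3$ deliver the $|\cP_l|-2$ versus $|\cP_l|-3$ dichotomy; as written, your argument stops exactly at the point you flag as ``the main obstacle,'' and that obstacle is the proof.
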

  
  Writing  $\kappa$ the number of groups of $\cP$ of size higher than one, we order the groups of the partition $\cP$ in such a way that the $\kappa$ first groups are of size at least $2$. 
  We define  
  the collection $\mathrm{deg}(\cP)$ of non-negative integer-valued vectors $D$ of size $\kappa$ satisfying the two following properties  for all $1\leq r\leq \kappa$, 
  \begin{itemize}
    \item[(a)] $D_r\leq |\cP_r|-2$ if $|\cP_r|\geq 2$ ;
    \item[(b)] $D_r\leq |\cP_r|-3$ if $|\cP_r|\geq 3$ and $\cP_r\subset [2k]$ or  $\cP_r\subset  [2k+1:4k]$ .
  \end{itemize}
  Define the monomial,  $G_{\cP,D}[x_1,\ldots, x_{|\cP|}]= \prod_{l=\kappa+1}^{|\cP|}x_l \prod_{l=1}^{\kappa}x_l^{D_l}$ for any $D\in \mathrm{deg}(\cP)$. By Lemma~\ref{lem:decomposition_G_P}, we deduce that there exists a collection of numbers $\alpha_{D,\cP}$ for $D\in \mathrm{deg}(\cP)$ such that 
  \[
  G_{\cP}[x_1,\ldots, x_{|\cP|}]= \sum_{D\in \mathrm{deg}(\cP)}\alpha_{D,\cP} G_{\cP,D}[x_1,\ldots, x_{|\cP|}] \ .
  \]
  The next lemma provides an uniform bound for the coefficients $\alpha_{D,\cP}$. It is based on M\"obius inversion formula (as the previous lemma) together with Lemma~\ref{lem:hermite_subgaussian}. 
  \begin{lem}\label{lem:G_P}
  For any partition $\cP$, we have 
  \[
  \sum_{D\in \mathrm{deg}(\cP)}|\alpha_{D,\cP} |\leq (c'\|E\|_{\psi_2})^{4k}k^{12k+2}
  \]
  where $c'$ is a numerical constant. 
  \end{lem}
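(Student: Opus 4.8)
The plan is to unfold the recursion defining $G_{\cP}$ by M\"obius inversion on the partition lattice, bound the two resulting ingredients (the M\"obius coefficients and the coefficient $\ell_1$-norms of the polynomials $F_{\cP'}$) separately, and then exploit the fact that everything factorizes over the blocks of $\cP$, which reduces the whole estimate to a one-block computation.

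First I would make the recursion explicit. Summing the defining relation $G_{\cP} = F_{\cP} - \sum_{\cP'\triangleleft\cP}G_{\cP'}$ over the down-set of $\cP$ yields $F_{\cP} = \sum_{\cP'\trianglelefteq\cP}G_{\cP'}$; since $\{\cP':\cP'\trianglelefteq\cP\}$ is the interval $[\hat 0,\cP]$ of the partition lattice of $[4k]$ (with $\hat 0$ the all-singletons partition), M\"obius inversion gives
\[
 G_{\cP} \;=\; \sum_{\cP'\trianglelefteq\cP}\mu(\cP',\cP)\,F_{\cP'}\ ,
\]
where $\mu$ is the M\"obius function of the partition lattice and each $F_{\cP'}$ is regarded, after the harmless identification of the variable of a block of $\cP'$ with that of the block of $\cP$ containing it, as a polynomial in the $|\cP|$ variables indexed by the blocks of $\cP$. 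Writing $\|Q\|_1$ for the sum of the absolute values of the coefficients of a polynomial $Q$, such a variable identification never increases $\|\cdot\|_1$, and since $G_{\cP}=\sum_{D\in\mathrm{deg}(\cP)}\alpha_{D,\cP}G_{\cP,D}$ by Lemma~\ref{lem:decomposition_G_P}, the triangle inequality applied coefficient by coefficient gives
\[
 \sum_{D\in\mathrm{deg}(\cP)}|\alpha_{D,\cP}|\;\le\;\sum_{\cP'\trianglelefteq\cP}|\mu(\cP',\cP)|\cdot\|F_{\cP'}\|_1\ .
\]

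Next I would bound the two factors. For the M\"obius function I would use the classical formula $|\mu(\cP',\cP)|=\prod_{B\in\cP}\big(n_B(\cP')-1\big)!$, where $n_B(\cP')$ is the number of blocks of $\cP'$ contained in the block $B$. For $\|F_{\cP'}\|_1$ I would start from the explicit product representation~\eqref{eq:definition_F_P}: up to an overall factor $\prod_{|\cP'_r|=1}x_r$ (which does not affect $\|\cdot\|_1$), $F_{\cP'}$ is a sum of at most four products over the blocks $\cP'_r$ of size $\ge 2$ of brackets of the form $x_r^{|\cP'_r|}+Q_{a,b}(x_r)$ or $x_r^{|\cP'_r|}+x_r^{c}P_d(x_r)$. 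By Lemma~\ref{lem:hermite_subgaussian} each such bracket has $\|\cdot\|_1\le (c\|E\|_{\psi_2})^{|\cP'_r|}|\cP'_r|^{|\cP'_r|+3}$, and since $\|\cdot\|_1$ is submultiplicative on products, $\|F_{\cP'}\|_1\le 4\prod_{r:\,|\cP'_r|\ge2}(c\|E\|_{\psi_2})^{|\cP'_r|}|\cP'_r|^{|\cP'_r|+3}$. The key observation is then that both $|\mu(\cP',\cP)|$ and this bound on $\|F_{\cP'}\|_1$ split as products over $B\in\cP$ of quantities depending only on the restriction $\pi=\cP'|_{B}$; grouping the refinements $\cP'\trianglelefteq\cP$ by their restrictions to the blocks of $\cP$ therefore yields
\[
 \sum_{D}|\alpha_{D,\cP}|\;\le\;4\prod_{B\in\cP}\Bigg(\sum_{\pi\vdash B}(|\pi|-1)!\prod_{P\in\pi}(c\|E\|_{\psi_2})^{|P|}|P|^{|P|+3}\Bigg)\ .
\]
For a block $B$ of size $b$ I would bound the inner sum by $(c\|E\|_{\psi_2})^{b}b^{b+3}\sum_{\pi\vdash B}(|\pi|-1)!$, and then use $\sum_{\pi\vdash B}(|\pi|-1)!=\sum_{j=1}^{b}S(b,j)(j-1)!\le\sum_{j=1}^{b}j^{b-1}\le b^{b}$ (via $S(b,j)\le j^{b}/j!$), so that each block contributes at most $(c\|E\|_{\psi_2})^{b}b^{2b+3}$. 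Finally, using $\sum_{B\in\cP}|B|=4k$, that $\cP$ has at most $2k$ blocks of size $\ge2$, and the elementary inequality $\prod_i x_i^{x_i}\le(\sum_i x_i)^{\sum_i x_i}$ for $x_i\ge1$, the product over all blocks is at most $(c'\|E\|_{\psi_2})^{4k}k^{12k+2}$ after adjusting the numerical constant.

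The main obstacle I anticipate is the bookkeeping at this last stage: the M\"obius coefficients, the coefficient bounds for the $Q_{a,b}$ and $P_d$ from Lemma~\ref{lem:hermite_subgaussian}, and the number of refinements of $\cP$ each contribute factorial-type and $b^{O(b)}$ factors, and one must organize them so that, after the block-by-block factorization, the total power of $k$ does not exceed $12k+2$. The clean way to do this is precisely the factorization identity above, which collapses the whole estimate to the single-block bound on $\sum_{\pi\vdash B}(|\pi|-1)!$; verifying the compatibility of the product structures of $\mu$, of $\|F_{\cP'}\|_1$, and of the variable identifications is the delicate point, everything after it being routine.
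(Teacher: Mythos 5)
Your proposal is correct and follows essentially the same route as the paper: both start from the M\"obius-inverted expression for $G_{\cP}$ (the paper's~\eqref{eq:expression_G_P}), bound the coefficients of the $P$'s and $Q$'s via Lemma~\ref{lem:hermite_subgaussian}, and control the sum over refinements of $\cP$; your block-by-block factorization is just a sharper organization of the same count (the paper simply multiplies the number of sub-partitions, the number of monomials, and a uniform coefficient bound). One small caution on the final bookkeeping: bounding $\prod_{B\in\cP}|B|^3$ by "(at most $2k$ blocks of size $\geq 2$) $\times (4k)^3$ each" gives $(4k)^{6k}$ and, combined with $(4k)^{8k}$ from $\prod_B|B|^{2|B|}$, overshoots to $k^{14k}$; you instead need $\prod_B|B|^3=(\prod_B|B|)^3\leq 3^{4k}$ (or a convexity argument on $b\mapsto(2b+3)\log b$), after which your estimate comes out as $(c'\|E\|_{\psi_2})^{4k}k^{8k+O(1)}$, comfortably within the stated $k^{12k+2}$.
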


  Now define the associated sums 
  \beq\label{eq:definition__g_m_cp_Ps_s}
  W_{\cP,D}=  \sum_{m: \ \cP \trianglelefteq \cP[m]}  G_{\cP,D}[(\bA_{\underline{m}_{s_1}},\ldots, \bA_{\underline{m}_{s_l} })]  \ , 
  \eeq
  where we recall that $(s_1,\ldots, s_l)$ are representatives of the partition $\cP$.
  In light of the last lemma, we deduce from~\eqref{eq:upper_risk_GP}
  \begin{eqnarray}
    \E\left[\left(U_k - \|\bA\|_{2k}^{2k}\right)^2\right]&\leq &  \sum_{\cP}\sum_{D\in \mathrm{Deg}(\cP)}|\alpha_{D,\cP}||W_{\cP,D}| \nonumber \\
    &\leq & (c''\|E\|_{\psi_2})^{4k}k^{16k+2}  \max_{\cP,\ D}|W_{\cP,D}|  \enspace . \label{eq:upper_max}
  \end{eqnarray}
  Hence, it remains to upper bound $W_{\cP,D}$ defined in~\eqref{eq:definition__g_m_cp_Ps_s} to conclude. 
  
  Consider any partition $\cP$ and any $D\in \mathrm{deg}(P)$ such that $W_{\cP,D}\neq 0$. In particular,  $\cP$ contains a least a group of size larger than one and $\cP$ does not contain any group of size $2$ which is either included in $[2k]$ or in $[2k+1,4k]$ --see Lemma~\ref{lem:decomposition_G_P}. 
  
  Define  $\cP_{ev}$  as the finest partition of $[4k]$ that is coarser than $\cP$ and coarser than $\{\{1,2\},\{3,4\}, \ldots, \{4k-1,4k\}\}$. 
  Similarly, we define  $\cP_{od}$ the as the finest partition of $[4k]$ that is coarser than $\cP$ and coarser than $\{\{2k,1\},\{2,3\}, \ldots, \{2k-2,2k-1\}, \{4k,2k+1\},\{2k+2,2k+1\},\ldots,  \{4k-2,4k-1\}\}$. Denote $l_1$ the number of groups of $\cP_{ev}$ and $l_2$ the number of groups of $\cP_{od}$. Equipped with this notation, we deduce that there are $p^{l_1}q^{l_2}$ sequences $m=(i,j,i',j')$ satisfying $\cP \trianglelefteq \cP[m]$.

  We denote $\cP_{\geq 2}$ the subcollection of $\cP$ made of groups of size at least $2$. By definition of $\kappa$ above, we have $\kappa=|\cP_{\geq 2}|$.  
  Finally, we define $\cP_{\geq 2,ev}\subset \cP_{ev}$ the subcollection of $\cP_{ev}$ made of groups that intersect at least one group of $\cP_{\geq 2}$. We denote $\kappa_1\leq \kappa$ the number of groups of $\cP_{\geq 2,ev}$. Analogously, we define $\cP_{\geq 2,od}$ and we denote $\kappa_2\leq \kappa$ the corresponding number of groups. In order to evaluate $W_{\cP,D}$, we first fix the values of $\underline{m}_{ev}$, $\underline{m}_{od}$ on $\cP_{\geq 2,ev}$ and $\cP_{\geq 2,od}$ respectively --recall the definitions of $\underline{m}_{ev}$ and $\underline{m}_{od}$ in \eqref{eq:definition_u_m_ev}.  
  Consider any  $s=(s_1,\ldots, s_{\kappa_1})\in [p]^{\kappa_1}$ and $t=(t_1,\ldots, t_{\kappa_2})\in [q]^{\kappa_2}$, where $s_r$ (resp. $t_r$) corresponds to the value of $(i_{ev},i'_{ev})$ (resp. $(j_{od},j'_{od})$) in the $r$-th group of $\cP_{\geq 2,ev}$ (resp. $\cP_{\geq 2,od})$.  Besides denote $\lambda_1$ (resp. $\lambda_2$) the number of groups of $\cP_{1,ev}=\cP_{ev}\setminus \cP_{\geq 2,ev}$ (resp. $\cP_{1,od}=\cP_{od}\setminus \cP_{\geq 2,od}$).
  
  By Definition~\eqref{eq:definition__g_m_cp_Ps_s} of $W_{\cP; D}$, there exist sequences $(h_r)_{r=1,\ldots,\kappa}$ in $[\kappa_1]^{\kappa}$ and  $(h'_r)_{r=1,\ldots,\kappa}$ in $[\kappa_2]^{\kappa}$,  $(h_r)_{r=\kappa+1,\ldots,l}$ in $[\kappa_1+\lambda_1]^{l-\kappa}$  and $(h'_{r})_{r=\kappa+ 1,\ldots, \lambda}$ in $[\kappa_2+\lambda_2]^{l-\kappa}$ such that  
  \beq\label{eq:first_expression_W_P_d}
   W_{\cP; D}= \sum_{s_1,\ldots, s_{\kappa_1}=1}^{p} \sum_{t_1,\ldots, t_{\kappa_2}=1}^{q}\left[\prod_{r=1}^{\kappa} {\bA}_{s_{h_r},t_{h'_r}}^{D_r}\right] \sum_{s_{\kappa_1+1}, \ldots, s_{\kappa_1+\lambda_1}=1}^{p} \quad \sum_{t_{\kappa_2+1}, \ldots, t_{\kappa_2+\lambda_2}=1}^{q}\, \prod_{r=\kappa+1}^{l}  \bA_{s_{h_r},t_{h'_r}} \enspace .
  \eeq
  Besides, the sequence $(h_r)_{r=\kappa+1,\ldots,l}$ contains exactly twice the value $u$ for all $u\in  [\kappa_1+1, \kappa_1+\lambda_1]$. Similarly, the sequence  $(h'_r)_{r=\kappa+1,\ldots,l}$ contains exactly twice the value $u$ for all $u \in [\kappa_2+1, \kappa_2+\lambda_2]$. The sequence $(h_r)_{r=1,\ldots, \kappa}$ contains at least once each value in $[\kappa_1]$  and the sequence $(h'_r)_{r=1,\ldots, \kappa}$ contains at least once each value in $[\kappa_2]$. Since the indices $s_{\kappa_1+1},\ldots,s_{\kappa_1+\lambda_1}$ and $t_{\kappa_2+1}, \ldots, t_{\kappa_2+\lambda_2}$ occur exactly twice in the sum~\eqref{eq:first_expression_W_P_d}, we can express the corresponding sums in~\eqref{eq:first_expression_W_P_d} as products of matrices $\bA$ and $\bA^T$. This leads us to the following expression for $W_{\cP; D}$ (the proof is omitted).
   
  \begin{lem}\label{lem:decompo_W_CP,d}
There exists a positive integer $z_{\cP}$ only depending on the partition $\cP$ such that 
  \beq\label{eq:upper_W_P_d}
  W_{\cP; D} =  \sum_{s_1,\ldots, s_{\kappa_1}=1}^{p} \sum_{t_1,\ldots, t_{\kappa_2}=1}^{q}\left[\prod_{r=1}^{\kappa} {\bA}_{s_{h_r},t_{h'_r}}^{D_r}\right]\prod_{r=1}^{z_{\cP}} \bB^{(r)}_{f_{r},f'_{r}}
  \eeq
  where $\bB^{(r)}=\bA^{\omega_r}(\bA^{T}\bA)^{\omega'_r}(\bA^{T})^{\omega''_r}$ with $\omega_r$ and $\omega''_r$ in $\{0,1\}$ and $\omega'_r$ is a non-negative integer. If $\omega_r= 1$, we have $f_r= s_{v_r}$ for some $1\leq v_r\leq \kappa_1$, otherwise we have $f_r= t_{v_r}$ for some $1\leq v_r\leq \kappa_2$. If $\omega''_r= 1$, we have $h'_r= s_{v'_r}$ for some $1\leq v'_r\leq l$, otherwise we have $h_r= t_{v'_r}$ for some $1\leq v_r\leq l'$.
  
  \end{lem}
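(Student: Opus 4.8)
The plan is to prove Lemma~\ref{lem:decompo_W_CP,d}, i.e. to show that the sum over the ``free'' indices $s_{\kappa_1+1},\ldots,s_{\kappa_1+\lambda_1}$ and $t_{\kappa_2+1},\ldots,t_{\kappa_2+\lambda_2}$ in~\eqref{eq:first_expression_W_P_d} can be contracted into a product of matrix entries of the form $\bB^{(r)}_{f_r,f'_r}$, with each $\bB^{(r)}$ an alternating product of $\bA$'s and $\bA^T$'s. The key structural input is the observation already recorded in the text: each of the indices $s_{\kappa_1+u}$ ($u=1,\ldots,\lambda_1$) appears \emph{exactly twice} among the row-indices $(s_{h_r})_{r=\kappa+1,\ldots,l}$, and each $t_{\kappa_2+v}$ appears exactly twice among the column-indices $(t_{h'_r})_{r=\kappa+1,\ldots,l}$ of the factors $\bA_{s_{h_r},t_{h'_r}}$ in the inner product $\prod_{r=\kappa+1}^{l}\bA_{s_{h_r},t_{h'_r}}$.

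First I would set up the graph-theoretic bookkeeping. Form a bipartite multigraph $\cG$ whose left vertices are the symbols $s_1,\ldots,s_{\kappa_1+\lambda_1}$, whose right vertices are $t_1,\ldots,t_{\kappa_2+\lambda_2}$, and which has one edge, labelled $r$, between $s_{h_r}$ and $t_{h'_r}$ for each factor $\bA_{s_{h_r},t_{h'_r}}$ with $r=\kappa+1,\ldots,l$. By the double-occurrence property above, every ``free'' vertex $s_{\kappa_1+u}$ and $t_{\kappa_2+v}$ has degree exactly $2$ in $\cG$, whereas the ``bound'' vertices $s_1,\ldots,s_{\kappa_1}$, $t_1,\ldots,t_{\kappa_2}$ may have arbitrary degree. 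Summing over the free indices corresponds to performing, for each degree-$2$ free vertex, the contraction $\sum_{x}\bA_{\cdot,x}\bA_{x,\cdot}$ or $\sum_x \bA_{\cdot,x}\bA_{\cdot,x}$ or $\sum_x \bA_{x,\cdot}\bA_{x,\cdot}$ depending on the orientation of the two incident edges (row vs.\ column position), which is precisely multiplication by $\bA\bA^T$, by $\bA$ followed by $\bA^T$, etc. Peeling off these degree-$2$ vertices one at a time (each contraction either shortens a path, closes a cycle, or merges two segments) one sees that the connected components of $\cG$ restricted to free vertices become maximal chains; each such chain, once summed out, collapses to a single matrix entry $\bB^{(r)}_{f_r,f'_r}$ where $\bB^{(r)}$ is the alternating product of $\bA$'s and $\bA^T$'s read off the chain, its endpoints $f_r,f'_r$ being bound indices (some $s_{v_r}$ or $t_{v_r}$, or an already-contracted index from the $\prod_{r=1}^{\kappa}\bA_{s_{h_r},t_{h'_r}}^{D_r}$ block). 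The parity of row/column positions along the chain forces $\bB^{(r)}$ to have the shape $\bA^{\omega_r}(\bA^T\bA)^{\omega'_r}(\bA^T)^{\omega''_r}$ with $\omega_r,\omega''_r\in\{0,1\}$: starting at a left endpoint one leaves via a row index ($\bA$), then alternately ($\bA^T\bA$), finally possibly a trailing $\bA^T$; the two endpoint conventions for $f_r,f'_r$ and $\omega_r,\omega''_r$ in the statement encode exactly whether the chain starts/ends on the $s$-side or the $t$-side. The number $z_{\cP}$ of factors produced is just the number of such maximal chains, which depends only on the combinatorics of $\cP$ (through $\cP_{ev},\cP_{od}$ and the free/bound split), not on the actual index values, as asserted.

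Concretely the steps are: (1) record the double-occurrence structure of $(h_r)_{r>\kappa}$ and $(h'_r)_{r>\kappa}$, already justified in the text from the definitions of $\cP_{ev}$, $\cP_{od}$ and $\mathrm{deg}(\cP)$; (2) reinterpret the inner sum $\sum_{s_{\kappa_1+1},\ldots}\sum_{t_{\kappa_2+1},\ldots}\prod_{r>\kappa}\bA_{s_{h_r},t_{h'_r}}$ as a tensor contraction along the multigraph $\cG$; (3) perform the contractions over all free vertices, checking at each step that a degree-$2$ vertex contraction replaces a pair of adjacent $\bA$/$\bA^T$ factors by a single longer alternating product and never creates a vertex of degree $\neq 2$ among the remaining free vertices; (4) read off $\bB^{(r)}$, $\omega_r$, $\omega'_r$, $\omega''_r$, $f_r$, $f'_r$ from the resulting chains and verify they satisfy the stated constraints; (5) note $z_{\cP}$ = number of chains is a function of $\cP$ only. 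The main obstacle is step~(3)--(4): carefully tracking which position (row or column) a bound endpoint sits in, handling closed cycles of free vertices (which contract to $\tr[(\bA^T\bA)^{m}]$-type scalars and must be absorbed into the overall constant / into one of the $\bB^{(r)}$ with trivial endpoints), and confirming that the alternating pattern never degenerates outside the advertised form $\bA^{\omega}(\bA^T\bA)^{\omega'}(\bA^T)^{\omega''}$ — this is a finite case analysis on the four orientation types of an edge, but it is the only place where anything can go wrong. Since the statement only claims existence of such a representation (the precise identities of $h_r,h'_r,f_r,\dots$ are left implicit and the paper explicitly says ``the proof is omitted''), it suffices to exhibit one valid contraction order, so no optimization is needed.
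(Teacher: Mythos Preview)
The paper explicitly omits the proof of this lemma, offering only the one-line hint that the double-occurrence of the free indices lets one contract the inner sums into products of $\bA$ and $\bA^T$. Your bipartite-multigraph picture with iterative degree-$2$ contractions is exactly the mechanism the paper has in mind, and your identification of the chain structure and the parity argument for the shape $\bA^{\omega_r}(\bA^T\bA)^{\omega'_r}(\bA^T)^{\omega''_r}$ are correct.

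Your one worry---closed cycles of free vertices producing trace-type scalars with no bound endpoints---is actually vacuous, though you do not argue this. Free $\cP_{ev}$-groups and free $\cP_{od}$-groups are necessarily single pairs $\{2t-1,2t\}$ and $\{2t,2t+1\}$ of consecutive positions (anything larger would have to contain a non-singleton of $\cP$ and hence be bound), so a chain of free vertices is a run of consecutive positions inside one of the two halves $[2k]$ or $[2k+1:4k]$; it closes into a cycle only if that entire half consists of singletons in $\cP$. But then every group of $\cP_{\geq 2}$ lies in the other half, so $|\cP_r\cap[2k]|=0$ (or symmetrically) for all such $r$, and plugging this into~\eqref{eq:definition_F_P} gives $Q_{0,|\cP_r|}=P_{|\cP_r|}$ and $P_0=0$, whence the bracketed expression collapses to $0$; the same holds for every $\cP'\trianglelefteq\cP$, so $G_{\cP}=0$ by M\"obius inversion and $W_{\cP,D}=0$. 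Thus for nonzero $W_{\cP,D}$ every maximal chain of free vertices terminates at bound endpoints, and the lemma's form is correct as stated without any scalar prefactor. With this observation in hand, your steps (1)--(5) go through cleanly.
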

  
Now that we have expressed $W_{\cP; D}$ as a sum indexed by the group in $\cP_{\geq 2,ev}$ and $\cP_{\geq 2,od}$ in \eqref{eq:upper_W_P_d}, we cannot simply express $W_{\cP; D}$ as the trace of a matrix or even the product matrix traces. Indeed, the indices $s_{i}$ or $t_{j}$ may occur more than two times in~\eqref{eq:upper_W_P_d}. The next bound is the crux of the proof. 
  \begin{prp}\label{lem:upper_W_cp,p}
   Upon denoting $a = \sum_{r=1}^{\kappa}(|\cP_r|-D_r)$, we have 
   \[
    \big|W_{\cP; D}\big|\leq\left\{
  \begin{array}{cc}
    \|\bA\|_{\infty}^{4k-a} p^{\lfloor a/4\rfloor}q^{\lceil a/4\rceil} & \text{ if $a$ is even}\\
    \|\bA\|_{\infty}^{4k-a} p^{ (a-1)/4}q^{(a-1)/4} & \text{ if $a$ is odd}
  \end{array}
    \right.
   \]

  \end{prp}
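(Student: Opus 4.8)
The polynomial $W_{\cP;D}$ is homogeneous of degree $4k-a$ in the entries of $\bA$: indeed, by~\eqref{eq:definition__g_m_cp_Ps_s} it is the sum over index configurations $m$ of the monomial $G_{\cP,D}$ evaluated at entries of $\bA$, and $G_{\cP,D}[x_1,\dots,x_{|\cP|}]=\prod_{l=\kappa+1}^{|\cP|}x_l\prod_{l=1}^{\kappa}x_l^{D_l}$ has degree $(|\cP|-\kappa)+\sum_{l=1}^{\kappa}D_l=(|\cP|-\kappa)+\big(4k-(|\cP|-\kappa)\big)-a=4k-a$, using $\sum_l|\cP_l|=4k$. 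Hence it suffices to prove the estimate when $\|\bA\|_\infty=1$, the general case following by scaling. From now on $\|\bA\|_\infty=1$, so every entry of $\bA$ is bounded by $1$ and every factor $\bB^{(r)}=\bA^{\omega_r}(\bA^T\bA)^{\omega'_r}(\bA^T)^{\omega''_r}$ occurring in Lemma~\ref{lem:decompo_W_CP,d} satisfies $\|\bB^{(r)}\|_\infty\le 1$ by submultiplicativity of the operator norm.

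The first step is to recast the summand of~\eqref{eq:upper_W_P_d} as the contraction of a weighted multigraph $H$: the vertices are the free summation indices, with $s_1,\dots,s_{\kappa_1}$ ranging over $[p]$ and $t_1,\dots,t_{\kappa_2}$ over $[q]$; each matrix factor contributes an edge carrying that matrix (a factor $\bA^{D_r}_{s_{h_r},t_{h'_r}}$ contributes $D_r$ parallel $s$--$t$ edges, while $\bB^{(r)}_{f_r,f'_r}$ contributes one edge whose endpoints are of $s$- or $t$-type according to $\omega_r,\omega''_r$). Then $|W_{\cP;D}|$ equals the modulus of the sum over all vertex labellings of the product of edge weights. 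I would establish, by induction on the number of vertices, the following generic contraction bound: if every edge of a multigraph carries a matrix of operator norm $\le 1$, the contraction is at most $\prod_v n_v^{1/2}$, where $n_v\in\{p,q\}$ is the range of vertex $v$. One peels vertices one at a time: a degree-one vertex $v$ joined to $w$ by $\bM$ turns the sum into the contraction of $H\setminus v$ times $(\mathbf 1^T\bM)_w$, and $\max_w\sum_v|\bM_{vw}|\le n_v^{1/2}\|\bM\|_\infty$; a vertex of degree $\ge 2$ is removed by splitting the product into two halves and applying Cauchy--Schwarz in $v$, which costs $n_v^{1/2}$ and duplicates the rest of $H$ while preserving the operator-norm bound on all edge weights. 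This already yields $|W_{\cP;D}|\le p^{\kappa_1/2}q^{\kappa_2/2}$ (and, combined with Lemma~\ref{lem:G_P}, this is what enters~\eqref{eq:upper_max}).

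Two refinements then upgrade $p^{\kappa_1/2}q^{\kappa_2/2}$ to the stated bound. First, a counting argument: every size-$\ge 2$ group of $\cP$ contributes $|\cP_r|-D_r\ge 2$ to $a$ (by the degree bound $D_r\le|\cP_r|-2$ of Lemma~\ref{lem:decomposition_G_P}) and lies inside a unique group of $\cP_{ev}$ and a unique group of $\cP_{od}$, so $\kappa_1+\kappa_2\le 2\kappa\le a$; since $2\kappa$ is even, when $a$ is odd one in fact has $\kappa_1+\kappa_2\le a-1$, which is responsible for the factor $(pq)^{(a-1)/4}$. Second --- and this is the delicate point --- the bound $\prod_v n_v^{1/2}$ is wasteful when $p\gg q$: an $s$-index that appears in~\eqref{eq:upper_W_P_d} only inside a ``sandwich'' $\bA\cdots\bA^T$ can be summed out against the $q\times q$ core $\bA^T\bA$ at \emph{no} cost, since $\sum_{s\in[p]}(\bA\bM)_{s,\alpha}(\bN\bA^T)_{\beta,s}=(\bN\bA^T\bA\bM)_{\beta,\alpha}$ has modulus $\le 1$; more generally, each $[p]$-ranged free index can be eliminated at cost at most $q^{1/2}$ rather than $p^{1/2}$ by routing the contraction through the $q$-dimensional factors. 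Carrying out these conversions systematically, and accounting for how many $p$-exponents are genuinely unavoidable, produces the $q$-heavy split $p^{\lfloor a/4\rfloor}q^{\lceil a/4\rceil}$ when $a$ is even.

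The genuine obstacle is this second refinement: making precise, for each free index of~\eqref{eq:upper_W_P_d}, the pattern of $\bA$, $\bA^T$ and $\bA^T\bA$ factors surrounding it --- i.e.\ relating the partition $\cP$, its coarsenings $\cP_{ev},\cP_{od}$, the integers $\kappa_1,\kappa_2,z_\cP$ and the quantity $a$ --- so as to obtain exactly the exponents $\lfloor a/4\rfloor$ and $\lceil a/4\rceil$, and the extra gain in the odd case. By contrast, the homogeneity reduction and the tensor-network contraction bound are routine once the setup of Lemma~\ref{lem:decompo_W_CP,d} is in hand.
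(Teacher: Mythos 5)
Your plan reproduces the paper's architecture --- factor out $\|\bA\|_{\infty}^{4k-a}$ by homogeneity, encode \eqref{eq:upper_W_P_d} as the contraction of a multigraph whose vertices are the free row/column indices, bound the contraction by iteratively pruning vertices, then count exponents --- but it stops exactly where the proof actually lives. The two items you label as ``refinements'' are not refinements of a complete argument; they are the argument. First, the crude contraction bound $\prod_v n_v^{1/2}=p^{\kappa_1/2}q^{\kappa_2/2}$ combined with $\kappa_1+\kappa_2\le a$ only yields $p^{a/2}$ in the worst case (all vertices of type '$r$'), which is far from $p^{\lfloor a/4\rfloor}q^{\lceil a/4\rceil}$; nothing in your counting forces at least half of the exponent to be carried by $q$, let alone puts the ceiling on the $q$ side when $a\equiv 2\ [4]$. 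Second, the mechanism that makes the $p\to q$ conversion possible is not the ``$\bA\cdots\bA^T$ sandwich'' identity alone but the fact that every edge matrix $\bB^{(r)}$ has rank at most $q$, hence Frobenius norm at most $\sqrt q$ (Lemma~\ref{lem:op+2}); this is what lets a pruned vertex cost $\sqrt q$ rather than $\sqrt{n_v}$, and you never invoke it. Without it your degree-$\ge 2$ Cauchy--Schwarz step (which ``duplicates the rest of $H$'') does not close: squaring the remaining edge weights destroys the normalization the induction hypothesis needs.

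The paper's execution of the part you defer is the bulk of the work: Proposition~\ref{prop:graph_majoration} (via Lemmas~\ref{lem:decompo_graph_1}--\ref{lem:result_pruning} and Propositions~\ref{property_graph_multiple}--\ref{property_graph_multiple_2}) prices each pruned vertex at $p$, $\sqrt p$, $q$ or $\sqrt q$ according to its external degree, its self-loops and its type, with a separate induction for graphs of minimum external degree $3$; Lemmas~\ref{lem:d_i}--\ref{lem:d_i_2} translate the combinatorics of $\cP$, $D$ and the parities of $|\cP_{i,ev}|$, $|\cP_{i,od}|$ into lower bounds on those external degrees; and the conclusion is a case analysis on $a\bmod 4$ (the quantities $T_1,T_2,T_3$) in which the odd case and the case $a\equiv 2\ [4]$ each need a dedicated parity argument to shave the exponent. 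None of this is supplied by your sketch, and you acknowledge as much, so the proposal has a genuine gap at the heart of the proposition rather than being an alternative proof.
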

  We first show how to conclude the proof from this bound before establishing Proposition~\ref{lem:upper_W_cp,p}. 
  Recall that $a = \sum_{r=1}^{\kappa}(|\cP_r|-D_r)$ lies in $[2,4k]$. If $a\equiv 3[4]$, then Young's inequality yields 
  \begin{eqnarray*}
    \|\bA\|_{\infty}^{4k-a} p^{(a-1)/4}q^{(a-1)/4}&\leq& \frac{1}{2}\|\bA\|_{\infty}^{4k-a-1}p^{ (a-3)/4}q^{ (a-3)/4}\left[\|\bA\|_{\infty}^{2} +  qp  \right]\\
    &\leq & \frac{1}{2}\|\bA\|_{\infty}^{4k-a-1}p^{\lfloor (a+1)/4\rfloor}q^{\lceil (a+1)/4\rceil}+ \frac{1}{2}\|\bA\|_{\infty}^{4k-a+1}p^{\lfloor (a-1)/4\rfloor}q^{\lceil (a-1)/4\rceil} \ . 
   \end{eqnarray*}
   If $a\equiv 1[4]$, then Young's inequality yields 
  \begin{eqnarray*}
    \|\bA\|_{\infty}^{4k-a} p^{(a-1)/4}q^{(a-1)/4}&\leq& \frac{1}{2}\|\bA\|_{\infty}^{4k-a-1}p^{ (a-1)/4}q^{ (a-1)/4}\left[\|\bA\|_{\infty}^{2} +  1 \right]\\
    &\leq & \frac{1}{2}\|\bA\|_{\infty}^{4k-a-1}p^{\lfloor (a+1)/4\rfloor}q^{\lceil (a+1)/4\rceil}+ \frac{1}{2}\|\bA\|_{\infty}^{4k-a+1}p^{\lfloor (a-1)/4\rfloor}q^{\lceil (a-1)/4\rceil} \ . 
   \end{eqnarray*}
  This allows us to derive 
  \[
    \big| W_{\cP; D}|\leq \max_{s= 1}^{2k}  \|\bA\|_{\infty}^{4k-2s}p^{\lfloor s/2\rfloor}q^{\lceil s/2\rceil}\leq \max\left[\|\bA\|_{\infty}^{4k-2}q,\|\bA\|_{\infty}^{4k-4}pq,(pq)^{k} \right]
  \]
  Then, together with~\eqref{eq:upper_max}, we conclude that
  \[
    \E\left[\left(U_k - \|\bA\|_{2k}^{2k}\right)^2\right]\leq  (c''\|E\|_{\psi_2})^{4k}k^{16k+2} \max\left[\|\bA\|_{\infty}^{4k-2}q,\|\bA\|_{\infty}^{4k-4}pq,(pq)^{k} \right]\ .
  \]
  The result follows.

  \end{proof}

  \subsection{Proof of Proposition~\ref{lem:upper_W_cp,p}}

  First, we factorize the expression  the operator norm of $\bA$ in order to consider matrices with operator norm at most one. 
  \beq\label{eq:W_cp_d}
   W_{\cP; D}= {\|\bA\|_{\infty}^{4k-\sum_{r=1}^{\kappa}(|\cP_r|-D_r)}}\sum_{s_1,\ldots, s_{\kappa_1}=1}^{p} \sum_{t_1,\ldots, t_{\kappa_2}=1}^{q}\prod_{r=1}^{\kappa} \overline{\bA}_{s_{h_r},t_{h'_r}}^{D_r} \sum_{r=1}^{z_{\cP}} \overline{\bB}^{(r)}_{f_{r},f'_{r}}\enspace , 
  \eeq
where $\overline{\bA}= \bA/\|\bA\|_{\infty}$ and $\overline{\bB}^{(r)}= \bB^{(r)}/ \|\bA\|_{\infty}^{\omega_r+\omega'_r+\omega''_r}$. In order to bound the latter expression, we shall introduce a multigraph $G$ and bound $W_{\cP,D}$ relying on the topology of $G$.

  Before explaining how we translate $(\cP,d)$ into a multigraph, we introduce some notation for a multigraph $G=(V,E)$. Since there are possibly multiple edges between the same two vertices, the edges $e\in E$ are defined as a $3$-tuple $(i,j,a)$ where $i$ and $j$ are two  vertices and $a$ is a positive integer corresponding to the index of $e$ among all the edges between $i$ and $j$. Besides, we respectively denote $e_1=i$ and $e_2=j$ the vertices incident to $e$. Although the edges are directed, we shall most often ignore their direction and the degree of vertex $i$ henceforth corresponds to the number of edges that are incident to $i$. Finally, each vertex of $G$ is either labelled as type '$r$' (for row) or as type '$c$' (for column). We write $R\subset V$ for the collection of vertices of type '$r$' and $C=V\setminus R$ for collection of vertices of type '$c$'.  We write  $E_{\mathrm{lo}}\subset E$ for the collection of self-loops, that is edges satisfying $e_1=e_2$ and $E_0= E\setminus E_{\mathrm{lo}}$ for the collection of remaining edges. 
  In the sequel, the external degree $d_i$ of a vertex $i\in V$ is the number of edges $e\in E_0$ that are incident to $i$. We also $d_{i,\mathrm{lo}}$ for the number of self-loops of $i$.

  A matrix $\bC$ is said to satisfy Condition $(\cB_0)$ if its operator norm is at most one and if its rank is at most $q$. A real vector $b$ is said to satisfy Condition~$(\cB_{\mathrm{lo}})$ if $|b|_{\infty}\leq 1$ and $|b|_1\leq q$. Finally, a real vector $b$ is said to satisfy the slightly weaker condition  $(\cB^{-}_{\mathrm{lo}})$ if $|b|_{\infty}\leq 1$ and $|b|_2\leq \sqrt{q}$

  For a multigraph $G=(V,E)$, we consider a collection $\bB= (\bB^{(e)})_{e\in E_0}$ of matrices and a collection $\beta= (\beta^{(e)})_{e\in E_{\mathrm{lo}}}$ of vectors. The matrix $\bB^{(e)}$ has $p$ (resp. $q$) rows  if $e_1\in R$ (resp. $e_1\in C$).  Similarly, $\bB^{(e)}$ has $p$ (resp. $q$) columns  if $e_2\in R$ (resp. $e_2\in C$). The size of $\beta^{(e)}$ is $p$ (resp. $q$) if $e_1=e_2\in R$ (resp. $\in C$). Finally, $(G,\bB,\beta)$ is said to satisfy Property $(\cA_0)$ if:
  \begin{itemize}
  \item[(i)] all matrices $\bB^{(e)}$ with $e\in E_0$ satisfy Condition $(\cB_0)$.
  \item[(ii)] all vectors $\beta^{(e)}$ with $e\in E_{\mathrm{lo}}$ satisfy Condition $(\cB^{-}_{\mathrm{lo}})$.
  \item[(iii)] all vectors $\beta^{(e)}$ with $e\in E_{\mathrm{lo}}$ where the self-loop $e$ is incident to a vertex $i$ such that $d_i\leq 1$ satisfy Condition $(\cB_{\mathrm{lo}})$.   
  \end{itemize}

  \medskip 
  
  Given a multigraph $G$, a collection $\bB$ of matrices and a collection $\beta$ of vectors, we define the quantity $W_{G,\bB,\beta}$ by 
  \begin{equation}\label{eq:definition_W}
    W_{G,\bB,\beta} := \sum_{(\omega_i)_{i\in V}}\prod_{e\in E_0} \bB^{(e)}_{\omega_{e_1},\omega_{e_2}}\prod_{e\in E_{\mathrm{lo}}}\beta^{(e)}_{\omega_{e_1}}\ , 
  \end{equation}
  where the sum runs over $\omega_i$ runs from $1$ to $p$ if $i\in R$ and from $1$ to $q$ is $i\in C$. 
  
  \medskip 
  
  Denote $V_{r,0,0}$ the subset of isolated vertices $i$ in $G$ that have a label $r$ and satisfy $d_i=d_{i,\mathrm{lo}}=0$. Similarly, 
  $V_{*,0,*}$ stands for the collection of vertices $i$ satisfying $d_i=0$, whereas  $V_{r,1,*}$ is the collection of vertices of type '$r$' that satisfy $d_i=1$. Finally, 
  $V_{r,\geq 2,*}$ is the collection of vertices of type '$r$' with $d_i\geq 2$. 
  

  \begin{prp}\label{prop:graph_majoration}
  If $(G,\bB,\beta)$ satisfies  Property $(\cA_0)$, then 
    \[
    \big|W_{G,\bB,\beta} \big|\leq  p^{|V_{r,0,0}|+|V_{r,1,0}|/2}q^{ |V_{*,0,*}|- |V_{r,0,0}|+ [|V|- |V_{r,1,0}|/2-|V_{*,0,*}|  ]/2]}\left(\frac{p}{q}\right)^{(|V_{r,\geq 2,*}|-1)_+/2} \enspace . 
    \]
  
  \end{prp}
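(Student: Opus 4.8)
The plan is to prove Proposition~\ref{prop:graph_majoration} by a combinatorial reduction of $G$, after first peeling off its trivial pieces. \textbf{Reductions.} Since $W_{G,\bB,\beta}$ factorizes as a product over the connected components of $G$, and since $p\ge q$ together with super-additivity of $x\mapsto(x-1)_+$ makes the claimed right-hand side for $G$ no smaller than the product of the claimed right-hand sides for its components, it suffices to treat a connected $G$. A component equal to an isolated type-$r$ vertex contributes exactly $p$, and a component equal to a single vertex carrying only self-loops contributes at most $q$ by $|\beta^{(e)}|_1\le q$ from Condition $(\cB_{\mathrm{lo}})$ (applicable since there $d_i=0\le 1$); these produce the prefactor $p^{|V_{r,0,0}|}q^{|V_{*,0,*}|-|V_{r,0,0}|}$. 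Hence assume $G$ connected with at least one non-loop edge, so $d_i\ge 1$ at every vertex. A self-loop at a vertex $i$ with $d_i\ge 1$ contributes $\prod_{\text{loops at }i}\beta^{(e)}_{\omega_i}$, of modulus at most $1$ uniformly in $\omega_i$; absorbing it as a diagonal multiplier into one incident edge matrix preserves Condition $(\cB_0)$, since multiplying a rank-$\le q$, operator-norm-$\le 1$ matrix by a contraction diagonal keeps both properties. So we may also assume $E_{\mathrm{lo}}=\emptyset$.

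\textbf{Reducing the graph.} We now have $G$ connected, $E_{\mathrm{lo}}=\emptyset$, $d_i\ge 1$ everywhere, all edge matrices obeying $(\cB_0)$. The argument proceeds by induction on a suitable size parameter of $G$, using the following moves. (i) \emph{Contracting a degree-$2$ vertex}: multiply its two incident matrices into one, still obeying $(\cB_0)$; if both edges returned to a common vertex, the product becomes a self-loop with $\ell_\infty$-norm $\le 1$ and $\ell_1$-norm $\le \|\bB^{(e_1)}\bB^{(e_2)}\|_*\le q$ — the rank part of $(\cB_0)$ being essential here — which is absorbed as above, or, when it closes off a component, contributes a factor $q$ via $|\tr(\bB^{(1)}\cdots\bB^{(m)})|\le\|\bB^{(1)}\cdots\bB^{(m)}\|_*\le q$. (ii) \emph{Peeling a leaf} $i$ attached to $j$: summing over $\omega_i$ turns the incident matrix into a vector pending at $j$, obtained by contracting the matrix against $\mathbf{1}_{n_i}$, hence of $\ell_2$-norm at most $\sqrt{n_i}$, with $n_i\in\{p,q\}$ according to the type of $i$. (iii) \emph{Sliding/pairing pending vectors}: a vector pending at a vertex of remaining degree $1$ is pushed across its last edge at no cost ($\ell_2$-norm non-increasing, as the matrix has operator norm $\le 1$), and a vertex bearing only pending vectors is eliminated by H\"older/Cauchy--Schwarz, each pending vector contributing its $\ell_2$-norm. (iv) \emph{Splitting a branch vertex}: for a block with minimum degree $\ge 2$ and no contractible vertex, a Cauchy--Schwarz cut at a vertex $j$ replaces $W$ by (the square root of) the same quantity for a graph in which $G$ is glued to a mirror copy along $j$; the resulting doubling of vertex counts is compensated by that square root. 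Running these moves to exhaustion produces a bound that is a product of a factor $\sqrt{n_i}$ for each leaf encountered — in particular $\sqrt p$ for each type-$r$ leaf, which is the term $p^{|V_{r,1,0}|/2}$ — and a factor $\le q$ for each collapsed cycle, with type-$c$ vertices and type-$r$ vertices of degree $\ge 2$ absorbed for free inside the resulting matrix products and traces.

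\textbf{The accounting, and the main obstacle.} Rewriting these contributions in terms of the original vertex sets is the delicate and essential step. The crude outcome "$\sqrt{n_i}$ per vertex" — valid for any connected $G$ with $d_i\ge 1$ — already carries the right power of $p$, produced by the isolated and leaf type-$r$ vertices; the improvement to the stated bound comes from the rank condition in $(\cB_0)$, which confines a type-$r$ index sitting on a degree-$\ge 2$ vertex to a $q$-dimensional space through the trace or matrix product in which it is eventually summed, so that it may be charged $\sqrt q$ instead of $\sqrt p$. The factor $(p/q)^{(|V_{r,\ge 2,*}|-1)_+/2}$ is precisely the slack left by the (at most) one such vertex for which this confinement cannot be read off in the recursion. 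The main obstacle is thus not any single estimate — each is one of the operator-norm, nuclear-norm and $\ell_1/\ell_2/\ell_\infty$ inequalities packaged into Conditions $(\cB_0)$--$(\cB^{-}_{\mathrm{lo}})$ — but the meticulous case analysis tracking how $|V|$, $|V_{r,1,0}|$ and $|V_{r,\ge 2,*}|$ transform under contraction, leaf peeling, pending-vector combination, and the mirror doubling of move (iv), and checking that each step is charged consistently against the target exponents.
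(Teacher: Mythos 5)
Your overall strategy---peel off isolated vertices and self-loops, then iteratively prune low-degree vertices paying $p$, $q$, $\sqrt p$ or $\sqrt q$ per step, and treat the remaining high-degree core separately---is the same architecture as the paper's proof (Lemmas~\ref{lem:decompo_graph_1} and \ref{lem:result_pruning}, then Propositions~\ref{property_graph_multiple} and \ref{property_graph_multiple_2}). Your individual estimates for the easy moves are correct: $|\beta^{(e)}|_1\le q$ for an isolated looped vertex, $\sqrt{n_i}$ for a peeled leaf, and $|\tr(\bB^{(1)}\cdots\bB^{(m)})|\le q$ for a collapsed cycle via the rank condition. But the proof has a genuine gap exactly where the difficulty lies: the core graph in which every vertex has external degree at least $3$, which is where the factor $(p/q)^{(|V_{r,\geq 2,*}|-1)_+/2}$ must be produced. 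Your move (iv), the ``mirror doubling'' Cauchy--Schwarz cut, is not a valid induction step as stated: gluing a mirror copy along $j$ produces a graph with roughly $2|V|-1$ vertices, so it does not decrease any size parameter, and you never verify that the square root compensates in a way that closes an induction or yields the claimed exponent. The paper instead runs an explicit induction on $|V|$ (Proposition~\ref{property_graph_multiple_2}) that preferentially selects degree-$3$ vertices of type `$r$', and the $(p/q)^{1/2}$ losses appear only in one identified sub-case (two type-$r$ vertices removed together); the whole content of the result is this case analysis, which you explicitly defer (``the main obstacle is \ldots the meticulous case analysis \ldots and checking that each step is charged consistently''). Deferring it is deferring the proof.

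A second, substantive error: you have the direction of the slack backwards. You write that type-$r$ degree-$\ge 2$ vertices are generically charged $\sqrt q$ thanks to the rank condition, with the factor $(p/q)^{(|V_{r,\ge 2,*}|-1)_+/2}$ accounting for ``the (at most) one such vertex for which this confinement cannot be read off.'' If that were the accounting, the correction factor would be at most $(p/q)^{1/2}$ independently of $|V_{r,\ge 2,*}|$. The exponent $(|V_{r,\ge 2,*}|-1)_+/2$ grows linearly in $|V_{r,\ge 2,*}|$: the bound allows all but one of these vertices to be charged $\sqrt p$, and guarantees the cheap charge $\sqrt q$ for only one of them. Any bookkeeping built on your reading would target the wrong inequality, so even carrying out your deferred case analysis would not land on the stated bound without first correcting this.
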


  Let us now apply explicit how we can apply this result to the expression~\eqref{eq:W_cp_d} of $W_{\cP; D}$. We build a multigraph with $\kappa_1$ vertices of type 'r' and $\kappa_2$ vertices of type 'c'. For $r=1,\ldots, \kappa$, we add $D_r$ edges between the vertex $h_r$ of type 'r' and the vertex $h'_r$ of type 'c' and we associate the matrix  $\overline{\bA}$ to this edge. Obviously, we have $\|\overline{\bA}\|_{\infty}\leq 1$ and $\mathrm{Rank}(\bA)\leq q$ so that $\overline{\bA}$  satisfies $\mathcal{B}_0$. For $r=1,\ldots, z_{\cP}$, we also add an edge between the vertices $v_r$ and $v'_r$ where $v_r$ and $v'_r$ are defined in Lemma~\ref{lem:decompo_W_CP,d}. If $v_r\neq v'_r$, we associate the matrix $\overline{\bB}^{(r)}$ which satisfies also $\cB_0$. If $v_r=v'_r$, then we associate the vector $(\bB^{(r)}_{a,a})$ to this self-loop. Since $\|\bB^{(r)}\|_{\infty}\leq 1$ and its rank is at most $q$, one readily checks that it satisfies $\cB_{\mathrm{lo}}$. We are therefore in position to apply Proposition~\ref{prop:graph_majoration} to $W_{\cP,D}$.
  \[
    |W_{\cP,D}|\leq \|\bA\|_{\infty}^{4k- a}p^{|V_{r,0,0}|+|V_{r,1,0}|/2}q^{ |V_{*,0,*}|- |V_{r,0,0}|+ [|V|- |V_{r,1,1}|-|V_{*,0,*}|  ]/2]}\left(\frac{p}{q}\right)^{(|V_{r,\geq 2,*}|-1)_+/2} \enspace ,
  \]
  where we recall that $a= \sum_{r=1}^{\kappa}(|\cP_r|-D_r)$. For $r=1,\ldots, \kappa$, define $a_r = |\cP_r|-D_r$ so that $a= \sum_{r=1}^{\kappa} a_r$. From Lemma~\ref{lem:decomposition_G_P}, we know that $a_r\geq 2$ and that $a_r\geq 3$ if $|\cP_r|\subset[2r]$ or $|\cP_r|\subset[2r+1:4r]$.
  
  Let us consider any vertex $i$ of the $\kappa_2$ vertices of type '$c$'. The vertex $i$ corresponds to a group $\cP_{i,od}$ of $\cP_{\geq 2,od}$ and we write $s_i$ for the number of groups of $\cP_{\geq 2}$ whose intersection with this group of  $\cP_{\geq 2,ev}$ is non-empty. By definition of $\cP_{\geq 2,ev}$, we have $s_i\geq 1$ and $\sum_{i=1}^{\kappa_2} s_i = \kappa$. If $s_i=1$, we denote $r_i$ for the unique group $\cP_{r_i}$ which is included in $\cP_{i,ev}$.

\begin{lem}\label{lem:d_i}
  Any node $i$ of type '$c$' such that $s_i=1$ and $a_{r_i}\leq 3$ has an external degree $d_i$ satisfying $d_i\geq 1$. 
\end{lem}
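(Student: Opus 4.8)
The plan is to first dispose of the case where $\cP_{r_i}$ carries a matrix factor, then argue by parity when $a_{r_i}=3$ and by a connectivity argument when $a_{r_i}=2$. First I would note that the group $\cP_{r_i}$ contributes $D_{r_i}$ parallel edges of $G$ joining the type-`r' vertex $h_{r_i}$ to the type-`c' vertex $i$ (the $\overline{\bA}$-edges coming from the first product in~\eqref{eq:upper_W_P_d}); since $h_{r_i}\neq i$ these are never self-loops, so $D_{r_i}\geq 1$ already forces $d_i\geq 1$. Hence one may assume $D_{r_i}=0$, so that $|\cP_{r_i}|=a_{r_i}\in\{2,3\}$ and $\cP_{r_i}$ produces no factor at all in $W_{\cP;D}$. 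The key bookkeeping identity is the following: because $s_i=1$, the only group of $\cP_{\geq 2}$ contained in $\cP_{i,od}$ is $\cP_{r_i}$, so $\cP_{i,od}$ is the disjoint union of $\cP_{r_i}$ and $m:=|\cP_{i,od}|-a_{r_i}$ singleton groups of $\cP$. Each such singleton is a ``bulk'' position whose $\bA$-factor carries the column index $t_i$, and conversely $t_i$ occurs exactly $m$ times among the matrix entries of $W_{\cP;D}$ (it does not occur inside the bulk contractions, only at their endpoints, and $\cP_{r_i}$ contributes nothing). Since every non-self-loop edge of $G$ at $i$ uses up one occurrence of $t_i$ and every self-loop at $i$ uses up two, this gives $m=d_i+2d_{i,\mathrm{lo}}$, and it remains to show $m>2d_{i,\mathrm{lo}}$.

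For $a_{r_i}=3$ this is immediate by a parity count. By construction, $\cP_{od}$ is coarser than the partition of $[4k]$ into the column-pairs $\{2u,2u+1\}$ and $\{2k,1\}$ (and their analogues in $[2k+1:4k]$); hence $\cP_{i,od}$ is closed under the fixed-point-free involution $\gamma$ that realizes these pairs, so $|\cP_{i,od}|$ is even. Therefore $m=|\cP_{i,od}|-3$ is odd, so $d_i=m-2d_{i,\mathrm{lo}}$ is odd, and in particular $d_i\geq 1$.

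For $a_{r_i}=2$ I would invoke Lemma~\ref{lem:decomposition_G_P}(a): since $W_{\cP;D}\neq 0$, the group $\cP_{r_i}$ cannot be contained in $[2k]$ nor in $[2k+1:4k]$, so $\cP_{r_i}=\{p,p'\}$ with $p\in[2k]$ and $p'\in[2k+1:4k]$. A short check (using $s_i=1$ and $\gamma$-closedness) shows $\cP_{i,od}=\{p,p',\gamma(p),\gamma(p')\}$, with $\gamma(p)$ and $\gamma(p')$ the two bulk positions, so $m=2$. Now follow the maximal ``bulk path'' $\Gamma$ obtained by starting at $\gamma(p)$ and extending it by alternately applying the row-pairing involution $\rho$ and the column-pairing involution $\gamma$ as long as one stays in singleton groups of $\cP$. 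Both $\rho$ and $\gamma$ preserve the block $[2k]$, so $\Gamma\subset[2k]$ and $\Gamma$ never meets $p'$. One endpoint of $\Gamma$ is $\gamma(p)$, whose $\gamma$-partner is $p$; because the factor of $\cP_{r_i}$ has been removed, this endpoint carries the free index $t_i$. The other endpoint of $\Gamma$ attaches, through a $\rho$- or $\gamma$-edge, either to $\rho(p)$ — in which case its free index is the row index of $p$, an index of type `r' — or to a position belonging to a group of $\cP$ of size $\geq 2$ other than $\cP_{r_i}$ (possibly one with $D=0$), whose free index is then a row index or a column index $t_j$ with $j\neq i$ (it is $\neq t_i$ since that position lies outside $\cP_{i,od}$). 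In every case the two free indices of $\Gamma$ are distinct, so $\Gamma$ contributes a non-self-loop edge of $G$ incident to $i$, whence $d_i\geq 1$.

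The hard part is not the case split above but setting up cleanly the dictionary between (i) the pair $(\cP,D)$, (ii) the cyclic ``position'' picture of $[4k]$ with its two fixed-point-free involutions $\rho$ (same row) and $\gamma$ (same column), and (iii) the matrix-product expression~\eqref{eq:upper_W_P_d} together with the multigraph $G$; in particular one must be careful about positions lying in a size-$\geq 2$ group with $D=0$, which contribute no matrix factor but still force identifications of indices, and therefore still count towards $|\cP_{i,od}|$ and towards the endpoints of bulk paths. Once that correspondence is pinned down — ideally once and for all, since it is also what underlies Lemma~\ref{lem:decompo_W_CP,d}, Proposition~\ref{prop:graph_majoration} and Proposition~\ref{lem:upper_W_cp,p} — the verification above is routine combinatorics.
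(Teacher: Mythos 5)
Your proof is correct and follows essentially the same route as the paper's: first reduce to $D_{r_i}=0$ via the parallel $\overline{\bA}$-edges, then use parity of $|\cP_{i,od}|-a_{r_i}$ when $a_{r_i}=3$, and the block-splitting forced by Lemma~\ref{lem:decomposition_G_P}(a) when $a_{r_i}=2$. Your bulk-path argument in the $a_{r_i}=2$ case merely spells out the self-loop exclusion that the paper compresses into ``from the construction of the graph $G$, we deduce that $d_i=2$,'' and it is a sound way to do so.
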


  Similarly, we define for any vertex $i=1,\ldots, \kappa_1$ of type '$r$' the group $\cP_{i,ev}$, the number $s'_i$ of groups of $\cP_{\geq 2}$ that are included in $\cP_{i,ev}$ and, if $s'_i=1$, the index $r'_i$ such that $\cP_{r'_i}\subset \cP_{i,ev}$.

\begin{lem}\label{lem:d_i_2}
  Any node $i$ of type '$r$' such that $s'_i=1$ and $a_{r'_i}\leq 3$ has an external degree $d_i$ satisfying $d_i\geq 1$. Besides,  if $s'_i=1$ and $a_{r'_i}=2$, then $d_i\geq 2$. 
\end{lem}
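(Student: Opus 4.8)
The plan is to follow the same degree‑chasing strategy as in Lemma~\ref{lem:d_i}, transposed from the column vertices to the row vertices, and then to add one case distinction that exploits Lemma~\ref{lem:decomposition_G_P}(b) when $a_{r'_i}=2$. Fix a row vertex $i$ with $s'_i=1$. Since the ev‑pairs $\{1,2\},\{3,4\},\dots$ form a perfect matching of $[4k]$, the group $\cP_{i,ev}$ is the union of the single large group $\cP_{r'_i}$ with a set of $\cP$‑singletons, each of which has its ev‑partner inside $\cP_{r'_i}$; hence $|\cP_{i,ev}|=|\cP_{r'_i}|+\sigma_i$ with $0\le \sigma_i\le |\cP_{r'_i}|$. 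In the representation~\eqref{eq:W_cp_d}--\eqref{eq:upper_W_P_d} of $W_{\cP;D}$ the outer variable $s_i$ occurs exactly $|\cP_{i,ev}|$ times: $D_{r'_i}$ of these occurrences are the kept scalar factors $\overline{\bA}_{s_i,h'_{r'_i}}^{D_{r'_i}}$, contributing $D_{r'_i}$ parallel non‑loop edges from $i$ to the column vertex $h'_{r'_i}$, while the remaining $a_{r'_i}+\sigma_i$ occurrences are absorbed, through the contraction of the inner variables in Lemma~\ref{lem:decompo_W_CP,d}, into endpoints of the matrix factors $\overline{\bB}^{(r)}$ equal to $s_i$. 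Since each self‑loop at $i$ uses up two such endpoints, $d_i=D_{r'_i}+\big(a_{r'_i}+\sigma_i-2L_i\big)=|\cP_{i,ev}|-2L_i$, where $L_i$ is the number of self‑loops incident to $i$. Thus both claims reduce to upper bounds on $L_i$: $L_i\le (|\cP_{i,ev}|-1)/2$ for the first, and $L_i\le |\cP_{i,ev}|/2-1$ when $a_{r'_i}=2$ for the second.

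For the first bound I would argue exactly as for the column vertices in Lemma~\ref{lem:d_i}. A self‑loop at $i$ corresponds to a maximal sub‑walk of the two length‑$2k$ cyclic structures on $[1{:}2k]$ and $[2k{+}1{:}4k]$ that starts and ends at an occurrence of $s_i$ without meeting any other outer row‑ or column‑variable in between. If $D_{r'_i}\ge1$ there is already a non‑loop edge, so $d_i\ge1$; otherwise $a_{r'_i}=|\cP_{r'_i}|\in\{2,3\}$, and I would trace the $i$‑coordinate along the two cycles starting from two distinct positions of $\cP_{r'_i}$: because these positions are joined, through the cyclic adjacency, to positions that carry $s_i$ only via $\cP_{r'_i}$ and its attached singletons, the corresponding open ends cannot all be closed off into $s_i$‑only loops, which leaves at least one non‑loop $\overline{\bB}$‑edge at $i$. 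This is the verbatim analogue of the parity argument behind Lemma~\ref{lem:d_i}.

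For the reinforcement when $a_{r'_i}=2$, the extra input is that $a_{r'_i}=2$ is incompatible with $\cP_{r'_i}\subset[1{:}2k]$ or $\cP_{r'_i}\subset[2k{+}1{:}4k]$: in either of those cases Lemma~\ref{lem:decomposition_G_P}(b) forces $D_{r'_i}\le|\cP_{r'_i}|-3$, hence $a_{r'_i}\ge3$. So $\cP_{r'_i}$ has at least one position in the block $[1{:}2k]$ and at least one in $[2k{+}1{:}4k]$, i.e. it is a genuine bridge between the two disjoint cycles. I would then run the walk argument separately on each cycle: on the side where $s_i$ still appears after removing the kept occurrences, the open end cannot be closed without leaving that cycle, and the accounting of the occurrences of $s_i$ forbids a self‑loop at $i$ that re‑enters the same cyclic block; doing this on both sides produces two distinct non‑loop $\overline{\bB}$‑edges incident to $i$, so $d_i\ge2$. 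The hard part — and the bulk of the work — is precisely this last step: one must handle the sub‑cases $|\cP_{r'_i}|=2$ (both occurrences non‑kept, one per block) and $|\cP_{r'_i}|\ge3$ (a kept occurrence may sit entirely on one side) uniformly, keep track of the singletons attached through the ev‑pairs, and make precise the claim that no self‑loop at a row vertex can simultaneously cover occurrences of $s_i$ lying in both cyclic blocks without also contributing a non‑loop edge. This is where the combinatorics of the M\"obius‑type decomposition of $G_{\cP}$ in Lemma~\ref{lem:decomposition_G_P} and of the contracted walk underlying \eqref{eq:definition__g_m_cp_Ps_s} really have to be pinned down.
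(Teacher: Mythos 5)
Your degree accounting contains an error that propagates through the whole argument. You write that the total number of half-edges at $i$ is $|\cP_{i,ev}|$, i.e.\ that the $a_{r'_i}$ non-kept occurrences of $\cP_{r'_i}$ are ``absorbed into endpoints of the matrix factors $\overline{\bB}^{(r)}$''. They are not: those occurrences correspond to the degree deficit of $G_{\cP}$ in $x_{r'_i}$ produced by the Hermite correction polynomials $P_k,Q_{k,l}$ of Lemma~\ref{lem:decomposition_G_P}, and they generate no index summation and hence no half-edge. The correct count, which is exactly the one used in the proof of Lemma~\ref{lem:d_i} (``the number of half edges incident to $i$ is $|\cP_{i,od}|-a_{r_i}$''), is $D_{r'_i}$ half-edges from the kept monomial plus $|\cP_{i,ev}|-|\cP_{r'_i}|$ half-edges from the singleton positions, i.e.\ $|\cP_{i,ev}|-a_{r'_i}$ in total, so that $d_i=|\cP_{i,ev}|-a_{r'_i}-2L_i$. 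Since $|\cP_{i,ev}|$ is even, your formula $d_i=|\cP_{i,ev}|-2L_i$ would make $d_i$ always even, and the ``verbatim analogue of the parity argument behind Lemma~\ref{lem:d_i}'' that you invoke for $a_{r'_i}=3$ cannot be run from it: that parity argument precisely needs the half-edge count $|\cP_{i,ev}|-3$ to be odd.

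For the reinforcement $d_i\geq 2$ when $a_{r'_i}=2$, your observation that the group must straddle $[2k]$ and $[2k+1{:}4k]$ is correct, but the ``walk on each cyclic block separately'' argument that you then propose is exactly the step you acknowledge not having carried out, and it is not needed. The paper settles this case by a short split on $|\cP_{r'_i}|$: if $|\cP_{r'_i}|\geq 4$ then $D_{r'_i}=|\cP_{r'_i}|-2\geq 2$ already yields two external edges; if $|\cP_{r'_i}|=3$ then $D_{r'_i}=1$ gives one external edge and the remaining $|\cP_{i,ev}|-3$ half-edges form an odd set, so at least one of them cannot be paired into a self-loop and gives a second external edge; if $|\cP_{r'_i}|=2$ one is back to the configuration of Lemma~\ref{lem:d_i} ($|\cP_{i,ev}|=4$ with one singleton in each block) where $d_i=2$. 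As written, your proposal therefore has both a wrong bookkeeping identity and a missing proof of its central claim.
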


  Since $p\geq q$, we deduce that 
  \[
    |W_{\cP,D}|\leq \|\bA\|_{\infty}^{4k- a}p^{T_1 -T_3}q^{T_2+T_3}\ , 
    \]
  where 
  \[T_1= \sum_{i=1}^{\kappa_1}\left[\1_{s'_i>1}+ \1_{s'_i=1}\1_{a_{r'_i}\geq 4} + \frac{1}{2} \1_{s'_i=1}\1_{a_{r'_i}\in [2,3]}\right] \enspace ;
  \]
  \[
    T_2= \sum_{i=1}^{\kappa_2}\left[\1_{s_i>1}+ \1_{s_i=1}\1_{a_{r_i}\geq 4} + \frac{1}{2} \1_{s_i=1}\1_{a_{r_i}\in [2,3]}\right]\ ,
  \]
  and $T_3= \tfrac{1}{2} \1_{\sum_{i=1}^{\kappa_1}\1_{s'_i=1}\1_{a_{r'_i}=2}>0}$. Since $\sum_{r=1}^\kappa a_r =a$ and $a_r\geq 2$, we derive that 
  \[
  T_2\leq \frac{1}{4} a - \sum_{i=1}^{\kappa_2}\frac{1}{2}(s_i-2)_+ - \frac{1}{4}\sum_{r=1}^{\kappa}(a_r-2)_+ +\frac{1}{2}\sum_{i=1}^{\kappa_2}\1_{s_i=1}\1_{a_{r_i}\geq 4}
  \]
  and similarly that 
  \beq\label{eq:upper_T_1}
  T_1 \leq \frac{1}{4} a - \sum_{i=1}^{\kappa_1}\frac{1}{2}(s'_i-2)_+ - \frac{1}{4}\sum_{r=1}^{\kappa}(a_r-2)_+ +\frac{1}{2}\sum_{i=1}^{\kappa_1}\1_{s'_i=1}\1_{a_{r'_i}\geq 4}\ .
  \eeq
  To finish, we consider four different cases depending on the value of $a$. If $a\equiv 0[4]$, then one readily checks that $T_1\leq a/4$ and $T_2\leq a/4$ which leads to 
  $|W_{\cP,D}|\leq \|\bA\|_{\infty}^{4k- a}p^{a/4}q^{a/4}$.  If $a\equiv 1[2]$, then at least one $a_{r}$ is odd and we deduce that $T_1\leq (a-1)/4$ and $T_2\leq (a-1)/4$ so that 
  $|W_{\cP,D}|\leq \|\bA\|_{\infty}^{4k- a}p^{(a-1)/4}q^{(a-1)/4}$. It remains to consider the case $a\equiv 2[4]$. We observe again that $T_2\leq a/4$ so that $T_2+T_3\leq \lceil a/4\rceil$. If $T_3=1/2$, we can use the simple bound $T_1\leq a/4$ so that $T_1-T_3\leq \lfloor a/4\rfloor$. If $T_3=0$, this implies that no index $i$ satisfies  $s'_i=1$ and $a_{r'_i}=2$, and we claim that $T_1\leq \lfloor a/4\rfloor = (a-2)/4$ which implies that $|W_{\cP,D}|\leq \|\bA\|_{\infty}^{4k- a}p^{\lfloor a/4\rfloor}q^{\lceil a/4\rceil}$.

  It remains to establish the claim. 
  Since we know that $T_1\leq a/4$, and that 4 times the rhs of~\eqref{eq:upper_T_1} is an integer, it suffices to prove that the rhs of ~\eqref{eq:upper_T_1} is neither equal to $a/4$ nor to $(a-1)/4$. First, this expression cannot be equal to $(a-1)/4$ because this would imply that 
  \[
    2\sum_{i=1}^{\kappa_1}\frac{1}{2}(s'_i-2)_+ -\sum_{r=1}^{\kappa}(a_r-2)_+  + 2\sum_{i=1}^{\kappa_1}\1_{s'_i=1}\1_{a_{r'_i}\geq 4}= 1\ ,
  \]
 which in turn would imply that $\sum_{r=1}^{\kappa}(a_r-2)_+ $ is odd  which contradicts that the fact that $a=\sum_{r=1}^{\kappa} a_r$ is even (recall that $a_r\geq 2$). Next, the only way of having the rhs of~\eqref{eq:upper_T_1} equal to $a/4$ is that (i)  all $s'_i$ are either equal to $1$ or $2$, (ii) that, for all $i$ such that $s'_i=1$, we have $a_{r'_i}=4$, and (iii) that all the remaining values of $a_{r}$ are equal to $2$.  
  This implies that $a\equiv 0[4]$ which contradicts the hypothesis $a\equiv 2[4]$.

\begin{proof}[Proof of Lemma~\ref{lem:d_i}]
  In order to have $d_i=0$, we need that $a_{r_i}=|\cP_{r_i}|$, otherwise the graph contains at least $D_{r_i}= |\cP_{r_i}|-a_{r_i}$ external edges incident to $i$ (see \eqref{eq:W_cp_d}). If $a_{r_i}=|\cP_{r_i}|=2$, then this implies that $|\cP_{r_i}\cap [2k]|=1$ and $|\cP_{r_i}\cap [2k+1:4k]|=1$, otherwise $W_{\cP,d}=0$ (Lemma~\ref{lem:decomposition_G_P}). Hence, we have  $|\cP_{i,od}|=4$ with $|\cP_{i,od}\cap[2k]|=2$ and $|\cP_{i,od}\cap[2k+1;4k]|=2$. From the construction of the graph $G$, we deduce that $d_i=2$ (see \eqref{eq:first_expression_W_P_d}).
  
  Now assume that  $a_{r_i}=|\cP_{r_i}|=3$.Since $|\cP_{i,od}|$ is an even integer, and since the number of half edges incident to $i$ in the graph is $|\cP_{i,od}|- a_{r_i}$, it follows that the number of half edges is odd and at least one of these half edges is connected to a distinct vertex. Hence, we have $d_i\geq 1$. 
  
\end{proof}
  
\begin{proof}[Proof of Lemma~\ref{lem:d_i_2}]
  The first claim is proved analogously to Lemma~\ref{lem:d_i}. We focus on the second claim. 
Also, we have already shown in the previous proof that $d_i=2$ if $|a_{r'_i}|=|\cP_{r'_i}|=2$. If $|\cP_{r'_i}|\geq 4$ and $a_{r'_i}=2$, then there are $D_{r_i}= |\cP_{r_i}|-a_{r_i}\geq 2$ external edges incident to $i$ (see \eqref{eq:W_cp_d}). It remains to consider the case where  $|\cP_{r'_i}|=3$ and $a_{r'_i}=2$. Then, $D_{r_i}=|\cP_{r_i}|-a_{r_i}=1$ and there is one external edge which is associated to $D_{r_i}$ (see \eqref{eq:W_cp_d}). Besides, $|\cP_{i,ev}|-|\cP_{r'_i}|$ is odd and is the number of half-edges corresponding to the right-hand side term in (see \eqref{eq:W_cp_d}). Since this number of half-edges is odd, at least one them corresponds to an external edge. Hence, we have proved that $d_i\geq 2$. 
\end{proof}

  \subsection{Proof of Proposition~\ref{prop:graph_majoration}}

The proof is based on an induction-like approach. We shall first iteratively prune the graph $G$ by removing small external degree vertices and we will show that the functional $\big|W_{G,\bB,\beta}\big|$ is not much larger than that of the pruned graph. We start with the following lemma that summarizes the pruning operations that we are able to consider.

  \begin{lem}\label{lem:decompo_graph_1}
  Let $(G,\bB,\beta)$ satisfying $(\cA_0)$ and let $i\in V$ be a vertex  satisfying $d_i\leq 2$. If $d_i\leq 1$, we define $G^{(-i)}$ as the subgraph induced by $V\setminus\{i\}$. If  $d_i=2$ and $i$ has two distinct neighbors, then  $G^{(-i)}$ is defined as the subgraph induced by $V\setminus\{i\}$ where we add an edge $e$ connecting the two neighbors of $i$. If $d_i=2$ and $i$ is connected twice to the same neighbor, say $j$,  then  $G^{(-i)}$  is defined as the subgraph induced by $V\setminus\{i\}$ such that $j$ is incident to a single self-loop. 
  Then, the following holds:
  \begin{enumerate}
    \item[(a)] If $d_i=0$, then, for some $\bB'$, $\beta'$ such that ($G^{(-i)},\bB',\beta'$) satisfies $(\cA_0)$, we have 
     \[
      \big|W_{G,\bB,\beta}\big|\leq  \big|W_{G^{(-i)},\bB',\beta')}\big| [q+ (p-q)\1_{i\in R}\1_{d_{i,\mathrm{lo}}=0}]\ .
    \]  
    \item[(b)] If $d_i=2$, then, for some $\bB'$, $\beta'$ such that ($G^{(-i)},\bB',\beta'$) satisfies $(\cA_0)$, we have
    \[
      W_{G,\bB,\beta}= W_{G^{(-i)},\bB',\beta'}\ . 
      \]
      \item[(c)] If $d_i=1$ and the unique neighbor of $j$ satisfies $d_j\geq 3$, then, for some $\bB'$, $\beta'$ such that ($G^{(-i)},\bB',\beta'$) satisfies $(\cA_0)$, we have    
      \[
        |W_{G,\bB,\beta}| \leq    \left|W_{G^{(-i)},\bB',\beta'} \right|\left[\sqrt{q}+ [\sqrt{p}-\sqrt{q}]\1_{i\in R}\1_{d_{i,\mathrm{lo}}=0}\right]\enspace .
      \] 
  \item[(d)] If $d_i=1$ and the unique neighbor of $j$ satisfies $d_j=1$, then, for some $\bB'$, $\beta'$ such that ($G^{(-i,-j)},\bB',\beta'$) satisfies $(\cA_0)$, we have
  \[
    \left|W_{G,\bB,\beta}\right| \leq   \left|W_{G^{(-i,-j)},\bB',\beta'}\right|\left[(\sqrt{q}+ [\sqrt{p}-\sqrt{q}]\1_{i\in R}\1_{d_{i,\mathrm{lo}}=0})(\sqrt{q}+ [\sqrt{p}-\sqrt{q}]\1_{i\in R}\1_{d_{i,\mathrm{lo}}=0})\right]\enspace .
  \]
  \end{enumerate}
  \end{lem}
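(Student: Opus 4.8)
The plan is to prove Lemma~\ref{lem:decompo_graph_1} by treating the four cases separately, in each case explicitly carrying out the partial summation over $\omega_i$ in the defining expression~\eqref{eq:definition_W} of $W_{G,\bB,\beta}$, recognising the result as $W_{G^{(-i)},\bB',\beta'}$ for a modified data $(\bB',\beta')$, and then checking that the modified data still satisfies $(\cA_0)$.

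First I would dispose of case (a), the isolated vertex. If $d_i=0$ then $i$ carries only self-loops in $E_{\mathrm{lo}}$. The sum over $\omega_i$ factors out of~\eqref{eq:definition_W} and equals $\sum_{\omega_i}\prod_{e \text{ self-loop at }i}\beta^{(e)}_{\omega_i}$, which is the $\ell_1$ norm of the entrywise product of the $\beta^{(e)}$'s. Since $d_i\le 1$ for these loops (in fact $d_i=0$), each $\beta^{(e)}$ satisfies $(\cB_{\mathrm{lo}})$, i.e.\ $|\beta^{(e)}|_\infty\le 1$ and $|\beta^{(e)}|_1\le q$; hence the entrywise product has $\ell_1$ norm at most $q$ (using $|x\odot y|_1\le |x|_\infty |y|_1\le |y|_1$) unless there are no loops at all, in which case the sum is just the length of $\omega_i$, namely $q$ if $i\in C$ and $p$ if $i\in R$. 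This gives the factor $[q+(p-q)\1_{i\in R}\1_{d_{i,\mathrm{lo}}=0}]$, and the remaining sum over $(\omega_j)_{j\ne i}$ is exactly $W_{G^{(-i)},\bB',\beta')}$ with $\bB'=\bB$ and $\beta'$ the restriction of $\beta$ to the remaining loops; $(\cA_0)$ is inherited trivially.

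Next, case (d): $i$ has external degree $1$ with a pendant neighbor $j$ (so the edge $e=(i,j,a)$ is the unique external edge at both $i$ and $j$). Summing over $\omega_i$ and $\omega_j$ simultaneously: writing $b_i$ (resp.\ $b_j$) for the entrywise product of the self-loop vectors at $i$ (resp.\ $j$), and $\bB^{(e)}$ for the edge matrix, the contribution is $\sum_{\omega_i,\omega_j} b_i(\omega_i)\,\bB^{(e)}_{\omega_i\omega_j}\,b_j(\omega_j)=b_i^T \bB^{(e)} b_j$. By Cauchy--Schwarz and $\|\bB^{(e)}\|_\infty\le 1$ this is at most $|b_i|_2\,|b_j|_2$. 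Now each self-loop vector at $i$ satisfies $(\cB^-_{\mathrm{lo}})$ or $(\cB_{\mathrm{lo}})$; if $i$ has a self-loop then $|b_i|_2\le\sqrt q$ (since $|x\odot y|_2\le|x|_\infty|y|_2$), while if it has none then $b_i\equiv 1$ and $|b_i|_2=\sqrt q$ if $i\in C$ or $\sqrt p$ if $i\in R$, giving the factor $(\sqrt q+[\sqrt p-\sqrt q]\1_{i\in R}\1_{d_{i,\mathrm{lo}}=0})$, and likewise for $j$. What remains is $W_{G^{(-i,-j)},\bB',\beta'}$ with the data restricted, and $(\cA_0)$ holds on the subgraph. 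Case (c) is the same computation but keeping $j$: summing only over $\omega_i$ gives a new self-loop vector $\beta'^{(j)}(\omega_j)=\sum_{\omega_i}b_i(\omega_i)\bB^{(e)}_{\omega_i\omega_j}$, i.e.\ $\beta'^{(j)}=(\bB^{(e)})^T b_i$, which has $|\cdot|_2\le|b_i|_2\le \sqrt q$ (or $\sqrt p$), so it satisfies $(\cB^-_{\mathrm{lo}})$; the extra multiplicative factor is again $\sqrt q+[\sqrt p-\sqrt q]\1_{i\in R}\1_{d_{i,\mathrm{lo}}=0}$. Crucially, since $d_j\ge 3$ in case (c), after removing $i$ the neighbor $j$ still has external degree $\ge 2$, so condition (iii) of $(\cA_0)$ is not triggered for the new loop at $j$ and $(\cB^-_{\mathrm{lo}})$ suffices — this is exactly why the hypothesis $d_j\ge 3$ appears.

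Finally, case (b): $d_i=2$, with the two external edges $e,e'$ at $i$ and (possibly) some self-loops at $i$. Summing over $\omega_i$: if the two neighbors of $i$ are distinct vertices $j,j'$, the contribution is $\sum_{\omega_i} b_i(\omega_i)\,\bB^{(e)}_{\omega_i\omega_j}\,\bB^{(e')}_{\omega_i\omega_{j'}}$. I would absorb $b_i$ into one of the edge matrices, say replace $\bB^{(e)}$ by $\mathrm{diag}(b_i)\bB^{(e)}$; since $|b_i|_\infty\le 1$ this has operator norm $\le 1$ and rank $\le q$, so it satisfies $(\cB_0)$. Then the $\omega_i$-sum is $\sum_{\omega_i}(\mathrm{diag}(b_i)\bB^{(e)})^T_{\omega_j\omega_i}\bB^{(e')}_{\omega_i\omega_{j'}}=((\mathrm{diag}(b_i)\bB^{(e)})^T\bB^{(e')})_{\omega_j\omega_{j'}}$, which is a product of two matrices of operator norm $\le 1$ and rank $\le q$, hence has operator norm $\le 1$ and rank $\le q$: assign it to the new edge in $G^{(-i)}$. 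If instead both edges of $i$ go to the same vertex $j$, the $\omega_i$-sum is $\sum_{\omega_i}b_i(\omega_i)\bB^{(e)}_{\omega_j\omega_i}\bB^{(e')}_{\omega_i\omega_j}=((\bB^{(e)})\mathrm{diag}(b_i)(\bB^{(e')}))_{\omega_j\omega_j}$, the diagonal of a rank-$\le q$, norm-$\le 1$ matrix, which therefore satisfies $(\cB_{\mathrm{lo}})$ — this is why in this sub-case the new self-loop at $j$ may use the stronger loop condition regardless of $d_j$. In all sub-cases there is no extra multiplicative factor, giving $W_{G,\bB,\beta}=W_{G^{(-i)},\bB',\beta'}$.

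The main obstacle, and the point requiring the most care, is the bookkeeping of the loop conditions $(\cB_{\mathrm{lo}})$ versus $(\cB^-_{\mathrm{lo}})$: one must verify in each case that the newly created self-loop vector lands in the correct condition for the value of $d_j$ it will have in $G^{(-i)}$, and that absorbing $b_i$ into a matrix preserves both the operator-norm bound \emph{and} the rank bound $\le q$. The rank bound is the subtle one — left-multiplication by $\mathrm{diag}(b_i)$ preserves it, and products of rank-$\le q$ matrices have rank $\le q$, so it is fine, but it must be stated. Everything else is routine matrix manipulation: Cauchy--Schwarz, submultiplicativity of the operator norm, and the elementary inequalities $|x\odot y|_r\le |x|_\infty|y|_r$.
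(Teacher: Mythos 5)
Your proposal follows essentially the same route as the paper: factor the $\omega_i$-sum out of the definition of $W_{G,\bB,\beta}$, package the result as new edge matrices or self-loop vectors on the pruned graph, and verify $(\cA_0)$ case by case. The individual estimates are all correct, including the less obvious one in case (b): the diagonal of a matrix with operator norm at most one and rank at most $q$ indeed satisfies $(\cB_{\mathrm{lo}})$, since its entries are bounded by the operator norm and its $\ell_1$ norm is at most the nuclear norm, hence at most $\mathrm{rank}\times\|\cdot\|_\infty\le q$ (the paper gets the same $\ell_1$ bound via Young's inequality and Lemma~\ref{lem:op+2}). Your observation that the hypothesis $d_j\ge 3$ in case (c) is exactly what lets the new loop at $j$ get away with $(\cB^{-}_{\mathrm{lo}})$ is also the paper's point.

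There is one concrete omission, in the sub-case of (b) where $i$ is doubly connected to a single neighbor $j$: removing $i$ drops the \emph{external} degree of $j$ by $2$, so if $j$ already carried self-loops whose vectors were only guaranteed to satisfy $(\cB^{-}_{\mathrm{lo}})$ (because $d_j\ge 2$ held in $G$), condition (iii) of $(\cA_0)$ may now demand the stronger $(\cB_{\mathrm{lo}})$ from them in $G^{(-i)}$, and your construction does not restore it. This is also why the lemma's statement insists that $j$ end up with a \emph{single} self-loop. The fix is the one the paper uses: merge all self-loops at $j$ (old and new) into one, taking the coordinatewise product of their vectors; since your new vector has $\ell_1$ norm at most $q$ and the old ones have $\ell_\infty$ norm at most one, the merged vector satisfies $(\cB_{\mathrm{lo}})$. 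With that one sentence added, the argument is complete.
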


  In view of this lemma, we iteratively prune the graph $G$ by  removing   vertex with external degree $0$  (case (a) above), or a vertex with external degree 2 (case (b) above) if no vertex with external degree 0 is left, or a vertex with external degree 1(cases (c) and (d) above) if no vertices with external  degrees $0$ or $2$ vertices are left. Then, the pruning process stops when the resulting graph $G'=(V',E')$ is  either  empty or when all its vertices have an external degree higher or equal to $3$. Below, we use the convention that $W_{G',\bB',\beta'}=1$ if $G'$ is empty
  
  Denote $V_{r,0,0}\subset R$ the subset of vertices $i$ in $G$ that have a label $r$ and satisfy $d_i=d_{i,\mathrm{lo}}=0$. Define $V_{*,0,*}\subset V$ the subset of vertices $i$ in $G$ satisfying $d_i=0$.  
  Denote $V_{r,1,0}\subset R$ the subset of vertices $i$ in $G$ that have a label $r$ and satisfy $d_i=1$ and $d_{i,\mathrm{lo}}=0$.  Denote $R'= R\cap V'$ and $C'=C\cap V'$ the vertices of $G'$ of type $r$ and $c$ respectively. The following lemma is an application of Lemma~\ref{lem:decompo_graph_1} above and controls $|W_{G,\bB,\beta}|$ in terms of $|W_{G',\bB',\beta'}|$. 
  
  \begin{lem}\label{lem:result_pruning}
  We have $V'\subset V\setminus  (V_{*,0,*}\cup V_{r,1,0})$ and  
  \begin{equation}\label{eq:upper_WGBbeta}
    |W_{G,\bB,\beta}|\leq |W_{G',\bB',\beta'}|p^{|V_{r,0,0}|+ |V_{r,1,0}|/2}q^{|V_{*,0,*}|-|V_{r,0,0}||+ [|V|-|V'|-|V_{*,0,*}|- |V_{r,1,0}|]/2]}\enspace ,
  \end{equation}
  where  $(G',\bB',\beta')$ satisfy $(\cA_0)$. 
  \end{lem}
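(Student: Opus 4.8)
\textbf{Plan for the proof of Lemma~\ref{lem:result_pruning}.}
The plan is to carry out the pruning process described in the paragraph preceding the lemma as a finite induction, using Lemma~\ref{lem:decompo_graph_1} at each step to control how the functional $|W_{G,\bB,\beta}|$ changes, and to bookkeep carefully the accumulated factors of $p$ and $q$. Concretely, I would run the following deterministic procedure on $(G,\bB,\beta)$: while the current graph $G^{(t)}=(V^{(t)},E^{(t)})$ is nonempty and has a vertex of external degree in $\{0,1,2\}$, pick such a vertex $i$, preferring external degree $0$, then $2$, then $1$; apply the relevant case (a)--(d) of Lemma~\ref{lem:decompo_graph_1} to replace $(G^{(t)},\bB^{(t)},\beta^{(t)})$ by $(G^{(t+1)},\bB^{(t+1)},\beta^{(t+1)})$, which still satisfies $(\cA_0)$; and record the multiplicative factor. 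The process terminates because $|V^{(t)}|$ strictly decreases at each step, and at termination we reach $(G',\bB',\beta')$ which either is empty or has all external degrees $\geq 3$.

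The core of the argument is then a ledger identity. Each time we remove a vertex $i$ we multiply the bound by at most: $q+(p-q)\1_{i\in R}\1_{d_{i,\mathrm{lo}}=0}$ in case (a); $1$ in case (b); $\sqrt q+(\sqrt p-\sqrt q)\1_{i\in R}\1_{d_{i,\mathrm{lo}}=0}$ in case (c); and the product of two such $\sqrt\cdot$-type factors (one per removed vertex) in case (d). I would observe that a vertex removed in case (a) must have been of external degree $0$ \emph{in the original graph} $G$ as well: once the pruning starts, removing vertices can only \emph{increase} the external degree of the remaining vertices (self-loops created in case (b) do not count toward external degree, and new ordinary edges in cases (b)--(d) add to external degrees), so a vertex with $d_i=0$ at step $t$ had $d_i=0$ at step $0$. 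Hence the vertices removed in case (a) are exactly $V_{*,0,*}$, and among these the ones in $R$ with $d_{i,\mathrm{lo}}=0$ — which contribute a factor $p$ rather than $q$ — are exactly $V_{r,0,0}$. Thus the contribution of all case-(a) removals is at most $q^{|V_{*,0,*}|-|V_{r,0,0}|}p^{|V_{r,0,0}|}$. Similarly, a vertex removed in case (c) or (d) has external degree $1$ at the moment of removal; I would argue that vertices of external degree $1$ in $G$ that get removed this way (and whose loop status is $d_{i,\mathrm{lo}}=0$, so the factor is $\sqrt p$) are precisely accounted for by $V_{r,1,0}$ — more carefully, a vertex in $V_{r,1,0}$ survives until some case-(c)/(d) removal (it is never removed by case (a) since $d_i=1\neq 0$, and if it is removed by case (b) its external degree must have risen to $2$, contradicting the preference ordering unless... here I need the precise claim that $V_{r,1,0}\cap V' = \emptyset$ and that each such vertex, at the step it is removed, still has external degree $1$ and $d_{i,\mathrm{lo}}=0$). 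Every $\sqrt p$ or $\sqrt q$ factor appears squared only if two case-(c)/(d) vertices are paired, so after collecting, the total number of $\sqrt q$-type factors is $|V|-|V'|-|V_{*,0,*}|$ (the vertices removed in cases (b),(c),(d)), of which $|V_{r,1,0}|$ carry an extra $\sqrt p/\sqrt q$; the case-(b) removals contribute factor $1$ which I absorb by bounding each remaining $\sqrt q$-factor by itself. This yields $q^{(|V|-|V'|-|V_{*,0,*}|-|V_{r,1,0}|)/2}p^{|V_{r,1,0}|/2}$ as the contribution of cases (b)--(d), and multiplying by the case-(a) contribution and by $|W_{G',\bB',\beta'}|$ gives exactly~\eqref{eq:upper_WGBbeta}.

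For the inclusion $V'\subset V\setminus(V_{*,0,*}\cup V_{r,1,0})$: a vertex in $V_{*,0,*}$ has external degree $0$ at the start, hence (by the monotonicity observation, and since the algorithm always removes an external-degree-$0$ vertex when one exists) it is removed at the first step at which it still exists — it cannot survive to $G'$, whose vertices all have external degree $\geq 3$. A vertex in $V_{r,1,0}$ has external degree $1$ in $G$; by the same monotonicity its external degree in $G^{(t)}$ is $\geq 1$ always, but crucially it either gets removed before its external degree rises (in which case it is not in $V'$) or, the moment its external degree would exceed $1$, that happens via an edge-creation in a case (b)/(c)/(d) removal of a neighbor, and I would track that this keeps it eligible for a later case-(c)/(d) removal; in any event I must rule out it surviving to $G'$, which follows since $G'$ has no vertices of external degree $<3$ — so I only need that such a vertex is \emph{some} step's removed vertex, which the termination condition guarantees for every vertex not in $V'$.

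\textbf{Main obstacle.} The delicate point — and the one I would spend the most care on — is the bookkeeping of \emph{which} vertex contributes a $p$-power versus a $q$-power, i.e.\ matching the sets $V_{*,0,*}$, $V_{r,0,0}$, $V_{r,1,0}$ (defined in terms of the \emph{original} graph $G$) against the factors accrued during pruning (which depend on $d_i$ and $d_{i,\mathrm{lo}}$ at the \emph{moment of removal}). The enabling facts are (i) external degrees are non-decreasing under the pruning operations, so "external degree $0$ now" $\Rightarrow$ "external degree $0$ originally", and (ii) the number of self-loops at a vertex, and in particular the indicator $\1_{d_{i,\mathrm{lo}}=0}$, can only change in controlled ways (a new self-loop is created in case (b) when a vertex is connected twice to the same neighbor; but then that neighbor has external degree that I must check against the $\sqrt p$ vs $\sqrt q$ accounting). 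I would need to verify that these transitions never convert a legitimate $p$-factor into a spurious one or vice versa, which is a finite but somewhat intricate case analysis on the four pruning cases of Lemma~\ref{lem:decompo_graph_1}; once that is pinned down, the exponent arithmetic is routine.
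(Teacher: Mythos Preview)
Your overall strategy—run the pruning loop, invoke Lemma~\ref{lem:decompo_graph_1} at each step, and keep a ledger of $p$ and $q$ factors—is the same as the paper's. However, the key monotonicity claim you rely on is stated with the wrong sign, and this breaks your bookkeeping.

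You write that ``removing vertices can only \emph{increase} the external degree of the remaining vertices.'' In fact the opposite holds: each pruning step of Lemma~\ref{lem:decompo_graph_1} leaves the external degree of every surviving vertex \emph{unchanged or smaller}. In case~(a) nothing incident to other vertices is touched. In case~(b) with two distinct neighbors $j,k$, each of $j,k$ loses its edge to $i$ and gains one new edge, net change~$0$; with a single neighbor $j$ connected twice, $j$ loses two external edges and gains only a self-loop, net change~$-2$. In case~(c) the neighbor $j$ loses its edge to $i$ and gains a self-loop, net change~$-1$. In case~(d) both endpoints disappear. So degrees never go up.

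Consequently your argument that a vertex removed in case~(a) ``must have been of external degree~$0$ in the original graph'' is false: vertices can \emph{drop} to degree~$0$ mid-process. What saves the ledger (and this is the point the paper uses) is that whenever a vertex $j$ has its external degree reduced to $0$ or to $1$ during pruning, the step that caused the drop necessarily created a self-loop at $j$; hence at the moment $j$ is removed one has $d_{j,\mathrm{lo}}>0$, so the factor in case~(a) is $q$ (not $p$) and in cases~(c)/(d) is $\sqrt{q}$ (not $\sqrt{p}$). Combined with the preference order $0\prec 2\prec 1$, this yields: a factor $p$ is paid exactly for vertices in $V_{r,0,0}$; a factor $\sqrt{p}$ exactly for vertices in $V_{r,1,0}$ (these keep $d_i=1$, $d_{i,\mathrm{lo}}=0$ until they are pruned in case~(c)/(d)); a factor $q$ for the remaining vertices of $V_{*,0,*}$; and every other pruned vertex costs at most $\sqrt{q}$ (case~(b) costs $1\le\sqrt q$). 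This is precisely the exponent count in~\eqref{eq:upper_WGBbeta}. Once you reverse the monotonicity direction and insert the ``a drop to $0$ or $1$ forces a self-loop'' observation, your proof goes through along the paper's lines.
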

  
  It remains to control quantities of the form $|W_{G',\bB',\beta'}|$ for graphs $G'$ whose vertices have all an external degree at least $3$. This is slightly more tricky than the previous arguments. See the proof of the following dedicated result.

  \begin{prp}\label{property_graph_multiple}
  Consider any multi-graph $G'=(V',E')$ whose vertices have an external degree at least $3$. Let $(\bB', \beta')$ be any matrices and vectors  such that $(G',\bB',\beta')$ satisfies Property $(\cA_0)$. Then, we have
    \beq\label{eq:upper_W}
    W_{G',\bB',\beta'}\leq q^{|V'|/2}\left(\frac{p}{q}\right)^{(|R'|-1)_+/2}\ ,
    \eeq
    where we recall that $[x]_+= \max(x,0)$ and $R'\subset V'$ is the subset of vertices of type '$r$'. 
  \end{prp}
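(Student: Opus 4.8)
\textbf{Proof proposal for Proposition~\ref{property_graph_multiple}.}

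The plan is to bound the sum $W_{G',\bB',\beta'}= \sum_{(\omega_i)_{i\in V'}}\prod_{e\in E'_0}\bB^{(e)}_{\omega_{e_1},\omega_{e_2}}\prod_{e\in E'_{\mathrm{lo}}}\beta^{(e)}_{\omega_{e_1}}$ by a Cauchy--Schwarz / flow argument on the multigraph $G'$, exploiting the fact that every vertex has external degree at least $3$. First I would dispose of the self-loops: since each vector $\beta^{(e)}$ with $e\in E'_{\mathrm{lo}}$ satisfies $|\beta^{(e)}|_\infty\le 1$, the factor $\prod_{e\in E'_{\mathrm{lo}}}\beta^{(e)}_{\omega_{e_1}}$ is bounded in absolute value by $1$, so it suffices to bound $\sum_{(\omega_i)}\prod_{e\in E'_0}\big|\bB^{(e)}_{\omega_{e_1},\omega_{e_2}}\big|$ — i.e.\ we may assume $E'=E'_0$ and all entries nonnegative. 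The key structural input is that $G'$ has minimum external degree $\ge 3$, so $|E'_0|\ge \tfrac{3}{2}|V'|$, and moreover $G'$ is connected componentwise; I would handle each connected component separately and multiply the bounds, so assume $G'$ connected.

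The core step is a repeated application of the Cauchy--Schwarz inequality "along a spanning structure" of $G'$, in the spirit of the random-matrix trace-method bounds in~\cite{anderson2010introduction}. Concretely, pick a spanning tree $\mathcal{T}$ of $G'$ rooted at some vertex $r_0\in R'$ (if $R'\neq\emptyset$; otherwise root anywhere). Orient $\mathcal{T}$ away from the root. The non-tree edges $E'_0\setminus\mathcal{T}$ number $|E'_0|-(|V'|-1)$. I would bound the sum by first summing over the leaf variables: for a leaf $i$ of $\mathcal{T}$ with parent $j$, Cauchy--Schwarz in $\omega_i$ against the single tree edge together with the operator-norm bound $\|\bB^{(e)}\|_\infty\le 1$ lets us "contract" $i$, at the cost of a factor $\sqrt{\,\#\{\text{values of }\omega_i\}\,}\le \sqrt{p}$ (if $i\in R'$) or $\sqrt q$ (if $i\in C'$), and absorbing its $d_i\ge 3$ incident edges. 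The crucial bookkeeping is that each row-type vertex is "paid for" only once with a factor $\sqrt p$ while the remaining geometry contributes $\sqrt q$ per vertex; since $p\ge q$ this is what produces the exponent $|V'|/2$ on $q$ together with the surplus $(p/q)^{(|R'|-1)_+/2}$ — the "$-1$" coming from the fact that one row vertex (the root, or more precisely the structure of the contraction) can be absorbed for free using $\|\cdot\|_\infty\le 1$ rather than a dimension factor. One must be careful to check that whenever we apply Cauchy--Schwarz, the matrices and vectors produced still satisfy Condition $(\cB_0)$ or $(\cB_{\mathrm{lo}})$, which follows because products $\bB^{(1)}\bB^{(2)}$ of operator-norm-$\le 1$ matrices have operator norm $\le 1$ and rank $\le q$, and Schur/Hadamard-type combinations preserve $|\cdot|_\infty\le1$; this is exactly why $(\cA_0)$ was set up the way it is.

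The main obstacle I anticipate is getting the exponent of $p$ exactly right — i.e.\ showing that the number of times we are forced to "spend" a $\sqrt p$ factor (as opposed to $\sqrt q$, or nothing) is precisely $(|R'|-1)_+$ and not, say, $|R'|$ or $|R'|/2$. This requires organizing the Cauchy--Schwarz contractions so that the $\sqrt p$ factors are attached only to row vertices, that each row vertex incurs at most one such factor, and that at least one row vertex (when $R'\neq\emptyset$) is eliminated using only $\|\bB^{(e)}\|_\infty\le 1$; here the minimum-external-degree-$3$ hypothesis is what guarantees enough "slack" in the edge budget to always be able to pair things up this way. Once the exponent of $p$ is pinned down, the exponent of $q$ is forced by homogeneity: the total number of summation indices is $|V'|$, the total dimension weight distributed is $|V'|$ in units of $\sqrt{\,\cdot\,}$ minus one free row vertex, giving $q^{|V'|/2}(p/q)^{(|R'|-1)_+/2}$ after collecting terms. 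I would present the tree-contraction as a clean induction on $|V'|$, with the base case $|V'|=1$ (a single vertex with $\ge 3$ self-loops, bounded by $1$) and the inductive step removing one carefully chosen vertex via Lemma~\ref{lem:decompo_graph_1}-type Cauchy--Schwarz moves adapted to the high-degree regime.
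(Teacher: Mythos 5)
Your outline has the right flavor --- an inductive pruning of the graph with Cauchy--Schwarz steps and a careful ledger of when a $\sqrt{p}$ rather than a $\sqrt{q}$ is paid --- and this is indeed the spirit of the paper's argument. But three points in your plan are genuine gaps rather than routine verifications.

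First, the reduction to nonnegative entries is not free. After replacing $\bB^{(e)}$ by its entrywise absolute value, the bound $\|\bB^{(e)}\|_{\infty}\leq 1$ is in general lost (the operator norm of $|\bB|$ can far exceed that of $\bB$), yet your contraction step explicitly invokes ``the operator-norm bound $\|\bB^{(e)}\|_{\infty}\le 1$.'' What survives the absolute value are the row/column $\ell_2$-norm bounds and the Frobenius bound $\|\bB^{(e)}\|_2\leq\sqrt{q}$ (the latter via the rank-$\leq q$ part of $(\cB_0)$), and the paper is forced to reformulate the whole induction in terms of exactly these quantities (its Condition $(\cB^*)$). Your argument as written uses a hypothesis you have already destroyed.

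Second, the leaf-contraction step does not go through for vertices of degree $\geq 3$. Summing $\omega_i$ out of $\prod_{e\ni i}\bB^{(e)}_{\omega_i,\cdot}$ when $i$ has three or more incident edges produces a $3$-tensor, not a matrix, so there is no object of type $(\cB_0)$ or $(\cB^*)$ to hand to the induction. The paper circumvents this by first \emph{deleting} edges (monotonicity of the sum in the number of edges, which is only valid after the nonnegativity reduction) so as to bring the chosen vertex down to degree $2$ or to an explicitly handled degree-$3$ configuration, and only degree-$2$ vertices are ever contracted into a matrix product. Your spanning-tree scheme never confronts this, and ``absorbing its $d_i\geq 3$ incident edges'' is precisely the step that fails.

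Third, the exponent $(|R'|-1)_+$ is stated as a target but the mechanism producing it is absent. In the paper it comes from a priority rule (always prune a degree-$3$ vertex of type `$r$' if one exists) together with the observation that a factor $\sqrt{p/q}$ is incurred only in one specific sub-case where two row vertices are removed simultaneously, so the induction hypothesis on the smaller graph (with $|R|-2$ row vertices) exactly compensates. Without such a rule, a naive tree contraction charges $\sqrt{p}$ once per row vertex and yields $(p/q)^{|R'|/2}$, which is too weak. So the proposal correctly names the obstacle but does not overcome it.
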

  
  Gathering the two previous lemmas leads to 
  \[
    |W_{G,\bB,\beta}|\leq p^{|V_{r,0,0}|+ |V_{r,1,0}|/2}q^{|V_{*,0,*}|-|V_{r,0,0}||+ [|V|-|V_{*,0,*}|- |V_{r,1,0}|]/2]}\left(\frac{p}{q}\right)^{(|R'|-1)_+/2} \enspace . 
  \]
To conclude, we need to upper bound $|R'|$. Since all vertices of with external  degree $0$ or $1$ have been pruned in the above iterative algorithm, the vertex set $V'$ is only made of vertices of degree at least $2$. Hence, $R'\subset V_{r,2,*}$, the collection of vertices of type '$r$' with external degree at least $2$. The result follows.

  \subsection{Proof of  Lemmas~\ref{lem:decompo_graph_1} and \ref{lem:result_pruning}}
  
  \begin{proof}[Proof of Lemma~\ref{lem:result_pruning}]
    Let us explain how we obtain~\eqref{eq:upper_WGBbeta}. As we first prune all vertices with external degree $0$ , it follows from Cases (a) and (b) in Lemma~\ref{lem:decompo_graph_1} that 
    \begin{equation}\label{eq:upper_WGBbeta0}
      |W_{G,\bB,\beta}|\leq |W_{G'',\bB'',\beta''}|p^{|V_{r,0,0}|}q^{|V_{*,0,*}|-|V_{r,0,0}|}\enspace ,
    \end{equation}
    where $G''$ is subgraph of $G$ induced by $V\setminus V_{*,0,*}$ and $(G'',\bB'',\beta'')$ satisfies $(\cA_0)$. Then, we observe that no pruning step in Lemma~\ref{lem:decompo_graph_1} increases the external degree of any vertex. Therefore, at some point, the procedure prunes all the vertices $i$ satisfying $d_i=1$ in the original graph $G$.

    If, at some point of the pruning procedure, we have decreased the degree $d_j$ of a vertex to $0$, then this implies that, in the previous step, we have pruned its unique neighbor, which was therefore an external degree $2$ vertex (case (b) in Lemma~\ref{lem:decompo_graph_1}), so that the vertex $j$ must have now a self-loop. When we prune this vertex $j$ with external degree $0$, we pay a total price of $q$ for having pruned two vertices (cases (a) and (b) in Lemma~\ref{lem:decompo_graph_1}). 

    If, at some point of the pruning procedure, we have decreased the degree $d_j$ of a vertex to $1$, then this vertex must now contain a self-loop. Indeed, the external degree of $j$ was at least $2$ in the previous step. It is not possible that it was equal to $2$, because this would imply that we have pruned an external degree one vertex before an external degree 2 vertex. Similarly, it is not possible that the external degree of $j$ was larger than $3$, otherwise its pruned neighbor would have been of external degree at least $3$. Finally, if the external degree of $j$ was equal to $3$, than its pruned neighbor $i$ was satisfying $d_i=2$ and pruning $i$ creates a self-loop in $j$ (see case (b) in Lemma~\ref{lem:decompo_graph_1}). Since $j$ has a self-loop, it follows from  cases (c) and (d) in Lemma~\ref{lem:decompo_graph_1}), we pay a price $\sqrt{q}$ when we prune $j$.

    In summary, in the pruning process, all vertices with external degree at most $2$ have been pruned. We pay a price $p$ only when pruning vertices $i\in R$ with $d_i=d_{i,lo}=0$ in the original graph $G$. We pay a price $\sqrt{p}$ only when pruning vertices $i\in R$ with $d_i=1$ and $d_{i,lo}=0$ in the original graph $G$. We pay a price $q$ only for vertices satisfying $d_i=0$ and  $i\in C$ or $d_{i,lo}>0$ in the original graph $G$. Otherwise, we pay a $\sqrt{q}$ price for all the other pruned vertices. The result follows.  
    \end{proof}

\begin{proof}[Proof of Lemma~\ref{lem:decompo_graph_1}]

    We start with a straightforward property that will be used several times through the manuscript.
  
   \begin{lem}\label{lem:op+2} If a matrix   $\bB$ satisfies $(\cB_{0})$, then  $\|\bB\|_{2}\leq \sqrt{q}$.
   \end{lem}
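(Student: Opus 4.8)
The plan is to bound the Frobenius (i.e.\ $2$-Schatten) norm of $\bB$ by the number of its nonzero singular values times its squared operator norm. First, I would unpack Condition $(\cB_0)$: it states that $\|\bB\|_{\infty}=\sigma_1(\bB)\leq 1$ and that $\rank(\bB)\leq q$. Writing the $2$-Schatten norm in terms of the singular values, we have $\|\bB\|_2^2=\sum_{i}\sigma_i^2(\bB)$, and by the rank bound at most $q$ of these terms are nonzero.

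Then I would simply estimate each nonzero term by $\sigma_i^2(\bB)\leq \sigma_1^2(\bB)=\|\bB\|_\infty^2\leq 1$, which gives
\[
\|\bB\|_2^2=\sum_{i=1}^{\rank(\bB)}\sigma_i^2(\bB)\leq \rank(\bB)\,\sigma_1^2(\bB)\leq q\enspace ,
\]
and taking square roots yields $\|\bB\|_2\leq\sqrt{q}$. There is no genuine obstacle here: the statement is an immediate consequence of the interpolation inequality $\|\bB\|_2\leq \rank(\bB)^{1/2}\|\bB\|_\infty$ between Schatten norms, and the only point to keep in mind is that the rank bound in $(\cB_0)$ is what controls the number of summands even when $\bB$ is a square $p\times p$ matrix (as happens for the matrices $\overline{\bB}^{(r)}$ built from products of $\bA$ and $\bA^T$).
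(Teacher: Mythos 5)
Your proof is correct and matches the paper's argument exactly: both bound $\|\bB\|_2^2=\sum_i\sigma_i^2(\bB)$ by the number of nonzero singular values (at most $q$ by the rank condition) times $\sigma_1^2(\bB)\leq 1$. Nothing further is needed.
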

    
    \begin{proof}[Proof of lemma~\ref{lem:op+2}]
    Indeed, $\|\bB\|^2_{2}=\sum_{i} \sigma_i^2(\bB)\leq \sigma_1^2(\bB)q = q$ because $\|\bB\|_{\infty}=1$ and the rank of $\bB$ is at most $q$. 
   \end{proof}

    If $(G,\bB,\beta)$ satisfies $(\cA_0)$ and if $G$ contains a vertex $i$ with external degree $d_i=0$. Then, we consider $G^{(-i)}$ as the graph induced by $V\setminus\{i\}$, $\bB^{(-i)}= \bB$, and $\beta^{(-i)}$ as the restriction of $\beta$ to self-loops that not incident to $i$. 
    We start from the decomposition
    \[
      W_{G,\bB,\beta}= W_{G^{(-i)},\bB^{(-i)},\beta^{(-i)}} \sum_{\omega_i}\prod_{e\in E_{lo,i}}\beta^{(e)}_{\omega_i}\ , 
    \]
    where $E_{lo,i}$ is the subset of self-loops that are incident to $i$ and where we use the convention $\prod_{e\in \emptyset}\beta^{(e)}_{\omega_i} =1$. Assume first that $E_{lo,i}=\emptyset$, then the above sum over $\omega_i$ equals $p$ if $i\in R$ and $q$ if $i\in C$.  If $E_{lo,i}\neq\emptyset$, then one deduces from Condition~($\cA_0$) that the vector  $\gamma=(\prod_{e\in E_{lo,i}}\beta^{(e)}_{\omega_i})$ satisfies $|\gamma|_1\leq q$, which yields
   \beq\label{eq:upper_degre0}
      \big|W_{G,\bB,\beta}\big|\leq  \big|W_{G^{(-i)},\bB^{(-i)},\beta^{(-i)}}\big| [q+ (p-q)\1_{i\in R}\1_{d_{i,\mathrm{lo}}=0}]\ .
   \eeq

  \medskip
  
    If $G$ contains a vertex $i$ with external degree $d_i=2$, then we define 
    $G^{(-i)}$ as the subgraph $G$ induced by $V\setminus\{i\}$ where we have added a new edge $e'$ between the neighbors of $i$. If these neighbors, say $j$ and $k$, are distinct, then the new matrix $\bB^{(e')}$ is defined by 
    \[
    \bB^{(e')}_{\omega,\omega'}= \sum_{\omega_i} \bB^{(j,i,1)}_{\omega,\omega_i}\bB^{(k,i,1)}_{\omega_i,\omega'} \prod_{e\in E_{lo,i}}\beta^{(e)}_{\omega_i}\ .
    \]
This matrix $\bB^{(e')}$ satisfies $(\cB_0)$.  Indeed, $\bB^{(e')}$ is a product of the matrices $\bB^{(j,i,1)}$ and  $\bB^{(k,i,1)}$ and of $|E_{lo,i}|$ diagonal matrices with diagonal entries given by the $\beta^{(e)}$. By Conditions $(\cB_0)$ and $(\cB_{\mathrm{lo}}^{-})$, the operator norm of all these matrices is smaller or equal to one, whereas the rank of the two first matrices in the product is smaller or equal to $q$. Hence, $\bB^{(e')}$ satisfies $(\cB_0)$. We have
 $W_{G,\bB,\beta}=  W_{G^{(-i)},\bB^{(-i)},\beta^{(-i)}}$, where $\bB^{(-i)}$ is the collection $\bB$ of matrices  where we have removed $\bB^{(j,i,1)}$ and $\bB^{(k,i,1)}$ and added $\bB^{(e')}$ and where $\beta^{(-i)}$ is the collection $\beta$ of vectors  where we have removed all vectors corresponding to self-loops incident to $i$. Since the external degrees of the vertices $j$ and $k$ are left unchanged, this implies that $(G^{(-i)},\bB^{(-i)},\beta^{(-i)})$ satisfies ($\cA_0$). 

 If $i$ has a unique neighbor, say $j$, then the new edge $e'$ is a self-loop incident to $j$. The  new vector $\beta^{(e')}$ is defined by 
    \[
    \beta^{(e')}_{\omega}=   \sum_{\omega_i} \bB^{(j,i,1)}_{\omega,\omega_i}\bB^{(j,i,2)}_{\omega,\omega_i} \prod_{e\in E_{lo,i}}\beta^{(e)}_{\omega_i}\ . 
    \]
  Applying Young's inequality and relying on the fact that the Frobenius norms of $\bB^{(j,i,1)}$ and $\bB^{(j,i,2)}$ are at most $\sqrt{q}$ (Lemma~\ref{lem:op+2}), we derive that $|\beta^{(e')}|_1\leq q$. Besides, $|\beta^{(e')}|_{\infty}\leq \|\bB^{(j,i,1)}\|_{\infty}\|\bB^{(j,i,2)}\|_{\infty}\leq 1$. As a consequence, $\beta^{(e')}$ satisfies $(\cB_{\mathrm{lo}})$. If the vertex $j$ had no previous self-loop, we consider $\bB^{(-i)}$ as the collection  $\bB$ of matrices where we have removed $\bB^{(j,i,1)}$ and $\bB^{(j,i,2)}$. Besides, we consider  $\beta^{(-i)}$ as the collection $\beta$ of vectors  where we have removed all vectors corresponding to self-loops incident to $i$ and we have added $\beta^{(e')}$. Hence, $(G^{(-i)},\bB^{(-i)},\beta^{(-i)})$ satisfies ($\cA_0$) and we have $W_{G,\bB,\beta}=  W_{G^{(-i)},\bB^{(-i)},\beta^{(-i)}}$. It remains to consider the case where $j$ had already at least one self-loop. We cannot simply use $(G^{(-i)},\bB^{(-i)},\beta^{(-i)})$ as before because the external degree of $j$ is possibly smaller than one while some of the vectors  $\beta^{(e)}$ associated to self-loops incident to $j$ do not necessarily satisfy ($\mathcal{B}_{\mathrm{lo}}$). To resolve this small issue, we merge all the self-loops of $j$ together and replace them  by a single self-loop, say $e''$ whose associated vector $\beta^{(e'')}$ is the coordinate-wise product of the vectors, i.e. $\beta^{(e'')}_{\omega}=\beta^{(e')}_{\omega}\prod_{e\in E_{l_0,j}}\beta^{(e)}_{\omega}$. One easily checks that $\beta^{(e'')}$ satisfied ($\cB_{\mathrm{lo}}$). The result follows.

    \medskip 
  
    Finally, we consider the case where  $G$ contains a vertex $i$ with external degree $d_i=1$ and we denote $j$ its unique neighbor.  
  \begin{itemize}
    \item If $d_j=1$. Then, we have 
    \[
      |W_{G,\bB,\beta}|\leq   \left|W_{G^{(-i,-j)},\bB^{(-i,-j)},\beta^{(-i,-j)}}\right|\left[(\sqrt{q}+ [\sqrt{p}-\sqrt{q}]\1_{i\in R}\1_{d_{i,\mathrm{lo}}=0})(\sqrt{q}+ [\sqrt{p}-\sqrt{q}]\1_{i\in R}\1_{d_{i,\mathrm{lo}}=0})\right]\enspace , 
   \]
  where $G^{(-i,-j)}$ is the subgraph induced by $V\setminus \{i,j\}$. 
  Indeed, denoting $e'$ the unique edges between $i$ and $j$, we have the decomposition 
  \[
    W_{G,\bB,\beta} =   W_{G^{(-i,-j)},\bB^{(-i,-j)},\beta^{(-i,-j)}} \sum_{\omega,\omega'}\bB^{(e')}_{\omega,\omega'} \prod_{e\in E_{lo,i}}\beta^{(e)}_{\omega}\prod_{e\in E_{lo,j}}\beta^{(e)}_{\omega'}\ , 
  \]
  and we only have to bound this sum over $\omega$ and $\omega'$. Since $\|\bB^{(e')}\|_{\infty}\leq 1$, we have 
  \[
  \left|\sum_{\omega,\omega'}\bB^{(e')}_{\omega,\omega'} \prod_{e\in E_{lo,i}}\beta^{(e)}_{\omega}\prod_{e\in E_{lo,j}}\beta^{(e)}_{\omega'} \right| \leq \Bigg[\underset{=A}{\underbrace{\sum_{\omega}\left(\prod_{e\in E_{lo,i}}\beta^{(e)}_{\omega}\right)^2}}\Bigg]^{1/2} \Bigg[\sum_{\omega'}\left(\prod_{e\in E_{lo,j}}\beta^{(e)}_{\omega'}\right)^2\Bigg]^{1/2}\ .
  \]
  By symmetry, we only need to consider the sum over $\omega$. If $d_{i,\mathrm{lo}}=0$, then we see that $A=q$ if $i\in C$ and $A=p$ if $i\in R$.  If $d_{i,\mathrm{lo}}>0$, then, we 
  deduce from Condition $(\cB_{\mathrm{lo}})$ that 
  \[
    A\leq \sum_{\omega } \left|\prod_{e\in E_{lo,i}}\beta^{(e)}_{\omega}\right|\leq q\enspace , 
  \]
  which concludes the proof. 
  
    \item If $d_j\geq 3$ then we consider the subgraph $G^{(-i)}$ induced by $V\setminus \{i\}$ where we have added a self-loop $e'$ incident to $j$. We have 
    \[
      W_{G,\bB,\beta}= \sum_{(\omega_l)_l\in V\setminus\{i,j\}}\sum_{\omega_j}\prod_{e\in E_0^{(-i)}}\bB^{(e)}_{\omega_{e_1}, \omega_{e_2}}\prod_{e\in E_{\mathrm{lo}}^{(-i)}}\beta^{e}_{\omega_{e_1}}\left[
     \sum_{\omega_i}\bB^{(j,i,1)}_{\omega_j,\omega_i} \prod_{e\in E_{lo,i}}\beta^{(e)}_{\omega_j}\right]\ , 
    \]
    where $E_{\mathrm{lo}}^{(-i)}$ is the set of self-loops that are not incident to $i$ and $E_0^{(-i)}$ is the subset of edges not incident to $i$.
    Define the vector $\beta^{e'}$ by 
    \[
    \beta^{e'}_{\omega}= \left[\sqrt{q}+ [\sqrt{p}-\sqrt{q}]\1_{i\in R}\1_{d_{i,\mathrm{lo}}=0}\right]^{-1} \sum_{\omega_i}\bB^{(j,i,1)}_{\omega,\omega_i} \prod_{e\in E_{lo,i}}\beta^{(e)}_{\omega_j}\ . 
    \]
Obviously we have 
\[
  |W_{G,\bB,\beta}| \leq    \left|W_{G^{(-i)},\bB^{(-i)},\beta^{(-i)}}\right|\left[\sqrt{q}+ [\sqrt{p}-\sqrt{q}]\1_{i\in R}\1_{d_{i,\mathrm{lo}}=0}\right]\enspace , 
\] 
Since the external degree of $j$ is still larger or equal to $2$ after the pruning step, we only have to prove that  $\beta^{(e')}$ satisfies $(\cB^{-}_{\mathrm{lo}})$.
 Indeed, define the vectors $\gamma$ and $\delta$ by 
  \[
  \gamma_{\omega}= \sum_{\omega_i}\bB^{(j,i,1)}_{\omega,\omega_i} \prod_{e\in E_{lo,i}}\beta^{(e)}_{\omega_i} \ ; \quad \delta_{\omega}=\prod_{e\in E_{lo,i}}\beta^{(e)}_{\omega} .
  \]
Since the  $\beta^{(e)}$'s satisfy $\cB_{\mathrm{lo}}$ (as $d_i=1$), we have $|\delta|_{\infty}=1$ and $|\delta|_1= \sqrt{q}+ [\sqrt{p}-\sqrt{q}]\1_{i\in R}\1_{d_{i,\mathrm{lo}}=0}$. Since $\bB^{(i,j,1)}$ satisfies ($\cB_0$), it follows that the vector $\gamma$ satisfies 
\[
  |\gamma|_{2}=  |\bB^{(i,j,1)}\delta|_2\leq \| \bB^{(i,j,1)}\|_{\infty}|\delta|_2\leq \sqrt{q}+ [\sqrt{p}-\sqrt{q}]\1_{i\in R}\1_{d_{i,\mathrm{lo}}=0}\ .   
\]
This implies that the  $|\beta^{(e')}|_2\leq 1$ and $\beta^{(e')}$ therefore satisfies $(\cB^{-}_{\mathrm{lo}})$.

   
  \end{itemize}
  \end{proof}

  \subsection{Proof of Proposition~\ref{property_graph_multiple}}

\begin{proof}[Proof of Proposition~\ref{property_graph_multiple}]

We shall establish the desired bound under different conditions and then show how to transform $(G', \bB', \beta')$ to satisfy these new conditions. A multigraph $G = (V, E)$ is said to satisfy Property $(\cG^*)$ if:
  \begin{itemize}
    \item[(a)] $G$ does not contain any self-loop. 
    \item[(b)] Any pair of vertices is connected by at most 2 edges.
    \item[(c)] The minimum degree is larger or equal to $2$.
    \item[(d)] If the degree of a vertex is $2$, the this vertex has only one neighbor. In other words, this vertex shares two edges with another vertex.
  \end{itemize}

A matrix  $\bC$ is said to satisfy Condition $(\cB^*)$ if (a) the entries of $\bC$ are non-negative,  if (b) the $l_2$ norm of each row and each column of $\bC$ is at most one, and if  (c) the Frobenius norm of $\bC$ is at most $\sqrt{q}$. 

Finally, we say that $(\bG, \bB)$ where $\bB= (\bB^{(e)})$ is a collection of matrices indexed by the edges of $G$ satisfies Property $(\cA^*)$ is $G$ satisfies $(\cG^*)$ and all matrices $\bB^{(e)}$ satisfy $(\cB^*)$.

\begin{prp}\label{property_graph_multiple_2}
  For any $(G,\bB)$ satisfying Condition ($\cA^*$), we have 
  \beq\label{eq:upper_W2}
  W_{G,\bB}\leq q^{|V|/2}\left(\frac{p}{q}\right)^{(|R|-1)_+/2}\ ,
  \eeq
  where we recall that $[x]_+= \max(x,0)$ and $R\subset V$ is the subset of vertices of type '$r$'. 
  \end{prp}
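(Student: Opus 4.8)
The plan is to reduce Proposition~\ref{property_graph_multiple_2} to a statement about graphs satisfying the cleaner Property $(\cG^*)$ and matrices satisfying $(\cB^*)$, and then to prove that cleaner statement by induction on the number of vertices. First I would show how to pass from $(G',\bB',\beta')$ satisfying $(\cA_0)$ with all external degrees at least $3$ to a pair $(G,\bB)$ satisfying $(\cA^*)$ without worsening the bound. The key observations here are: (i) self-loops can be absorbed, since a vertex $i$ with a self-loop of associated vector $\beta^{(e)}$ satisfying $(\cB^{-}_{\mathrm{lo}})$ or $(\cB_{\mathrm{lo}})$ can be merged into one of its incident edges via a coordinate-wise multiplication, turning $\bB^{(f)}_{\omega,\omega'}$ into $\bB^{(f)}_{\omega,\omega'}\beta^{(e)}_{\omega'}$, which still has rows/columns of $\ell_2$-norm at most one and Frobenius norm at most $\sqrt q$; (ii) entries can be made non-negative by replacing each $\bB^{(e)}_{\omega,\omega'}$ by $|\bB^{(e)}_{\omega,\omega'}|$, which only increases $|W|$ and preserves the norm conditions (operator norm $\le 1$ is not preserved but the $\ell_2$-row/column and Frobenius bounds are, which is all $(\cB^*)$ requires); (iii) if two vertices are joined by $m\ge 3$ edges, we can group the edges in pairs, contracting each pair $\bB^{(e_1)},\bB^{(e_2)}$ between $i,j$ into a single diagonal-like vector on $i$ (or $j$) and then re-absorb as in (i), reducing multiplicity; and similarly a degree-$2$ vertex with two distinct neighbors is contracted using Lemma~\ref{lem:decompo_graph_1}(b). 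After these reductions the minimum degree can only stay $\ge 2$ (the pruning is only applied to degree-$2$ vertices that get removed), so $(\cG^*)$ holds.

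Next I would prove Proposition~\ref{property_graph_multiple_2} itself by induction on $|V|$. The base case is a single pair of vertices joined by two edges (the smallest graph satisfying $(\cG^*)$), where $W_{G,\bB}=\sum_{\omega,\omega'}\bB^{(1)}_{\omega\omega'}\bB^{(2)}_{\omega\omega'}=\langle \bB^{(1)},\bB^{(2)}\rangle \le |\bB^{(1)}|_2|\bB^{(2)}|_2\le q$, matching the claimed $q^{|V|/2}=q$ with $(|R|-1)_+\le 1$ handled by the $p/q\ge 1$ factor. For the inductive step I would pick a vertex $i$ of minimum degree. If $\deg(i)=2$ then by $(\cG^*)$(d) it shares both edges with a single neighbor $j$; summing over $\omega_i$ first gives $\sum_{\omega_i}\bB^{(1)}_{\omega_j\omega_i}\bB^{(2)}_{\omega_j\omega_i}$, a vector in $\omega_j$ whose entries are $\le 1$ (by the row-$\ell_2$ bounds and Cauchy--Schwarz) and whose $\ell_1$ norm is $\le\sqrt q\cdot\sqrt q=q$ (Frobenius bounds and Young), hence absorbable into an edge at $j$, reducing to a graph on $|V|-1$ vertices; the price paid is a factor $q$ when $i\in C$ (or when $i\in R$ but $j\in R$, since the summed vector then has $\ell_1\le q$ still) and here one must check the accounting of $p$ versus $q$ against $R$-count carefully. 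The more delicate case is $\deg(i)\ge 3$: I would sum over $\omega_i$, producing a tensor in the neighbors of $i$, and bound it using Cauchy--Schwarz/Hölder across the $\ge 3$ incident edges, spending roughly $q^{1/2}$ per vertex and an extra $(p/q)^{1/2}$ only if $i$ is a row vertex with no row neighbor to share cost with — this is exactly where the $(|R|-1)_+/2$ exponent and the nonnegativity of entries come in.

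The main obstacle I expect is the bookkeeping of the exponents of $p$ versus $q$ through the induction — specifically making precise why only $(|R'|-1)_+$ row vertices contribute the $p/q$ excess rather than all of $|R'|$. The intuition is that a row index $\omega\in[p]$ only "costs" $\sqrt p$ instead of $\sqrt q$ when it is not tied down by a matrix that also has a column index, and the structure of $(\cG^*)$ (every vertex has degree $\ge 2$, matrices relate a row and a column index with $\ell_2$-row/column norm $\le 1$) forces the row vertices to be interconnected so that, along any spanning structure, all but one of them get "paired" with a column vertex and thus cost only $\sqrt q$. Concretely I would set up the induction hypothesis with the exact exponent $q^{|V|/2}(p/q)^{(|R|-1)_+/2}$, and in each reduction step verify that removing a vertex of type '$c$' changes $|V|$ by $1$ and $|R|$ by $0$, removing an isolated-after-pruning type '$r$' vertex is impossible under $(\cG^*)$, and contracting/summing a type '$r$' vertex of degree $\ge 3$ costs at most $q^{1/2}(p/q)^{1/2}$ while decreasing $(|R|-1)_+$ by $1$ — so the ratio is preserved. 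The self-loop absorption and nonnegativity reductions are routine once the right norms ($\ell_2$-row, $\ell_2$-column, Frobenius) are identified as the invariants, so I would not belabor them; the real work is the careful induction in Proposition~\ref{property_graph_multiple_2}.
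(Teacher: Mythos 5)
Your overall architecture (base case $|V|=2$ via Cauchy--Schwarz/Young, then induction on $|V|$ after the reduction to $(\cA^*)$) matches the paper, and the preliminary reductions (absorbing self-loops coordinate-wise, taking absolute values, merging multiplicities $\geq 3$) are essentially the ones used there. But the core of the inductive step has a genuine gap, in two respects. First, your plan for a minimum-degree vertex $i$ with $d_i\geq 3$ is to ``sum over $\omega_i$, producing a tensor in the neighbors of $i$, and bound it using Cauchy--Schwarz/H\"older across the $\geq 3$ incident edges.'' Summing out a degree-$3$ vertex produces a $3$-tensor indexed by its neighbors, which no longer fits the graph-with-matrices framework on which the induction hypothesis is stated; you cannot close the induction this way. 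The paper never sums out a vertex of external degree $\geq 3$ directly: it exploits that all entries lie in $[0,1]$, so that \emph{deleting} edges only increases $W_{G,\bB}$, deletes edges until some vertex has degree $2$ or $1$, and only then contracts (a degree-$2$ vertex yields a genuine matrix product $\bC=\bB^{(e')}\,{}^{T}\bB^{(e'')}$, and Lemma~\ref{lem:C1_C2} shows $\bC/\sqrt q$ again satisfies $(\cB^*)$). The entire case analysis (Cases 1.a, 1.b.i, 1.b.ii, 2, 3) is about choosing which edges to delete and which vertex to contract so that the deletions do not destroy Property $(\cG^*)$ for the neighbors.

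Second, the exponent bookkeeping that you yourself flag as ``the main obstacle'' is not resolved, and the heuristic you offer points in the wrong direction. If every row vertex of degree $\geq 3$ cost $q^{1/2}(p/q)^{1/2}=\sqrt p$, you would obtain $q^{|V|/2}(p/q)^{|R|/2}$, not $q^{|V|/2}(p/q)^{(|R|-1)_+/2}$; and when $|R|$ drops from $1$ to $0$ the budget $(|R|-1)_+$ does not decrease, so your ``ratio is preserved'' step fails for the last row vertex. The actual mechanism is the opposite of ``all but one row vertex gets paired with a column vertex'': because every matrix satisfies $\|\bB^{(e)}\|_2\leq\sqrt q$ (rank at most $q$), summing a row index \emph{jointly with its partner index} costs only $\sqrt q$ per vertex regardless of type. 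A factor $\sqrt p$ appears in exactly one degenerate configuration: when a type-`$r$' vertex $j$ is reduced to external degree $1$ and one must bound $\sum_{\omega_j}\bB^{(e)}_{\omega_j,\omega_l}$, the $\ell_1$ norm of a length-$p$ column with $\ell_2$ norm at most one. The paper's rule of pruning degree-$3$ vertices of type `$r$' \emph{first} guarantees that whenever this happens, the vertex $i$ just removed was also of type `$r$', so two row vertices leave $R$ while only one factor $(p/q)^{1/2}$ is paid, which is what makes the exponent $(|R|-1)_+/2$ (rather than $|R|/2$) sustainable through the induction. Without this ordering and the accompanying case analysis, the claimed bound does not follow.
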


Let us show how to apply this result to $W_{G',\bB',\beta'}$. First, we transform $G'$ into a multigraph $G^{\uparrow}$ by erasing the self-loops of $G'$. Recall that the external degree of each vertex in $G'$ is at least $3$. For  any vertex $i\in V$, we pick an external edge $e=(k,i,a)$ incident to $i$ and we modify the corresponding matrix $(\bB')^{(e)}$ into $(\bB^{\uparrow})^{(e)}$ by setting $(\bB^{\uparrow})^{(e)}_{\omega, \omega'}= (\bB')^{(e)}_{\omega, \omega'}\prod_{e\in E_{l_0,i}}\beta^{(e)}_{\omega}$. One checks that the resulting matrix  $(\bB^{\uparrow})^{(e)}$ still satisfies $(\cB_{0})$. For all the other external edges $e$, we set 
$(\bB^{\uparrow})^{(e)}= \bB^{(e)}$.
As a consequence,  the resulting  multigraph $G^{\uparrow}= (V, E^{\uparrow})$ is such that $(G^{\uparrow}, \bB^{\uparrow})$ satisfies $(\cA_0)$ and 
$W_{G^{\uparrow}, \bB^{\uparrow}}=   W_{G',\bB',\beta'}$. Next, we modify $G^{\uparrow}$ by erasing some edges in such a way that any pair of vertices is connected by at most twice edges. We call $G^{\ddagger}$ the resulting multigraph. Since the degree of any vertex in  
$G^{\uparrow}$ is larger or equal to $3$, one easily checks that $G^{\ddagger}$ satisfies Condition ($\cG^*$). Then, we define the collection of matrix $\bB^{\ddagger}$ as follows. For any pair of vertices that are connected by at most 2 edges, we simply define $(\bB^{\ddagger})^{(e)}$ by 
$(\bB^{\ddagger})^{(e)}_{\omega,\omega'}= \big|(\bB^{\uparrow})^{(e)}_{\omega,\omega'}\big|$ for the corresponding edges. If there are more than $2$ edges, then the first matrix $(\bB^{\ddagger})^{(e)}$ is defined as above, whereas the second matrix $(\bB^{\ddagger})^{(e)}$ corresponding to the second edge
is defined by the product of the absolute values of the entries of the remaining matrices $(\bB^{\uparrow})^{(e')}$. Equipped with these notation, we straightforwardly have 
\beq\label{eq:upper_W7}
  \big|W_{G', \bB',\beta'}\big|=  \big|W_{G^{\uparrow}, \bB^{\uparrow}}\big|\leq   W_{G^{\ddagger}, \bB^{\ddagger}}\ .
\eeq
It remains to show that the matrices $(\bB^{\ddagger})^{e}$ all satisfy Condition $(\cB^*)$. Since the maximum entry (in absolute value value) of all matrices $(\bB^{\uparrow})^{e}$ is at most one because $\|(\bB^{\uparrow})^{e}\|_{\infty}\leq 1$, it suffices to establish the following claim: if a matrix $\bC$ satisfies Condition $(\cB_0)$, then the matrix $\overline{\bC}$ defined by  $\overline{\bC}_{\omega,\omega'}= |\bC_{\omega,\omega'}|$ satisfies $(\cB^*)$. Indeed, it follows from Lemma~\ref{lem:op+2} that $\|\overline{\bC}\|_2=\|\bC\|_2\leq \sqrt{q}$. Besides, the $l_2$ norm of each row and each column of $\overline{\bC}$ is upper bounded by the operator norm of $\bC$ and is therefore smaller than one. The claim follows. Since $(G^{\ddagger}, \bB^{\ddagger})$ satisfies $(\cA^*)$, we are in position to apply Proposition~\ref{property_graph_multiple_2} and the result follows from~\eqref{eq:upper_W7}.

\end{proof}

Before turning to the proof  Proposition~\ref{property_graph_multiple_2}, we state a simple result that will be used several times. 

\begin{lem}\label{lem:C1_C2}
  Let $\bC_1$ and $\bC_2$ be any two $r\times s$ and $s\times t$ matrices that satisfy Property $(\cB^*)$. Then the product matrix $q^{-1/2}\bC_1\bC_2$ also satisfies Property $(\cB^*)$. 
\end{lem}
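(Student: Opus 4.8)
\textbf{Proof plan for Lemma~\ref{lem:C1_C2}.} The statement asserts that if $\bC_1$ ($r\times s$) and $\bC_2$ ($s\times t$) both satisfy $(\cB^*)$, then $q^{-1/2}\bC_1\bC_2$ satisfies $(\cB^*)$. Recall $(\cB^*)$ consists of three conditions: (a) nonnegative entries; (b) each row and each column has $l_2$ norm at most $1$; (c) Frobenius norm at most $\sqrt q$. I would verify each of these in turn for the product $\bD := q^{-1/2}\bC_1\bC_2$.

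Condition (a) is immediate: the entries of $\bC_1\bC_2$ are sums of products of nonnegative reals, hence nonnegative, and multiplying by $q^{-1/2}>0$ preserves this. For condition (b), fix a row index $i$. The $i$-th row of $\bC_1\bC_2$ is $(\bC_1)_{i,\cdot}\,\bC_2$, where $(\bC_1)_{i,\cdot}$ is the $i$-th row of $\bC_1$, which has $l_2$ norm at most $1$ by hypothesis (b) for $\bC_1$. Since the operator norm of $\bC_2$ (as a map acting on row vectors from the right, i.e. $\|\bC_2\|_\infty = \|\bC_2^T\|_\infty$) is bounded by... here is the subtlety: $(\cB^*)$ does not directly give a bound of $1$ on $\|\bC_2\|_\infty$, only on the $l_2$ norms of rows/columns and on the Frobenius norm. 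However, the Frobenius norm bound $\|\bC_2\|_2\le\sqrt q$ gives $\|\bC_2\|_\infty \le \|\bC_2\|_2 \le \sqrt q$ (operator norm is at most Frobenius norm). Hence $|(\bC_1)_{i,\cdot}\,\bC_2|_2 \le |(\bC_1)_{i,\cdot}|_2\,\|\bC_2\|_\infty \le 1\cdot\sqrt q = \sqrt q$, so after dividing by $q^{1/2}$ the $i$-th row of $\bD$ has $l_2$ norm at most $1$. The column bound is symmetric: the $j$-th column of $\bC_1\bC_2$ is $\bC_1(\bC_2)_{\cdot,j}$, with $|(\bC_2)_{\cdot,j}|_2\le 1$ and $\|\bC_1\|_\infty\le\|\bC_1\|_2\le\sqrt q$, giving the same bound.

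For condition (c), I would bound the Frobenius norm of $\bC_1\bC_2$. Write $\|\bC_1\bC_2\|_2^2 = \sum_j |(\bC_1\bC_2)_{\cdot,j}|_2^2 = \sum_j |\bC_1 (\bC_2)_{\cdot,j}|_2^2 \le \|\bC_1\|_\infty^2 \sum_j |(\bC_2)_{\cdot,j}|_2^2 = \|\bC_1\|_\infty^2 \|\bC_2\|_2^2$. Using $\|\bC_1\|_\infty \le 1$ — wait, again $(\cB^*)$ only gives $\|\bC_1\|_\infty\le\sqrt q$ a priori. But actually condition (b) does control the operator norm better than Frobenius: if every column of $\bC_1$ has $l_2$ norm at most $1$, that still only bounds $\|\bC_1\|_\infty$ by $\sqrt s$ in general, not by $1$. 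So the cleanest route for (c) is: $\|\bC_1\bC_2\|_2^2 \le \|\bC_1\|_\infty^2\,\|\bC_2\|_2^2 \le \|\bC_1\|_2^2\,\|\bC_2\|_2^2 \le q\cdot q = q^2$, hence $\|\bC_1\bC_2\|_2\le q$, and therefore $\|\bD\|_2 = q^{-1/2}\|\bC_1\bC_2\|_2 \le q^{-1/2}\cdot q = \sqrt q$, which is condition (c). The alternative using the row/column $l_2$ bounds of $(\cB^*)$(b) directly — $\|\bC_1\bC_2\|_2^2 = \sum_{i,j}|(\bC_1)_{i,\cdot}\cdot(\bC_2)_{\cdot,j}|^2 \le \sum_{i,j}|(\bC_1)_{i,\cdot}|_2^2|(\bC_2)_{\cdot,j}|_2^2 \le \sum_{i,j}1 = rt$ by Cauchy–Schwarz — is also available but $rt$ need not be $\le q^2$, so the Frobenius-times-operator-norm bound is the one to use.

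The main (minor) obstacle is simply keeping straight which of the three $(\cB^*)$ bounds to invoke at each step so that the constants come out to exactly $\sqrt q$ after the $q^{-1/2}$ normalization; there is no deep difficulty, and the whole argument is three short Cauchy–Schwarz / submultiplicativity estimates. I would present it as: (i) nonnegativity is trivial; (ii) row and column $l_2$ bounds via $|\text{row}(\bC_1\bC_2)|_2 \le |\text{row}(\bC_1)|_2\|\bC_2\|_\infty \le \sqrt q$ and its transpose; (iii) Frobenius bound via $\|\bC_1\bC_2\|_2 \le \|\bC_1\|_\infty\|\bC_2\|_2 \le \|\bC_1\|_2\|\bC_2\|_2 \le q$; in all cases dividing by $\sqrt q$ restores the required inequality.
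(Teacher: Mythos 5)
Your proof is correct and follows essentially the same route as the paper's: nonnegativity is immediate, the row/column $l_2$ bounds come from pairing the unit row (or column) bound of one factor with a $\sqrt{q}$ bound on the other factor (you via $\|\bC\|_{\infty}\le\|\bC\|_2\le\sqrt q$, the paper via an entrywise Cauchy--Schwarz that amounts to the same estimate), and the Frobenius bound comes from sub-multiplicativity $\|\bC_1\bC_2\|_2\le\|\bC_1\|_2\|\bC_2\|_2\le q$. The $q^{-1/2}$ rescaling then restores all three conditions exactly as in the paper.
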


\begin{proof}[Proof of Proposition~\ref{property_graph_multiple_2}]

We prove this result by induction on the number $|V|$ of vertices. We start by considering any pair $(G, \bB)$ with $|V|= 2$ vertices that satisfies $(\mathcal{A}^*)$.  Since the two vertices are connected by exactly two edges, we  derive from Young's inequality that 
    \begin{eqnarray*}
      W_{G,\bB} &= &\sum_{\omega_1,\omega_2=1}^{p}\prod_{e\in E} \bB^{(e)}_{\omega_1,\omega_2}\leq \frac{1}{2}\sum_{e\in E}\sum_{\omega_1,\omega_2=1}^{p} (\bB^{(e)}_{\omega_1,\omega_2})^2
      \leq q\leq q^{|V|/2}\left(\frac{p}{q}\right)^{(|R|-1)_+/2}\ ,
    \end{eqnarray*}
    where we used that  the Frobenius norms of the matrices $\bB^{(e)}$ is at most $\sqrt{q}$ by Condition~($\mathcal{B}$).

    \medskip

    \begin{figure}[h]
      \centering 
      \subfloat[Case 1.a]{{\includegraphics[width=3.5cm]{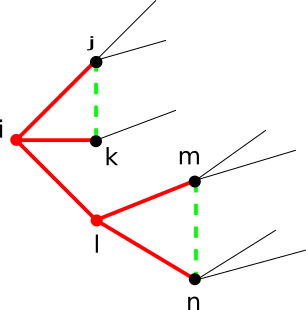}}}\quad 
      \subfloat[Case 1.b.i]{{\includegraphics[width=3.5cm]{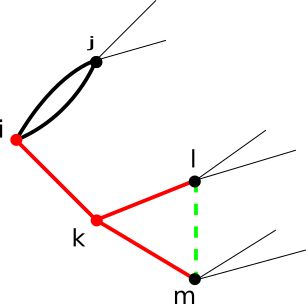}}}\quad 
      \subfloat[Case 1.b.ii]{{\includegraphics[width=3.5cm]{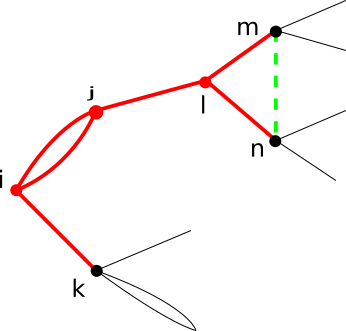}}}\quad 
      \subfloat[Case 3]{{\includegraphics[width=3cm]{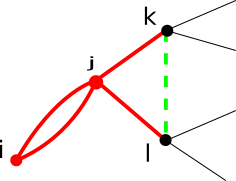}}}
      \caption{Examples of pruned graphs in the proof of Proposition~\ref{property_graph_multiple_2}. 
        Vertices and edges in red are removed, whereas edges in dotted green are inserted. \label{fig:proof_multiple_graph} }    
   \end{figure}

  Let us now consider any $(G,\bB)$ satisfying $(\cA^*)$ with $|V|\geq 3$. 
  If the graph $G$ is not connected, we write $G_1=(V_1,E_1)$, $G_2=(V_2,E_2)$,\ldots, $G_K=(V_K,E_K)$ its decomposition into connected components and $\bB_1$, \ldots, $\bB_K$ the corresponding collection of matrices. Obviously, each $(G_j,\bB_j)$ satisfies the property $(\cA^*)$. Besides, we have  $W_{G,\bB} = \prod_{j=1}^K   W_{G_j,\bB_j}$. By induction, we conclude that  $W_{G,\bB} \leq q^{|V|/2}\left(\frac{p}{q}\right)^{(|R|-1)_+/2}$. We focus henceforth on the situation where $G$ is connected. Since $|V|\geq 3$, this implies that at least one vertex has a degree larger or equal to $3$. The analysis is divided into three cases.

  \noindent
  {\bf CASE (1)}: there exists a least one vertex whose degree is exactly $3$. Then, we pick any degree $3$ vertex $i$ of type $r$  if there exists one and otherwise we pick any degree $3$ vertex $i$ of type $c$. There are two sub-cases depending on the number of distinct neighbors of $i$. 
  
  \medskip
  
  \noindent 
  {\bf CASE (1.a)}: the vertex $i$ has three distinct neighbors $j$, $k$, and $l$ in $G$ --see Figure~\ref{fig:proof_multiple_graph}. Since $W_{G,\bB}$ is decreasing with the number of edges, we start by removing the edge between $i$ and $l$ from $E$ and the corresponding matrix $\bB^{(i,l,1)}$ from $\bB$. We write $G^p$ the corresponding pruned graph and $\bB^p$ the corresponding collection of matrices. Focusing on the sum over $w_i$ in $W_{G,\bB}$, we have 
  \begin{eqnarray*}
 W_{G,\bB} \leq   W_{G^{p},\bB^{p}} = \sum_{\omega_t,\ t\in V\setminus\{i\}}\prod_{e\in E^{(-i)}} \bB^{(e)}_{\omega_{e_1},\omega_{e_2}} \sum_{\omega_i}\prod_{e\in E^{p}_{i}}\bB^{(e)}_{\omega_{e_1},\omega_{e_2}}\ ,
  \end{eqnarray*}
  where $E^{p}_{i}$ is the set of edges incident to $i$. 
  We denote $e'=(i,j,1)$ and $e''=(i,k,1)$ the two edges incident to $i$ in $G^p$. Then, we introduce the matrix $\bC$ defined by 
 $ \bC_{\omega,\omega'}=\sum_{\omega_i} \bB^{(e')}_{\omega_i,\omega}\bB^{(e'')}_{\omega_i,\omega'}$, 
  where the sum over $\omega_i$ runs over $[p]$ if $i$ is of type '$r$' and over $[q]$ if $j$ is of type '$c$'. From Lemma~\ref{lem:C1_C2}, we deduce that 
  $\bC/\sqrt{q}$ satisfies Condition $(\cB^*)$, so that 
  \[
    W_{G^p,\bB}\leq \sum_{\omega_t,\ t\in V\setminus\{i\}}\prod_{e\in E^{(-i)}} \bB^{(e)}_{\omega_{e_1},\omega_{e_2}} \bC_{\omega_{j},\omega_{k}}\ .
  \]
  We now consider the modification $\overline{G}$ and $\overline{\bB}$ of $G^p$ and $\bB^p$ by removing the vertex $i$ and, if there are at most one edge between $i$ and $k$, we insert a new edge $\overline{e}$ between $j$ and $k$ and associate the matrix $\overline{\bB}^{(\overline{e})}= \bC/\sqrt{q}$. All the other matrices $\bB^{(e)}$ are left unchanged. Then, we have 
  \[
    W_{G,\bB}\leq \sqrt{q}W_{\overline{G},\overline{\bB}}\ .
  \]
  Among the remaining vertices in $\overline{G}$, only the vertex $l$ possibly does not meet the conditions in $(\cG^*)$. If $l$ satisfies those condition, then ($\overline{G},\overline{\bB}$) satisfies $(\cA^*)$, and by induction, we conclude that 
  \[
    W_{G,\bB}\leq \sqrt{q}q^{|\overline{V}|/2}\left(\frac{p}{q}\right)^{(|\overline{R}|-1)_+/2}\leq q^{|V|/2}\left(\frac{p}{q}\right)^{(|R|-1)_+/2}\ ,
  \]
  where we denote $\overline{R}$ the collection of vertices of type '$r$' in $\overline{G}$. If $l$ does not satisfy the degree condition in $(\cG^*)$, then this implies that the degree of $l$ in $G^p$ is $2$ and that $l$ is connected to two distinct vertices, say $m$ and $n$. Denoting  $e'=(l,m,1)$ and $e''=(l,n,1)$ these two edges, we consider as above the matrix $\bC'$ defined by  $\bC'_{a,b}=\sum_{\omega_l} \bB^{(e')}_{\omega_l,a}\bB^{(e'')}_{\omega_l,b}$.
Again, the matrix $\bC'/\sqrt{q}$ satisfies $(\cB^*)$ by Lemma~\ref{lem:C1_C2}. Then, 
  \[
    W_{\overline{G},\overline{\bB}}\leq \sum_{\omega_s,\ s\in V\setminus \{i,l\}}\prod_{e\in \overline{E}^{-l}} \overline{\bB}^{(e)}_{e_1,e_2} \bC_{\omega_{m},\omega_{n}}\ .
  \]
  Finally, we consider the transformations $(\overline{G}',\overline{\bB}')$  of $(\overline{G},\overline{\bB})$ where we remove the vertex $l$ and, if the number of edges between $m$ and $n$ is at most one, we insert an edge $\tilde{e}$ between $m$ and $n$ and introduce the corresponding matrix $\overline{\bB}'^{(\tilde{e})}= \bC/\sqrt{q}$. Then, $(\overline{G}',\overline{\bB}')$ satisfies Condition ($\cA^*$) and  we conclude  by induction that
  \[
    W_{G,\bB}\leq qq^{|\overline{V}'|/2}\left(\frac{p}{q}\right)^{(|\overline{R}'|-1)_+/2}\leq q^{|V|/2}\left(\frac{p}{q}\right)^{(|R|-1)_+/2}\ .
  \]

  \noindent 
  {\bf CASE (1.b)}: the vertex $i$ has only two neighbors $j$ and  $k$ in $G$. Without loss of generality, we can assume that there are two edges between $i$ and $j$ and only one between $i$ and $k$. Since $W_{G,\bB}$ is non-decreasing with the number of edges, we start by removing the edge between $i$ and $k$ and call $G^p$ the corresponding pruned graph. We consider again two sub-cases depending on whether the vertex $k$ satisfies the degree condition of  $(\cG^*)$ in  the pruned graph $G^p$. 
  \medskip 
  
  \noindent 
  {\bf CASE (1.b.i)} The vertex $k$ does not satisfy the degree condition of $(\cG^*)$ in the pruned graph $G^p$  --see Figure~\ref{fig:proof_multiple_graph}. Then, this implies that, in $G^p$, $d_k=2$ and that $k$   is connected to two distinct vertices, say $l$ and $m$.  argue as in Case 1.a by introducing the matrix $\bC$ defined by 
  \[
    \bC_{\omega,\omega'}=\sum_{\omega_k} \bB^{(e)}_{\omega_k,\omega}\bB^{(e')}_{\omega_k,\omega'}\ .
  \]
  where $e$ and $e'$ respectively stand for the edges $e= (k,l,1)$ and $e'=(k,m,1)$. The matrix $\bC/\sqrt{q}$ satisfies Condition $(\cB^*)$ by Lemma~\ref{lem:C1_C2}. Then,  we consider the transformation $(\overline{G},\overline{\bB})$ of $(G^p,\bB^p)$, where we remove the vertex $k$ and, if the number of edges between $l$ and $m$ is at most one, we insert an edge $\overline{e}$ between $l$ and $m$  with the matrix $\overline{\bB}^{(\overline{e})}= \bC/\sqrt{q}$. Then, $(\overline{G},\overline{\bB})$ satisfies Condition $(\cA^*)$ and $W_{G,\bB}\leq \sqrt{q}W_{\overline{G},\overline{\bB}}$.  By induction, we conclude that 
  \[
    W_{G,\bB}\leq \sqrt{q}W_{\overline{G},\overline{\bB}}\leq  q^{|V|/2}\left(\frac{p}{q}\right)^{(|R|-1)_+/2}\ .
  \]
  
  \medskip  
  
  \noindent 
  {\bf CASE (1.b.ii)} In the pruned graph $G^p$, the vertex $k$ satisfies the conditions of $(\cG^*)$. Hence, $(G^{P},\bB^{P})$ satisfy Condition $(\mathcal{A}^*)$. We now focus on the vertices $i$ and $j$ --recall that they are connected by two edges. If the degree of $j$ is equal to $2$, then $i$ and $j$ form a connected component of size $2$ in $G^p$. Decomposing $G^p$ into an union of at least two connected components, we conclude by induction  as in the beginning of the proof that 
  \[
    W_{G,\bB}\leq  W_{G^p,\bB^P}\leq    q^{|V|/2}\left(\frac{p}{q}\right)^{(|R|-1)_+/2} \ .
  \]
  We now assume that the degree of $j$ is at most $3$ in $G^p$  --see Figure~\ref{fig:proof_multiple_graph}. Let us denote $e=(i,j,1)$ and $e'=(i,j,2)$ the two edges between $i$ and $j$. We derive from Condition $(\cB^*)$ and Young's inequality that, for any $\omega_j$, 
  \[
    \sum_{\omega_i} \bB^{(e)}_{\omega_i,\omega_j}\bB^{(e')}_{\omega_i,\omega_j}\leq \frac{1}{2}\sum_{\omega_i} \left(\bB^{(e)}_{\omega_i,\omega_j}\right)^2+ \left(\bB^{(e')}_{\omega_i,\omega_j}\right)^2\leq 1\ .
  \]
  As a consequence, the new pruned graph $G^{(-i)}$ where we have removed the vertex $i$  and all edges incident to $i$ satisfies 
  \[
    W_{G,\bB}\leq  W_{G^{(-i)},\bB^{(-i)}}\ . 
  \]
  If, in $G^{(-i)}$, the vertex $j$ satisfies the degree conditions of~$(\cG^*)$, then $(G^{(-i)},\bB^{(-i)})$ satisfies $(\cA^*)$ and we conclude by induction. If the vertex $j$ does not satisfy the degree conditions~$(\cG^*)$, then either $d_j=1$ or $d_j=2$ and $j$ has two distinct neighbors. In the latter case, we have already explained (e.g. in Case (1.b.i)) how to factorize the sum  with respect to $\omega_j$ in $ W_{G^{(-i)},\bB^{(-i)}}$  to build a multi-graph $\overline{G}$ with vertex set $V\setminus\{i,j\}$  and a collection  of matrices $\overline{\bB}$ satisfying Condition $(\cA^*)$ and such that 
  \[
     W_{G^{(-i)},\bB^{(-i)}}\leq \sqrt{q}W_{\overline{G},\overline{\bB}}\ .
  \]
  Then, we conclude by induction. It remains to consider the case where $j$ has exactly one neighbor in $G^{(-i)}$. Let us denote $l$ this neighbor and $e=(j,l,1)$ the corresponding edge. In $W_{G^{(-i)},\bB^{(-i)}}$ we focus on the sum
  \[
  \sum_{\omega_j}\bB^{(e)}_{\omega_j,\omega_l} \ . 
  \]
  By Cauchy-Schwarz inequality and Condition~$(\cB^*)$, this sum is bounded by $\sqrt{q}$ if $j$ is of type '$c$' and by $\sqrt{p}$ if $j$ is of type '$r$'. Since the degree of $j$ in $G^{(-i)}$ is 1, this entails that the degree of $j$ in $G$ is equal to 3. Hence, if $j$ is of type '$r$', then this implies that $i$ is of type '$r$' because we have picked $i$ as one of the degree 3 vertices of type '$r$' if there exists at least one. Hence, considering the new pruned graph $(G^{(-i,-j)},\bB^{(-i,-j)})$ where we have also removed the vertex $j$, we arrive at 
  \[
    W_{G,\bB}\leq \sqrt{q}\sqrt{\frac{p}{q}}^{\1_{i,j\in R}} W_{G^{(-i,-j)},\bB^{(-i,-j)}}\ .
  \]
  In $G^{(-i,-j)}$, only the vertex $l$ possibly does not satisfy the degree condition in  $(\cG^*)$. Hence, if $l$ satisfies the degree conditions in $(\cG^*)$, then $(G^{(-i,-j)},\bB^{(-i,-j)})$ satisfies $(\cA^*)$ and we conclude by induction. Otherwise, the vertex $l$ has two distinct neighbors say $m$ and $n$  in $G^{(-i,-j)}$ with corresponding edges $e$ and $e'$. As already  argued several times, we focus the sum over $\omega_l$  of $\bB^{(e)}_{\omega_l,\omega_m} \bB^{(e')}_{\omega_l,\omega_{n}}$. This allows us to build a graph $\overline{G}$ with vertex set $V\setminus \{i,j,l\}$ and a collection   $\overline{\bB}$ of matrices that satisfy Condition~$(\cA^*)$ and such that 
  \[
    W_{G^{(-i,-j)},\bB^{(-i,-j)}}\leq \sqrt{q}W_{\overline{G},\overline{\bB}}\ . 
  \]
  Then, we conclude by induction.
  
  \medskip 
  
  \noindent 
  {\bf  CASE $(2)$}: The degree of all vertices in $G$ is at least $4$. Since the sum $W_{G,\bB}$ does not decrease when we remove edges in $G$, we can iteratively prune the graph by removing some edges until one vertex has a degree $3$. If the pruned graph $G^p$ is not connected, we conclude as in the beginning of the proof. If the pruned graph is connected, then we come back to Case $1$.

  \medskip 
  
  \noindent 
  {\bf CASE $(3)$}: No vertex has a degree $3$ and there exists at least one vertex with degree $2$. We pick one of the degree $2$ vertices, say $i$. Denoting $j$ the neighbor of $i$ in $G$, we observe that the degree of $j$ in $G$ is at least $4$ since $G$ is connected and since $d_j\neq 3$. Arguing exactly as in Case 1.ii.b, we observe that  $(G^{(-i)},\bB^{(-i)})$ where we have removed the vertex $i$ and the two edges incident to $i$ satisfies
  \[
    W_{G,\bB}\leq  W_{G^{(-i)},\bB^{(-i)}}\ . 
  \]
  If, in $G^{(-i)}$, $j$ satisfies the degree conditions of $(\cG^*)$, then $(G^{(-i)},\bB^{(-i)})$ satisfies Condition~$(\cA^*)$ and we conclude by induction. If $j$ does not satisfy the  degree conditions of $\cG$ in $G^{(-i)}$, then $d_j=2$ and $j$ has two distinct neighbors, say $k$ and $l$ --see Figure~\ref{fig:proof_multiple_graph}. As argued several times, we prune the vertex $j$ and possibly create an edge between $k$ and $l$ to build $(\overline{G},\overline{\bB})$ satisfying $(\cA^*)$ and such that 
  \[
   W_{G^{(-i)},\bB^{(-i)}}\leq \sqrt{q}W_{\overline{G},\overline{\bB}}\ .
  \]
  Then, we conclude by induction.

  \end{proof}

  \begin{proof}[Proof of Lemma~\ref{lem:C1_C2}]
    Let us denote $\bD= \bC_1\bC_2$. We only have to prove that $\|\bD\|_2\leq q$ and that the $l_2$ norms of each row and column is smaller or equal to $\sqrt{q}$. Since the Frobenius norm is sub-multiplicative (as long as all Schatten norms), we deduce that 
    \[
    \|\bD\|_2\leq \|\bC_1\|_2\|\bC_2\|_2  \leq q \ , 
    \]
    since both $\bC_1$ and $\bC_2$ satisfy $(\cB^*)$. Consider that $a$-th row of $\bD$. Then, 
    \beqn
    \sum_{b=1}^t \bD^2_{ab}&=& \sum_{b=1}^t \left[\sum_{c=1}^s (\bC_1)_{a,c}(\bC_2)_{c,b} \right]^2 \leq \sum_{b=1}^t \left[\sum_{c=1}^s(\bC_1)^2_{a,c}\right]\left[\sum_{c'=1}^s(\bC_2)^2_{c',b}\right]\\
    &\leq &  \sum_{b=1}^t \sum_{c'=1}^s(\bC_2)^2_{c',b}\leq  q \enspace , 
    \eeqn 
    where we used in the second line that the $l_2$ norm of the $a$-th row of $\bC_1$ is at most one and  that the Frobenius norm of $\bC_2$ is at most $q$. 
    \end{proof}

  \subsection{Proof of Technical Lemmas}

  \begin{proof}[Proof of Lemma~\ref{lem:hermite_subgaussian}]
    Given a positive integer $k$, we write $a_{l,k}$ for the coefficients of the decomposition of the $k$-th Hermite polynomial in the canonical basis, that is $H_k(x)= \sum_{l=0}^{k} a_{l,k}x^{l}$.  First, we consider the case where $k$ is even. Since $H_{k}$ is a symmetric polynomial, we derive that 
    \begin{eqnarray*}
      \E[H_k(Y)] &=&\sum_{l=0}^{k/2} \E[a_{2l,k} (\theta+E)^{2l}] =  \sum_{s=0}^{k} \theta^{s} \sum_{l=\lceil s/2 \rceil }^{k/2}  a_{2l,k} \binom{2l}{2l-s}\E[E^{2l-s}]\\ 
      & = &a_{k,k} \theta^{k}+ a_{k,k} \theta^{k-1} k\E[E] +  \theta^{k-2}\left[a_{k,k}\frac{k(k-1)}{2}\E[E^2]+ a_{k-2,k}  \right]\\ & &+ \sum_{s=0}^{k-3} \theta^{s} \sum_{l=\lceil s/2 \rceil }^{k/2}  a_{2l,k} \binom{2l}{2l-s}\E[E^{2l-s}] \\ 
      & =& \theta^{k} + \sum_{s=0}^{k-3} \theta^{s} \sum_{l=\lceil s/2 \rceil }^{k/2}  a_{2l,k} \binom{2l}{2l-s}\E[E^{2l-s}] \ , 
    \end{eqnarray*}
    because $\E[E]=0$, $\E[E^2]=1$, $a_{k,k}=1$ and $a_{k,k}\frac{k(k-1)}{2}+a_{k-2,k}=0$. The two latter identities are a consequence of the equality $\E[H_{k}(\theta+ Z)]=\theta^k$ when $Z\sim \cN(0,1)$. Similarly, if $k$ is odd, we derive that 
    \[
      \E[H_k(Y)] = \theta^{k} + \sum_{s=0}^{k-3} \theta^{s} \sum_{l=\lceil (s-1)/2 \rceil }^{(k-1)/2}  a_{2l+1,k} \binom{2l+1}{2l+1-s}\E[E^{2l+1-s}] \ . 
    \]
    Next, we upper bound the coefficients of the polynomial $P_{k}$. For $l\equiv k [2]$, it is well known that the coefficient $a_{l,k}$ of the Hermite polynomial satisfies
    \[
    a_{l,k}=  \frac{k!}{2^k l! [(k-l)/2]!}(-1)^{(k-l)/2} \ , 
    \]
    so that 
    \beq\label{eq:upper_bound_hermite_coefficient}
    |a_{l,k}|\leq \frac{1}{2^k}\binom{k}{l}(k-l)^{(k-l)/2}\leq (k-l)^{(k-l)/2}\leq k^{k/2}\ . 
    \eeq
    Besides, we know (e.g.~\cite{vershynin2018high}) that  $|\E[E^k]|\leq (c\|E\|_{\psi_2}\sqrt{k})^k$ for some positive numerical constant $c$. 
    Writing $P_{k}(x)= \sum_{l=0}^{k-3}b_{l,k}x^k$, we derive that $|b_{l,k}|\leq c^k\|E\|_{\psi_2}^k k^{k+1}2^{k}$.
    Let us turn to $\E[H_kH_l(Y)]$. 
    \begin{eqnarray*} 
      \E[H_kH_l(Y)]&= & \sum_{s=0}^{k+l} \sum_{t=(s-l)\vee 0}^{s\wedge k} a_{t,k}a_{s-t,l}\E[(\theta+E)^{s}]\\
      & = & \sum_{r=0}^{k+l}\theta^{r}  \left[ \sum_{s=r}^{k+l} \sum_{t=(s-l)\vee 0}^{s\wedge k} a_{t,k}a_{s-t,l}  \binom{s}{r}\E[E^{s-r}]\right]\\
      & =& \theta^{k+l}+ \sum_{r=0}^{k+l-2}\theta^{r}  \left[ \sum_{s=r}^{k+l} \sum_{t=(s-l)\vee 0}^{s\wedge k} a_{t,k}a_{s-t,l}  \binom{s}{r}\E[E^{s-r}]\right]\ ,
    \end{eqnarray*}
    because $a_{k,k}=a_{l,l}=1$ and $a_{k-1,k}=a_{l-1,l}=0$. The rhs in the above identity corresponds to a polynomial of degree at most $k+l-2$. Arguing as above, we derive that its coefficients are uniformly bounded (in absolute value) by $(c\|E\|_{\psi_2})^{(k+l)}(k+l)^{2+(k+l)}$. 
    
    Finally, we consider the case where 
    $k+l=2=k\vee l$. Then, we observe that $\E[H_k(Y)H_{l}(Y)]= \E[H_2(Y)]= \E[(\theta+E)^2-1]= \theta^2$ so that $Q_{k,l}=0$. 
    
    \end{proof}

  \begin{proof}[Proof of Lemma~\ref{lem:decomposition_G_P}] 
    We prove the first result by induction on the size of   $v_{\cP}= \sum_{r}|\cP_r|\1_{|\cP_r|>1}$. If $v_{\cP}=0$, then $G_{\cP}=0$ by definition. Consider any partition $\cP$ such that $v_{\cP}>0$, $\cP$ contains a group, say $\cP_1$, of size $2$ satisfying $\cP_1\subset [2k]$ or $\cP_1\subset[2k+1,4k]$. We build the finer collection $\cP^{(-1)}\triangleleft \cP$ by breaking $\cP_1$ into two singletons. Then, 
    \begin{eqnarray*} 
    G_{\cP}(m)&= &F_{\cP}(m) - G_{\cP^{(-1)}}(m)  - \sum_{\cP'\triangleleft \cP,\  \cP'\ntrianglelefteq \cP^{(-1)} }G_{\cP'}(m) - \sum_{ \cP'\triangleleft \cP^{(-1)} }G_{\cP'}(m) \\
    & = & F_{\cP}(m) - G_{\cP^{(-1)}}(m) - \sum_{ \cP'\triangleleft \cP^{(-1)} }G_{\cP'}(m) \\
    & =& F_{\cP}(m) -F_{\cP^{(-1)}}(m) \  , 
    \end{eqnarray*}
    where  we used the definition of $G_{\cP}(m)$ and that, by induction,  $G_{P'}(m)=0$ for $P'$ such $P'\triangleleft \cP$ and   $P'\ntrianglelefteq \cP^{(-1)}$. Finally, since $Q_{2,0}=Q_{0,2}=P_{2}=P_0=0$, we deduce from~\eqref{eq:definition_F_P} that $F_{\cP}(m)= F_{\cP^{(-1)}}(m)$ so that $G_{\cP}(m)=0$.

    We now turn to the second result. Let us apply the generalized M\"obius inversion formula for posets~\cite[Sect.3.7]{stanley2011enumerative} to obtain an explicit expression for $G_{\cP}(m)$: 
    \beq\label{eq:mobius_inversion}
    G_{\cP}(m)= \sum_{\cP'\trianglelefteq \cP} (-1)^{|\cP|-|\cP'|} F_{\cP'}(m)\mu(\cP,\cP') \ , 
    \eeq
    where $\mu$ is the M\"obius function for partitions. More precisely, for $\cP'\trianglelefteq \cP$ where $\cP$ has $t$ groups which respectively split into $s_1$,\ldots, $s_t$ block in $\cP'$, we have $\mu(\cP,\cP')= (-1)^{|\cP|-|\cP'|}\prod_{r=1}^{t}(s_r-1)!$\ . 
    
    We denote $\kappa$ the number of groups of $\cP$ of size larger to one and, without loss of generality, we denote these groups $\cP_1,\ldots, \cP_\kappa$. Writing $\cG_{\cP}$ as a polynomial of $|\cP|$ variables, we then deduce from~\eqref{eq:definition_F_P} that 
    \begin{eqnarray}
    \lefteqn{G_{\cP}[x_1,\ldots, x_{|\cP|}]}&& \nonumber \\ \nonumber
    &=&  \left(\prod_{l=\kappa+1}^{|\cP|} x_l
    \right)\sum_{\underline{\cP'_1} \trianglelefteq \cP_1}  
    \sum_{\underline{\cP'_2}\trianglelefteq \cP_2} 
    \ldots \sum_{\underline{\cP'_\kappa}\trianglelefteq \cP_\kappa} 
    (-1)^{t-\sum_{l=1}^{\kappa}|\underline{\cP'_l|}}\prod_{l=1}^{\kappa} (\underline{|\cP'_l|}-1)!\\ 
    && \cdot \left[\prod_{l=1}^\kappa  x_l^{|\cP_l|} + \prod_{l=1}^{\kappa}\prod_{\cQ \in \underline{\cP'_l}} \left[ x_l^{|\cQ|} + Q_{|\cQ\cap [2k]|,|\cQ\cap [2k+1:4k]|}(x_l)\right]  \right.\nonumber \\ 
    &-& \left.  \prod_{l=1}^{\kappa}\prod_{\cQ \in \underline{\cP'_l}} \left[  x_l^{|\cQ|}+ x_l^{|\cQ\cap [2k]|}P_{|\cQ\cap [2k+1:4k]|}(x_l) \right] 
    -   \prod_{l=1}^{\kappa}\prod_{\cQ \in \underline{\cP'_l}} \left[  (x_l^{|\cQ|}+ x_l^{|\cQ\cap [2k+1:4k]|}P_{|\cQ\cap [2k]|}(x_l) \right]  \right]\ . \nonumber \\ \label{eq:expression_G_P}
    \end{eqnarray}
    where we consider any finer partition $\cP'\trianglelefteq \cP$ as a combination of $t$ partitions of the groups $\cP_1,\ldots, \cP_t$. We now prove that, for 
    $r=1,\ldots, t$, the maximum degree of $x_l$ in  $G_{\cP}$ is at most $|\cP_r|-2$ and is at most $|\cP_r|-3$ when $\cP_l\subset [2k]$ or $\cP_r\subset [2k+1,4k]$. Without loss of generality, we fix $r=1$. We only have to consider monomials with degree $|\cP_1|$ in $x_1$ as there is no monomial of degree $|\cP_1|-1$ in $x_1$ and no monomial of degree $|\cP_1|-2$ in $x_1$ when  $\cP_1\subset [2k]$ or $\cP_1\subset [2k+1,4k]$. Gathering all the monomials in $G_{\cP}$ of degree $|\cP_1|$ in $x_1$,  we arrive at 
    \beqn 
    && x_{1}^{|\cP_1|}\left[\sum_{\underline{\cP'_1} \trianglelefteq \cP_1} (-1)^{1-|\underline{\cP'_1}|} (|\underline{\cP'_1}|-1)! \right] \left[\prod_{l=\kappa+1}^{|\cP|} x_l\right] Z \ ;\\
    Z&:=& \sum_{\underline{\cP'_2} \trianglelefteq \cP_2} \ldots \sum_{\underline{\cP'_\kappa} \trianglelefteq \cP_\kappa}\prod_{l=2}^{\kappa} (-1)^{1-|\cP'_l|}(\underline{|\cP'_l|}-1)!  \left[\prod_{l=2}^\kappa  x_l^{|\cP_l|} + \prod_{l=1}^{\kappa}\prod_{\cQ \in \underline{\cP'_l}} \left[ x_l^{|\cQ|} + Q_{|\cQ\cap [2k]|,|\cQ\cap [2k+1:4k]|}(x_l)\right]  \right.\\ 
     &&- \prod_{l=2}^{\kappa}\prod_{\cQ \in \underline{\cP'_l}} \left[  x_l^{|\cQ|}+ x_l^{|\cQ\cap [2k]|}P_{|\cQ\cap [2k+1:4k]|}(x_l) \right] 
     -   \left.  \prod_{l=2}^{\kappa}\prod_{\cQ \in \underline{\cP'_l}} \left[  (x_l^{|\cQ|}+ x_l^{|\cQ\cap [2k+1:4k]|}P_{|\cQ\cap [2k]|}(x_l) \right]  \right]\ .
    \eeqn 
    Finally, we need to compute  $M_{\cP_1}:= \sum_{\underline{\cP'_1} \trianglelefteq \cP_1} (-1)^{1-|\underline{\cP'_1}|} (|\underline{\cP'_1}|-1)!$. Applying again the generalized inverse M\"obius inverse formula for posets with the constant functional $A_{\cP'}=1$ for any partition $\cP'$ of $\cP_1$, we conclude that $M_{\cP_1}=0$. As a consequence, the degree of $G_P$ in $x_1$ is at most $|\cP_1|-2$ and is at most $\cP_1-3$ if $\cP_1\subset [2k]$ or $\cP_1\subset [2k+1,4k]$.

    \end{proof}

    \begin{proof}[Proof of Lemma~\ref{lem:G_P}]
  It follows from~\eqref{eq:expression_G_P} that $G_{\cP}$ decomposes into a sum of at most $(4k)^{4k}$ -- upper bound for the number of sub-partitions-- sum of four polynomials (see the two last lines in~\eqref{eq:expression_G_P}). Each of these four polynomial decomposes into at most $2^{2k}$ monomials. Finally, it follows from Lemma~\ref{lem:hermite_subgaussian} that the coefficient associated to each of these monomials is smaller or equal $(4k-1)!(c\|E\|_{\psi_2})^{4k}(4k)^{4k+2}$.
  \[ 
  \sum_{D\in \mathrm{Deg}(\cP)} |\alpha_{D,\cP}| \leq 4\cdot 2^{2k} (4k)^{12k+2} (c\|E\|_{\psi_2})^{4k}\leq (c'\|E\|_{\psi_2})^{4k}k^{12k+2}\enspace . 
  \]
      \end{proof}

    \subsection{Proof of Corollary~\ref{cor:schatten_non_gaussian}}

  \begin{proof}[Proof of Corollary \ref{cor:schatten_non_gaussian}]
    The proof follows the same steps as for Corollary~\ref{cor:schatten}. First we observe that $\|\bA\|_{\infty}\leq \|\bA\|_{2k}$ so that 
    \[
      \E\left[\left(U_k - \|\bA\|_{2k}^{2k}\right)^2\right]\leq   (c\|E\|_{\psi_2}k)^{c'k} \left[   (pq)^{k} +  pq \|\bA\|_{2k}^{4k-4}+ q\|\bA\|_{2k}^{4k-2}\right]  
    \]
    We consider two cases depending on $\|\bA\|_{2k}$. If  $\|\bA\|_{2k} \leq  (pq)^{1/4}$, then the above risk bound simplifies in 
    $\E\left[|(U_k-\|\bA\|_{2k}^{2k})^2\right]\leq   3(c\|E\|_{\psi_2}k)^{c'k}(pq)^k$. Then, together with the inequality $|x^{1/(2k)}-y^{1/(2k)}|\leq |x-y|^{1/(2k)}$ and H\"older's inequality, we arrive at
    \[ \E\left[|(U_k)_+^{1/(2k)}- \|\bA\|_{2k}|\right]\leq \E\left[|U_k- \|\bA\|^{2k}_{2k}|\right]^{1/2k}\leq c''(\|E\|_{\psi_2}k)^{c_3} (pq)^{1/4}\enspace .
    \]
        Turning to the case where  $\|\bA\|_{2k} \geq  (pq)^{1/4}$,
        we use   $|(1+x)^{1/(2k)}-1|\leq  2|x|$ for any $x\geq - 1/2$, we have 
        \begin{eqnarray*}
        \E\left[|(U_k)_+^{1/(2k)}- \|\bA\|_{2k}|\right]&=&\E\left[ \|\bA\|_{2k} \big|[U_k/\|\bA\|^{2k}_{2k}]^{\frac{1}{2k}} - 1\big|\right] \nonumber\\
        &\leq & \|\bA\|_{2k}\P\left[U_k\leq \|\bA\|_{2k}^{2k}/2\right]+ 2\|\bA\|^{1-2k}_{2k}\E\left[|U_k- \|\bA\|_{2k}^{2k}|\right] \nonumber \\
        &\leq & \|\bA\|_{2k}\left[4\frac{\E\left[(U_k- \|\bA\|_{2k}^{2k})^2\right]}{\|\bA\|^{4k}_{2k}}+ 2\frac{\E\left[(U_k- \|\bA\|_{2k}^{2k})^2\right]^{1/2}}{\|\bA\|^{2k}_{2k}} \right]\nonumber \\
        &\lesssim &  (c\|E\|_{\psi_2}k)^{c'k} \left[\frac{(pq)^k}{\|\bA\|^{4k-1}_{2k}}+ \frac{(pq)^{k/2}}{\|\bA\|^{2k-1}_{2k}}+ \frac{pq+\sqrt{pq}\|\bA\|_{2k}}{\|\bA\|_{2k}^{2}}+  \frac{q}{\|\bA\|_{2k}}+\sqrt{q} \right] \nonumber
        \\
        &\lesssim & (c\|E\|_{\psi_2}k)^{c'k} (pq) ^{1/4}
        \ .
        \end{eqnarray*}
    The result follows. 
        \end{proof}

\subsection{Proofs of Propositions~\ref{prp:control_operator_subgaussian} and \ref{prp:upper_risk_nuclear_covariance_subgaussian}}

        \begin{proof}[Proof of Proposition~\ref{prp:control_operator_subgaussian}]
          As in proof of  Proposition~\ref{prp:control_operator}, we start from~\eqref{eq:upper_error_operator}
          \[
            \big|\|\bY^T\bY\|_{\infty} - p - \|\bA^T\bA\|_{\infty}\big|\leq   2\|\bA^T\bE\|_{\infty}+ \|\bE^T\bE - p\bI\|_{\infty} \leq   2\|\bA\|_{\infty}\|\bPi_{I(\bA)}\bE\|_{\infty}+ \|\bE^T\bE - p\bI\|_{\infty} \enspace .
          \]
          We control $\|\bE^T\bE - p\bI\|_{\infty}$ using \eqref{eq:convergence_specture_empirical_covariance_subgaussian} and $\|\bPi_{I(\bA)}\bE\|_{\infty}$ as in Lemma~\ref{lem:concentration_pi_A_E_subgaussian}. The remainder of the proof is as in the proof of Proposition~\ref{prp:control_operator} and we obtain
          \[
            \mathbb{E}\cro{|(\|\bY^T \bY\|_{\infty}-p)_+^{1/2} - \|\bA\|_{\infty}|} \leq c_{\|E\|_{\psi_2}}(pq)^{1/4}\ . 
          \]
           
             \end{proof}

              \begin{proof}[Proof of Proposition~\ref{prp:upper_risk_nuclear_covariance_subgaussian}]
                We argue as in the proof of Proposition~\ref{prp:upper_risk_nuclear_covariance}. Starting from~\eqref{eq:upper_T_S}, we have
                \beqn 
                |T^s_s - \|\bA\|^s_s|
                &\leq & 2^{s/2}s \sum_{i=1}^{q}\left[\|\Pi_{I(\bA)}\bE\|^2_{\infty}+ \|\bE^T\bE- p \bI_q\|_{\infty}+ 2 \sigma_i(\bA)\|\Pi_{I(\bA)}\bE\|_{\infty}\right]\sigma^{s-2}_i(\bA)\\ & &\quad \quad\quad \quad  + 3^{s/2-1}\left[\|\Pi_{I(\bA)}\bE\|^s_{\infty}+ \|\bE^T\bE- p \bI_q\|^{s/2}_{\infty}+ 2^{s/2} \sigma^{s/2}_i(\bA)\|\Pi_{I(\bA)}\bE\|^{s/2}_{\infty}\right]\ ,
                \eeqn 
                and we only have to control moments of $\|\Pi_{I(\bA)}\bE\|_{\infty}$ and $\|\bE^T\bE- p \bI_q\|_{\infty}$.
          
          \begin{lem}\label{lem:concentration_pi_A_E_subgaussian}
            There exists a numerical constant $c>0$, such that for any $t>0$, we have
            \beq\label{eq:concentration_pi_A_E_subgaussian}
            \P\left[\|\Pi_{I(\bA)}\bE\|_{\infty} \leq c\|E\|_{\psi_2} \left[\sqrt{q}+ \sqrt{t}\right]\right]\geq 1- e^{-t}\ . 
            \eeq
            \end{lem}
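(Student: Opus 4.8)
The plan is to reduce the operator norm $\|\Pi_{I(\bA)}\bE\|_{\infty}$ to a bilinear form over a ball of dimension $r := \dim I(\bA) = \rank(\bA)\le q$ and to control it by a standard $\varepsilon$-net argument. In the Gaussian case one could simply invoke the rotational invariance of $\bE$ together with Lemma~\ref{lem:DavSza}; this shortcut is unavailable here because $\Pi_{I(\bA)}\bE$ no longer has independent entries after projection, so I would instead work directly with the sub-Gaussian bilinear form.

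First, write $\|\Pi_{I(\bA)}\bE\|_{\infty} = \sup_{u\in S^{p-1},\,v\in S^{q-1}} u^{T}\Pi_{I(\bA)}\bE v = \sup_{w\in B,\,v\in S^{q-1}} w^{T}\bE v$, where $B$ is the closed unit ball of the subspace $I(\bA)\subset\mathbb{R}^{p}$, using $\Pi_{I(\bA)}^{T}=\Pi_{I(\bA)}$ and that $\Pi_{I(\bA)}u$ ranges over $B$ as $u$ ranges over $S^{p-1}$. For fixed unit vectors $w\in B$ and $v\in S^{q-1}$, the scalar $w^{T}\bE v = \sum_{i,j} w_i v_j \bE_{ij}$ is a weighted sum of independent, mean-zero, sub-Gaussian variables, so by the standard bound on sums of independent sub-Gaussians (e.g.~\cite{vershynin2018high}) its sub-Gaussian norm is at most $c\|E\|_{\psi_2}\big(\sum_{i,j}w_i^2 v_j^2\big)^{1/2}=c\|E\|_{\psi_2}$; in particular $\P\big[|w^{T}\bE v|\ge s\big]\le 2\exp\big(-c' s^{2}/\|E\|_{\psi_2}^{2}\big)$ for all $s>0$.

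Next, fix $1/4$-nets $\cN_1$ of $B$ and $\cN_2$ of $S^{q-1}$, of cardinalities at most $9^{r}$ and $9^{q}$ respectively; a routine approximation argument gives $\|\Pi_{I(\bA)}\bE\|_{\infty}\le 2\max_{w\in\cN_1,\,v\in\cN_2} w^{T}\bE v$. Combining this with a union bound over the $|\cN_1||\cN_2|\le 9^{r+q}\le 81^{q}$ pairs and the tail estimate above, we obtain, for every $s>0$,
\[
\P\big[\|\Pi_{I(\bA)}\bE\|_{\infty}>2s\big]\le 2\cdot 81^{q}\exp\!\big(-c' s^{2}/\|E\|_{\psi_2}^{2}\big).
\]
Choosing $s=c''\|E\|_{\psi_2}(\sqrt{q}+\sqrt{t})$ with $c''$ large enough makes the right-hand side at most $e^{-t}$, and since $r\le q$ this yields~\eqref{eq:concentration_pi_A_E_subgaussian} after renaming the constant.

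The argument is entirely standard; the only point requiring a little care is that one must avoid passing through an intermediate bound carrying $\|E\|_{\psi_2}^{2}$ in front (as would come out of the row-wise sub-Gaussian matrix deviation inequality), since the statement asks for a single power of $\|E\|_{\psi_2}$ — applying the net argument directly to the bilinear form delivers exactly this, because the $\psi_2$-norm of $w^{T}\bE v$ is linear in $\|E\|_{\psi_2}$. Thus I do not expect a genuine obstacle, only bookkeeping of constants and net cardinalities.
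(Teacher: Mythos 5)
Your proposal is correct and follows essentially the same route as the paper: reduce to the bilinear form over the unit ball of $I(\bA)$ (of dimension at most $q$) and the unit ball of $\mathbb{R}^q$, pass to $1/4$-nets to pick up a factor $2$, apply the sub-Gaussian tail bound for each fixed pair, and union-bound over the nets whose log-cardinalities are $O(q)$. The only cosmetic difference is that you make the net cardinalities and the union bound fully explicit, whereas the paper states the same estimate more compactly.
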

            
            Next, we turn to the control of $\|\bE^T\bE -p \bI_q\|_{\infty}$ which is a straightforward application of Theorem~9 in~\cite{koltchinskii2014concentration}. There exists a constant $c_{\|E\|_{\psi_2}}$ that only depends on $\|E\|_{\psi_2}$ such that for any $t$,
            \beq\label{eq:convergence_specture_empirical_covariance_subgaussian}
            \P\left[\|\bE^T\bE -p \bI_q\|_{\infty}\leq c_{\|E\|_{\psi_2}}\left[\sqrt{pq}+ q + \sqrt{pt}+ t\right]\right]\geq 1-e^{-t}\ .
            \eeq
          Integrating the deviation bounds of~\eqref{eq:concentration_pi_A_E_subgaussian} and~\eqref{eq:convergence_specture_empirical_covariance_subgaussian}, we arrive as in~\eqref{eq:upper_risk_moment1} and \eqref{eq:upper_risk_moment2}
           \[
                 \E\left[|T^s_s - \|\bA\|^s_s|\right]\leq c^s_{\|E\|_{\psi_2}} \left[q[(p\vee s)(q\vee s)]^{s/4}+ \sqrt{q}\|\bA\|^{s-1}_{s-1}+ \1_{s\geq 2}\sqrt{pq}\|\bA\|_{s-2}^{s-2}\right]\ . 
            \]
          Then, the remainder of the proof is exactly as in the proof of Proposition~\ref{prp:upper_risk_nuclear_covariance_subgaussian}.
           \end{proof}

           \begin{proof}[Proof of Lemma~\ref{lem:concentration_pi_A_E_subgaussian}]
            We provide a simple but pedestrian proof based on $\epsilon$-nets. We start from
            \beq\label{eq:def_pi_I_A}
            \|\Pi_{I(\bA)}\bE\|_{\infty} = \sup_{u\in I(\bA), |u|\leq 1}\quad  \sup_{v\in \mathbb{R}^q:\ |v|_2\leq 1 }u^T \bE v  
            \eeq
              Consider an $1/4$-net $\mathcal{N}_1$ of the unit ball of $I(\bA)$ and a $1/4$-net $\mathcal{N}_2$ of the unit ball of $\mathbb{R}^q$.
            Let $u^*$ and $v^*$ be any maximizer of the supremum in~\eqref{eq:def_pi_I_A} and let $u'$ and $v'$ denote the corresponding closest points in $\mathcal{N}_1$ and $\mathcal{N}_2$.  Then, we have 
            
            \beqn 
            \|\Pi_{I(\bA)}\bE\|_{\infty} & = &(u^*)^T \bE v^* = (u')^T \bE v'+ (u^*- u')^T\bE v' + (u^*)^T \bE (v^*-v')\\ &\leq& \sup_{u\in \cN_1}\sup_{v\in \cN_2}u^T \bE v  + \frac{1}{2}\|\Pi_{I(\bA)}\bE\|_{\infty}
            \eeqn
            so that 
            \[
              \|\Pi_{I(\bA)}\bE\|_{\infty}\leq 2 \sup_{u\in \cN_1}\sup_{v\in \cN_2}u^T \bE v
            \]
            Since the mean zero random variable $u^T \bE v$ is sub-Gaussian with sub-Gaussian norm $\|E\|_{\psi_2}$, we deduce that, with probability higher than $1- e^{-t}$, one has 
            \[
              \|\Pi_{I(\bA)}\bE\|_{\infty}\leq c\|E\|_{\psi_2}\left[\sqrt{t}+ \sqrt{\log(|\cN_1|)+\log(|\cN_2|) }\right]\ , 
            \]
            Finally, we conclude by observing that $\log(|\cN_2|)+\log(|\cN_1|)\leq c'q$ since the $\epsilon$-covering number of a $d$-dimensional unit ball is at most $d\log(3/\epsilon)$.
            \end{proof}

\appendix

\section{Technical inequalities}

For bounding the singular values of $\bE$, we shall rely on the following results (taken e.g. from  \cite{Davidson2001}). 
\begin{lem} \label{lem:spectrum}
Let  $\bE$ be a $p \times q$ matrix  whose entries are independent standard Gaussian random variables. Then, 
\[ \mathbb{E}\cro{|\sigma_1(\bE)|}\leq \pa{\sqrt{p} + \sqrt{q}}\enspace . \]
\end{lem}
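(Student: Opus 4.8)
The plan is to realize $\sigma_1(\bE)$ as the supremum of a centered Gaussian process and to apply a Gaussian comparison inequality of Slepian--Gordon type (this is essentially Chevet's inequality). Writing $S^{n-1}$ for the Euclidean unit sphere of $\R^n$, we have the variational formula
\[
 \sigma_1(\bE)= \sup_{u\in S^{p-1}}\ \sup_{v\in S^{q-1}} u^T\bE v\ =:\ \sup_{(u,v)}X_{u,v}\ ,
\]
where $X_{u,v}= \sum_{i,j}\bE_{ij}u_iv_j$ is a centered Gaussian process indexed by $S^{p-1}\times S^{q-1}$. As a comparison process I would take $Y_{u,v}= g^Tu + h^Tv$, where $g\sim \cN(0,\bI_p)$ and $h\sim \cN(0,\bI_q)$ are independent standard Gaussian vectors.

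The crux is comparing the intrinsic pseudometrics of the two processes. For unit vectors one computes $\E[(X_{u,v}-X_{u',v'})^2]= \|uv^T-u'v'^T\|_2^2 = 2-2(u^Tu')(v^Tv')$ and $\E[(Y_{u,v}-Y_{u',v'})^2]= |u-u'|_2^2+|v-v'|_2^2 = 4-2u^Tu'-2v^Tv'$. Subtracting, $\E[(Y_{u,v}-Y_{u',v'})^2]-\E[(X_{u,v}-X_{u',v'})^2]= 2(1-u^Tu')(1-v^Tv')\geq 0$, since $u^Tu'\leq 1$ and $v^Tv'\leq 1$; moreover this difference vanishes whenever $u=u'$ (and, symmetrically, whenever $v=v'$). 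These are exactly the hypotheses of Gordon's comparison inequality, which therefore gives $\E\big[\sup_{(u,v)}X_{u,v}\big]\leq \E\big[\sup_{(u,v)}Y_{u,v}\big]$.

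It remains to evaluate the right-hand side, which decouples: $\sup_{u\in S^{p-1}}g^Tu=|g|_2$ and $\sup_{v\in S^{q-1}}h^Tv=|h|_2$, so that, by independence and Jensen's inequality,
\[
 \E\Big[\sup_{(u,v)}Y_{u,v}\Big]= \E[|g|_2]+\E[|h|_2]\leq \sqrt{\E[|g|_2^2]}+\sqrt{\E[|h|_2^2]}= \sqrt{p}+\sqrt{q}\ .
\]
Combining this with the comparison inequality yields $\E[\sigma_1(\bE)]\leq \sqrt{p}+\sqrt{q}$.

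The main obstacle is purely a matter of care in invoking the right comparison lemma: one needs the asymmetric ("Gordon") form, in which equality of the increments on one coordinate is required while domination holds otherwise, so that one controls the expectation of the supremum (and not merely of the maximum over finite subsets). The standard reduction to finite $\varepsilon$-nets of the two spheres, followed by a monotone limit and a continuity argument, upgrades the finite-index statement to the supremum over $S^{p-1}\times S^{q-1}$. Since this exact bound is classical, one may alternatively simply cite \cite{Davidson2001}.
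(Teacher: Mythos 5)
Your proof is correct and is essentially the standard argument behind the bound the paper cites from \cite{Davidson2001}: realize $\sigma_1(\bE)$ as the supremum of the Gaussian process $u^T\bE v$, compare its increments with those of $g^Tu+h^Tv$ (Sudakov--Fernique/Gordon), and bound $\E[|g|_2]+\E[|h|_2]$ by Jensen. The only minor remark is that for the one-sided bound on the expected supremum the Sudakov--Fernique inequality (increment domination for centered processes) already suffices; the full Gordon form is only needed for the companion lower bound on $\sigma_q(\bE)$.
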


\begin{lem}\label{lem:DavSza}
Let  $\bE$ be a $p \times q$ matrix  whose entries are independent standard Gaussian random variables. Then,  for any $t > 0$, we have 
\begin{equation}
\max\ac{\mathbb{P}\pa{\sigma_1\pa{\boldsymbol{\bE}}\geq \sqrt{p} + \sqrt{q} + \sqrt{2t}},\mathbb{P}\pa{\sigma_q \pa{\bE}\leq\sqrt{p} - \sqrt{q} - \sqrt{2t} }} \leq e^{-t}\ . 
\end{equation}
\end{lem}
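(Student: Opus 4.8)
\textbf{Proof plan for Lemma~\ref{lem:DavSza}.} This is the Davidson--Szarek deviation bound, and the plan is to combine Gaussian concentration of measure with sharp bounds on the expected extreme singular values. Identify $\bE$ with the standard Gaussian vector $(\bE_{ij})\in\mathbb{R}^{pq}$ equipped with the Euclidean norm, which coincides with the Frobenius norm on matrices. The first step is to check that both maps $\bE\mapsto \sigma_1(\bE)$ and $\bE\mapsto \sigma_q(\bE)$ are $1$-Lipschitz with respect to $|\cdot|_2$. For $\sigma_1$ this is immediate since $\sigma_1$ is a norm (the operator norm) and $\|\bM\|_\infty=\sigma_1(\bM)\le \|\bM\|_2$ for every matrix $\bM$. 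For $\sigma_q$, Weyl's perturbation inequality for singular values gives $|\sigma_q(\bE)-\sigma_q(\bE')|\le \sigma_1(\bE-\bE')\le \|\bE-\bE'\|_2$, so it is $1$-Lipschitz as well.

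Next, invoke the Gaussian concentration inequality (Borell--Sudakov--Tsirelson): for any $1$-Lipschitz $f:\mathbb{R}^{pq}\to\mathbb{R}$ and any $u>0$,
\[
\mathbb{P}\big(f(\bE)\ge \mathbb{E}[f(\bE)]+u\big)\le e^{-u^2/2}, \qquad \mathbb{P}\big(f(\bE)\le \mathbb{E}[f(\bE)]-u\big)\le e^{-u^2/2}.
\]
Applying the first inequality to $f=\sigma_1$ and the second to $f=\sigma_q$ with $u=\sqrt{2t}$ yields
\[
\mathbb{P}\big(\sigma_1(\bE)\ge \mathbb{E}[\sigma_1(\bE)]+\sqrt{2t}\big)\le e^{-t}, \qquad \mathbb{P}\big(\sigma_q(\bE)\le \mathbb{E}[\sigma_q(\bE)]-\sqrt{2t}\big)\le e^{-t}.
\]
It then remains to show $\mathbb{E}[\sigma_1(\bE)]\le \sqrt{p}+\sqrt{q}$ and $\mathbb{E}[\sigma_q(\bE)]\ge \sqrt{p}-\sqrt{q}$, which combined with the two displays above gives exactly the claimed bound.

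The upper bound $\mathbb{E}[\sigma_1(\bE)]\le \sqrt{p}+\sqrt{q}$ is precisely Lemma~\ref{lem:spectrum}, which we may quote. The lower bound $\mathbb{E}[\sigma_q(\bE)]\ge \sqrt{p}-\sqrt{q}$ is the part requiring real work: writing $\sigma_q(\bE)=\min_{|v|_2=1}\max_{|u|_2=1} u^\top \bE v$ (recall $p\ge q$) exhibits it as a min--max of the Gaussian process $(u,v)\mapsto u^\top\bE v$, and one compares it downward with the simpler Gaussian process $(u,v)\mapsto |g|_2\, \text{(a term in }u\text{)} + |h|_2$ via Gordon's Gaussian min--max comparison inequality; taking expectations and using $\mathbb{E}|g|_2\ge$ (the relevant $\chi$-moment bounds) leads to $\mathbb{E}[\sigma_q(\bE)]\ge \sqrt p-\sqrt q$. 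This invocation of Gordon's inequality (and the careful handling of the min--max structure, in particular that the minimum over $v$ is the delicate direction) is the main obstacle; everything else is routine. Alternatively, since the statement is quoted from \cite{Davidson2001}, one may simply cite that reference for both expectation bounds, in which case the only thing to verify in detail is the Lipschitz reduction and the choice $u=\sqrt{2t}$.
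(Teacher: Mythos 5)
The paper does not prove this lemma at all: it is stated in the appendix as a result quoted directly from \cite{Davidson2001}, so there is no in-paper argument to compare against. Your outline is the standard (and correct) proof of that cited result: the $1$-Lipschitz property of $\sigma_1$ and $\sigma_q$ with respect to the Frobenius norm, Gaussian concentration with $u=\sqrt{2t}$, and the expectation bounds $\E[\sigma_1(\bE)]\le\sqrt p+\sqrt q$ and $\E[\sigma_q(\bE)]\ge\sqrt p-\sqrt q$ all check out. The one step you leave at the level of a plan --- Gordon's min--max comparison for the lower bound on $\E[\sigma_q(\bE)]$, together with the monotonicity of $n\mapsto\sqrt n-\E|g_n|_2$ hidden in your ``relevant $\chi$-moment bounds'' --- is correctly identified as the only nontrivial ingredient, and falling back on the citation for it is exactly what the paper itself does.
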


The following lemma is taken from~\cite{laurent00}

\begin{lem}\label{lem:chi2}
 Let $Z$ be distributed as $\chi^2(p)$ random variable. For any $t>0$, we have 
 \[
  \P[Z\geq p + 2\sqrt{pt}+ 2t ]\leq e^{-t}\ . 
 \]
\end{lem}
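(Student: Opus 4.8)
The statement is the classical upper-tail estimate for a chi-square variable (it is quoted from Lemma~1 of~\cite{laurent00}), so one option is simply to cite it; the self-contained route is a Chernoff bound, which I would carry out as follows. First I would record the moment generating function: for $Z\sim\chi^2(p)$ and any $\lambda\in[0,1/2)$ one has $\E[e^{\lambda Z}]=(1-2\lambda)^{-p/2}$, obtained by taking the $p$-th power (using independence of the coordinates) of the identity $\E[e^{\lambda X^2}]=(1-2\lambda)^{-1/2}$ for a single standard normal $X$. Markov's inequality applied to $e^{\lambda Z}$ then gives, for every admissible $\lambda$ and every $u>0$,
\[
 \P[Z\geq p+u]\leq e^{-\lambda(p+u)}(1-2\lambda)^{-p/2}\ .
\]

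Next I would optimise in $\lambda$. The right-hand side is minimised at $\lambda^{*}=u/(2(p+u))$, which indeed lies in $(0,1/2)$ and satisfies $1-2\lambda^{*}=p/(p+u)$ and $\lambda^{*}(p+u)=u/2$; substituting turns the bound into
\[
 \P[Z\geq p+u]\leq \exp\!\Big(-\tfrac{u}{2}+\tfrac{p}{2}\log\big(1+\tfrac{u}{p}\big)\Big)\ .
\]
It then remains to insert $u=2\sqrt{pt}+2t$ and verify that the exponent is at most $-t$, i.e.\ that $\tfrac{u}{2}-\tfrac{p}{2}\log(1+u/p)\geq t$. Writing $a=\sqrt{t/p}$ one has $u/p=2a+2a^{2}$ and $u/2=\sqrt{pt}+t=ap+t$, so the claimed inequality reduces to $2a\geq\log(1+2a+2a^{2})$, that is $e^{2a}\geq 1+2a+2a^{2}$; this is immediate from the power series $e^{2a}=1+2a+2a^{2}+\tfrac{4a^{3}}{3}+\cdots$, whose omitted terms are nonnegative for $a\geq 0$.

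I do not expect any genuine obstacle: the computation is entirely elementary, and the only points needing a little care are checking $\lambda^{*}\in[0,1/2)$ and keeping the signs straight so that the optimisation yields an upper rather than a lower bound. An alternative that avoids the exact optimisation is the sub-gamma route: from $-\log(1-v)\leq v+\tfrac{v^{2}}{2(1-v)}$ one gets $\log\E[e^{\lambda(Z-p)}]\leq p\lambda^{2}/(1-2\lambda)$, and the standard sub-gamma tail bound with variance factor $2p$ and scale factor $2$ reproduces exactly the threshold $2\sqrt{pt}+2t$; either way the constants come out as stated.
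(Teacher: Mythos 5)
Your proof is correct. The paper does not prove this lemma at all --- it simply quotes it from Laurent and Massart~\cite{laurent00} --- and your Chernoff-bound derivation is exactly the standard argument behind that cited result: the optimisation at $\lambda^{*}=u/(2(p+u))$ is carried out correctly, and the reduction of the final inequality to $e^{2a}\geq 1+2a+2a^{2}$ with $a=\sqrt{t/p}$ checks out term by term in the power series.
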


\bibliography{biblio}

\begin{thebibliography}{10}

\bibitem{anderson2010introduction}
Greg~W Anderson, Alice Guionnet, and Ofer Zeitouni.
\newblock {\em {An introduction to random matrices}}, volume 118.
\newblock Cambridge university press, 2010.

\bibitem{balcan2019testing}
Maria-Florina Balcan, Yi~Li, David~P Woodruff, and Hongyang Zhang.
\newblock {Testing matrix rank, optimally}.
\newblock In {\em {Proceedings of the Thirtieth Annual ACM-SIAM Symposium on
  Discrete Algorithms}}, pages 727--746. Society for Industrial and Applied
  Mathematics, 2019.

\bibitem{bernstein1913valeur}
Serge Bernstein.
\newblock {Sur la valeur asymptotique de la meilleure approximation des
  fonctions analytiques}.
\newblock {\em Communications de la Soci{\'e}t{\'e} math{\'e}matique de
  Kharkow}, 13(6):263--273, 1913.

\bibitem{bernstein1938meilleure}
Serge Bernstein.
\newblock {Sur la meilleure approximation de $|x|^{p}$ par des polyn{\^o}mes de
  degr{\'e}s tr{\`e}s {\'e}lev{\'e}s}.
\newblock {\em Izvestiya Rossiiskoi Akademii Nauk. Seriya Matematicheskaya},
  2(2):169--190, 1938.

\bibitem{bhatia2013matrix}
Rajendra Bhatia.
\newblock {\em {Matrix analysis}}, volume 169.
\newblock Springer Science \& Business Media, 2013.

\bibitem{book_concentration}
St{\'e}phane Boucheron, G{\'a}bor Lugosi, and Pascal Massart.
\newblock {\em {Concentration inequalities}}.
\newblock Oxford University Press, Oxford, 2013.
\newblock A nonasymptotic theory of independence, With a foreword by Michel
  Ledoux.

\bibitem{cailow2011}
T.~Tony Cai and Mark~G. Low.
\newblock {Testing composite hypotheses, {H}ermite polynomials and optimal
  estimation of a nonsmooth functional}.
\newblock {\em Ann. Statist.}, 39(2):1012--1041, 2011.

\bibitem{chatterjee2015matrix}
Sourav Chatterjee.
\newblock {Matrix estimation by universal singular value thresholding}.
\newblock {\em The Annals of Statistics}, 43(1):177--214, 2015.

\bibitem{collier2015minimax}
Olivier Collier, La{\"e}titia Comminges, and Alexandre~B Tsybakov.
\newblock Minimax estimation of linear and quadratic functionals on sparsity
  classes.
\newblock {\em The Annals of Statistics}, 45(3):923--958, 2017.

\bibitem{couillet2011random}
Romain Couillet and Merouane Debbah.
\newblock {\em {Random matrix methods for wireless communications}}.
\newblock Cambridge University Press, 2011.

\bibitem{Davidson2001}
K.~R. Davidson and S.~J. Szarek.
\newblock {Local operator theory, random matrices and {B}anach spaces}.
\newblock In {\em {Handbook of the geometry of Banach spaces, Vol. I}}, pages
  317--366. North-Holland, Amsterdam, 2001.

\bibitem{donoho2014minimax}
David Donoho and Matan Gavish.
\newblock {Minimax risk of matrix denoising by singular value thresholding}.
\newblock {\em The Annals of Statistics}, 42(6):2413--2440, 2014.

\bibitem{donoho1990minimax}
David~L Donoho and Michael Nussbaum.
\newblock {Minimax quadratic estimation of a quadratic functional}.
\newblock {\em Journal of Complexity}, 6(3):290--323, 1990.

\bibitem{gavish2014optimal}
Matan Gavish and David~L Donoho.
\newblock {The optimal hard threshold for singular values is 4/$\sqrt{3}$}.
\newblock {\em IEEE Transactions on Information Theory}, 60(8):5040--5053,
  2014.

\bibitem{mukherjee2017}
Yanjun Han, Jiantao Jiao, and Rajarshi Mukherjee.
\newblock On estimation of $l_{r}$-norms in gaussian white noise models.
\newblock {\em Probability Theory and Related Fields}, 177(3):1243--1294, 2020.

\bibitem{han2018local}
Yanjun Han, Jiantao Jiao, and Tsachy Weissman.
\newblock Local moment matching: A unified methodology for symmetric functional
  estimation and distribution estimation under wasserstein distance.
\newblock In {\em Conference On Learning Theory}, pages 3189--3221. PMLR, 2018.

\bibitem{jiao2015minimax}
Jiantao Jiao, Kartik Venkat, Yanjun Han, and Tsachy Weissman.
\newblock {Minimax estimation of functionals of discrete distributions}.
\newblock {\em IEEE Transactions on Information Theory}, 61(5):2835--2885,
  2015.

\bibitem{jost2006entropy}
Lou Jost.
\newblock {Entropy and diversity}.
\newblock {\em Oikos}, 113(2):363--375, 2006.

\bibitem{khetan2019spectrum}
Ashish Khetan and Sewoong Oh.
\newblock {Spectrum estimation from a few entries}.
\newblock {\em The Journal of Machine Learning Research}, 20(1):718--772, 2019.

\bibitem{koltchinskii2014concentration}
Vladimir Koltchinskii and Karim Lounici.
\newblock Concentration inequalities and moment bounds for sample covariance
  operators.
\newblock {\em Bernoulli}, 23(1):110--133, 2017.

\bibitem{kong2017spectrum}
Weihao Kong and Gregory Valiant.
\newblock {Spectrum estimation from samples}.
\newblock {\em The Annals of Statistics}, 45(5):2218--2247, 2017.

\bibitem{lang2002graduate}
Serge Lang.
\newblock {\em {Graduate Texts in Mathematics: Algebra}}.
\newblock Springer, 2002.

\bibitem{laurent00}
B.~Laurent and P.~Massart.
\newblock {Adaptive estimation of a quadratic functional by model selection}.
\newblock {\em Annals of Statistics}, 28(5):1302--1338, 2000.

\bibitem{lepski1999estimation}
Oleg Lepski, Arkady Nemirovski, and Vladimir Spokoiny.
\newblock {On estimation of the L r norm of a regression function}.
\newblock {\em Probability theory and related fields}, 113(2):221--253, 1999.

\bibitem{letac2004all}
G{\'e}rard Letac and H{\'e}l{\`e}ne Massam.
\newblock {All invariant moments of the Wishart distribution}.
\newblock {\em Scandinavian Journal of Statistics}, 31(2):295--318, 2004.

\bibitem{li2014sketching}
Yi~Li, Huy~L Nguyen, and David~P Woodruff.
\newblock {On sketching matrix norms and the top singular vector}.
\newblock In {\em {Proceedings of the twenty-fifth annual ACM-SIAM symposium on
  Discrete algorithms}}, pages 1562--1581. Society for Industrial and Applied
  Mathematics, 2014.

\bibitem{nadakuditi2014optshrink}
Raj~Rao Nadakuditi.
\newblock {Optshrink: An algorithm for improved low-rank signal matrix
  denoising by optimal, data-driven singular value shrinkage}.
\newblock {\em IEEE Transactions on Information Theory}, 60(5):3002--3018,
  2014.

\bibitem{pachon2009barycentric}
Ricardo Pach{\'o}n and Lloyd~N Trefethen.
\newblock Barycentric-remez algorithms for best polynomial approximation in the
  chebfun system.
\newblock {\em BIT Numerical Mathematics}, 49(4):721, 2009.

\bibitem{polyanskiy2019dualizing}
Yury Polyanskiy and Yihong Wu.
\newblock Dualizing le cam's method for functional estimation, with
  applications to estimating the unseens.
\newblock {\em arXiv preprint arXiv:1902.05616}, 2019.

\bibitem{qazi2007some}
MA~Qazi and QI~Rahman.
\newblock {Some coefficient estimates for polynomials on the unit interval}.
\newblock {\em Serdica Mathematical Journal}, 33(4):449p--474p, 2007.

\bibitem{remez1934determination}
Eugene~Y Remez.
\newblock Sur la d{\'e}termination des polyn{\^o}mes d’approximation de
  degr{\'e} donn{\'e}e.
\newblock {\em Comm. Soc. Math. Kharkov}, 10(196):41--63, 1934.

\bibitem{rivlin1990chebyshev}
Theodore Rivlin.
\newblock {Chebyshev polynomials. from approximation theory to algebra and
  number theory. 1990}.
\newblock {\em Pure Appl. Math.(NY)}, 1990.

\bibitem{roy2007effective}
Olivier Roy and Martin Vetterli.
\newblock {The effective rank: A measure of effective dimensionality}.
\newblock In {\em {2007 15th European Signal Processing Conference}}, pages
  606--610. IEEE, 2007.

\bibitem{shabalin2013reconstruction}
Andrey~A Shabalin and Andrew~B Nobel.
\newblock {Reconstruction of a low-rank matrix in the presence of Gaussian
  noise}.
\newblock {\em Journal of Multivariate Analysis}, 118:67--76, 2013.

\bibitem{stanley2011enumerative}
Richard~P Stanley.
\newblock Enumerative combinatorics volume 1 second edition.
\newblock {\em Cambridge studies in advanced mathematics}, 2011.

\bibitem{timan2014theory}
Aleksandr~Filippovich Timan.
\newblock {\em {Theory of approximation of functions of a real variable}},
  volume~34.
\newblock Elsevier, 2014.

\bibitem{tsybakov_book}
Alexandre~B. Tsybakov.
\newblock {\em {Introduction to nonparametric estimation}}.
\newblock {Springer Series in Statistics}. Springer, New York, 2009.
\newblock Revised and extended from the 2004 French original, Translated by
  Vladimir Zaiats.

\bibitem{vallet2012improved}
Pascal Vallet, Philippe Loubaton, and Xavier Mestre.
\newblock {Improved subspace estimation for multivariate observations of high
  dimension: the deterministic signals case}.
\newblock {\em IEEE Transactions on Information Theory}, 58(2):1043--1068,
  2012.

\bibitem{vershynin2018high}
Roman Vershynin.
\newblock {\em High-dimensional probability: An introduction with applications
  in data science}, volume~47.
\newblock Cambridge university press, 2018.

\bibitem{wu2016minimax}
Yihong Wu and Pengkun Yang.
\newblock {Minimax rates of entropy estimation on large alphabets via best
  polynomial approximation}.
\newblock {\em IEEE Transactions on Information Theory}, 62(6):3702--3720,
  2016.

\bibitem{wu2021polynomial}
Yihong Wu and Pengkun Yang.
\newblock Polynomial methods in statistical inference: theory and practice.
\newblock {\em arXiv preprint arXiv:2104.07317}, 2021.

\end{thebibliography}
\bibliographystyle{plain}

\end{document}